\title[Classification of irreducible admissible genuine mod $p$ representations of $p$-adic $\widetilde{SL}_2$]{A classification of the irreducible admissible genuine mod $p$ representations of $p$-adic $\widetilde{SL}_2$} 
\author{Laura Peskin}
\newtheorem{thm}{Theorem}
\newtheorem*{*thm}{Theorem}
\newtheorem*{*cor}{Corollary}
\newtheorem{cor}[thm]{Corollary}
\newtheorem{lem}[thm]{Lemma}
\newtheorem{prop}[thm]{Proposition}
\newtheorem{rem}[thm]{Remark}
\newcommand{\NN}{\ensuremath{\mathbb{N}}}
\newcommand{\ZZ}{\ensuremath{\mathbb{Z}}}
\newcommand{\CC}{\ensuremath{\mathbb{C}}}
\newcommand{\QQ}{\ensuremath{\mathbb{Q}}}
\newcommand{\cI}{\ensuremath{\mathcal{I}}}
\newcommand{\OF}{\ensuremath{\mathcal{O}_{F}}}
\newcommand{\Gt}{\ensuremath{\widetilde{G}}}
\newcommand{\Tt}{\ensuremath{\widetilde{T}}}
\newcommand{\Bt}{\ensuremath{\widetilde{B}}}
\newcommand{\Bbt}{\ensuremath{\widetilde{\overline{B}}}}
\newcommand{\Kt}{\ensuremath{\widetilde{K}}}
\newcommand{\Ks}{\ensuremath{K^*}}
\newcommand{\sigr}{\ensuremath{\sigma_{\vec{r}}}}
\newcommand{\sigs}{\ensuremath{\sigma_{\vec{s}}}}
\newcommand{\sigrt}{\ensuremath{\tilde{\sigma}_{\vec{r}}}}
\newcommand{\sigst}{\ensuremath{\tilde{\sigma}_{\vec{s}}}}
\newcommand{\SHA}{\ensuremath{\mathcal{H}(\widetilde{G}, \widetilde{K}, \tilde{\sigma}_{\vec{r}})}}
\newcommand{\SHAs}{\ensuremath{\mathcal{H}(\widetilde{G}, \widetilde{K}, \tilde{\sigma}_{\vec{r}}, \tilde{\sigma}_{\vec{s}})}}
\newcommand{\SHT}{\ensuremath{\mathcal{H}(\widetilde{T}, \widetilde{T} \cap \widetilde{K}, (\tilde{\sigma}_{\vec{r}})_{\Ubark})}}
\newcommand{\SHAps}{\ensuremath{\mathcal{H}(\widetilde{G}, \widetilde{K}', \tilde{\sigma}_{\vec{r}}^{\tilde{\alpha}}, \tilde{\sigma}_{\vec{s}}^{\tilde{\alpha}})}}
\newcommand{\Vsigt}{\ensuremath{V_{\vec{r}}}}
\newcommand{\phit}{\ensuremath{\varphi}}
\newcommand{\UK}{\ensuremath{U \cap K}}
\newcommand{\Ubar}{\ensuremath{\overline{U}}}
\newcommand{\Bbar}{\ensuremath{\overline{B}}}
\newcommand{\Uk}{\ensuremath{U(\mathfrak{k})}}
\newcommand{\Ubark}{\ensuremath{\overline{U}(\mathfrak{k})}}
\newcommand{\UKt}{\ensuremath{U^* \cap K^*}}
\newcommand{\TKt}{\ensuremath{\widetilde{T}\cap \widetilde{K}}}
\newcommand{\ind}{\text{ind}}
\newcommand{\Ind}{\text{Ind}}
\newcommand{\End}{\text{End}}
\newcommand{\Hom}{\text{Hom}}
\numberwithin{equation}{section}
\numberwithin{thm}{section}
\begin{document}
\maketitle

\begin{abstract}
We classify the irreducible, admissible, smooth, genuine mod $p$ representations of the metaplectic double cover of $SL_2(F)$, where $F$ is a $p$-adic field and $p \neq 2$. We show, using a generalized Satake transform, that each such representation is isomorphic to a certain explicit quotient of a compact induction from a maximal compact subgroup by an action of a spherical Hecke operator, and we define a parameter for the representation in terms of this data. We show that our parameters distinguish genuine nonsupercuspidal representations from genuine supercuspidals, and that every irreducible genuine nonsupercuspidal representation is in fact an irreducible principal series representation. In particular, the metaplectic double cover of $p$-adic $SL_2$ has no genuine special mod $p$ representations. 
\end{abstract}

\renewcommand{\baselinestretch}{0.6}\normalsize
\setcounter{tocdepth}{1}
\tableofcontents
\renewcommand{\baselinestretch}{1.1}\normalsize

\section{Introduction}

\subsection{Context}
Central extensions of algebraic groups appear in several key roles in number theory, perhaps first in representation-theoretic studies of theta functions and more generally of automorphic forms of half-integral weight. In that context, the relevant covering group is a \textit{metaplectic group}, a central extension $\widetilde{Sp}_{2n}$ of an even-dimensional symplectic group by the square roots of unity. The work of Weil \cite{weil:unitaires} on the representations of metaplectic groups, in particular a distinguished (Weil) representation, led to the theory of dual reductive pairs and theta correspondence. A central work in this area is the parametrization by Waldspurger of the irreducible genuine representations of $\widetilde{SL}_2(F)$, where $F$ is a local field of characteristic 0 (different from $\CC$) and a \textit{genuine} representation is one which does not factor through a representation of $SL_2(F)$. This parametrization, by the irreducible representations of $PGL_2(F)$ and of the units of the projectivization of a quaternion algebra, has been shown by Waldspurger and others (cf. $\S$2.2 of \cite{gelbartps:cuspforms}) to encode the local factors of automorphic representations which appear in the statement of local-global compatibility for the classical local Langlands correspondence (LLC). 

The past several years have seen much investigation of the role of covering groups in the Langlands Program. Recently, Waldspurger's parametrization has been generalized to $\widetilde{Sp}_{2n}(F)$ (for $F$ a local field of characteristic 0 and odd residual characteristic) by Gan and Savin \cite{gansavin:epsilon}, who also define a local Langlands correspondence for $\widetilde{Sp}_{2n}$ via the existing LLC for the special orthogonal groups whose representations provide the parametrizing set. In the general setting of Brylinski-Deligne covers, Weissman \cite{weissman:metaplecticlgps} has constructed $L$-groups and $L$-parameters for covers of split groups. The recent preprint \cite{gangao:langlandsweissman} of Gan and Gao, incorporating Weissman's refinement of his $L$-group construction, defines an unramified LLC for this class of covering groups. 

Another recent focus of activity has been the mod $p$ local Langlands program, which aims to relate the irreducible mod $p$ representations of a reductive group over a $p$-adic field $F$ to certain mod $p$ representations of the absolute Galois group $\text{Gal}(\overline{F}/F).$ Such a correspondence exists for $GL_2(\QQ_p)$ by work of Barthel and Livn\'e \cite{barthellivne:irredmodp}, \cite{barthellivne:ordunram} and Breuil \cite{breuil:modpgl2i}, and has been shown to be induced by Colmez's functor \cite{colmez:gl2qpphigamma} which realizes the $p$-adic LLC for $GL_2(\QQ_p)$. For higher-rank groups, and even for $GL_2(F)$ when $F$ is a proper extension of $\QQ_p$, it quickly becomes difficult even to formulate a precise conjecture. As a prerequisite, one should have a classification of the irreducible admissible mod $p$ representations of the desired reductive group. Such a classification exists up to supercuspidals for split reductive groups due to Abe \cite{abe:irredmodp}, building on Herzig's  classification (likewise up to supercuspidals) for $GL_n(F)$ \cite{herzig:modpgln}, and has recently been generalized to connected reductive groups by Abe, Henniart, Herzig, and Vign\'eras \cite{abehenniartherzigvigneras:irredmodp}. 

A parallel question, whose relationship to the mod $p$ Langlands program is so far unclear but intriguing, is the existence of a mod $p$ theta correspondence. Shin \cite{shin:avweilrepn} notes that several objects appearing in the classical theta correspondence fail to carry over to the mod $p$ setting, and even once reconstructed in geometric terms, appear to behave differently from their classical counterparts \cite{shin:modptheta}. However, a weak version of Howe duality for unramified representations, defined in terms of the respective unramified spherical Hecke algebras, holds in the case of a type II dual reductive pair (\cite{shin:modptheta} Theorem 5.14). 

Our main goal in this paper is to provide a detailed study of the mod $p$ representations of $\widetilde{SL}_2(F)$, where $F$ is a $p$-adic field of odd residual characteristic. As $SL_2(F)$ is a rank-one group with a unique topological central extension of degree 2, it is possible to work very concretely while keeping track of the effects of all choices involved in the parametrization of representations. In particular, we give a classification of the irreducible admissible genuine mod $p$ representations of $\widetilde{SL}_2(F)$ along the lines of Barthel and Livn\'e's classification of the irreducible mod $p$ representations of $GL_2(F)$. This classification may be viewed as a generalized Satake parametrization, incorporating all admissible irreducible genuine representations (not only the unramified ones). Similar parametrizations already exist for $PGL_2(F)$ (by Barthel-Livn\'e \cite{barthellivne:irredmodp} and \cite{barthellivne:ordunram}), $SL_2(F)$ (by Abdellatif \cite{abdellatif:thesis}), and $PD^\times$ where $D$ is a quaternion algebra (by Cheng, \cite{cheng:modpjl}). Thus we hope this work will provide most of the information needed to use $\widetilde{SL}_2(F)$ as a test case for conjectures in the above-mentioned mod $p$ programs. 

This hope has informed our choice of techniques. For example, Hecke algebras and their modules play a key role both in the mod $p$ Langlands program and in Gan-Savin's formulation of a local Shimura correspondence \cite{gansavin:heckecorr}. On the way to our classification, we give explicit descriptions of the genuine spherical mod $p$ Hecke algebras of $\widetilde{SL}_2(F)$ using a generalized Satake transform. 

\subsection{Main Results}

Let $F$ be a $p$-adic field with $p \neq 2$ and residue field of order $q = p^f$, let $E$ be an algebraic closure of $\mathbb{F}_q$, and from now on write $\Gt$ for the metaplectic group $\widetilde{SL}_2(F)$. (See $\S$\ref{subsubsec:covgp} for the construction of the metaplectic group.) The main result is a classification theorem for irreducible admissible genuine mod $E$-representations of $\Gt$. To each such representation, we attach a parameter $(\vec{r}, \lambda)$ consisting of a vector $\vec{r} \in \{0, \dots, p-1\}^f$ and a value $\lambda \in E$. 

In fact we attach two such parameters, one for each of the two conjugacy classes of maximal compact subgroups in $\Gt$. We briefly explain how this is done. For any maximal compact subgroup $\widetilde{\mathcal{K}}$ of $\Gt$, the irreducible genuine representations (which we call \textit{weights}) of $\widetilde{\mathcal{K}}$ are indexed by vectors $\vec{r} \in \{ 0 , \dots, p-1\}^f$ (Proposition \ref{prop:ktweights} and Lemma \ref{lem:kt'weights}). Let $\tilde{\sigma}_{\vec{r}}$ denote the weight of $\widetilde{\mathcal{K}}$ with index $\vec{r}$. The endomorphism algebra $\End_{\Gt}(\ind_{\widetilde{\mathcal{K}}}^{\Gt}\tilde{\sigma}_{\vec{r}})$, a \textit{genuine spherical Hecke algebra}, acts on the compact induction $\ind_{\widetilde{\mathcal{K}}}^{\Gt}\tilde{\sigma}_{\vec{r}}$. We show (Corollary \ref{cor:injimag}) that each genuine spherical Hecke algebra is a polynomial algebra in a single operator $\mathcal{T}$, the precise form of which depends on $\vec{r}$ and $\widetilde{\mathcal{K}}$. The generator $\mathcal{T}$ of the genuine spherical Hecke algebra then must act by a scalar on the compact induction. An irreducible genuine representation $\pi$ is given the parameter $(\vec{r}, \lambda)$ with respect to $\widetilde{\mathcal{K}}$ if $\pi$ is isomorphic to a quotient of the $\Gt$-module $\frac{\ind_{\widetilde{\mathcal{K}}}^{\Gt}\tilde{\sigma}_{\vec{r}}}{(\mathcal{T} - \lambda)}$. If $\pi$ is admissible, then (Proposition \ref{prop:possparamexists}) $\pi$ has a parameter with respect to any maximal compact subgroup.  

The parameter attached to $\pi$ does not depend on the choice of maximal compact subgroup within a given conjugacy class, but does depend on the choice of conjugacy class. (This dependence is described for nonsupersingular parameters by Theorem \ref{thm:dictionary} below.) A parameter is called \textit{supersingular} if $\lambda \neq 0$. We show that if $\pi$ has a supersingular parameter with respect to one maximal compact subgroup, then all of its parameters (with respect to any maximal compact subgroup) are supersingular. Let $\Tt$ denote the preimage in $\Gt$ of the diagonal torus of $G$, and say that an irreducible admissible genuine representation of $\Gt$ is \textit{supercuspidal} if it is not isomorphic to a subquotient of the parabolic induction of an irreducible genuine representation of $\Tt$. Then, under the admissibility hypothesis, the property of having a supersingular parameter is equivalent to supercuspidality. This is our first main result: 

\begin{*thm}[Theorem \ref{thm:irredadmissclassn}] 
The smooth, genuine, irreducible, admissible $E$-representations of $\Gt$ fall into two disjoint classes:
\begin{enumerate}
\item those which have only nonsupersingular parameters,
\item those which have only supersingular parameters.
\end{enumerate}
The representations in the first class are exactly the genuine principal series representations of $\Gt$. All representations in the second class are supercuspidal. 
\end{*thm}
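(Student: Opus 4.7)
The plan is to combine the structural results already in place---the existence of a parameter for $\pi$ with respect to each conjugacy class of maximal compacts (Proposition \ref{prop:possparamexists}), the fact that each genuine spherical Hecke algebra is a polynomial ring in a single operator $\mathcal{T}$ (Corollary \ref{cor:injimag}), and the dictionary \ref{thm:dictionary}---with a Satake/Frobenius analysis of parabolic induction from $\Tt$.

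First I would observe that disjointness of the two classes is essentially built in. The preceding discussion establishes that if one parameter of $\pi$ is supersingular then every parameter of $\pi$ is supersingular, and by Proposition \ref{prop:possparamexists} every irreducible admissible genuine $\pi$ has a parameter with respect to each of the two conjugacy classes. Hence each such $\pi$ is either ``all supersingular'' or ``all nonsupersingular'', and the two classes of the theorem partition the admissible duals.

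For the nonsupersingular half, I would show that a nonsupersingular parameter $(\vec{r}, \lambda)$ realizes $\pi$ inside a principal series. The generalized Satake transform embeds \SHA{} as a subalgebra of the commutative algebra \SHT{} and translates the equation $\mathcal{T}-\lambda = 0$ into a (nonempty, precisely because $\lambda$ is nonsupersingular) family of genuine characters $\chi$ of $\Tt$ that factor the action of $\mathcal{T}$ on $\ind_{\Kt}^{\Gt} \sigrt$. By Frobenius reciprocity and the standard compact-induction/parabolic-induction adjunction, the quotient $\ind_{\Kt}^{\Gt}\sigrt/(\mathcal{T}-\lambda)$ then maps to $\Ind_{\Bt}^{\Gt}\chi$, so $\pi$ appears as an irreducible subquotient of a genuine principal series.

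The heart of the argument, and the step I expect to be the main obstacle, is showing that this subquotient is in fact the whole principal series, so that $\pi$ is itself an \emph{irreducible} principal series and no genuine special representations occur. To do this I would compute the $\Kt$-socle (and the $\Kt'$-socle) of $\Ind_{\Bt}^{\Gt}\chi$ via Mackey decomposition, identify the weights appearing there with the two weights that the dictionary Theorem \ref{thm:dictionary} attaches to $(\vec{r},\lambda)$ across the two conjugacy classes, and use admissibility together with the cyclicity of $\ind_{\Kt}^{\Gt}\sigrt/(\mathcal{T}-\lambda)$ over \SHA{} to conclude that any proper submodule of $\Ind_{\Bt}^{\Gt}\chi$ would force an additional weight in the $\Kt$- or $\Kt'$-socle, contradicting the explicit computation. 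This rigidity---absent in the $GL_2$ and $SL_2$ settings of Barthel--Livn\'e and Abdellatif because of the presence of one-dimensional constituents there---is what rules out the special representations. Finally, for the supersingular half I argue by contrapositive: if $\pi$ were a subquotient of some $\Ind_{\Bt}^{\Gt}\chi$, then Frobenius reciprocity together with the image of the Satake transform would produce a nonsupersingular parameter for $\pi$ with respect to at least one maximal compact, contradicting the dichotomy from the first step. Thus every representation in the second class is supercuspidal, and the classification is complete.
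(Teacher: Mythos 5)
Your overall route is the paper's: parameters exist by admissibility (Proposition \ref{prop:possparamexists}), a nonsupersingular parameter exhibits $\pi$ as a quotient of a genuine principal series via the Satake transform and the compact/parabolic comparison, and the supersingular half follows by contrapositive. But the step you yourself identify as the heart --- that $\Ind_{\Bbt}^{\Gt}(\mu\cdot\chi_{\psi})$ has no proper nonzero submodule --- is exactly where your sketch has a genuine gap. In the case $\mu\big\vert_{\OF^\times} = (-,\varpi)_F^{m}$ the $\Kt$-socle consists of the two weights $\tilde{\sigma}_{\vec{0}}$ and $\tilde{\sigma}_{\overrightarrow{p-1}}$, each with multiplicity one (Proposition \ref{prop:psweights}), and socle counting cannot rule out a proper submodule containing just one of them: in characteristic $p$ the weight functor $\Hom_{\Kt}(\tilde{\sigma},-)$ is only left exact, so a quotient $\Ind/\rho$ may contain a weight without that weight contributing new multiplicity to the socle of $\Ind$; no contradiction can be extracted from the explicit multiplicities alone. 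Likewise ``cyclicity over $\SHA$'' is not the input you need; what is needed is that the $\sigrt$-isotypic vectors generate $\Ind_{\Bbt}^{\Gt}(\mu\cdot\chi_{\psi})$ as a $\Gt$-module, i.e.\ surjectivity of $\ind_{\Kt}^{\Gt}\sigrt \rightarrow \Ind_{\Bbt}^{\Gt}(\mu\cdot\chi_{\psi})$, and that comes from the universal-module comparison (Proposition \ref{prop:wtisomuniv}, via Corollary \ref{cor:shtcharwtinj} and Lemma \ref{lem:pswtauxlem}), which is an isomorphism only for $\vec{r}\neq\vec{0}$ --- so invoking it begs the question precisely in the extremal case your argument must handle.

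The paper closes this gap with the change-of-weight isomorphism (Lemma \ref{lem:changeofweight}): because $\lambda\neq 0$ and $\mathcal{T}_1^{\vec{0},\overrightarrow{p-1}}\circ\mathcal{T}_1^{\overrightarrow{p-1},\vec{0}}=(\mathcal{T}_1^{\overrightarrow{p-1}})^2$ (Lemma \ref{lem:cowmap}), one gets $\pi(\vec{0},\lambda)\cong\pi(\overrightarrow{p-1},\lambda)$, and then any submodule containing $\tilde{\sigma}_{\vec{0}}$ is forced to contain $\tilde{\sigma}_{\overrightarrow{p-1}}$, whose image generates the whole principal series. (An alternative, closer in spirit to your two-compact idea, is to note that in the extremal case the unique $\Kt'$-weight is the intermediate weight indexed by $\overrightarrow{\frac{p-1}{2}}$ and to run the non-extremal argument on the $\Kt'$-side --- but even that uses the surjection from the compact induction, not a socle count.) Finally, your first step assumes ``one supersingular parameter implies all parameters supersingular'' as already known; in the paper this dichotomy is itself part of what the theorem's proof establishes (via Proposition \ref{prop:SHAaction} and Corollary \ref{cor:dictionary2}, which show principal series carry only nonzero Hecke eigenvalues), so it cannot be taken as an input at the outset.
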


In particular, we show that all genuine principal series representations of $\Gt$ are irreducible. We are able to give a more refined description of these representations, proving (Theorem \ref{thm:pswtisom}) that a representation having a nonsupersingular parameter $(\vec{r}, \lambda)$ with respect to a maximal compact  $\widetilde{\mathcal{K}}$ is isomorphic to the cokernel module $\frac{\ind_{\widetilde{\mathcal{K}}}^{\Gt}\sigrt}{\mathcal{T}-\lambda}$ mentioned above. 

We fix representatives $\Kt$ and $\Kt'$ for the two conjugacy classes of maximal compact subgroups, where $\Kt$ is the preimage in $\Gt$ of $SL_2(\OF)$. We denote the cokernel module $\frac{\ind_{\Kt}^{\Gt}\sigrt}{\mathcal{T} - \lambda}$ by $\pi(\vec{r}, \lambda)$, while the analogous module with respect to $\Kt'$ is denoted by $\pi'(\vec{r}, \lambda)$; a cokernel module is called supersingular if $\lambda = 0$ and nonsupersingular otherwise. As part of the proof of Theorem \ref{thm:irredadmissclassn}, we show that no supersingular cokernel module is isomorphic to a nonsupersingular one. The following theorem gives all equivalences between nonsupersingular cokernel modules.

\begin{*thm}[Theorem \ref{thm:dictionary}] Given $\vec{r} \in \{0, \dots, p-1\}^f$, let $\vec{r'}$ denote any vector in $\{0, \dots, p-1\}^f$ such that 
 $$\sum_{i = 0}^{f-1}r_i'p^i \equiv \sum_{i = 0}^{f-1}\left(r_i + \frac{p-1}{2}\right)p^i \pmod{q-1}.$$ 
Let $\overrightarrow{p-1}$ (resp., $\overrightarrow{\frac{p-1}{2}}$) denote the vector in $\{0, \dots, p-1\}^f$ with all entries equal to $p-1$ (resp., to $\frac{p-1}{2}$). (Then $\vec{r'}$ is uniquely determined if $\vec{r}\neq \overrightarrow{\frac{p-1}{2}}$, and $\vec{r'} \in \{ \vec{0}, \overrightarrow{p-1}\}$ if $\vec{r} = \overrightarrow{\frac{p-1}{2}}$.) 
\begin{enumerate}
\item For any $\vec{r} \in \{ 0, \dots, p-1\}^f$ and $\lambda \in E^\times$, $$\pi(\vec{r}, \lambda) \cong \pi'(\vec{r'}, \lambda).$$
Then, in particular, $$\pi(\vec{0}, \lambda) \cong \pi(\overrightarrow{p-1}, \lambda) \cong \pi'\left(\overrightarrow{\frac{p-1}{2}}, \lambda\right)$$ and $$\pi'(\vec{0}, \lambda) \cong \pi'(\overrightarrow{p-1}, \lambda) \cong \pi\left(\overrightarrow{\frac{p-1}{2}}, \lambda\right).$$
\item The isomorphisms of (1) are the only equivalences between nonsupersingular cokernel modules. 
\end{enumerate}
\end{*thm}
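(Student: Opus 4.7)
The plan is to realize every nonsupersingular cokernel module as an explicit principal series, and then use the standard classification of intertwiners between genuine principal series to deduce both the list of isomorphisms and their exhaustiveness.

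First, I would identify $\pi(\vec{r}, \lambda)$ with a concrete principal series. By Theorem \ref{thm:irredadmissclassn} combined with Theorem \ref{thm:pswtisom}, each nonsupersingular cokernel module is an irreducible genuine principal series. Via Frobenius reciprocity and the explicit form of $\mathcal{T}$ from Corollary \ref{cor:injimag}, there is a (unique up to scalar) nonzero $\Gt$-morphism $\ind_{\Kt}^{\Gt}\sigrt \to \Ind_{\Bt}^{\Gt}\chi_{\vec{r}, \lambda}$, where $\chi_{\vec{r}, \lambda}$ is the genuine character of $\Tt$ determined by two conditions: its restriction to $\Tt \cap \Kt$ equals the $\Tt \cap \Kt$-action on $(\sigrt)_{\Ubark}$, and its value on a chosen lift of a uniformizer is pinned by the Satake image of $\mathcal{T}$ being equal to $\lambda$. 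The kernel is precisely $(\mathcal{T} - \lambda)\ind_{\Kt}^{\Gt}\sigrt$, so the induced map $\pi(\vec{r}, \lambda) \hookrightarrow \Ind_{\Bt}^{\Gt}\chi_{\vec{r}, \lambda}$ is an isomorphism by the irreducibility of the target. The analogous argument with $(\Kt', \sigrt^{\tilde{\alpha}})$ gives $\pi'(\vec{r'}, \lambda) \cong \Ind_{\Bt}^{\Gt}\chi'_{\vec{r'}, \lambda}$.

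Next, I would compare the two characters $\chi_{\vec{r}, \lambda}$ and $\chi'_{\vec{r'}, \lambda}$ explicitly. Characters of $\OF^\times / (1+\p) \cong \Fq^\times$ are indexed by $\ZZ/(q-1)\ZZ$ via the base-$p$ expansion $\vec{r} \mapsto \sum_{i=0}^{f-1} r_i p^i$. The two conjugacy classes of maximal compact subgroups of $\Gt$ are related by the element $\tilde{\alpha}$ with $\Kt' = \tilde{\alpha} \Kt \tilde{\alpha}^{-1}$; conjugation by $\tilde{\alpha}$ on $\Tt \cap \Kt$ twists characters by the unique nontrivial quadratic character $x \mapsto x^{(q-1)/2}$ of $\Fq^\times$, which in the above parameterization corresponds to adding $\overrightarrow{\frac{p-1}{2}}$ coordinatewise modulo $q-1$. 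The uniformizer-part of both characters transports compatibly under $\tilde{\alpha}$-conjugation, matching precisely when the eigenvalues $\lambda$ agree. Hence $\chi_{\vec{r}, \lambda} = \chi'_{\vec{r'}, \lambda}$ exactly when the congruence of the theorem holds; combined with the realizations above, this establishes part (1). The ambiguity when $\vec{r} = \overrightarrow{\frac{p-1}{2}}$ reflects that $\vec{0}$ and $\overrightarrow{p-1}$ both represent the trivial element of $\ZZ/(q-1)\ZZ$.

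For part (2), I would invoke the standard intertwining analysis: the Bruhat filtration on $\Ind_{\Bt}^{\Gt}\chi|_{\Bt}$ together with Frobenius reciprocity gives $\Hom_{\Gt}(\Ind_{\Bt}^{\Gt}\chi_1, \Ind_{\Bt}^{\Gt}\chi_2) \neq 0$ iff $\chi_1 \in \{\chi_2, w \cdot \chi_2\}$, where $w$ is the nontrivial Weyl element. The equality case is exactly the list from part (1). For the characters $\chi_{\vec{r}, \lambda}$ and $\chi'_{\vec{r'}, \lambda}$ arising here, the Weyl twist coincides with the effect of $\tilde{\alpha}$-conjugation, hence is already accounted for by the $\Kt \leftrightarrow \Kt'$ swap in part (1); no new isomorphisms appear. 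The main obstacle will be the explicit metaplectic cocycle computation underlying the middle step: verifying that conjugation by $\tilde{\alpha}$ twists characters of $\Tt \cap \Kt$ by exactly the quadratic character $x \mapsto x^{(q-1)/2}$ (and not some other character of $\Fq^\times$), and that a compatible normalization of the respective Hecke operators preserves the eigenvalue $\lambda$. This is where the assumption $p \neq 2$ enters crucially, since $\frac{p-1}{2}$ and the nontrivial quadratic character of $\Fq^\times$ are only defined for odd residual characteristic.
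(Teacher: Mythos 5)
Your treatment of part (1) is essentially the paper's route: realize $\pi(\vec{r},\lambda)$ and $\pi'(\vec{r'},\lambda)$ as genuine principal series via Theorem \ref{thm:pswtisom}, and compare them through conjugation by $\tilde{\alpha}$, which twists genuine characters of $\Tt \cap \Kt$ by $(\cdot,\varpi)_F = \delta_{\overrightarrow{\frac{p-1}{2}}}$ (Lemma \ref{lem:kt'coinvs}). The eigenvalue compatibility you flag as the ``main obstacle'' is real but routine: in the paper it is absorbed by an explicit choice of $\mu(\varpi)$ and by the observation that the cocycle factor $(\varpi,\varpi)_F$ produced by $\tilde{\alpha}$-conjugation at $\tilde{h}(\varpi)$ is exactly cancelled by the twisting character $(\cdot,\varpi)_F$ evaluated at $\varpi$, so that $(\mu'\cdot\chi_{\psi})^{\tilde{\alpha}} = \mu\cdot\chi_{\psi}$ on all of $\Tt$, where $\mu' = (\cdot,\varpi)_F\,\mu$.

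Part (2), however, has a genuine gap. The ``standard intertwining analysis'' you invoke, namely $\Hom_{\Gt}(\Ind_{\Bbt}^{\Gt}\chi_1, \Ind_{\Bbt}^{\Gt}\chi_2) \neq 0$ iff $\chi_1 \in \{\chi_2, w\cdot\chi_2\}$, is the characteristic-zero statement and fails over $E$: in characteristic $p$ there is no big-cell intertwining operator, and in fact there are no nonzero maps between genuine principal series induced from \emph{distinct} characters of $\Bbt$ --- which is precisely what must be proved here. If your criterion held, one would obtain extra isomorphisms $\Ind_{\Bbt}^{\Gt}\chi \cong \Ind_{\Bbt}^{\Gt}(w\cdot\chi)$, hence extra equivalences of cokernel modules, contradicting the very statement of part (2). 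Your attempted repair --- that the Weyl twist ``coincides with the effect of $\tilde{\alpha}$-conjugation'' --- is false: $\tilde{\alpha}$-conjugation multiplies the restriction to $\Tt\cap\Kt$ by the quadratic character $(\cdot,\varpi)_F$, whereas $w$ inverts the torus, so the restriction of $w\cdot\chi$ to $\Tt\cap\Kt$ involves $\delta_{\vec{r}}^{\,-1}$ rather than $\delta_{\vec{r}}\cdot(\cdot,\varpi)_F$; for generic $\vec{r}$ these differ. What is actually needed, and what the paper does, is the following: if two nonsupersingular cokernel modules are isomorphic, the common representation is an irreducible principal series; Proposition \ref{prop:psweights} shows its set of $\Kt$-weights is either a single $\sigrt$ with $\vec{r} \notin \{\vec{0},\overrightarrow{p-1}\}$ or exactly the pair $\{\tilde{\sigma}_{\vec{0}}, \tilde{\sigma}_{\overrightarrow{p-1}}\}$, which pins the first components; Proposition \ref{prop:SHAaction} identifies the Hecke eigenvalue on every weight space with $\mu\cdot\chi_{\psi}(\tilde{h}(\varpi))$, forcing $\lambda = \nu$; and the mixed comparison $\pi(\vec{r},\lambda) \cong \pi'(\vec{s},\nu)$ is settled by conjugating by $\tilde{\alpha}$ (Lemma \ref{lem:pipi'isom}) and comparing $\Tt\cap\Kt$-restrictions, which forces the congruence defining $\vec{r'}$. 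Without an argument of this kind (or an independent proof that distinct genuine characters of $\Bbt$ yield non-isomorphic inductions in characteristic $p$), your part (2) does not go through.
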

Thus a genuine principal series representation $\pi$ is in one of the following three situations: (i) $\pi$ has a unique parameter with respect to each of $\Kt$ and $\Kt'$, or (ii) $\pi$ has a unique parameter with respect to $\Kt'$ and has exactly two parameters with respect to $\Kt$, or (iii) $\pi$ has a unique parameter with respect to $\Kt$ and has exactly two parameters with respect to $\Kt'$. The metaplectic cover is split over $K$ (resp., over $K'$); we denote the image of this splitting in $\Gt$ by $K^*$ (resp., by $(K')^*$). Then case (ii) occurs exactly when $\pi$ has a $K^*$-fixed vector, and case (iii) occurs exactly when $\pi$ has a $(K')^*$-fixed vector.  

Alternatively, one can parametrize genuine principal series representations by genuine characters of the metaplectic torus $\Tt$. In fact the genuine principal series representations can be parametrized by characters of $F^\times$, but there is an ambiguity equivalent to the choice of a class in $(F^\times)/(F^\times)^2.$  This choice appears also in the complex representation theory of $\widetilde{SL}_2(F)$, where it is tied to the choice of a nontrivial additive character $\psi$ of $F$. In the complex theory, this character $\psi$ determines a genuine character $\chi_{\psi}$ of $\Tt$, which then gives a bijection between genuine characters of $\Tt$ and characters of $F^\times$. Any other nontrivial additive character of $F$ is equal to $\psi_a := \left( x \mapsto \psi(ax)\right)$ for some $a \in F^\times$, and $\psi$ and $\psi_a$ give identical parametrizations of the genuine characters of $\Tt$ if and only if $a \in (F^\times)^2$. 

The additive character $\psi$ plays an important role in Waldspurger's correspondence, and indeed the correspondence depends on the choice of $\psi$. But a nontrivial additive continuous character of $F$ does not even exist in the mod $p$ setting, much less the theory of Whittaker models which motivates the use of $\psi$ in the complex case. On the other hand, it is still possible (Lemma \ref{lem:gencharparam}) to define a \textit{multiplicative $E$-valued} genuine character $\chi_{\psi}$ of $\Tt$ using a \textit{complex-valued additive} character $\psi$ of $F$. One can then proceed to parametrize the mod $p$ genuine characters of $\Tt$ by mod $p$ characters of $F^\times$ using $\chi_{\psi}$. The dependence of this parametrization on the choice of $\psi$ is the same as in the complex case. 

We give a complete cross-indexing between the extant  parametrizations of a genuine principal series representation, namely the two parametrizations with respect to each of the two conjugacy classes of maximal compact subgroups, and the parametrizations by characters of $F^\times$ with respect to nontrivial additive $\CC^*$-valued characters. In the following statement, $(\cdot, \cdot)_F$ denotes the degree-2 Hilbert symbol on $F^\times \times F^\times$, $\varpi$ is a choice of uniformizer of $F$ (fixed throughout the paper), and $\Bbt$ is the full preimage in $\widetilde{SL}_2(F)$ of the lower-triangular Borel subgroup of $SL_2(F)$. 

\begin{*cor}[Corollary \ref{cor:dictionary2}]
Let $\psi: F \rightarrow \CC^\times$ be a nontrivial additive character of conductor $m$, let $\mu: F^\times \rightarrow E^\times$ be a smooth multiplicative character, let $\chi_{\psi}$ be the genuine $E$-valued character of $\Tt$ defined in (\ref{eqn:genchar}), and let $\lambda_{\mu, \psi}$ be the element of $E^\times$ defined in (\ref{eqn:lambdamupsi}). 
\begin{enumerate}
\item Suppose either that $2 \big \vert m$ and $\mu \big \vert_{\OF^\times} = 1$, or that $2 \not \big \vert m$ and $\mu \big \vert_{\OF^\times} = (-, \varpi)_F$. Then the parameters of $\Ind_{\Bbt}^{\Gt}(\mu \cdot \chi_{\psi})$ with respect to $\Kt$ are $(\vec{0}, \lambda_{\mu, \psi})$ and $(\overrightarrow{p-1}, \lambda_{\mu, \psi})$, and $\Ind_{\Bbt}^{\Gt}(\mu \cdot \chi_{\psi})$ has the unique parameter $(\overrightarrow{\frac{p-1}{2}}, \lambda_{\mu, \psi})$ with respect to $\Kt'$. 

\item Suppose either that $2 \big \vert m$ and $\mu \big \vert_{\OF^\times} = (-, \varpi)_F$, or that $2 \not \big \vert m$ and $\mu \big \vert_{\OF^\times} = 1$. Then $\Ind_{\Bbt}^{\Gt}(\mu \cdot \chi_{\psi})$ has the unique parameter $(\overrightarrow{\frac{p-1}{2}}, \lambda_{\mu, \psi})$ with respect to $\Kt$, and the parameters of $\Ind_{\Bbt}^{\Gt}(\mu \cdot \chi_{\psi})$ with respect to $\Kt'$ are $(\vec{0}, \lambda_{\mu, \psi})$ and $(\overrightarrow{p-1}, \lambda_{\mu, \psi})$.  

\item Otherwise, $\Ind_{\Bbt}^{\Gt}(\mu \cdot \chi_{\psi})$ has a unique parameter with respect to $\Kt$, and this parametrization is of the form $(\vec{r}, \lambda_{\mu, \psi})$ for some $\vec{r} \notin \{\vec{0}, \overrightarrow{\frac{p-1}{2}}, \overrightarrow{p-1}\}$. $\Ind_{\Bbt}^{\Gt}(\mu\cdot \chi_{\psi})$ also has a unique parameter with respect to $\Kt'$, equal to $(\vec{r'}, \lambda_{\mu, \psi})$ where $\vec{r'}$ is the unique vector in $\{0, \dots, p-1\}^f$ such that $\sum_{i = 0}^{f-1}r_i'p^i \equiv \sum_{i = 0}^{f-1}\left(r_i + \frac{p-1}{2}\right) p^i \pmod{q-1}$. 
\end{enumerate}
\end{*cor}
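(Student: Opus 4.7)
The plan is to combine Theorem~\ref{thm:dictionary} with a direct computation of fixed vectors under the two conjugacy classes of maximal compact subgroups. By Theorem~\ref{thm:irredadmissclassn} together with the trichotomy described after Theorem~\ref{thm:dictionary}, every irreducible genuine principal series $\pi$ falls into exactly one of three situations: (i) $\pi$ has a unique parameter with respect to each of $\Kt$ and $\Kt'$; (ii) $\pi$ has a $\Ks$-fixed vector, in which case Theorem~\ref{thm:dictionary}(1) forces the $\Kt$-parameters to be exactly $(\vec{0},\lambda)$ and $(\overrightarrow{p-1},\lambda)$ and the unique $\Kt'$-parameter to be $(\overrightarrow{\frac{p-1}{2}},\lambda)$; or (iii) the mirror situation with the fixed vector under $(K')^*$. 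Thus to prove the corollary I only need to determine, for $\pi := \Ind_{\Bbt}^{\Gt}(\mu\chi_{\psi})$, when a $\Ks$-fixed vector exists (this will be case~(1)), when a $(K')^*$-fixed vector exists (case~(2)), and when neither exists (case~(3)); in case~(3) I also need to pin down the unique $\Kt$-parameter.

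For the first question I would use the Iwasawa decomposition $\Gt = \Bbt\cdot \Ks$ and Frobenius reciprocity: $\pi$ has a $\Ks$-fixed vector if and only if the character $\mu\chi_{\psi}$ of $\Tt$ is trivial when restricted to $\Bbt \cap \Ks$. Since $\mu\chi_{\psi}$ is automatically trivial on the unipotent radical of $\Bbt$ and $\Bbt \cap \Ks$ meets $\Tt$ in the image of $T(\OF)$ under the splitting, this reduces to the question of whether $(\mu\chi_{\psi})|_{T(\OF)\subset \Ks} = 1$. Using the defining formula~(\ref{eqn:genchar}) and the standard evaluation of the Weil index on units, the restriction $\chi_{\psi}|_{T(\OF)\subset \Ks}$ is trivial when the conductor $m$ of $\psi$ is even and equal to the restricted Hilbert symbol $(-,\varpi)_F|_{\OF^\times}$ when $m$ is odd. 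Matching with the hypothesis on $\mu|_{\OF^\times}$ reproduces exactly the condition defining case~(1). An analogous computation with $\Ks$ replaced by $(K')^*$, using that the two splittings are conjugate by an element $\tilde{\alpha}$ whose action on $T(\OF)$ introduces a twist by $(-,\varpi)_F|_{\OF^\times}$ into the restriction of $\chi_{\psi}$, flips the parity condition and yields case~(2).

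When neither $\Ks$- nor $(K')^*$-fixed vectors exist we are in case~(3), so $\pi$ has a unique $\Kt$-parameter $(\vec{r},\lambda)$ with $\vec{r}\notin\{\vec{0},\overrightarrow{p-1},\overrightarrow{\frac{p-1}{2}}\}$. To identify $\vec{r}$ I would again apply Frobenius reciprocity to the $\Kt$-socle of $\pi|_{\Kt}$: the weight $\sigrt$ appears if and only if $(\mu\chi_{\psi})|_{\TKt}$ matches the genuine character $u\mapsto \bar{u}^{\sum_{i=0}^{f-1} r_i p^i}$ determined by the highest weight of $\sigrt$ restricted to $T(\OF)\subset \Ks$ (tensored with the sign on $\{\pm 1\}$). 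This uniquely determines $\vec{r}$ from the data $(\mu, \psi)$, and the corresponding $\Kt'$-parameter $\vec{r'}$ follows directly from Theorem~\ref{thm:dictionary}(1). In every case the eigenvalue of the Hecke generator $\mathcal{T}$ on the cyclic vector of $\pi$ produced by Frobenius reciprocity is $\lambda_{\mu,\psi}$ essentially by construction of~(\ref{eqn:lambdamupsi}).

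The main technical obstacle is the calculation of $\chi_{\psi}|_{T(\OF)}$ through each of the two splittings, as this is where the parity of the conductor $m$ enters and where the three cases of the corollary are separated from one another; once this restriction is in hand the rest of the argument is a matter of bookkeeping with Theorem~\ref{thm:dictionary}.
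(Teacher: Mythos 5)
Your proposal is correct and follows essentially the paper's own route: your fixed-vector and $\TKt$-restriction computations (via the Iwasawa decomposition, Frobenius reciprocity, and the Weil-index values on units from Theorem \ref{thm:raoweilindexof}) are exactly the content of Proposition \ref{prop:psweights}, and the rest is the same bookkeeping with Proposition \ref{prop:SHAaction} and Theorem \ref{thm:dictionary} that the paper cites. The only cosmetic difference is that the paper also records explicitly that no supersingular parameter can occur (since the Hecke eigenvalue $\lambda_{\mu,\psi}$ is nonzero), a point your argument contains implicitly.
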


In particular, we show that the choice of parity of the conductor of $\psi$ in the latter parametrization is completely equivalent to the choice of a maximal compact in the former parametrization (Corollary \ref{cor:dictionary3}), and that the remaining dependence on $\psi$ is encoded by the second components of the parameters with respect to $\Kt$ and $\Kt'$.
Therefore, one can index genuine principal series representations by $E^\times$-valued characters of $F^\times$ and express the dependence of this indexing on the choice of a class in $(F^\times)/(F^\times)^2$ without any reference to a $\CC^\times$-valued character. 

As for supercuspidal (equivalently, admissible supersingular) genuine representations of $\widetilde{SL}_2(F)$, we show that supercuspidals must appear as quotients of the cokernel modules $\pi(\vec{r}, 0)$ or $\pi'(\vec{r}, 0)$. (We can confirm that $\pi(\vec{r}, 0)$ and $\pi'(\vec{r}, 0)$ have smooth irreducible quotients, but not that they have admissible irreducible quotients.) This paper lays the ground for classifying the irreducible quotients of these cokernel modules for $\widetilde{SL}_2(\QQ_p)$ along the lines of Breuil \cite{breuil:modpgl2i} and for studying their admissibility, and we plan to do this in future work. However, the case of $GL_2(F)$, $F \neq \QQ_p$, leads us to expect difficulties in classifying supercuspidal representations oustide the case of $\widetilde{SL}_2(\QQ_p)$. 

\subsection{Plan of the paper} 
\label{subsec:plan}
The general outline of the paper is modelled on that of Barthel and Livn\'e's founding work  \cite{barthellivne:ordunram}, \cite{barthellivne:irredmodp} on the mod $p$ representations of $GL_2(F)$, and of Herzig's classification of the mod $p$ representations of $GL_n(F)$ up to supercuspidal representations \cite{herzig:modpgln}. Sections 2, 3, and 4 develop the technical prerequisites for applying the methods of \cite{herzig:modpgln} in the new context of the covering group $\widetilde{SL}_2(F)$. The main points which have to be developed are the following:

\begin{enumerate}
\item \textit{Weight theory} ($\S$\ref{sec:weights}). A key fact of mod $p$ representation theory is that every smooth representation of a pro-$p$ group on a vector space of characteristic $p$ must have a fixed vector. Therefore every smooth mod $p$ representation of a maximal compact $\mathcal{K} \subset SL_2(F)$ is tamely ramified. A smooth mod $p$ irreducible representation of $\mathcal{K}$ (or equivalently, an irreducible mod $p$ representation of $SL_2(\mathbb{F}_q)$) is called a \textit{weight}, and there are exactly $q$ inequivalent such representations. We have defined a weight of a maximal compact subgroup $\widetilde{\mathcal{K}} \subset \widetilde{SL}_2(F)$ to be a smooth irreducible \textit{genuine} mod $p$ representation of $\widetilde{\mathcal{K}}$. Then the weights of a maximal compact  $\widetilde{\mathcal{K}} \subset \widetilde{SL}_2(F)$ have a simple classification in terms of the weights of $\mathcal{K}$, and we recover a useful criterion (Proposition \ref{prop:ktwtsignif}) for irreducibility of a smooth genuine mod $p$ representation of $\widetilde{SL}_2(F)$. 

\item \textit{Satake transform} (Proposition \ref{prop:gensataketransformi}). We define a Satake transform from a genuine spherical Hecke algebra of $\widetilde{SL}_2(F)$ to a certain genuine spherical Hecke algebra of the torus $\Tt$. (In fact we define the transform for a more general class of genuine spherical Hecke bimodules, which is needed to establish the ``change-of-weight'' isomorphism discussed in the next point). The definition of the transform is almost identical to that of the map $'\mathcal{S}_G$ defined by Herzig in \cite{herzig:modpgln}, and we have used ideas from \cite{henniartvigneras:cptparabolic} and \cite{herzig:glnnotes} to streamline some arguments. We use the comparison of Cartan and Iwasawa decompositions mentioned above in order to give an explicit formula for the transform in terms of Hecke operators for $\Tt$. This formula is then used to find explicit generators of the spherical Hecke algebras of $\Gt$ and to define the change-of-weight map (below). Various properties of the mod $p$ Satake transform for unramified reductive groups are discussed in \cite{herzig:modpsatake} and apply to our transform as well: in particular, the image is not invariant under the Weyl group.

\item \textit{Change of weight} (Lemma \ref{lem:changeofweight}). A comparison of compact and parabolic inductions (Proposition \ref{prop:wtisomuniv}) suffices to prove irreducibility of all parabolic inductions which do not contain a one-dimensional weight. A similar situation occurs in the classification of mod $p$ representations of $GL_n(F)$, and in \cite{herzig:modpgln} Herzig proceeds by defining a ``change-of-weight'' map. This map is usually an isomorphism and allows one to embed a higher-dimensional weight into the parabolic induction, thus proving it to be irreducible. The obstruction to injectivity of this change-of-weight map is defined in terms of the explicit Satake transform, and in the case of $GL_2$ injectivity fails only when the representation under consideration is the induction of a character of form $\chi \otimes \chi$, $\chi:F^\times \rightarrow \bar{\mathbb{F}}_p^\times$. In our situation, however, there is no obstruction to injectivity: due to the behavior of the spherical Hecke operators for $\widetilde{SL}_2(F)$, the change-of-weight map is \textit{always} an isomorphism. This allows us to prove irreducibility of \textit{all} parabolic inductions, and explains the symmetry between the nonsupersingular parameters $(\vec{0}, \lambda)$ and $(\overrightarrow{p-1}, \lambda)$. 
\end{enumerate}

Once these pieces are in place, the statement of Theorem \ref{thm:pswtisom} (and its refinements) follow with only minor adaptations to the strategy of \cite{herzig:modpgln}. Consequently, the techniques of this paper should generalize well, modulo the above three points, to tame covering groups of higher rank and/or higher degree. 

Along the way, we also develop the following point:

\begin{enumerate}
\item[(iv)] \textit{Local systems on the tree of $SL_2(F)$} ($\S$\ref{subsec:tree}). Given a weight $\sigma$ of a maximal compact subgroup $\mathcal{K} \subset SL_2(F)$, one has a local system on the Bruhat-Tits tree of $SL_2(F)$. In the work of Barthel-Livn\'e, the local system defined by $\sigma$ (a weight of the maximal compact of $GL_2(F)$ in their context) provides a useful framework for understanding the spherical Hecke algebra attached to $\sigma$. For each weight of a maximal compact $\widetilde{\mathcal{K}}$ we define a local system on the tree of $SL_2(F)$, and use this to deduce several properties of the genuine spherical Hecke algebras (e.g. Proposition \ref{prop:SHAfree}).
\end{enumerate}

This final point is useful in the frameworks of Barthel-Livn\'e and Breuil for studying the supersingular representations, and we plan to use it in future work on the supersingular cokernel modules of $\widetilde{SL}_2(F)$.

\subsection{Acknowledgments}
This work grew out of my Ph.D. thesis \cite{peskin:thesis}, and many thanks are due to my advisor Dinakar Ramakrishnan for suggesting the thesis problem. Professors Rachel Ollivier, Nike Vatsal, and Haruzo Hida have each made very helpful comments at various stages of this project. Thanks also to Karol Koziol for several useful comments on a previous draft, including a correction to the statement of Lemma \ref{lem:cartaniwasawa} and suggestion for simplifying its proof. 

\pagebreak

\section{Preliminaries}
\label{sec:prelim}

\subsection{Notation and basic definitions}
\label{subsec:notation}
\subsubsection{The base field} Let $F$ be a $p$-adic field with finite residue field $\mathfrak{k}$ of order $q = p^f$. We assume throughout that $p \neq 2.$ For $n \geq 2$ we denote the group of $n^{\text{th}}$ roots of unity in $F^\times$ by $\mu_n(F)$. Let $\OF$ denote the ring of integers of $F$ and fix a uniformizer $\varpi$, hence also an identification of $\OF/(\varpi\OF)$ with $\mathfrak{k}$. The image of $a \in \OF$ under the reduction map $\text{red}: \OF \rightarrow \mathfrak{k}$ will be denoted by $\overline{a}$. The valuation $v_F$ on $F$ is normalized so that $v_F(\varpi) = 1$. 

The Teichm\"uller lift of $x \in \mathfrak{k}$ is denoted by $[x]$. For $m \geq 1$, let 
$$\cI_{m} = \{ \sum_{i = 0}^{m-1} [\lambda_i]\varpi^i: \, (\lambda_0, \dots, \lambda_{m-1}) \in \mathfrak{k}^m\}.$$ For $m' \geq m \geq 1$, let $[\cdot]_{m}: \cI_{m'} \rightarrow \cI_{m}$ denote the truncation map defined by
$$\Big[\sum_{i = 0}^{m'-1}[\lambda_i]\varpi^i\Big]_{m} = \sum_{i = 0}^{m-1}[\lambda_i]\varpi^i.$$
For $m \geq 1$, $\kappa = \sum_{i = 0}^{m-1}[\kappa_i]\varpi^i \in \cI_m$, and $\lambda = [\lambda_0] \in \cI_1$, let 
$$\kappa_{+\lambda} = [\kappa]_{m-1} + [\kappa_{m-1} + \lambda_0]\varpi^{m-1}.$$

\subsubsection{The Hilbert symbol} The symbol $( \cdot, \cdot)_F$ will denote the degree-2 Hilbert symbol on $F$. We will frequently use the following formula for $(\cdot, \cdot)_F$ which may be found in, e.g., \cite{neukirch:ant} Prop. V.3.4.
For $x \in \OF^\times$, let $\omega(x)$ denote the unique element of $\mu_{q-1}(F)$ such that $x = \omega(x)\cdot \langle x \rangle $ with $\langle x \rangle \in \OF^\times$ congruent to 1 $\pmod{\varpi\OF}$. Since $p \neq 2$, the degree-2 Hilbert symbol is tame, given by 
\begin{equation} \label{eqn:tamesymbol}
(a, b)_F = \omega\left((-1)^{v_F(a)v_F(b)}\frac{b^{v_F(a)}}{a^{v_F(b)}}\right)^{\frac{q-1}{2}}
\end{equation}
for $a$, $b \in F^\times$.
\begin{rem}
\label{rem:hilbsymprops}
 We will make use of the following properties of $(\cdot, \cdot)_F$:
\begin{enumerate}
\item $(\cdot, \cdot)_F$ is symmetric and bilinear.
\item $(-a, a)_F = 1$ for all $a \in F^\times.$ Together with (1), this implies $(a, a)_F = (-1, a)_F$ for all $a \in F^\times.$
\item $(\cdot, \cdot)_F$ is unramified, i.e., $(a, b)_F = 1$ when $a$, $b \in \OF^\times.$
\item $(d, c)_F = 1$ for all $d \in F^\times$ if and only if $c \in (F^\times)^2.$ In particular, $(-1, \varpi)_F = 1$ when $q \equiv 1 \pmod{4}$.
\item Let $(\lambda_0, \dots, \lambda_{n-1}) \in \mathfrak{k}^n$ with $\lambda_0 \neq 0$, so that $\lambda = \sum_{i = 0}^{n-1}[\lambda_i]\varpi^i$ belongs to $\OF^\times.$ Then $(\lambda, \varpi)_F = 1$ if and only if $\lambda_0$ is a square in $\mathfrak{k}$. 
\item As a consequence of (5), for any $n \geq 1$ the set $\{ \lambda \in \cI_n \cap \OF^\times: \left(\lambda, \varpi\right)_F = 1\}$ forms a subset of index 2 in $\cI_n \cap \OF^\times$, and the set $\{ \lambda \in \OF^\times: (\lambda, \varpi)_F = 1\} $ forms a subgroup of index 2 in $\OF^\times.$ 
\end{enumerate}
\end{rem}

\subsubsection{The coefficient field} 
\label{subsubsec:coefffield}
Let $E$ be an algebraically closed field of characteristic $p$ which admits an embedding of $\mathfrak{k}$. We fix (but suppress from the notation) an embedding $\mathfrak{k} \hookrightarrow E$. Unless specifically mentioned otherwise, all representations should be assumed to have coefficients in $E$. 

In particular, given a vector $\vec{r} = (r_0, \dots, r_{f-1})$ with $0 \leq r_i \leq p-1$ for each $0 \leq i \leq f-1$, we define a tamely ramified character $\delta_{\vec{r}}: \OF^\times \rightarrow E^\times$ as the composition of the character
\begin{align*}
\OF^\times & \longrightarrow \mathfrak{k}^\times\\
a & \mapsto (\overline{a})^{\sum_{i = 0}^{f-1}r_ip^i}
\end{align*}
with the fixed embedding $\mathfrak{k} \hookrightarrow E$.

\subsubsection{The covering group} 
\label{subsubsec:covgp}
Let $G = SL_2(F)$. In this part we summarize the construction of the metaplectic group, which is a certain extension of $G$ by the square roots of 1. Everything said here is well-known, and much of the summary is specialized (to the case $r = n = 2$, $F$ $p$-adic, $p \neq 2$) from $\S$0.1 of \cite{kazhdanpatterson:metaplecticforms}. 

Since $H^2_{meas}(G, \mu_2(F)) = \mu_2$ (cf. \cite{kubota:topcovering}, also \cite{moore:gpextensions}) there is a unique nontrivial topological central extension of $G$ by $\mu_2(F)$, called the \textit{metaplectic group} and denoted here by $\Gt$. Kubota \cite{kubota:topcovering} produced a cocycle $\Delta$ of nontrivial class in $H^2_{meas}(G, \mu_2(F))$. We define $\Gt$ concretely as the set $G \times \mu_2$ with multiplication 
$$(g, \zeta) \cdot (g', \zeta') = (gg', \zeta \zeta' \Delta(g, g')),$$
where $\Delta \in H^2_{meas}(G, \mu_2(F))$ is Kubota's cocycle, namely 
\begin{equation} \label{eqn:sl2cocycle} 
\Delta(g, g') = \left( \frac{X(gg')}{X(g)}, \frac{X(gg')}{X(g')}\right)_F, \, \, X\left( 
  \left( \begin{array}{cc}
a      & b \\
c      & d 
    \end{array} \right) \right) = \begin{cases}
c & \text{ if } c \neq 0\\
d & \text{ if } c = 0.
\end{cases}
\end{equation}
(This formula for $\Delta$ is simpler than the one appearing in \cite{kubota:topcovering} but is equivalent, cf. \cite{kazhdanpatterson:metaplecticforms} or \cite{mcnamara:psreps}.) Let $Pr: \Gt \rightarrow G$ denote the projection $(g, \zeta) \mapsto g.$ Given a subgroup $H \subset G$, we denote its full preimage in $\Gt$ by $\widetilde{H}.$ 

The covering $\Gt \rightarrow G$ may be extended to a nontrivial double cover of $GL_2(F)$ defined by the following cocycle (which we also denote by $\Delta$):
\begin{equation} \label{eqn:gl2cocycle}
\Delta(g, g')  = \left( \frac{X(gg')}{X(g)}, \frac{X(gg')}{X(g')}\right)_F \cdot \left(\text{det}(g), \frac{X(gg')}{X(g)}\right)_F.
\end{equation} The extension $\widetilde{GL}_2(F) \rightarrow GL_2(F)$ corresponding to $\Delta$ is the unique nontrivial topological extension of $GL_2(F)$ by $\mu_2(F)$ if and only if $F$ contains a primitive fourth root of unity, but all such extensions are indistinguishable in applications for which $\mu_2(F)$ is identified with a subgroup of a field of coefficients containing a primitive fourth root of unity (cf. \cite{kazhdanpatterson:metaplecticforms} $\S$0.1, \textit{Remarks}, p.41-42). Hence there is no loss of generality, from the point of view of representation theory over a sufficiently large field of coefficients, in fixing $\widetilde{GL}_2(F)$ as the preferred double cover of $GL_2(F)$. 

\subsubsection{Splittings and preferred lifts} 
\label{subsubsec:splittings}
The covering $\Gt \rightarrow^{Pr} G$ is canonically split over any unipotent subgroup $\mathcal{U} \subset G$ by the map $u \mapsto (u, 1)$, whose image we denote by $\mathcal{U}^*$. We write $U$ (resp., $\Ubar$) for the upper-triangular (resp., lower-triangular) unipotent subgroup of $\Gt$, and write $u(x)$ (resp., $\bar{u}(x)$) for the element of $U$ (resp., $\Ubar$) with off-diagonal entry equal to $x \in F$. We write $\tilde{u}(x)$ for $(u(x), 1) \in U^*$ and $\tilde{\bar{u}}(x)$ for $(\bar{u}(x), 1) \in \Ubar^*$. 

For $x \in F^\times$, elements of the form $w(x) := 
\left( \begin{array}{cc}
0    & x \\
-x^{-1}    & 0 
  \end{array} \right)$ are generated by unipotent elements of $G$, so we may fix a preferred lift of $w(x)$ using the canonical sections of $U$ and $\Ubar$. This preferred lift is 
$$\tilde{w}(x) := \tilde{u}(x)\cdot \tilde{\bar{u}}(-x^{-1}) \cdot \tilde{u}(x) = (w(x), 1).$$

The restriction of the covering $\Gt \rightarrow^{Pr} G$ does not split over the diagonal torus $T$. Letting $h(x)$ denote the element of $T$ with diagonal entries $(x, x^{-1})$, we have $\Delta(h(x), h(y)) = (x, y)_F$. Since $h(x) = w(x)w(-1),$ we again choose a preferred section $T \rightarrow \Gt$ using the canonical sections of the unipotent subgroups: for $x \in F^\times$, put
$$\tilde{h}(x) := \tilde{w}(x)\tilde{w}(-1) = (h(x), (-1, x)).$$

For future convenience, we define a map $\phi: \ZZ \rightarrow \mu_2$ by 
\begin{equation}
\label{eqn:phidef}
\tilde{h}(\varpi)^n = \left(h(\varpi^n), \phi(n) \right).
\end{equation}

\begin{rem}
\label{rem:phiprops}
Some easy properties of $\phi$ are collected here.
\begin{enumerate}
\item $\phi(n) = \begin{cases}
(-1, \varpi)_F & \text{ if } n \equiv 1 \text{ or } 2 \pmod{4},\\
1 & \text{ if } n \equiv 0 \text{ or } 3 \pmod{4}.
\end{cases}$\\
In particular, $\phi(n) = 1$ for all $n$ if and only if $q \equiv 1 \pmod{4}.$
\item $\phi(n)\phi(-n) = (-1, \varpi)_F^n$ for all $n$. 
\item $\phi(n+1)\phi(-n) = (-1, \varpi)_F$ for all $n$.
\end{enumerate} 
\end{rem}

Let $B = TU$ and $\Bbar = T\Ubar$, and let $\Bt$ and $\Bbt$ denote the respective preimages in $\Gt$. We have $\Bt = \Tt U^*$ and $\Bbt = \Tt \Ubar^*$.

Finally we describe splittings over maximal compact subgroups of $G$. Let $K = SL_2(\OF)$, and let $\alpha = 
\left( \begin{array}{cc}
1    & 0 \\
0    & \varpi 
  \end{array} \right) \in GL_2(F)$ and $K' = 
\alpha K \alpha^{-1} \subset G$. Then $K$ and $K'$ represent the two conjugacy classes of maximal compact subgroups in $G$, and likewise $\Kt$ and $\Kt'$ represent the two conjugacy classes of maximal compact subgroups in $\Gt$. There exists a unique splitting $K \rightarrow \Kt$, whose image we denote by $K^*$, of the restriction of $Pr$ to $\Kt$. Concretely, if $k = 
\left( \begin{array}{cc}
a    & b \\
c    & d 
  \end{array} \right) \in K$ then this splitting sends $k$ to $(k, \theta(k))$, where
\begin{equation} \label{eqn:thetadef}
\theta(k) := \begin{cases}
1 & \text{ if } c = 0 \text{ or } c \in \OF^\times\\
(c, d)_F & \text{ otherwise.}
\end{cases}
\end{equation}

From $\theta$ one can also construct a unique splitting of the extension over $K'$, as follows. Let $\tilde{\alpha}$ be any lift of $\alpha$ to $\widetilde{GL}_2(F)$.  For any $k \in K$, the product $\tilde{\alpha}\cdot (k, \theta(k))\cdot(\tilde{\alpha})^{-1}$, taken in $\widetilde{GL}_2(F)$ using the cocycle (\ref{eqn:gl2cocycle}), is independent of the choice of lift of $\alpha$ and we may define 
$$(\alpha k \alpha^{-1}, \theta'(\alpha k \alpha^{-1})) := \tilde{\alpha}\cdot (k, \theta(k))\cdot (\tilde{\alpha})^{-1}.$$
Then $g \mapsto (g, \theta'(g))$ defines a splitting of $\Gt \rightarrow^{Pr} G$ over $K'$. Explicitly, for $k' \in K'$ and $k = \alpha^{-1} k' \alpha = 
\left( \begin{array}{cc}
a    & b \\
c    & d 
  \end{array} \right) \in K$, we have
\begin{align*}
\theta'(k') & = \begin{cases}
 (c, d)_F & \text{ if } c \neq 0 \text{ and } c \notin \OF^\times,\\
 (d, \varpi)_F & \text{ if } c = 0,\\
 1 & \text{ if } c \in \OF^\times.
\end{cases}
\end{align*}

\begin{rem}
\label{rem:kspl}
The canonical splitting of the covering over a unipotent subgroup agrees with $(-, \theta(-))$ (resp., with $(-, \theta'(-))$) on the intersection of $K$ (resp., of $K'$) with that unipotent subgroup. However, $\theta$ and $\theta'$ do not agree on all of $K \cap K'$: for example if $x \in \OF^\times$ is an element whose reduction modulo $\varpi$ is a nonsquare in $\mathfrak{k}$, and if $k = h(x) \in K \cap K'$, then 
$\theta(k) = 1$ while $\theta'(k) = (x, \varpi)_F = -1.$ There is no conflict between these statements, since $T \cap K = T \cap K'$ is generated by unipotent elements which lie in $K$ but not in $K'$. 
\end{rem}

\subsubsection{Conventions on representations} As already mentioned, all representations should be assumed (unless noted otherwise) to have coefficients in the field $E$ of characteristic $p$. A representation of $\Gt$ is called \textit{smooth} if the subgroup of $\Gt$ fixing each vector is open, and called \textit{genuine} if it does not factor through a representation of $G$. In terms of our explicit construction of $\Gt$, a representation $\rho$ is genuine if and only if $\rho((g, \zeta)) = \zeta \rho((g, 1))$ for all $g \in G$, $\zeta \in \mu_2$. We say that a function $f$ on $\Gt$ is genuine if the same relation holds of $f$. We will work only with smooth, genuine representations of $\Gt$. We do not assume that representations are admissible unless this hypothesis is specifically mentioned.

The symbol $\Ind$ will denote smooth induction (of a smooth representation of a closed subgroup). We do not normalize smooth induction. The symbol $\ind$ will denote compact induction (of a smooth representation of an open subgroup). We use the following standard notation for certain elements of a compact induction: given a smooth representation $\sigma$ of a group $H$ which is an open subgroup of a group $H'$, $g \in H$, and $v \in \sigma$, the symbol $[g, v]$ denotes the element of $\ind_{H}^{H'}\sigma$ which is supported on $Hg^{-1}$ and which takes the value $[g, v](h) = \sigma(hg)\cdot v$ on $h \in Hg^{-1}$.

\subsection{Commutators in $\Gt$}
\label{subsec:commutators}
 For $H$ a subgroup of $\Gt$, let $[H, H]$ denote the subgroup of $\Gt$ generated by commutators in $H$. The following fact is  well-known. 

\begin{lem} \label{lem:commutators}
\begin{enumerate}
\item $[\Tt, \Tt] = 1$. 
\item $[\Bt, \Bt] = U^*$. 
\item $[\Bbt, \Bbt] = \Ubar^*$. 
\item $[\Gt, \Gt] = \Gt$. 
\end{enumerate}
\end{lem}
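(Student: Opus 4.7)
The plan is to handle the four parts in sequence: (1) and (2) by direct cocycle calculation, (3) by symmetry, and (4) via the nontriviality of the cover. For (1), since $\mu_2$ is central it suffices to check that the preferred lifts $\tilde{h}(x)$ commute pairwise. Applying the $X$-function formula (\ref{eqn:sl2cocycle}) to the pair $(h(x), h(y))$ yields $\Delta(h(x), h(y)) = (x, y)_F$, so
\[
\tilde{h}(x)\tilde{h}(y) = \bigl( h(xy),\ (-1, x)_F (-1, y)_F (x, y)_F \bigr),
\]
which is symmetric in $x$ and $y$ by the bilinearity and symmetry of the Hilbert symbol (Remark \ref{rem:hilbsymprops}(1)).

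For (2), note that the quotient $\Bt/U^* \cong \Tt$ is already abelian by (1), so the inclusion $[\Bt, \Bt] \subseteq U^*$ is automatic. For the reverse inclusion I compute
\[
\tilde{h}(x)\tilde{u}(y)\tilde{h}(x)^{-1} = \tilde{u}(x^2 y).
\]
A short application of the $X$-function gives $\Delta(h(x), u(y)) = 1$ and $\Delta(h(x) u(y), h(x^{-1})) = (x, x^{-1})_F$, while the same method shows that $\tilde{h}(x)^{-1} = (h(x^{-1}), 1)$. All Hilbert-symbol factors then collapse via the identity $(x, x^{-1})_F = (-1, x)_F$ (a consequence of Remark \ref{rem:hilbsymprops}(2) together with bilinearity), depositing the conjugate in $U^*$. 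Consequently $[\tilde{h}(x), \tilde{u}(y)] = \tilde{u}((x^2 - 1) y)$, and choosing any $x \in F^\times$ with $x^2 \neq 1$ (possible since $F$ is infinite and $p \neq 2$) sweeps out all of $U^*$. Part (3) follows by the parallel computation with $\bar{u}$ in place of $u$.

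For (4), combining (2) and (3) places both $U^*$ and $\Ubar^*$ inside $[\Gt, \Gt]$, so $H := \langle U^*, \Ubar^* \rangle \subseteq [\Gt, \Gt]$. Since $U$ and $\Ubar$ generate $SL_2(F)$, $H$ projects onto $G$ under $Pr$; if $H$ were proper in $\Gt$ then $H \cap \mu_2 = \{1\}$ and $H$ would be a group-theoretic splitting of the central extension $1 \to \mu_2 \to \Gt \to G \to 1$, contradicting the nontriviality of the cover recorded in $\S$\ref{subsubsec:covgp}. Hence $H = \Gt$, proving (4). The main obstacle is the Hilbert-symbol bookkeeping in (2): everything rests on $(x, x^{-1})_F = (-1, x)_F$, which is precisely what forces the conjugate $\tilde{h}(x)\tilde{u}(y)\tilde{h}(x)^{-1}$ to land in $U^*$ rather than in its $(-1)$-coset. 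Once that cancellation is isolated, (3) is symmetric and (4) is a one-line consequence.
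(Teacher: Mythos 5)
Parts (1)--(3) are correct: your cocycle values $\Delta(h(x),h(y))=(x,y)_F$, $\Delta(h(x),u(y))=1$, $\tilde{h}(x)^{-1}=(h(x^{-1}),1)$ and the resulting formula $[\tilde{h}(x),\tilde{u}(y)]=\tilde{u}((x^2-1)y)$ all check out, and this is exactly the ``straightforward calculation'' the paper leaves to the reader. Two small remarks: in (2) the containment $[\Bt,\Bt]\subseteq U^*$ presupposes that $U^*$ is normal in $\Bt$, which is supplied by the conjugation formula you prove afterwards (so state it first); and in (3) the computation is not literally sign-for-sign parallel ($\Delta(h(x),\bar{u}(y))=(y,x^{-1})_F$ is not $1$ in general), though the total sign does cancel and $\tilde{h}(x)\tilde{\bar{u}}(y)\tilde{h}(x)^{-1}=\tilde{\bar{u}}(x^{-2}y)$ holds, so the conclusion stands.

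The genuine gap is in (4). Setting $H=\langle U^*,\Ubar^*\rangle\subseteq[\Gt,\Gt]$ and noting $Pr(H)=G$ is fine, but your dichotomy ends with: if $H$ is proper then $H\cap\mu_2=\{1\}$ and $(Pr\vert_H)^{-1}$ is a \emph{group-theoretic} splitting, ``contradicting the nontriviality of the cover recorded in \S\ref{subsubsec:covgp}.'' What is recorded there is nontriviality of the class in $H^2_{meas}(G,\mu_2(F))$, i.e.\ nontriviality as a \emph{topological} central extension. An abstract (not necessarily continuous or measurable) section is not ruled out by that statement: topological nontriviality does not formally imply abstract non-splitness, and for the metaplectic cover the abstract statement is a deeper fact (it comes from Moore--Matsumoto--Steinberg theory of abstract central extensions, which the paper never invokes). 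So as written the contradiction is not justified by what you cite. There are two ways to close the gap. (i) Show the putative section is automatically continuous: it agrees with the canonical continuous sections on $U$ and $\Ubar$, hence on $w(x)=u(x)\bar{u}(-x^{-1})u(x)$ and $h(x)=w(x)w(-1)$ it is given by explicit continuous formulas, so it is continuous on the open big cell $\Ubar T U$; a homomorphism of topological groups continuous on a nonempty open set is continuous, and then you do contradict the recorded topological nontriviality. (ii) Better, do what the paper does and avoid the splitting argument altogether: since $U^*,\Ubar^*\subseteq[\Gt,\Gt]$, every $\tilde{h}(a)=\tilde{w}(a)\tilde{w}(-1)$ lies in $[\Gt,\Gt]$; choosing $u\in\OF^\times$ with $(u,\varpi)_F=-1$, one computes $\tilde{h}(u)\tilde{h}(\varpi)\tilde{h}(u\varpi)^{-1}=(1,(u,\varpi)_F)=(1,-1)$, so $(1,-1)\in[\Gt,\Gt]$ explicitly, and together with $[G,G]=G$ this gives $[\Gt,\Gt]=\Gt$. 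That two-line Hilbert-symbol computation is precisely the content your contradiction was meant to replace.
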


\begin{proof}
(1), (2), and (3) are straightforward calculations using the cocycle (\ref{eqn:sl2cocycle}). For (4), recall that $[G, G] = G$, so it suffices to show that $(1, -1)$ is also generated by commutators in $\Gt$. By (2) and (3) the subgroups $U^*$ and $\Ubar^*$ are generated by commutators in $\Gt$, which implies that for any $a \in F^\times$ we have $\tilde{h}(a) \in [\Gt, \Gt].$ 

Pick $u \in \OF^\times$ so that $(u, \varpi)_F = -1$ (such a unit exists by Remark \ref{rem:hilbsymprops} (6)). We have 
$$\tilde{h}(u) \cdot \tilde{h}(\varpi)  = 
\left(h(u\varpi), (-1, u\varpi)_F (u, \varpi)_F \right) \in [\Gt, \Gt],$$
and also 
$$\tilde{h}(u\varpi)^{-1} = \left( h(u\varpi)^{-1}, (-1, u\varpi)_F(u\varpi, u\varpi)_F \right) \in [\Gt, \Gt],$$
so 
$$\left(h(u\varpi), (-1, u\varpi)_F (u, \varpi)_F \right) \left(h(u\varpi), (-1, u\varpi)_F \right)^{-1} = (1, (u, \varpi)_F) = (1, -1) \in [\Gt, \Gt].$$
\end{proof}
Since $\Bt = \Tt U^* = U^* \Tt$ (resp., $\Bbt = \Tt \Ubar^* = \Ubar^* \Tt$), we obtain:
\begin{cor}
\label{cor:abelianization}
The abelianization of $\Bt$ (resp., of $\Bbt$) is $U^*$ (resp., $\Ubar^*$), and the abelianization of $\Gt$ is trivial. 
\end{cor}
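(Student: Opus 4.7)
Each of the three claims is an immediate consequence of Lemma~\ref{lem:commutators}, combined with the decompositions $\Bt = \Tt U^* = U^*\Tt$ and $\Bbt = \Tt\Ubar^* = \Ubar^*\Tt$ recalled just above the statement. The plan is to interpret ``the abelianization of $\Bt$ is $U^*$'' as naming the normal subgroup one quotients by to pass to the abelian quotient (i.e.\ the commutator subgroup $[\Bt,\Bt]$), and then to check each case in turn.

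First, I would use the decomposition $\Bt = \Tt U^* = U^*\Tt$ to display $U^*$ as a normal subgroup of $\Bt$. By Lemma~\ref{lem:commutators}(2), $[\Bt,\Bt] = U^*$, so $U^*$ is exactly the subgroup of $\Bt$ that must be killed to form the abelianization. (Normality of $U^*$ is of course automatic from its being a commutator subgroup, but the decomposition makes it visible.) The same reasoning applied to $\Bbt = \Tt\Ubar^* = \Ubar^*\Tt$ together with Lemma~\ref{lem:commutators}(3) gives $[\Bbt,\Bbt] = \Ubar^*$, which handles the statement for $\Bbt$.

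For $\Gt$, Lemma~\ref{lem:commutators}(4) asserts $[\Gt,\Gt] = \Gt$, so $\Gt/[\Gt,\Gt]$ is the trivial group; equivalently, there are no nontrivial abelian quotients of $\Gt$ to record.

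There is no genuine obstacle: the real work was completed in Lemma~\ref{lem:commutators}, and the corollary is simply a uniform restatement of parts (2)--(4) of that lemma in the language of abelianization, with the semidirect product decompositions serving only to make the relevant normal subgroups manifest.
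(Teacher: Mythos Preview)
Your proposal is correct and mirrors the paper's own justification, which consists solely of the sentence ``Since $\Bt = \Tt U^* = U^* \Tt$ (resp., $\Bbt = \Tt \Ubar^* = \Ubar^* \Tt$), we obtain:'' together with the implicit appeal to Lemma~\ref{lem:commutators}. Your explicit remark about how to read the phrase ``the abelianization of $\Bt$ is $U^*$'' is a sensible clarification, since later in the paper (\S\ref{subsec:heckeactionps}) the author writes ``the abelianization of $\Bbt$ is the quotient $\Bbt/\Ubar^* = \Tt$,'' confirming that the intended content is exactly that $[\Bt,\Bt]=U^*$ and hence $\Bt^{\mathrm{ab}}\cong\Tt$.
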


\subsection{Cartan decompositions of  $\Gt$}
\label{subsec:cartandecomp}
Recall that $G$ has the following Cartan decompositions:
\begin{equation} \label{eqn:gcartandecomp}
G = \coprod_{n \geq 0} Kh(\varpi)^{-n}K  = \coprod_{n \geq 0} K' h(\varpi)^{-n}K'.
\end{equation}

Lifting these decompositions to $\Gt$, we have
\begin{equation}\label{eqn:gtktcartandecomp} \Gt = \coprod_{n \geq 0} \Kt \tilde{h}(\varpi)^{n}\Kt = \coprod_{n \geq 0} \Kt'\tilde{h}(\varpi)^{-n}\Kt'.
\end{equation}
It will be useful to refine (\ref{eqn:gtktcartandecomp}) to a disjoint union of $K^*$- (resp., $K'^*$-) double cosets.

\begin{lem} \label{lem:gtkscartandecomp}
$\Gt$ has the Cartan decompositions
$$\Gt = \coprod_{\substack{n \geq 0\\ \zeta \in \mu_2}} K^* \tilde{h}(\varpi)^{-n}(1, \zeta) K^* = \coprod_{\substack{n \geq 0\\ \zeta \in \mu_2}} K'^* \tilde{h}(\varpi)^{-n}(1, \zeta)K'^*.$$
\end{lem}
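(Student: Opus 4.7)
The plan is to refine the decomposition (\ref{eqn:gtktcartandecomp}) by splitting each $\Kt$-double coset into $K^*$-double cosets. Since $\mu_2$ is central in $\Gt$ and $K^*$ has index $2$ in $\Kt$, one has $\Kt = K^* \sqcup K^*(1,-1)$, whence immediately
\[
\Kt\,\tilde h(\varpi)^{-n}\,\Kt \;=\; K^*\tilde h(\varpi)^{-n}K^* \,\cup\, K^*\tilde h(\varpi)^{-n}(1,-1)K^*.
\]
The two forms of (\ref{eqn:gtktcartandecomp}) describe the same partition because $\tilde w(1) \in K^*$ conjugates $\tilde h(\varpi)$ into the $K^*$-double coset of $\tilde h(\varpi)^{-1}$. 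Applying $Pr$, cosets corresponding to distinct $n$ land in distinct Cartan double cosets of $G$, so the family is automatically pairwise disjoint across $n$.

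The main step is to show that for each fixed $n \geq 0$, the two $K^*$-double cosets above are genuinely distinct. Their coincidence is equivalent to $(1,-1) \in K^* \cdot \tilde h(\varpi)^n K^* \tilde h(\varpi)^{-n}$, which I recast as the nontriviality of the character
\[
\phi_n \colon \{\,a \in K^* : \tilde h(\varpi)^n a\tilde h(\varpi)^{-n} \in \Kt\,\} \longrightarrow \mu_2,\qquad \phi_n(a) \;=\; \chi\bigl(\tilde h(\varpi)^n a\tilde h(\varpi)^{-n}\bigr),
\]
where $\chi \colon \Kt \to \mu_2$ is the nontrivial character with kernel $K^*$. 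So the goal becomes $\phi_n \equiv 1$. Under $Pr$ the domain of $\phi_n$ maps isomorphically onto $H_n = \{k \in K : c \in \varpi^{2n}\OF\}$, and an Iwahori-style factorization $g = u(b/d)h(d^{-1})\bar u(c/d)$ or $g = \bar u(c/a)h(a)u(b/a)$ (using $ad-bc = 1$ with $c \in \varpi\OF$ to force $a$ or $d$ into $\OF^\times$) shows that $H_n$ is generated by $T \cap K$, $U(\OF)$, and $\overline{U}(\varpi^{2n}\OF)$. Since the splitting $(-,\theta(-))$ agrees with the canonical splittings on unipotent subgroups and, by tameness of the Hilbert symbol, $\tilde h(x) = (h(x),1) \in K^*$ for $x \in \OF^\times$, the source of $\phi_n$ is correspondingly generated by $\Tt \cap K^*$, $U^*(\OF)$, and $\overline{U}^*(\varpi^{2n}\OF)$.

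Direct cocycle computations with (\ref{eqn:sl2cocycle}) then yield, for $a \in \OF$, $b \in \varpi^{2n}\OF$, and $x \in \OF^\times$,
\[
\tilde h(\varpi)^n \tilde u(a)\tilde h(\varpi)^{-n} = \tilde u(a\varpi^{2n}),\quad \tilde h(\varpi)^n \tilde{\bar u}(b)\tilde h(\varpi)^{-n} = \tilde{\bar u}(b\varpi^{-2n}),\quad \tilde h(\varpi)^n \tilde h(x)\tilde h(\varpi)^{-n} = \tilde h(x),
\]
and all three images land in $K^*$; the torus identity uses Remark \ref{rem:phiprops} so that the various contributions from $\phi(\pm n)$, from $\Delta(h(\varpi^{-n}),h(x\varpi^n))$, and from $(x,\varpi)_F^{2n}$ collapse into $(-1,\varpi)_F^{n(n+1)} = 1$. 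Hence $\phi_n$ is trivial on a generating set, and thus identically trivial, giving the first displayed equality of the lemma.

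The second equality follows by conjugation. For any lift $\tilde\alpha \in \widetilde{GL}_2(F)$ of $\alpha$, the construction of $\theta'$ gives $K'^* = \tilde\alpha K^* \tilde\alpha^{-1}$, and a direct check shows $\alpha h(\varpi)^{-n}\alpha^{-1} = h(\varpi)^{-n}$, so $\tilde\alpha\tilde h(\varpi)^{-n}\tilde\alpha^{-1} = (1,\epsilon_n)\tilde h(\varpi)^{-n}$ for some $\epsilon_n \in \mu_2$. Conjugating the $K^*$-decomposition by $\tilde\alpha$ and relabeling $\zeta' = \epsilon_n\zeta$ (which permutes $\mu_2$ for each fixed $n$) produces the $K'^*$-decomposition. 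I expect the main obstacle throughout to be the cocycle bookkeeping in Step 3, where — especially for the torus generator — one must verify that the signs from $\phi$ and the Hilbert symbol exactly cancel.
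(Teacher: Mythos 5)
Your proposal is correct and follows essentially the same route as the paper: both reduce the disjointness of $K^*\tilde h(\varpi)^{-n}K^*$ and $K^*\tilde h(\varpi)^{-n}(1,-1)K^*$ for fixed $n$ to the triviality of a $\mu_2$-valued homomorphism (your $\phi_n$, the paper's $\psi_n$) defined by conjugation with $\tilde h(\varpi)^{\pm n}$ on the congruence subgroup $\{k \in K : c \in \varpi^{2n}\OF\}$, checked on a generating set, and both deduce the $K'^*$-decomposition by conjugating the $K^*$-decomposition with a lift $\tilde\alpha$ of $\alpha$. The only real difference is that your Iwahori-style generating set explicitly includes $T\cap K$ (handled via $\tilde h(x)=(h(x),1)\in K^*$ for $x\in\OF^\times$ and commutativity of $\Tt$), whereas the paper checks only the unipotent generators; this extra check is harmless and, if anything, more careful.
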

 
\begin{proof}[Proof of Lemma \ref{lem:gtkscartandecomp}]
We first prove the decomposition with respect to $K^*$. By (\ref{eqn:gtktcartandecomp}) we have
$$ \Gt  = \coprod_{n \geq 0} \left( \bigcup_{\zeta \in \mu_2}K^{*}\tilde{h}(\varpi)^{-n}(1, \zeta)K^{*} \right).$$
We will show that the union over $\mu_2$ is also disjoint, using a similar idea to that of \cite{mcnamara:psreps} Theorem 9.2. Let $K_n$ denote the intersection $K \cap h(\varpi)^{-n}Kh(\varpi)^{n}$, and define a map $\psi_n: K_n \rightarrow \mu_2$ as follows: $\psi(k) = \zeta$, where $\zeta \in \mu_2$ is the unique sign such that $(k, \theta(k)) \tilde{h}(\varpi)^{-n} = \tilde{h}(\varpi)^{-n}(k', \zeta\theta(k'))$ for some $k' \in K$. The intersection $K^* \tilde{h}(\varpi)^{-n}K^* \cap K^* \tilde{h}(\varpi)^{-n}(1, -1)K^*$ is empty if and only if $\psi_n$ is trivial. The map $\psi_n$ is a group homomorphism since $\theta$ is a splitting of the covering $\Gt \rightarrow G$ over $K$. It therefore suffices to show that $\psi_n$ is trivial on a set of generators for $K_n$; for example, on $(U \cap K_n) \cup (\Ubar \cap K_n)$. Let $u \in U \cap K_n$; then there exists a unique $k \in K$ such that $\tilde{h}(\varpi)^{n}(u, \theta(u)) \tilde{h}(\varpi)^{-n}= (k, \psi_n(u)\theta(k))$. Since $\Tt$ normalizes $U^*$ (by \ref{lem:commutators} (2)), we must have $(k, \psi_n(u)\theta(k)) \in U^* \cap \widetilde{K}$. And since $U^* \cap \widetilde{K} = K^*$ (Remark \ref{rem:kspl}), it follows that $\psi_n(u) = 1$. The same argument (using \ref{lem:commutators} (3) this time) shows that $\psi_n(u) = 1$ for all $u \in \Ubar \cap K_n$. Hence $K^* \tilde{h}(\varpi)^{-n}K^* \cap K^* \tilde{h}(\varpi)^{-n}(1, -1)K^* = \emptyset$ for each $n \geq 0$. 

Now suppose that there exist $k_1$, $k_2 \in K$ and $n \geq 0$ such that 
$$\tilde{h}(\varpi)^{-n}(1, -1) = (\alpha k_1\alpha^{-1}, \theta'(\alpha k_1 \alpha^{-1})) \cdot \tilde{h}(\varpi)^{-n} \cdot (\alpha k_2\alpha^{-1}, \theta'(\alpha k_2 \alpha^{-1})).$$
Then 
\begin{align*}
(\tilde{\alpha})^{-1}\tilde{h}(\varpi)^{-n}(1, -1)\tilde{\alpha} & = (k_1, \theta(k_1))\cdot (\tilde{\alpha})^{-1}\tilde{h}(\varpi)^{-n}\tilde{\alpha}\cdot (k_2, \theta(k_2)),
\end{align*}
or equivalently
\begin{align*}
\tilde{h}(\varpi)^{-n}(1, -(\varpi, \varpi^n)_F) & = (k_1, \theta(k_1)) \cdot \tilde{h}(\varpi)^{-n}(1, (\varpi, \varpi^n)_F)\cdot (k_2, \theta(k_2)),
\end{align*}
which is impossible since $K^*\tilde{h}(\varpi)^{-n}K^* \cap K^*\tilde{h}(\varpi)^{-n}(1, -1)K^* = \emptyset.$ Hence $K'^*\tilde{h}(\varpi)^{-n}K'^* \cap K'^*\tilde{h}(\varpi)^{-n}(1, -1)K'^* = \emptyset$ for all $n \geq 0.$ 
\end{proof}

\subsection{Bruhat decompositions of $\Gt$ and $K^*$}
\label{subsec:bruhatdecomp}
Recall the standard and refined (respectively) Bruhat decompositions of $G$:
\begin{equation}
\label{eqn:gbruhat}
G = B \amalg B w(1) B = B \amalg B w(1) U.
\end{equation}
Lifting to $\Gt$, we have the decompositions
\begin{equation}
\label{eqn:gtbruhat}
\Gt = \Bt \amalg \Bt \tilde{w}(1)\Bt = \Bt \amalg \Bt \tilde{w}(1)U^*. 
\end{equation}
In particular, using the cocycle (\ref{eqn:sl2cocycle}) one finds 
\begin{equation}
\label{eqn:ubarbruhat}
\tilde{\bar{u}}(x\varpi) = \left(
  \left( \begin{array}{cc}
-(x\varpi)^{-1}      & -1 \\
0      & -x\varpi 
    \end{array} \right), (-1, x \varpi)_F \right) \cdot \tilde{w}(1) \cdot \tilde{u}((x \varpi)^{-1}).
\end{equation}

Let $I$ denote the inverse image in $K$ of $B(\mathfrak{k})$ under the map given by reduction modulo $\varpi$. The Bruhat decompositions of $G(\mathfrak{k})$ induce decompositions
\begin{equation}
\label{eqn:kbruhat}
K = I \amalg I w(1)I = I \amalg (U \cap I) w(1) I = I \amalg \left(\coprod_{x \in \mathfrak{k}} u \left(\left[x\right] \right)w(1)I\right),
\end{equation}
which lift to the following decompositions of $K^*$:
\begin{equation}
\label{eqn:ktbruhat}
K^* = I^* \amalg I^* \tilde{w}(1)I^* = I^* \amalg (U \cap I)^* \tilde{w}(1)I^* = I^* \amalg \left(\coprod_{x \in \mathfrak{k}}\tilde{u}\left( \left[x\right]\right)\tilde{w}(1)I^* \right). 
\end{equation}

\subsection{A system of $\Ks$-coset representatives in $\Gt$}
\label{subsec:ktcosetdecomp}
For $n \geq 0$ and $\lambda \in \cI_{2n-1}$, let 
\begin{equation}
\label{eqn:etadef}
\eta_n(\lambda) := \phi(-v_F(\lambda) - 1) \cdot \left(\lambda, \varpi^{-v_F(\lambda) - 1} \right)_F \cdot (-1, \varpi^n)_F. 
\end{equation}

For $n \geq 1$ and $\lambda \in\cI_{2n}$ let
\begin{equation}
\label{eqn:ht0def}
\tilde{h}^0_{2n, \lambda} := \tilde{h}(\varpi)^n\tilde{u}(\lambda\varpi^{-2n})(1, (-1, \varpi^n)_F),
\end{equation}
and for $n \geq 1$ and $\lambda \in\cI_{2n-1}$ let
\begin{equation}
\label{eqn:ht1def}
\tilde{h}^1_{2n-1, \lambda} = \begin{cases}
\tilde{h}(\varpi)^{-n} & \text{ if } \lambda =0,\\
\tilde{h}(\varpi)^{n - v_{F}(\lambda) - 1} \tilde{u} \left(\lambda \varpi^{-2n + 1} \right) (1, \eta_n(\lambda)) & \text{ if } \lambda \neq 0.  
\end{cases}
\end{equation}

Let $S_0 := \{ (1, 1)\}$, and for $n \geq 1$ define
\begin{align*}
S^0_n &:= \{ \tilde{h}^0_{2n, \lambda} : \, \lambda \in \cI_{2n}\},\\
S^1_{n} &:= \{ \tilde{h}^1_{2n-1, \lambda} : \, \lambda \in\cI_{2n-1}\},\\
S_n &:= S^0_n \amalg S^1_n.
\end{align*} 
Finally, let $S := \bigcup_{n \geq 0} S_n$.

\begin{lem}
\label{lem:leftksreps}
For each $n \geq 1$ we have $K^* \tilde{h}(\varpi)^{-n}K^* = \coprod_{g \in S_n} gK^* ,$ and also $\Gt = \coprod_{\substack{g \in S\\ \zeta \in \mu_2}} g(1, \zeta)K^*.$
\end{lem}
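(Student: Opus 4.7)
The plan is to first establish the local decomposition $K^*\tilde{h}(\varpi)^{-n}K^* = \coprod_{g \in S_n} gK^*$ for each $n \geq 1$, and then to derive the global decomposition as a formal consequence of Lemma \ref{lem:gtkscartandecomp}. The strategy is to exhibit each $g \in S_n$ explicitly as a product $\tilde{k}\tilde{h}(\varpi)^{-n}\tilde{k}'$ with $\tilde{k}, \tilde{k}' \in K^*$, which simultaneously proves $g \in K^*\tilde{h}(\varpi)^{-n}K^*$ and identifies the right $K^*$-coset $gK^* = \tilde{k}\tilde{h}(\varpi)^{-n}K^*$. The right $K^*$-cosets of this form are parametrized by $K^*/K_n^*$, where $K_n = K \cap h(\varpi)^{-n}Kh(\varpi)^n = \{k \in K : c_k \in \varpi^{2n}\OF\}$ is the stabilizer of $[1:0]$ under the standard action of $K$ on $\mathbb{P}^1(\OF/\varpi^{2n})$. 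In particular $|K^*/K_n^*| = q^{2n-1}(q+1) = |S_n|$, so it suffices to produce the decompositions and to verify that the resulting projective points $\bar{k}\cdot[1:0]$ are distinct across $g \in S_n$.

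I would handle the three types of elements of $S_n$ in turn. For $g = \tilde{h}^0_{2n,\lambda}$, the identity $h(\varpi)^n = w(1)h(\varpi)^{-n}w(-1)$ combined with $w(-1)u(x)w(1) = \bar{u}(-x)$ gives $h(\varpi)^n u(\lambda\varpi^{-2n}) = [u(\lambda)w(1)]\cdot h(\varpi)^{-n}\cdot w(-1)$. Since both $u(\lambda)w(1)$ and $w(-1)$ have lower-left entries in $\OF^\times$, their canonical lifts to $K^*$ are $(u(\lambda)w(1), 1)$ and $(w(-1), 1)$, and the coset corresponds to the point $u(\lambda)w(1)\cdot[1:0] = [\lambda:1]$. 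For $g = \tilde{h}^1_{2n-1, 0} = \tilde{h}(\varpi)^{-n}$, the coset corresponds trivially to $[1:0]$. For $g = \tilde{h}^1_{2n-1,\lambda}$ with $\lambda \neq 0$, writing $v = v_F(\lambda)$ and $u_\lambda = \lambda\varpi^{-v}$, a direct matrix calculation shows that $\bar{u}(-u_\lambda^{-1}\varpi^{v+1})\cdot h(\varpi)^{n-v-1}u(\lambda\varpi^{1-2n}) \in h(\varpi)^{-n}K$, yielding the point $[1:u_\lambda^{-1}\varpi^{v+1}]$. The resulting assignment $S_n \to \mathbb{P}^1(\OF/\varpi^{2n})$ is a bijection: the images $\{[\lambda:1] : \lambda \in \cI_{2n}\}$ and $\{[1:\gamma] : \gamma \in \varpi\cI_{2n-1}\}$ partition $\mathbb{P}^1(\OF/\varpi^{2n})$, and the map $\lambda \mapsto u_\lambda^{-1}\varpi^{v+1}$ (with $0 \mapsto 0$) is a bijection $\cI_{2n-1} \to \varpi\cI_{2n-1}$.

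The main technical obstacle is the $\mu_2$-sign bookkeeping: one must verify that the signs $(-1,\varpi^n)_F$ built into $\tilde{h}^0_{2n,\lambda}$ and $\eta_n(\lambda)$ built into $\tilde{h}^1_{2n-1,\lambda}$ are precisely those produced by the Kubota cocycle (\ref{eqn:sl2cocycle}) in the decompositions above. For the $S^0$ case, direct computation gives $\Delta(u(\lambda)w(1), h(\varpi)^{-n}) = \Delta(u(\lambda)w(1)h(\varpi)^{-n}, w(-1)) = (-1,\varpi^n)_F$; combining with the factor $\phi(-n)$ carried by $\tilde{h}(\varpi)^{-n}$ and applying the identity $\phi(n)\phi(-n) = (-1,\varpi)_F^n$ from Remark \ref{rem:phiprops}(2) recovers exactly the sign $\phi(n)(-1,\varpi^n)_F$ appearing in $\tilde{h}^0_{2n,\lambda}$. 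For the $S^1$ case, the Bruhat-type identity (\ref{eqn:ubarbruhat}) supplies the factor $(-1,x\varpi)_F$ from $\tilde{\bar{u}}$, and one combines this with $\Delta$-computations for the relevant torus and Weyl products, simplifying via the Hilbert symbol properties in Remark \ref{rem:hilbsymprops} to recover exactly $\eta_n(\lambda)$. Once the local decomposition is in hand, the global statement follows formally: by Lemma \ref{lem:gtkscartandecomp}, $\Gt$ is the disjoint union over $n \geq 0$ and $\zeta \in \mu_2$ of $K^*\tilde{h}(\varpi)^{-n}(1,\zeta)K^*$, and since $(1,\zeta)$ is central, each of these double cosets equals $\coprod_{g \in S_n} g(1,\zeta)K^*$, with the $n = 0$ case handled by $S_0 = \{(1,1)\}$.
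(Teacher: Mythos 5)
Your argument is correct in substance, but it takes a genuinely different route from the paper's. The paper obtains the coset decomposition constructively: it feeds the refined Bruhat decomposition (\ref{eqn:ktbruhat}) of $K^*$ and the Iwahori factorization of $I^*$ into $K^*\tilde{h}(\varpi)^{-n}K^*$, so that the list $S_n$, its disjointness, and its exhaustiveness all fall out of the computation, with no counting. You instead verify membership element by element --- writing each $g\in S_n$ as $\tilde{k}\tilde{h}(\varpi)^{-n}\tilde{k}'$ with $\tilde{k},\tilde{k}'\in K^*$ --- and then prove exhaustiveness by matching $|S_n|=q^{2n}+q^{2n-1}=q^{2n-1}(q+1)$ against the number of right $K^*$-cosets in the double coset, identified with points of $\mathbb{P}^1(\OF/\varpi^{2n}\OF)$. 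Your explicit factorizations do work: in the $S^0_n$ case the two Kubota cocycle values are both $(-1,\varpi^n)_F$ and cancel, so the sign comparison reduces to Remark \ref{rem:phiprops}(2), exactly as you say; and your matrix identity in the $S^1_n$ case is correct (the relevant lower-right entry vanishes identically, and the resulting element of $K$ has unit lower-left entry, so $\theta=1$ on it). The $\eta_n(\lambda)$ sign verification is only sketched, but the ingredients you cite are the right ones, and this is no less detailed than the paper's own ``further manipulation.'' What your route buys is a transparent combinatorial picture (cosets as points of the projective line over $\OF/\varpi^{2n}\OF$); what the paper's route buys is that exhaustiveness requires no counting and no auxiliary input.

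One step should be made explicit: the parametrization of the right $K^*$-cosets inside $K^*\tilde{h}(\varpi)^{-n}K^*$ by $K^*/K_n^*$ is not automatic in the cover. One always has $K^*\cap\tilde{h}(\varpi)^{-n}K^*\tilde{h}(\varpi)^{n}\subseteq\{(k,\theta(k)):k\in K_n\}$, but a priori this could be an index-two subgroup, in which case the double coset would contain $2[K:K_n]$ right cosets and your count would no longer force exhaustiveness. What rules this out is precisely the triviality of the homomorphism $\psi_n$ from the proof of Lemma \ref{lem:gtkscartandecomp}, equivalently the disjointness $K^*\tilde{h}(\varpi)^{-n}K^*\cap K^*\tilde{h}(\varpi)^{-n}(1,-1)K^*=\emptyset$. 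Since that lemma precedes this one and you already invoke it for the global decomposition, this is a legitimate input --- but it must be cited (or the triviality of $\psi_n$ reproved) at the counting step as well, since without it the exhaustiveness half of the statement is not established.
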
 

\begin{proof}[Proof of Lemma \ref{lem:leftksreps}]

Let $n \geq 1$. Applying (\ref{eqn:ktbruhat}), we have
$$K^*\tilde{h}(\varpi)^{-n}K^* = I^* \tilde{h}(\varpi)^{-n}K^* \amalg I^* \tilde{w}(1) I^* \tilde{h}(\varpi)^{-n}K^* = I^*\tilde{h}(\varpi)^{-n}K^* \amalg \left(\coprod_{\lambda \in \mathfrak{k}}\tilde{u}\left( \left[\lambda\right]\right)\tilde{w}(1)I^*\tilde{h}(\varpi)^{-n}K^* \right).$$ 
The Iwahori decomposition $I = (I \cap \Ubar)(I \cap T)(I \cap U)$ lifts to give $I^* = (I \cap \Ubar)^* (I \cap T)^* (I \cap U)^*$, where the three factors may be taken in any order. Furthermore, for each $n \geq 0$, $\tilde{h}(\varpi)^{-n}$ normalizes $(I \cap \Ubar)^*$ while $\tilde{h}(\varpi)^n$ normalizes $(I \cap U)^*$, and $\tilde{w}(1)\tilde{h}(\varpi)^{\pm n} = \tilde{h}(\varpi)^{\mp n}(1, (-1, \varpi^n)_F)\tilde{w}(1)$. Applying these facts and calculating, we get
\begin{align*}
\coprod_{x \in \mathfrak{k}} \tilde{u}([x]) \tilde{w}(1)I^*\tilde{h}(\varpi)^{-n}K^* & = \coprod_{ x \in \mathfrak{k}} \tilde{u}([x]) \tilde{w}(1)(I \cap \Ubar)^*(I \cap T)^*(I \cap U)^*\tilde{h}(\varpi)^{-n}K^*\\
& = \coprod_{x \in \mathfrak{k}} \tilde{u}([x])\tilde{w}(1)(I \cap \Ubar)^*\tilde{h}(\varpi)^{-n}K^*\\
& = \coprod_{x \in \mathfrak{k}} \bigcup_{y \in \varpi\OF} \tilde{u}([x])\tilde{u}(-y)\tilde{w}(1)\tilde{h}(\varpi)^{-n}K^*\\
& = (I \cap U)^*\tilde{w}(1)\tilde{h}(\varpi)^{-n}K^*\\
& = (I \cap U)^*\tilde{h}(\varpi)^{n}(1, (-1, \varpi^n)_F)K^*.
\end{align*}

If $\tilde{u} \in (I \cap U)^*$ then $\tilde{u} = \tilde{u}(z)$ for some $z \in \OF$. We have $\tilde{u}(z) \tilde{h}(\varpi)^n = \tilde{h}(\varpi)^n \tilde{u}(z\varpi^{-2n})$, and if $z$, $z' \in \OF$ then $\tilde{h}(\varpi)^n\tilde{u}(z\varpi^{-2n})(1, (-1, \varpi^n)_F)K^* = \tilde{h}(\varpi)^n\tilde{u}(z'\varpi^{-2n})(1, (-1, \varpi^n)_F)K^*$ if and only if $z - z' \in \varpi^{2n}\OF$. Hence
\begin{align*}
(I \cap U)^*\tilde{h}(\varpi)^{n}(1, (-1, \varpi^n)_F)K^* & = \coprod_{\lambda \in \cI_{2n}} \tilde{h}(\varpi)^{n}\tilde{u}\left(\lambda\varpi^{-2n}\right)(1, (-1, \varpi^n)_F)K^* = \coprod_{g \in S^0_n} gK^*. 
\end{align*}
We also have
\begin{align*}
I^*\tilde{h}(\varpi)^{-n}K^* = (I \cap \Ubar)^*(I \cap T)^*(I \cap U)^* \tilde{h}(\varpi)^{-n}K^* = (I \cap \Ubar)^*\tilde{h}(\varpi)^{-n}K^*.
\end{align*}
If $\tilde{\bar{u}} \in (I \cap \Ubar)^*$ then $\tilde{\bar{u}} = \tilde{\bar{u}}(-z\varpi)$ for some $z \in \OF$. For $z, z' \in \OF$ we have $\tilde{\bar{u}}(-z\varpi) \tilde{h}(\varpi)^{-n}K^* = \tilde{\bar{u}}(-z'\varpi) \tilde{h}(\varpi)^{-n}K^*$ if and only if $z - z' \in \varpi^{2n-1}\OF$, so 
\begin{align*}
(I \cap \Ubar)^*\tilde{h}(\varpi)^{-n}K^* & = \tilde{h}(\varpi)^{-n}K^* \amalg \left( \coprod_{\lambda \in \cI_{2n-1}\setminus\{0\}} \tilde{\bar{u}}(-\lambda\varpi) \tilde{h}(\varpi)^{-n}K^*\right).
\end{align*}

For $\lambda \in \cI_{2n-1}\setminus \{0\}$, a calculation using the explicit Bruhat decomposition (\ref{eqn:ubarbruhat}) of $\tilde{\bar{u}}(-\lambda\varpi)$ implies that
\begin{align*}
\tilde{\bar{u}}(-\lambda\varpi) \tilde{h}(\varpi)^{-n} & = \left(
  \left( \begin{array}{cc}
(\lambda\varpi)^{-1}      & -1 \\
0      & \lambda\varpi 
    \end{array} \right), (-1, \lambda\varpi)_F \right)\tilde{w}(1)\tilde{u}(-(\lambda\varpi)^{-1})\tilde{h}(\varpi)^{-n}\\
& = \left(
  \left( \begin{array}{cc}
(\lambda\varpi)^{-1}      & -1 \\
0      &  \lambda\varpi
    \end{array} \right), (-1, \lambda\varpi^{n+1})_F \right)\tilde{h}(\varpi)^n\tilde{w}(1)\tilde{u}(-\lambda^{-1}\varpi^{2n-1}).
\end{align*}
Further manipulation of the expression just above gives
\begin{align*}
\tilde{\bar{u}}(-\lambda\varpi) \tilde{h}(\varpi)^{-n} & \in \tilde{h}^1_{2n-1, \underline{\lambda}}K^*,
\end{align*}
where $\underline{\lambda}$ denotes the unique element of $\cI_{2n-1} \setminus \{0\}$ such that $\underline{\lambda} \equiv -\left( \frac{\lambda}{\varpi^{v_F(\lambda)}}\right)^{-1} \varpi^{v_F(\lambda)} \pmod{\varpi^{2n-1}}.$ It follows that
$$I^*\tilde{h}(\varpi)^{-n}K^* = \tilde{h}(\varpi)^{-n}K^* \coprod_{\lambda \in \cI_{2n-1}\setminus\{0\}} \tilde{h}^1_{2n-1, \underline{\lambda}} K^*.$$
Since $\lambda \mapsto \underline{\lambda}$ is a (valuation-preserving) bijection $\cI_{2n-1}\setminus \{0\}\rightarrow \cI_{2n-1}\setminus \{0\}$, we obtain
$$I^*\tilde{h}(\varpi)^{-n}K^* = \coprod_{g \in S^1_n} gK^*.$$
Hence for each $n \geq 1$ we have $K^*\tilde{h}(\varpi)^{-n}K^* = \coprod_{g \in S_n} gK^*$, and applying the Cartan decomposition of $\Gt$ we get $\Gt = \coprod_{\substack{g \in S\\ \zeta \in \mu_2}} g(1, \zeta)K^*$.   
\end{proof}

\begin{rem}
\label{rem:poscartan}
Since $\tilde{w}(\pm 1) \in K^*$ and $\tilde{w}(1)\tilde{h}(\varpi)^n\tilde{w}(-1) = (1, (-1, \varpi^n)_F)\tilde{h}(\varpi)^{-n}$ for each $n \in \ZZ$, we have $K^*\tilde{h}(\varpi)^{-n}K^* = K^*\tilde{h}(\varpi)^n(1, (-1, \varpi^n)_F)K^*$. Hence $\Gt$ also has the Cartan decompositions 
\begin{align*}
\Gt & = \coprod_{\substack{n \geq 0\\ \zeta \in \mu_2}} K^* \tilde{h}(\varpi)^n(1, \zeta)K^* = \coprod_{\substack{n \geq 0 \\ \zeta \in \mu_2}}K'^*\tilde{h}(\varpi)^{n}(1, \zeta) K'^*,
\end{align*}
and for $n \geq 0$, 
\begin{align*}
K^* \tilde{h}(\varpi)^nK^* & = \coprod_{g \in S_n}(1, (-1, \varpi^n)_F ) gK^*. 
\end{align*}
\end{rem}

\subsection{Comparison of Cartan and Iwasawa decompositions in $\Gt$}
\label{subsec:cartaniwasawa}
The Iwasawa decomposition $G = BK$ lifts to the decomposition $\Gt = \Bt K^*$. For later applications, in particular the derivation of an explicit formula for a Satake-type transform (Proposition \ref{prop:gensataketransformi}), we need to compare this Iwasawa decomposition of $\Gt$ with the Cartan decomposition with respect to $\Ks$ given in Lemma \ref{lem:gtkscartandecomp}. 

\begin{rem}
\label{rem:ciintro}
In this section we work with $\ZZ$-coefficients rather than with $E$-coefficients as elsewhere in this paper. Although in the proof of Proposition \ref{prop:gensataketransformi} we only take advantage of the reduction mod $q$ of the calculations in Lemmas \ref{lem:cartaniwasawa} and \ref{lem:citranspose}, the lemmas may also be useful for determining the image of the Satake transform for $\Gt$ when the coefficient field has characteristic 0 or dividing $q-1$. 
\end{rem}

\begin{lem} \label{lem:cartaniwasawa}
Let $n \geq 0$, $m \in \ZZ$, $\zeta \in \mu_2$, and let 
$$S_{n, m, \zeta} : = \{\tilde{u} \in U^*/(U \cap K)^*: \, \tilde{h}(\varpi)^m\tilde{u}K^* \subset K^*\tilde{h}(\varpi)^{-n}K^*\} .$$ Then 
$$|S_{n, m, \zeta}| =
\begin{cases}
1 & \text{ if } m = -n \text{ and } \zeta = 1,\\

q^{n+m-1}(q-1) & \text{ if } -n < m < n \text{ and } 2 \big \vert (m - n) \text{ and } \zeta = (-1, \varpi)_F^{\frac{n+m}{2}} \\
q^{n+m-1}\left(\frac{q-1}{2} \right) & \text{ if } -n < m < n \text{ and } 2 \not \big \vert (m - n),\\
q^{2n} & \text{ if } m = n \text{ and } \zeta = (-1, \varpi^n)_F,\\
0 & \text{ otherwise.} 
\end{cases}$$

\end{lem}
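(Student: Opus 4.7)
The plan is to match each coset $\tilde{h}(\varpi)^m\tilde{u}(x)K^*$ against the explicit system of left $K^*$-coset representatives $S_n = S_n^0 \amalg S_n^1$ of $K^*\tilde{h}(\varpi)^{-n} K^*$ furnished by Lemma \ref{lem:leftksreps}, so that $\zeta$ can be read off directly from the central $\mu_2$-factor attached to the matching $g \in S_n$.

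First, I would organize the analysis by the Cartan invariant $n' := \max(|m|, k - m)$ of the underlying element $h(\varpi^m) u(x) \in G$, where $k := -v(x)$ if $x \notin \OF$ and $k := 0$ otherwise. The condition $n' = n$ forces exactly one of three possibilities: (1) $m = n$ and $v(x) \geq -2n$, giving $q^{2n}$ cosets with representatives $x = \lambda\varpi^{-2n}$ for $\lambda \in \cI_{2n}$; (2) $m = -n$ and $x \in \OF$, a single coset; or (3) $-n < m < n$ and $v(x) = -(n+m)$, giving $(q-1)q^{n+m-1}$ cosets. All other $(n, m)$ have $n' \neq n$, so $|S_{n, m, \zeta}| = 0$. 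In possibility (1), the formula $\tilde{h}^0_{2n, \lambda} = \tilde{h}(\varpi)^n\tilde{u}(\lambda\varpi^{-2n})(1, (-1, \varpi^n)_F)$ from (\ref{eqn:ht0def}) together with centrality of $\mu_2$ gives $\tilde{h}(\varpi)^n\tilde{u}(x)K^* \subset K^*\tilde{h}(\varpi)^{-n}(1, (-1, \varpi^n)_F)K^*$, so all $q^{2n}$ cosets contribute to $\zeta = (-1, \varpi^n)_F$. Possibility (2) gives the single contribution to $\zeta = 1$ trivially, since $\tilde{h}(\varpi)^{-n}K^* \subset K^*\tilde{h}(\varpi)^{-n}(1,1)K^*$.

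The main work is in possibility (3). There each $(U \cap K)^*$-coset of $\tilde{u}(x)$ with $v(x) = -(n+m)$ has a unique representative $x = \lambda\varpi^{-(2n-1)}$ with $\lambda \in \cI_{2n-1}$ and $v_F(\lambda) = n-m-1$, and the formula (\ref{eqn:ht1def}) yields $\tilde{h}^1_{2n-1, \lambda} = \tilde{h}(\varpi)^m\tilde{u}(x)(1, \eta_n(\lambda))$, so $\tilde{h}(\varpi)^m\tilde{u}(x)K^* \subset K^*\tilde{h}(\varpi)^{-n}(1, \eta_n(\lambda))K^*$ and thus $\zeta = \eta_n(\lambda)$. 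It then remains to count the admissible $\lambda$'s on which $\eta_n(\lambda)$ takes each value in $\mu_2$. Unpacking (\ref{eqn:etadef}) with $-v_F(\lambda) - 1 = m - n$ gives $\eta_n(\lambda) = \phi(m-n)(\lambda, \varpi)_F^{m-n}(-1, \varpi^n)_F$. When $2 \mid (m - n)$, the factor $(\lambda, \varpi)_F^{m-n}$ is trivial, and Remark \ref{rem:phiprops}(1) yields $\phi(m-n) = (-1, \varpi)_F^{(m-n)/2}$, so $\eta_n(\lambda) = (-1, \varpi)_F^{(m+n)/2}$ independently of $\lambda$, assigning all $(q-1)q^{n+m-1}$ cosets to that $\zeta$. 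When $2 \nmid (m - n)$, the factor $(\lambda, \varpi)_F$ depends only on the square class in $\mathfrak{k}^\times$ of the leading unit digit of $\lambda$, and Remark \ref{rem:hilbsymprops}(6) shows that each value in $\mu_2$ is attained on exactly half of the admissible $\lambda$'s, splitting the $(q-1)q^{n+m-1}$ cosets evenly between the two values of $\zeta$. The main obstacle is the Hilbert-symbol bookkeeping required to reduce $\eta_n(\lambda)$ to the closed-form values of $\zeta$ claimed in the lemma.
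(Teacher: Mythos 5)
Your proposal is correct, and its skeleton coincides with the paper's: both proofs ultimately rest on the explicit representatives $\tilde{h}^0_{2n,\lambda}$, $\tilde{h}^1_{2n-1,\lambda}$ of Lemma \ref{lem:leftksreps} (together with the disjointness from Lemma \ref{lem:gtkscartandecomp}, which makes ``the'' $\zeta$ of a coset well-defined), and the sign determination for $-n<m<n$ is exactly the paper's computation: reduce $\eta_n(\lambda)$ with $v_F(\lambda)=n-m-1$, use Remark \ref{rem:phiprops}(1) in the even case to get $\zeta=(-1,\varpi)_F^{(n+m)/2}$ for every admissible $\lambda$, and the Hilbert-symbol equidistribution of Remark \ref{rem:hilbsymprops} in the odd case to get the even split $q^{n+m-1}\frac{q-1}{2}$. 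Where you genuinely differ is in how the multiplicities are counted: you work from the Iwasawa side, computing the Cartan invariant of $h(\varpi)^m u(x)$ directly from the minimal valuation of its entries and counting cosets $x \bmod \OF$ in each case, then matching each coset against (\ref{eqn:ht0def}) and (\ref{eqn:ht1def}) to read off $\zeta$; the paper instead starts from the representative set $S_n$, sorts its elements into the strips $\tilde{h}(\varpi)^j U^*$, uses the fact that $S$ represents distinct cosets of $\Gt/\Kt$ to get the totals $\sum_\zeta |S_{n,m,\zeta}|$, and cites Macdonald's Theorem 2.6.11(3) for the vanishing when $|m|>n$. Your elementary valuation argument replaces both the Macdonald citation and the distinctness bookkeeping, which makes the counting slightly more self-contained, at the cost of having to verify the matching formulas $n-v_F(\lambda)-1=m$ by hand (which you do correctly). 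One small imprecision to tighten: for $\lambda$ of positive valuation, $(\lambda,\varpi)_F$ is not literally a function of the leading unit digit alone, since $(\lambda,\varpi)_F=(-1,\varpi)_F^{v_F(\lambda)}\cdot(\lambda\varpi^{-v_F(\lambda)},\varpi)_F$; but as $v_F(\lambda)=n-m-1$ is constant on the set you are counting, the extra factor is a constant and your fifty-fifty split conclusion stands.
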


\begin{proof}[Proof of Lemma \ref{lem:cartaniwasawa}]
First we suppose that $|m| > n$ and show that $|S_{n, m, \zeta}| = 0$.  Suppose that $|S_{n, m, \zeta}| > 0$; then there exists some $\tilde{u} \in U^*/(U \cap K)^*$ such that $\tilde{h}(\varpi)^{m} \tilde{u}K^* \subset K^* \tilde{h}(\varpi)^{-n}(1, \zeta)K^*$. Let $u = Pr(\tilde{u})$ and view $u$ as a representative of $U/(U \cap K)$; then $h(\varpi)^m u K \subset Kh(\varpi)^{-n}K$. But by Theorem 2.6.11(3) of \cite{macdonald:sphericalfns}, the set $\{u \in U/(U \cap K): h(\varpi)^{m}uK \subset Kh(\varpi)^{-n}K\}$ is empty if $|m| > n$. 

Consequently $|S_{0, m, \zeta}| \neq 0$ only if $m = 0$, and we have $|S_{0, 0, 1}| =| \{\tilde{u} \in U^*/(U \cap K)^*: \, \tilde{u} \in K^*\}| = 1$ while $|S_{0, 0, -1}| = 0.$

Now suppose that $n \geq 1$. By Lemma \ref{lem:leftksreps}, 
$$K^*\tilde{h}(\varpi)^{-n}(1, \zeta)K^* = \left(\coprod_{\lambda \in\cI_{2n}} \tilde{h}^{0}_{2n, \lambda}(1, \zeta)K^*\right)
\amalg \left(\coprod_{\lambda \in\cI_{2n-1}} \tilde{h}^1_{2n-1, \lambda}(1, \zeta)K^* \right).$$

It is clear from the definitions (\ref{eqn:ht0def}, \ref{eqn:ht1def}) that for $\lambda \in\cI_{2n}$ we have $\tilde{h}^0_{2n, \lambda} \in (1, (-1, \varpi^n)_F)\tilde{h}(\varpi)^nU^*$, while for $\lambda \in\cI_{2n-1}$ we have $\tilde{h}^1_{2n-1, \lambda} \in (1, \eta_n(\lambda))\tilde{h}(\varpi)^{n - v_F(\lambda)- 1}U^*$ if $\lambda \neq 0$ and $\tilde{h}^1_{2n-1, 0} \in \tilde{h}(\varpi)^{-n}U^*$. Since $S$ is a complete set of representatives for $\Gt/\Kt$, we have the following facts: if $\lambda$ and $\lambda'$ are distinct elements of $\cI_{2n}$, then $\tilde{h}(\varpi)^{-n}\tilde{h}^0_{2n, \lambda}\Kt \neq \tilde{h}(\varpi)^{-n}\tilde{h}^0_{2n, \lambda'}\Kt$, and if $\lambda$ and $\lambda'$ are distinct elements of $\cI_{2n-1}$ such that $0 \leq v_F(\lambda) = v_F(\lambda') \leq 2n-2$ then $\tilde{h}(\varpi)^{-n +v_F(\lambda) + 1}\tilde{h}^1_{2n-1, \lambda}\Kt \neq \tilde{h}(\varpi)^{-n +v_F(\lambda) + 1}\tilde{h}^1_{2n-1, \lambda'}\Kt$. In particular, $\tilde{h}(\varpi)^{-n}\tilde{h}^0_{2n, \lambda}(\widetilde{U} \cap \widetilde{K}) \neq \tilde{h}(\varpi)^{-n}\tilde{h}^0_{2n, \lambda'}(\widetilde{U} \cap \widetilde{K})$ if $\lambda$ and $\lambda'$ are distinct elements of $\cI_{2n}$, and $\tilde{h}(\varpi)^{-n + v_F(\lambda) + 1}\tilde{h}^1_{2n-1, \lambda}(\widetilde{U} \cap \widetilde{K}) \neq \tilde{h}(\varpi)^{-n + v_F(\lambda) + 1}\tilde{h}^1_{2n-1, \lambda'}(\widetilde{U} \cap \widetilde{K})$ if $\lambda$ and $\lambda'$ are distinct elements of $\cI_{2n-1}$ such that $0 \leq v_F(\lambda) = v_F(\lambda') \leq 2n-2$. Thus 
\begin{align*}
\sum_{\zeta \in \mu_2}|S_{n, n, \zeta}| &= |S^0_n| = |\cI_{2n}| = q^{2n},\\
\sum_{\zeta \in \mu_2}|S_{n, m, \zeta}| & = |\{\lambda \in \cI_{2n-1}: \, v_F(\lambda) = n - m - 1\}| = q^{n + m - 1} (q-1) \text{ if } -n < m < n,\\
\sum_{\zeta \in \mu_2}|S_{n, -n, \zeta}| & = |\{\tilde{h}^1_{2n-1,0}\}| = 1.
\end{align*}

Next, considering the sign of $\tilde{h}^0_{2n, \lambda}$ modulo $\tilde{h}(\varpi)^nU^*$ (respectively, the sign of $\tilde{h}^1_{2n-1, 0}$ modulo $\tilde{h}(\varpi)^{-n}U^*$), we easily see that
\begin{align*}
|S_{n, n, \zeta}| & = \begin{cases}
q^{2n} &\text{ if } \zeta = (-1, \varpi^n)_F,\\
0 & \text{ otherwise};
\end{cases}
\end{align*}
\begin{align*}
|S_{n, -n, \zeta}| & = \begin{cases}
1 &\text{ if } \zeta = 1, \\
0 & \text{ otherwise}.
\end{cases}
\end{align*}

Now let $-n < m < n$ and let $\lambda \in\cI_{2n-1}$ such that $v_F(\lambda) = n - m - 1$; then $\tilde{h}^1_{2n-1, \lambda} \subset (1, \eta_n(\lambda))\tilde{h}(\varpi)^{m}U^*$. Hence 
$$|S_{n, m, \zeta}| = |\{\lambda \in\cI_{2n-1}:\, v_F(\lambda) = n - m - 1 \text{ and } \eta_n(\lambda) = \zeta\}|.$$
Fix $n\geq 1$, $-n < m < n$, and $\zeta \in \mu_2$, and consider $\lambda \in\cI_{2n-1}$ such that $v_F(\lambda) = n - m - 1$. From the definition (\ref{eqn:etadef}) and a short calculation with the Hilbert symbols, we have $\eta_n(\lambda) = \zeta$ if and only if 
\begin{equation}
\label{eqn:etacond}
\left(\lambda\varpi^{m-n+1}, \varpi^{m - n}\right)_F = \zeta \cdot (-1, \varpi^n)_F\cdot \phi(m - n). 
\end{equation} 
If $2 \big \vert (m - n)$ then the left-hand side of (\ref{eqn:etacond}) is always equal to 1, so in that case $\eta_n(\lambda) = \zeta$ if and only if $\zeta = (-1, \varpi^n)_F \cdot \phi(m - n)$. From (1) of Remark \ref{rem:phiprops}, when $2 \big \vert (m - n)$ we have $(-1, \varpi^n)_F \cdot \phi(m - n) = (-1, \varpi^n)_F \cdot (-1, \varpi)_F^{\frac{m - n}{2}} = (-1, \varpi)_F^{\frac{n + m}{2}}.$

Suppose that $-n < m < n$ and $2 \not \big \vert (m - n)$. Then the left-hand side of (\ref{eqn:etacond}) is equal to $\left(\lambda\varpi^{m-n+1}, \varpi\right)_F$, which by (6) of Remark \ref{rem:hilbsymprops} is equal to 1 for exactly $q^{n + m - 1}\left(\frac{q-1}{2}\right)$ of the $q^{n + m - 1}(q-1)$ elements in the set $\{\lambda \in \cI_{2n-1}: \, v_F(\lambda) = n - m - 1\}$. Therefore if $-n< m < n$ and $2 \not \big \vert(m - n)$, then $|S_{n, m, \zeta}| = q^{n + m - 1} \left(\frac{q-1}{2} \right)$ for each $\zeta \in \mu_2$. 
\end{proof}

In applications (e.g. Proposition \ref{prop:satakeproperties}) it will also be convenient to have a comparison of the Cartan decomposition $\Gt = \amalg_{n \geq 0}\Ks \tilde{h}(\varpi)^{-n} \Ks$ with the ``opposite'' Iwasawa decomposition $\Gt = \Bbt \Ks$. 

The transpose operation on $G$ gives a bijection between the sets 
$\{u \in U/(\UK): \, h(\varpi)^muK \subset Kh(\varpi)^{-n}K\}$ and $ \{ \bar{u} \in (\Ubar \cap K) \setminus \Ubar: \, K\bar{u}h(\varpi)^m \subset Kh(\varpi)^{-n}K\}$. The following lemma gives the analogous statement for $\Gt$.

\begin{lem} \label{lem:citranspose}
Let $n \geq 0$, $m \in \ZZ$, and $\zeta \in \mu_2$. Then 
$$\#\{ \tilde{\bar{u}} \in (\Ubar \cap K)^* \setminus \Ubar^*: \, K^*\tilde{\bar{u}}\tilde{h}(\varpi)^m \subset K^*\tilde{h}(\varpi)^{-n}(1, \zeta)K^*\} = |S_{n, m, \zeta}|.$$
\end{lem}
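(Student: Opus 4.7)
The plan is to exploit an anti-automorphism $\sigma:\Gt \to \Gt$ that interchanges $U^*$ with $\Ubar^*$ while fixing $K^*$, and to push the condition defining $S_{n, m, \zeta}$ through it. The natural candidate is $\sigma(g) := \tilde w(1) \cdot g^{-1} \cdot \tilde w(1)^{-1}$. The element $\tilde w(1) = (w(1), 1)$ lies in $K^*$ because the lower-left entry of $w(1)$ is in $\OF^\times$, where $\theta$ is trivial. Conjugation by $\tilde w(1)$ interchanges $U^*$ with $\Ubar^*$ and the respective intersections with $K^*$, so $\sigma$ induces a bijection between the coset spaces $U^*/(U \cap K)^*$ and $(\Ubar \cap K)^*\backslash \Ubar^*$.

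The only delicate computation in this approach is the identity
\[
\tilde w(1) \cdot \tilde h(\varpi)^k \cdot \tilde w(1)^{-1} = (1, (-1, \varpi^k)_F) \cdot \tilde h(\varpi)^{-k}, \qquad k \in \ZZ,
\]
which I would establish by a direct cocycle calculation using \eqref{eqn:sl2cocycle} for $k = 1$ and then extend to arbitrary $k$ by taking powers. This identity has two consequences. First, together with Remark \ref{rem:poscartan}, it shows that conjugation by $\tilde w(1)$ preserves each double coset $K^*\tilde h(\varpi)^{-n}(1, \zeta)K^*$; second, it yields $\sigma(\tilde h(\varpi)^m) = (1, (-1, \varpi^m)_F) \cdot \tilde h(\varpi)^m$. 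Since inversion (again combined with Remark \ref{rem:poscartan}) sends $X := K^*\tilde h(\varpi)^{-n}(1, \zeta)K^*$ to $\sigma(X) = K^*\tilde h(\varpi)^{-n}(1, \zeta\cdot (-1, \varpi^n)_F)K^*$, applying $\sigma$ to the condition $\tilde h(\varpi)^m \tilde u K^* \subset X$ translates it, after collecting the central signs, into
\[
K^* \tilde{\bar u}' \tilde h(\varpi)^m \subset K^*\tilde h(\varpi)^{-n}\bigl(1,\, \zeta \cdot (-1, \varpi^{n+m})_F\bigr) K^*,
\]
where $\tilde{\bar u}' = \sigma(\tilde u)$ ranges over $(\Ubar \cap K)^*\backslash\Ubar^*$. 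In particular,
\[
|S_{n, m, \zeta}| = \#\bigl\{\tilde{\bar u} \in (\Ubar \cap K)^*\backslash \Ubar^* :\; K^*\tilde{\bar u}\tilde h(\varpi)^m \subset K^*\tilde h(\varpi)^{-n}(1, \zeta\cdot(-1, \varpi^{n+m})_F)K^* \bigr\}.
\]

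To conclude Lemma \ref{lem:citranspose} it then remains to verify that $|S_{n, m, \zeta}| = |S_{n, m, \zeta\cdot(-1, \varpi^{n+m})_F}|$, so that the compensating sign can be absorbed. This is a routine case inspection of Lemma \ref{lem:cartaniwasawa}: both counts vanish when $|m| > n$; at $m = \pm n$ the discrepancy is $(-1, \varpi^{\pm 2n})_F = 1$; for $-n < m < n$ with $2 \,|\, (n-m)$ one has $2 \,|\, (n+m)$, again trivializing the sign; and for $-n < m < n$ with $2 \nmid (n-m)$, the count is already independent of $\zeta$. The main obstacle is thus localized in the cocycle calculation that produces the formula for $\tilde w(1)\tilde h(\varpi)^k\tilde w(1)^{-1}$: once the correct sign $(-1, \varpi^k)_F$ is secured there, the translation under $\sigma$ is formal and the case check is immediate.
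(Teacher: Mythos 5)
Your proposal is correct and is essentially the paper's own argument: the anti-automorphism $\sigma(g)=\tilde{w}(1)\,g^{-1}\,\tilde{w}(1)^{-1}$ you use coincides with the transpose operation the paper defines generator-by-generator (both induce $g\mapsto {}^{t}g$ on $G$, fix $K^*$, swap $U^*$ with $\Ubar^*$, and move $\tilde{h}(\varpi)^m$ only by the sign $(-1,\varpi^m)_F$), and your absorption of the resulting sign $(-1,\varpi^{n+m})_F$ via the parity cases of Lemma \ref{lem:cartaniwasawa} is exactly the paper's closing step. The only difference is presentational: realizing the transpose as inversion composed with conjugation by $\tilde{w}(1)\in K^*$ lets you quote Remark \ref{rem:poscartan} directly and sidesteps checking well-definedness of the generator-based definition, which is a harmless simplification rather than a different route.
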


\begin{proof}[Proof of Lemma \ref{lem:citranspose}]
Since $\Gt$ is generated by $U^*$ and $\Ubar^*$ (Lemma \ref{lem:commutators} (3)), we may define a transpose operation on $\Gt$ by setting
${}^{t}(u, 1) := ({}^tu, 1)$ and ${}^t(\bar{u}, 1) := ({}^t\bar{u}, 1)$ for $u \in U$ and $\bar{u} \in \Ubar$, and then defining 
$${}^{t}(g, \zeta) : = {}^t(u_{j}, 1)\cdot \cdot \cdot{}^t(u_1, 1)$$  
where $u_i \in U \cup \Ubar$ and $(g, \zeta) = \prod_{i = 1}^{i = j} (u_i, 1)$. Then ${}^t(g_1g_2) = {}^tg_2{}^tg_1$ for all $g_1$ and $g_2 \in \Gt$, ${}^tk \in K^*$ for all $k \in K^*$, and ${}^t\tilde{h}(a) = \tilde{h}(a)(1, (-1, a)_F)$ for all $a \in F^\times$. 

Two elements $\tilde{u}$, $\tilde{u}' \in U^*$ represent distinct cosets in $U^*/(U \cap K)^*$ if and only if ${}^t\tilde{u}, \, {}^t\tilde{u}' \in \Ubar^*$ represent distinct cosets in $(\Ubar \cap K)^* \setminus \Ubar^*$. Furthermore we have  $\tilde{u} \in U^*$ and $\tilde{h}(\varpi)^m\tilde{u}K^* \subset K^* \tilde{h}(\varpi)^{-n}(1, \zeta)K^*$ if and only if ${}^t\tilde{u} \in \Ubar^*$ and $K^* ({}^t\tilde{u}) \tilde{h}(\varpi)^m(1, (-1, \varpi^m)_F) \subset K^*\tilde{h}(\varpi)^{-n}(1, (-1, \varpi^{n})_F)K^*$. Hence the transpose operation on $\Gt$ gives a bijection between $\{\tilde{u} \in U^* / (U \cap K)^*: \, \tilde{h}(\varpi)^m\tilde{u}K^* \subset K^* \tilde{h}(\varpi)^{-n}K^*\}$ and $\{\tilde{\bar{u}} \in (\Ubar \cap K)^* \setminus \Ubar^*: \, K^* \tilde{\bar{u}}\tilde{h}(\varpi)^m \subset K^* \tilde{h}(\varpi)^{-n}(1, (-1, \varpi^{n+m})_F)K^*\}.$ Lemma \ref{lem:citranspose} then follows from Lemma \ref{lem:cartaniwasawa} and the fact that $(-1, \varpi^{n+m})_F = 1$ whenever $2 \big \vert (m - n).$ 
\end{proof}

\section{Weights}
\label{sec:weights}

\subsection{Weights of $\Kt$} 
\label{subsec:ktweights}
A \textit{weight of} $\Kt$ is a smooth, genuine, irreducible representation of $\Kt$ on an $E$-vector space. The extension which defines the metaplectic cover of $G$ is split over $K$, so the classification of weights of $\Kt$ reduces to the classification of weights of $K$, i.e., of smooth irreducible $E$-representations of $K$. Recall that $K^*$ denotes the image of the map $k \mapsto (k, \theta(k))$ (defined in (\ref{eqn:thetadef})) which uniquely splits the extension over $K$, that $\epsilon$ denotes the embedding $\mu_2(F) \rightarrow E^\times$, and that $f$ is the residual degree of $F/\QQ_p$.

\begin{prop} \label{prop:ktweights}
For each vector $\vec{r} = (r_0, \dots, r_{f-1}) \in \{0, \dots, p-1\}^f$, let $\sigr$ denote the inflation to $K$ of the following $E$-representation of $G(\mathfrak{k})$, likewise denoted by $\sigr$:
$$\sigr = \bigotimes_{i = 0}^{f-1}(Sym^{r_i}E^2)^{Fr^i}$$
where $Fr$ is the Frobenius map $x \mapsto x^p$. 
\begin{enumerate}
\item Any smooth irreducible representation of $K^*$ is isomorphic to $\sigr$ for exactly one $\vec{r} \in \{0, \dots, p-1\}^{f},$ and the weights for $\Kt$ are exactly the representations
$$\sigrt := \sigr \otimes \epsilon, \, \, \vec{r} \in \{0, \dots, p-1 \}^f.$$
\item Let $\rho \neq 0$ be a smooth, genuine representation of $\Gt$. Then for any weight $\sigrt$ of $\Kt$,
$$\text{dim}_E \, \text{Hom}_{\Kt}(\sigrt, \rho\big \vert_{\Kt}) = \text{dim}_E \, \text{Hom}_{\Ks}(\sigr, \rho \big \vert_{\Ks})$$
(where $\sigr$ is viewed as a representation of $K^*$ on the right-hand side). In particular, there is a weight $\sigrt$ of $\Kt$ such that 
$$\text{dim}_E \, \text{Hom}_{\Kt}(\sigrt, \rho \big \vert_{\Kt}) \geq 1.$$
\end{enumerate}
\end{prop}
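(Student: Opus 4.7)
The plan is to reduce the classification to that of smooth irreducible $E$-representations of $K \cong \Ks$, and then for part~(2) to use genuineness to read off the $\mu_2$-factor. First I would observe that the splitting $k \mapsto (k,\theta(k))$ of the covering $\Kt \to K$, together with the central subgroup $\{(1,\pm 1)\}$, realises $\Kt$ as an internal direct product $\Ks \times \mu_2$: each $(k,\zeta) \in \Kt$ decomposes uniquely as $(k,\theta(k)) \cdot (1, \zeta\theta(k))$. Consequently every smooth irreducible $E$-representation of $\Kt$ has the form $\sigma \otimes \chi$ with $\sigma$ a smooth irreducible representation of $\Ks$ (inflated via the first projection) and $\chi$ a character of $\mu_2$ (inflated via the second), and it is genuine precisely when $\chi = \epsilon$. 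Part~(1) is thus reduced to the classification of smooth irreducible $E$-representations of $K$.

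For this, I would use that the first congruence subgroup $K(1) := \ker(K \twoheadrightarrow SL_2(\mathfrak{k}))$ is pro-$p$, so by the standard fact that a nonzero smooth $E$-representation of a pro-$p$ group in characteristic $p$ has a nonzero fixed vector, any smooth irreducible $K$-representation factors through $K/K(1) \cong SL_2(\mathfrak{k}) = SL_2(\Fq)$. I would then invoke the classical classification of irreducible $E$-representations of $SL_2(\Fq)$ in the defining characteristic (Steinberg's tensor product theorem): they are exactly the $\sigr = \bigotimes_{i=0}^{f-1}(Sym^{r_i}E^2)^{Fr^i}$ for $\vec r \in \{0,\ldots,p-1\}^f$, and distinct $\vec r$ give non-isomorphic representations (distinguished e.g.\ by the action of the diagonal torus $T(\mathfrak{k})$ on a highest weight line). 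Combined with the previous paragraph, this gives part~(1).

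For part~(2), since $\rho$ is genuine the central element $(1,-1) \in \Kt$ acts on $\rho$ as $-1 = \epsilon(-1)$; under the decomposition $\Kt = \Ks \times \mu_2$ this forces $\rho\big\vert_{\Kt} \cong \rho\big\vert_{\Ks} \otimes \epsilon$ (with $\Ks$ acting on the first factor and $\mu_2$ on the second), and the external tensor product formula for Hom then gives
$$\Hom_{\Kt}(\sigrt, \rho\big\vert_{\Kt}) = \Hom_{\Kt}(\sigr \otimes \epsilon,\, \rho\big\vert_{\Ks} \otimes \epsilon) = \Hom_{\Ks}(\sigr, \rho\big\vert_{\Ks}) \otimes \Hom_{\mu_2}(\epsilon, \epsilon) = \Hom_{\Ks}(\sigr, \rho\big\vert_{\Ks}).$$
To produce a weight that actually occurs, I would pick any nonzero $v \in \rho$; smoothness makes $\langle \Ks \cdot v\rangle_E$ a nonzero finite-dimensional smooth $\Ks$-representation (since $v$ is fixed by a finite-index open subgroup of $\Ks$), and any such representation contains a simple subrepresentation isomorphic to some $\sigr$ by part~(1). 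The only nontrivial ingredient anywhere is the classical mod $p$ classification for $SL_2(\Fq)$; everything else is a formal consequence of the splitting over $K$ and the genuineness of $\rho$, so no real obstacle is anticipated.
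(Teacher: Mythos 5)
Your proposal is correct and follows essentially the same route as the paper: decompose $\Kt \cong \Ks \times \mu_2$ via the splitting $k \mapsto (k,\theta(k))$, reduce part (1) to the classification of smooth irreducible $E$-representations of $K$ (which the paper cites from Abdellatif and you re-derive by the standard pro-$p$ fixed-vector argument plus Steinberg's tensor product theorem), and deduce part (2) from the fact that, since $\mu_2$ acts by $\epsilon$ on both sides, every $\Ks$-linear map is automatically $\Kt$-linear. The only difference is that you inline proofs of the standard facts the paper simply cites, so no gap and no genuinely different approach.
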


\begin{proof}[Proof of Proposition \ref{prop:ktweights}]
\begin{enumerate}
\item Let $\pi$ be a weight of $\Kt \cong K^* \times \mu_2$. Then $\pi \big \vert_{\Ks}$ is a smooth irreducible representation of $K^* \cong K$. By the classification of weights of $K$ (a reference is \cite{abdellatif:thesis} Lemme 3.5.1), we have $\pi \big \vert_{\Ks \times \{1\}} \cong \sigr$ for a unique $\vec{r} \in \{0, \dots, p-1\}^f.$ Since $\pi$ is genuine, the restriction $\pi \big \vert_{\{1\} \times \mu_2}$ is nontrivial. Thus $\pi \cong \sigr \otimes \epsilon$ for a unique $r \in \{0, \dots, p-1\}^f$. Conversely, given $\vec{r} \in \{0, \dots, p-1\}^f$, the inflation of $\sigr$ to $K^*$ is smooth and irreducible. The product $\sigr \otimes \epsilon$ is likewise smooth and irreducible as a representation of $\Kt$, and moreover is genuine, so $\sigrt = \sigr \otimes \epsilon$ is a weight of $\Kt$. 

\item Every $E$-represention of $K$ contains a weight of $K$, so, identifying $K$ with $\Ks$,
$$\text{dim}_E\,  \text{Hom}_{\Ks}(\sigr, \rho\big \vert_{\Ks}) \geq 1$$
for some weight $\sigr$ of $K$. A map of $\Ks$-representations has a unique extension to a map of genuine $\Kt$ representations, so
$$\text{dim}_E \, \text{Hom}_{\Kt}(\sigrt, \rho \big \vert_{\Kt}) \geq 1.$$
\end{enumerate}
\end{proof}

From now on we parametrize the weights of $\Kt$ by $\vec{r} \in \{0, \dots, p-1 \}^f.$ The underlying vector space of $\sigrt$ (equivalently, of $\sigr$) will be denoted by $V_{\vec{r}}$. 

Proposition \ref{prop:ktweights} (2) provides a convenient irreducibility criterion. The following is a standard argument in mod $p$ representation theory:

\begin{prop} \label{prop:ktwtsignif} 
Suppose that $\rho \neq 0$ is a smooth, genuine representation of $\Gt$, and let $\tilde{\sigma}_{\vec{r}}$ be a weight of $\Kt$ contained in $\rho$. If $$\text{dim}_E \, \Hom_{\Kt}(\tilde{\sigma}_{\vec{s}}, \rho\big \vert_{\Kt}) = \begin{cases}
1 & \text{ if } \vec{s} = \vec{r},\\
0 & \text{ otherwise, }
\end{cases}$$ 
then the image of the inclusion $\tilde{\sigma}_{\vec{r}} \hookrightarrow \rho$ generates an irreducible $\Gt$-subrepresentation of $\rho$. 
\end{prop}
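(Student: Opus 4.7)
The plan is the standard Frobenius-type argument familiar from mod $p$ representation theory of $GL_2$. Let $W \subseteq \rho$ denote the $\Gt$-subrepresentation generated by the image of the given embedding $\tilde{\sigma}_{\vec{r}} \hookrightarrow \rho$. To show $W$ is irreducible, I take an arbitrary nonzero $\Gt$-subrepresentation $W' \subseteq W$ and aim to prove $W' = W$.

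The first step is to produce a weight inside $W'$. Since $W'$ is a nonzero smooth genuine $E$-representation of $\Gt$, its restriction to $\Kt$ is a nonzero smooth genuine representation. The key input from Proposition \ref{prop:ktweights}(2) (whose proof uses the basic fact that every nonzero smooth representation of a pro-$p$ group on an $E$-vector space has a nonzero fixed vector) is that $W' \big|_{\Kt}$ must contain at least one weight, i.e., there exists $\vec{s} \in \{0, \dots, p-1\}^f$ with $\Hom_{\Kt}(\tilde{\sigma}_{\vec{s}}, W'\big|_{\Kt}) \neq 0$.

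The second step uses the hypothesis. Any nonzero element of $\Hom_{\Kt}(\tilde{\sigma}_{\vec{s}}, W'\big|_{\Kt})$ composed with the inclusion $W' \hookrightarrow \rho$ gives a nonzero element of $\Hom_{\Kt}(\tilde{\sigma}_{\vec{s}}, \rho\big|_{\Kt})$. By the hypothesis on multiplicities, this forces $\vec{s} = \vec{r}$, and moreover $\Hom_{\Kt}(\tilde{\sigma}_{\vec{r}}, \rho\big|_{\Kt})$ is one-dimensional. Consequently, the embedding $\tilde{\sigma}_{\vec{r}} \hookrightarrow W' \subseteq \rho$ is a scalar multiple of the original distinguished embedding $\tilde{\sigma}_{\vec{r}} \hookrightarrow \rho$. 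In particular, the image of the original embedding is contained in $W'$.

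The third step closes the argument: since $W$ is by definition the $\Gt$-subrepresentation generated by the image of $\tilde{\sigma}_{\vec{r}}$ under the original embedding, and this image lies in the $\Gt$-stable subspace $W'$, we conclude $W \subseteq W'$ and hence $W' = W$. Therefore $W$ admits no nontrivial proper $\Gt$-subrepresentation and is irreducible. I do not anticipate any real obstacle here; the argument is purely formal, with the only substantive ingredient being the existence of a weight in every nonzero smooth genuine $\Kt$-representation, which is already packaged into Proposition \ref{prop:ktweights}(2).
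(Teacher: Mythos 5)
Your proposal is correct and follows essentially the same argument as the paper: produce a weight in any nonzero subrepresentation via Proposition \ref{prop:ktweights}(2), note that the multiplicity hypothesis forces this weight to be $\tilde{\sigma}_{\vec{r}}$ with the embedding a scalar multiple of the given one, and conclude that the subrepresentation contains the generating copy of $\tilde{\sigma}_{\vec{r}}$ and hence everything.
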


\begin{proof}[Proof of Proposition \ref{prop:ktwtsignif}]
Let $\Gt \cdot \tilde{\sigma}_{\vec{r}}$ denote the $\Gt$-subrepresentation of $\rho$ generated by the image of $\tilde{\sigma}_{\vec{r}}$. Suppose that $\pi \subset \Gt \cdot \tilde{\sigma}_{\vec{r}}$ is again a $\Gt$-subrepresentation. Then $\pi$ is a smooth genuine $\Gt$-representation, so by Proposition \ref{prop:ktweights}, $\pi$ contains a $\Kt$-weight. This $\Kt$-weight must be $\sigrt$, since it is contained in $\rho$. Since also $\text{dim}_E\, \Hom_{\Kt}(\tilde{\sigma}_{\vec{r}}, \rho \big \vert_{\Kt}) = 1$, the $\Gt$-subrepresentation of $\pi$ generated by $\sigrt$ is equal to $\Gt \cdot \tilde{\sigma}_{\vec{r}}$. Thus $\Gt\cdot \tilde{\sigma}_{\vec{r}}$ is irreducible. 
\end{proof}

\subsection{Weights of $\Kt'$}
\label{subsec:kt'weights}
Recall that $\Kt' = \tilde{\alpha}\Kt\tilde{\alpha}^{-1}$ where $\tilde{\alpha}$ is a lift to $\widetilde{GL}_2(F)$ of $\alpha = 
\left( \begin{array}{cc}
1    & 0 \\
0    & \varpi 
  \end{array} \right),$ and that the extension defining $\Gt$ is split over $\Kt'$. We define a \textit{weight of} $\Kt'$ to be a smooth, irreducible, genuine representation of $\Kt'$. The classification of weights of $\Kt'$ (in Lemma \ref{lem:kt'weights} below) will reduce to that of the weights of $\Kt$, following the argument used in \cite{abdellatif:thesis} Cor. 3.5.2 to classify weights of $K'$ in terms of weights of $K$. 

Given a representation $\pi$ of a subgroup $H$ of $\widetilde{GL}_2(F)$ and an element $g \in \widetilde{GL}_2(F)$, we define a conjugate representation $\pi^g$ of $gHg^{-1}$ by
$$\pi^g(h) = \pi(g^{-1}hg)$$
for $h \in H$. 

\begin{lem} \label{lem:kt'weights}
Let $\tilde{\alpha}$ be any lift of $\alpha =
  \left( \begin{array}{cc}
1      & 0 \\
0      & \varpi 
    \end{array} \right)$ to $\widetilde{GL}_2(F)$. 
\begin{enumerate}
\item Any weight of $\Kt'$ is isomorphic to $(\sigrt)^{\tilde{\alpha}}$ for a unique vector $\vec{r} \in \{ 0, \dots, p-1\}^{f}$. (The conjugate representation does not depend on the choice of lift of $\alpha$.)
\item Let $\rho \neq 0$ be a smooth, genuine representation of $\Gt$. Then for any weight $\sigrt^{\tilde{\alpha}}$ of $\Kt'$, 
$$\Hom_{\Kt'}(\sigrt^{\tilde{\alpha}},\, \rho \big \vert_{\Kt'}) =  \Hom_{\Kt}\left(\sigrt,\, \rho^{(\tilde{\alpha})^{-1}}\big \vert_{\Kt}\right).$$
In particular, there is at least one weight $\sigrt^{\tilde{\alpha}}$ of $\Kt'$ such that 
$$\text{dim}_E \, \Hom_{\Kt'}(\sigrt^{\tilde{\alpha}}, \rho \big \vert_{\Kt'}) \geq 1.$$
\end{enumerate}
\end{lem}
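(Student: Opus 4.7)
The plan is to transfer the classification of weights of $\Kt$ given by Proposition \ref{prop:ktweights} to $\Kt'$ via conjugation by $\tilde{\alpha}$, using the facts that $\tilde{\alpha}^{-1}\Kt'\tilde{\alpha} = \Kt$, that $\tilde{\alpha}^{-1}\Gt\tilde{\alpha} = \Gt$ (since $\Gt$ is normal in $\widetilde{GL}_2(F)$), and that $\tilde{\alpha}$ centralizes the central subgroup $\mu_2 \subset \widetilde{GL}_2(F)$. I will first dispose of the independence from the choice of lift: any two lifts of $\alpha$ differ by multiplication by an element of $\mu_2$, which is central, so conjugation by a lift of $\alpha$ depends only on $\alpha$ itself. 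This justifies the unadorned notation $\pi^{\tilde{\alpha}}$ and guarantees the claim in parentheses in (1).

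For part (1), I will start with an arbitrary weight $\pi'$ of $\Kt'$ and pass to the conjugate representation $(\pi')^{(\tilde{\alpha})^{-1}}$ on the same underlying vector space; by the remarks above this is a smooth, irreducible representation of $\Kt$, and it remains genuine because $\tilde{\alpha}$ acts trivially on $\mu_2$ by conjugation. Proposition \ref{prop:ktweights}(1) then identifies it with $\sigrt$ for a unique $\vec{r} \in \{0,\dots,p-1\}^{f}$, whence $\pi' \cong (\sigrt)^{\tilde{\alpha}}$; the uniqueness of $\vec{r}$ is inherited from the same proposition.

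For part (2), I would simply unravel definitions: a linear map $\phi: V_{\vec r} \to \rho$ intertwines $\sigrt^{\tilde{\alpha}}$ with $\rho|_{\Kt'}$ if and only if, for every $k \in \Kt$, $\phi(\sigrt(k)v) = \rho(\tilde{\alpha} k \tilde{\alpha}^{-1})\phi(v) = \rho^{(\tilde{\alpha})^{-1}}(k)\phi(v)$, which is precisely the condition that $\phi$ intertwines $\sigrt$ with $\rho^{(\tilde{\alpha})^{-1}}|_{\Kt}$. This establishes the displayed equality of Hom-spaces. The ``in particular'' statement is then immediate: $\rho^{(\tilde{\alpha})^{-1}}$ is again a nonzero smooth genuine representation of $\Gt$, so Proposition \ref{prop:ktweights}(2) produces a weight $\sigrt$ of $\Kt$ with nontrivial Hom into it, and the equality of Hom-spaces transfers this to a weight $\sigrt^{\tilde{\alpha}}$ of $\Kt'$ with nontrivial Hom into $\rho$.

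I do not anticipate any genuine obstacle here; the argument is a transport-of-structure and essentially mirrors the reduction from $K'$ to $K$ already used in Abdellatif's classification of weights of $K'$. The only points requiring any care are confirming that $\rho^{(\tilde{\alpha})^{-1}}$ is still \emph{genuine} as a $\Gt$-representation, and that conjugation is independent of the choice of lift; both are consequences of the fact that the cover-theoretic center $\mu_2$ is central in the larger group $\widetilde{GL}_2(F)$.
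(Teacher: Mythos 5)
Your proposal is correct and follows essentially the same route as the paper: both reduce the classification of $\Kt'$-weights to that of $\Kt$-weights by conjugating with $\tilde{\alpha}$ and then invoking Proposition \ref{prop:ktweights}, with the Hom-space identity obtained by unwinding the definition of the conjugate representation. The extra care you take over the independence of the lift and the preservation of genuineness (both consequences of $\mu_2$ being central in $\widetilde{GL}_2(F)$) is consistent with what the paper records, just spelled out in more detail.
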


\begin{proof}[Proof of Lemma \ref{lem:kt'weights}]
\begin{enumerate}
\item Let $\pi$ be a weight of $\Kt'$. Then $\pi^{(\tilde{\alpha})^{-1}}$ is a weight of $\Kt$, so there is a unique vector $\vec{r} \in \{0, \dots, p-1\}^f$ such that $\pi^{(\tilde{\alpha})^{-1}} \cong \sigrt$.
Then $\pi = (\pi^{(\tilde{\alpha})^{-1}})^{\tilde{\alpha}} \cong (\sigrt)^{\tilde{\alpha}}.$
\item From the definition of the conjugate representations it follows that each element of $\Hom_{\Kt'}(\sigrt^{\tilde{\alpha}},\, \rho\big \vert_{\Kt'})$ belongs to $\Hom_{\Kt}\left( \sigrt, \, \rho^{(\tilde{\alpha})^{-1}} \big \vert_{\Kt}\right)$, and vice versa. By Proposition \ref{prop:ktweights} (2) the $\Gt$-representation $\rho^{(\tilde{\alpha})^{-1}}$ contains some weight $\sigrt$ of $\Kt$, so $\rho$ contains the weight $\sigrt^{\tilde{\alpha}}$ of $\Kt'$. 
\end{enumerate} 
\end{proof}

\subsection{$\Ubark$-coinvariants of $\Kt$- and $\Kt'$-weights}

Let $\Uk$ denote the upper-triangular unipotent subgroup of $G(\mathfrak{k})$, and let $\sigrt$ be a weight of $\Kt$. The restriction of $\sigrt$ to $U^* \cap \Kt$ factors through the composition 
\begin{align*}
\UKt& \longrightarrow \UK  \longrightarrow^{red} \Uk\\
\, \, \hspace{.2cm}  (u(x), 1) \hspace{.2cm}\,  & \longmapsto  \hspace{.3cm} u(x)\, \,  \longmapsto \, \, u(\bar{x}),
\end{align*}
so we may view $\sigrt \big \vert_{(U \cap K)^*}$ as a representation of $\Uk$. The same is true if $U$ is replaced with $\overline{U}$, so we may likewise view $\sigrt\big \vert_{(\Ubar \cap K)^*}$ as a representation of $\Ubark$. Let $(\sigrt)^{\Uk}$ denote the subspace of $\Uk$-invariants in $\Vsigt$, let $(\sigrt)_{\Ubark}$ denote the $\Ubark$-coinvariants of $\sigrt$, and let $p_{\Ubark}$ denote the projection $\Vsigt \rightarrow (\sigrt)_{\Ubark}$. 

As an $E$-vector space, $(\sigrt)^{\Uk}$ is equal to the invariants of $\Vsigt$ by the action of $\Uk$ under $\sigma_{\vec{r}}$ via inflation through $red$ alone; this $\Uk$-invariant space is well-known to be the one-dimensional highest weight space of $\sigma_{\vec{r}}$. Furthermore, $(\sigma_{\vec{r}})^{\Ubark}$ is stable by $T \cap K$, and is isomorphic to (the inflation to $T \cap K$ of) the character $\delta_{\vec{r}}$ of $\OF^\times$. The $\Ubark$-coinvariants $(\sigma_{\vec{r}})_{\Ubark}$ likewise carry a $T \cap K$-representation isomorphic to $\delta_{\vec{r}}$. Let $\tilde{\delta}_{\vec{r}}$ denote the genuine character $\delta_{\vec{r}} \otimes \epsilon$ of $\TKt$. The following lemma is a consequence of the preceding comment together with well-known facts for weights of $K$ (cf. \cite{herzig:modpsatake} Lemma 2.5 for (2) in the general setting of mod $p$ representations of unramified reductive groups). 

\begin{lem} \label{lem:ukinvariants}
\begin{enumerate} 
\item $(\sigrt)^{\Uk}$ is generated as an $E$-vector space by the highest-weight vector of $\sigma_{\vec{r}}$, and $$(\sigrt)^{\Uk} \cong \tilde{\delta}_{\vec{r}}$$ as $\TKt$-representations.
\item The composition $$j_{\vec{r}}: (\sigrt)^{\Uk} \hookrightarrow^{i^{\Uk}_{\vec{r}}} \sigrt \twoheadrightarrow (\sigrt)_{\Ubark}$$ is an isomorphism of
 $\TKt$-representations. 
\end{enumerate}
\end{lem}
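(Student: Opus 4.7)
The plan is to reduce both parts to the classical theory of weights of $SL_2(\mathfrak{k})$, exploiting the fact that the Kubota splitting $K \hookrightarrow K^*$ and the canonical splitting of the extension over unipotent subgroups agree on $U \cap K$ and on $\overline{U}\cap K$ (cf.\ Remark \ref{rem:kspl}). Under this matching, the character $\epsilon$ is trivial on both $(U\cap K)^*$ and $(\overline{U}\cap K)^*$, so the underlying vector spaces of $(\sigrt)^{\Uk}$ and $(\sigrt)_{\Ubark}$ are literally those of $(\sigr)^{\Uk}$ and $(\sigr)_{\Ubark}$, with the $\mu_2$-factor merely contributing the $\epsilon$-twist to the resulting $\TKt$-action. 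This bridging step reduces both statements to assertions about $\sigr$ as a representation of $K$, which are the ``well-known facts'' the paper alludes to.

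For part (1), I would cite the standard classification of weights of $SL_2(\mathfrak{k})$ (as used already in the proof of Proposition \ref{prop:ktweights} via Abdellatif's Lemme 3.5.1) to conclude that $(\sigr)^{\Uk}$ is one-dimensional, spanned by the tensor product of highest-weight vectors $e_1^{r_i}$ across the Frobenius-twisted factors $(Sym^{r_i}E^2)^{Fr^i}$. A direct computation on this line yields $T\cap K$-character $\delta_{\vec{r}}$ (the twist by $Fr^i$ multiplies the weight of the $i$-th factor by $p^i$, producing $\bar{a}\mapsto \bar{a}^{\sum_i r_i p^i}$), and incorporating the $\epsilon$-factor on $\mu_2$ promotes this to the genuine character $\tilde{\delta}_{\vec{r}}$ of $\TKt$.

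For part (2), the analogous fact (Herzig \cite{herzig:modpsatake}, Lemma 2.5) gives that $(\sigr)_{\Ubark}$ is one-dimensional with $(T\cap K)$-character $\delta_{\vec{r}}$; transporting $\epsilon$ through the quotient, $(\sigrt)_{\Ubark}\cong \tilde{\delta}_{\vec{r}}$ as $\TKt$-modules. Since source and target of $j_{\vec{r}}$ are both one-dimensional and $\TKt$-isomorphic, it suffices to show $j_{\vec{r}}\neq 0$. I would do this by decomposing $\Vsigt$ into $T(\mathfrak{k})$-weight spaces: since $\Ubark$ corresponds to the negative simple root, for any weight vector $v$ of weight $\chi$ and any $\bar{u}\in\Ubark$, the difference $\bar{u}\cdot v - v$ is a sum of vectors of weights strictly less than $\chi$. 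Consequently the subspace $\operatorname{span}\{(\bar{u}-1)v:\bar{u}\in\Ubark,\,v\in\Vsigt\}$ does not meet the highest-weight line, so the highest-weight vector—which spans $(\sigrt)^{\Uk}$—projects to a nonzero class in $(\sigrt)_{\Ubark}$.

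The main obstacle, modest as it is, lies in the weight-lowering step of part (2): one must verify that after the Frobenius twist $Fr^i$, although the $T(\mathfrak{k})$-weights of the $i$-th tensor factor are rescaled by $p^i$, the \emph{sign} of each root is preserved, so the strictly-weight-decreasing property of $(\bar{u}-1)$ persists on the full tensor product $\bigotimes_i (Sym^{r_i}E^2)^{Fr^i}$. Once that bookkeeping is done, no further input beyond the cited classical facts is required.
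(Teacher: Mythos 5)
Your proposal is correct and follows essentially the same route as the paper, which proves Lemma \ref{lem:ukinvariants} by exactly this reduction: the compatibility of the splittings on unipotent subgroups lets one read off the $\Uk$-invariants and $\Ubark$-coinvariants from the classical weight $\sigma_{\vec{r}}$ of $K$ (with $\epsilon$ supplying the genuine twist), after which (1) is the standard highest-weight computation and (2) is the statement of Herzig's Lemma 2.5. The only difference is cosmetic: where the paper cites Herzig for the isomorphism in (2), you reprove the nonvanishing of $j_{\vec{r}}$ by the weight-lowering property of $\Ubark$ (which does survive the Frobenius twists, since they rescale weights by the positive integers $p^i$), and this is a correct, slightly more self-contained substitute for the citation.
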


Since $\delta_{\vec{r}} \cong \delta_{\vec{s}}$ if and only if $\sum_{i = 0}^{f-1}r_ip^i \equiv \sum_{i = 0}^{f-1}s_ip^i \pmod{q-1}$, we obtain:

\begin{cor} \label{cor:ukisoms}
Let $\sigrt$ and $\sigst$ be two weights of $\Kt$. There exists a $\Tt \cap \Kt$-linear isomorphism 
$$(\sigrt)_{\Ubark} \longrightarrow (\sigst)_{\Ubark}$$
if and only if $\vec{r} = \vec{s}$ or $\{\vec{r}, \vec{s}\} = \{\vec{0}, \overrightarrow{p-1}\}.$
\end{cor}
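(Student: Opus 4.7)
The plan is to reduce the statement directly to the classification of $E^\times$-valued characters of $\mathfrak{k}^\times$, by invoking Lemma \ref{lem:ukinvariants}.

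First I would apply part (2) of Lemma \ref{lem:ukinvariants} twice to identify $(\sigrt)_{\Ubark}$ with $\tilde{\delta}_{\vec{r}}$ and $(\sigst)_{\Ubark}$ with $\tilde{\delta}_{\vec{s}}$ as $\TKt$-representations. Since both target spaces are one-dimensional, the existence of a nonzero $\TKt$-linear map between them is equivalent to the existence of a $\TKt$-linear isomorphism, and hence to an equality of characters $\tilde{\delta}_{\vec{r}} = \tilde{\delta}_{\vec{s}}$ of $\TKt$.

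Next I would factor out the genuine twist. Because $\TKt = (\TKt \cap K^*) \cdot \mu_2$ and both $\tilde{\delta}_{\vec{r}}$ and $\tilde{\delta}_{\vec{s}}$ are defined as $\delta_{\vec{r}} \otimes \epsilon$ and $\delta_{\vec{s}} \otimes \epsilon$ respectively (with the same embedding $\epsilon$), the equality $\tilde{\delta}_{\vec{r}} = \tilde{\delta}_{\vec{s}}$ is equivalent to $\delta_{\vec{r}} = \delta_{\vec{s}}$ as characters of $T \cap K$. Under the identification $T \cap K \cong \OF^\times$ via $h(a) \leftrightarrow a$, both characters factor through reduction to $\mathfrak{k}^\times$ and are given by $\bar{a} \mapsto \bar{a}^{\sum_{i=0}^{f-1} r_i p^i}$ and $\bar{a} \mapsto \bar{a}^{\sum_{i=0}^{f-1} s_i p^i}$ respectively. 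Since $\mathfrak{k}^\times$ is cyclic of order $q-1$, these characters coincide if and only if $\sum_{i=0}^{f-1} r_i p^i \equiv \sum_{i=0}^{f-1} s_i p^i \pmod{q-1}$.

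Finally I would carry out the elementary base-$p$ analysis to rewrite this congruence in terms of $\vec{r}$ and $\vec{s}$. For any $\vec{r} \in \{0, \dots, p-1\}^f$, the integer $N(\vec{r}) := \sum_{i=0}^{f-1} r_i p^i$ lies in $[0, q-1]$, with $N(\vec{r}) = 0$ precisely when $\vec{r} = \vec{0}$ and $N(\vec{r}) = q-1$ precisely when $\vec{r} = \overrightarrow{p-1}$. Thus if $N(\vec{r}) \equiv N(\vec{s}) \pmod{q-1}$, then either the two values coincide in $[0, q-1]$, in which case uniqueness of the base-$p$ expansion forces $\vec{r} = \vec{s}$, or they differ by exactly $q-1$, in which case one is $0$ and the other is $q-1$, giving $\{\vec{r}, \vec{s}\} = \{\vec{0}, \overrightarrow{p-1}\}$. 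The converse is immediate.

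There is no real obstacle here; the corollary is a direct consequence of Lemma \ref{lem:ukinvariants}(2) together with the elementary observation about base-$p$ digits modulo $q-1$. The only thing requiring any care is tracking the genuine twist $\epsilon$ through the identification $\TKt \cong (T \cap K) \times \mu_2$, which is trivial because $\epsilon$ is the same on both sides.
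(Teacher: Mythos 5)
Your argument is correct and is essentially the paper's own proof: the paper deduces the corollary in one line from Lemma \ref{lem:ukinvariants} together with the fact that $\delta_{\vec{r}} \cong \delta_{\vec{s}}$ iff $\sum_i r_i p^i \equiv \sum_i s_i p^i \pmod{q-1}$, leaving the base-$p$ digit analysis implicit, which you simply spell out (correctly, since both sums lie in $[0,q-1]$ with the extremes attained only at $\vec{0}$ and $\overrightarrow{p-1}$).
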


Now consider the $\Kt'$-weight $\sigrt^{\tilde{\alpha}}$. The subgroup $(U^* \cap K^*)^{\tilde{\alpha}}$ of $\Kt'$ consists of the elements $(\tilde{\alpha})(u(x), 1)\tilde{\alpha}^{-1} = (u(\varpi^{-1}x), (-1, \varpi)_F)$ such that $x \in \OF$, and $\sigrt^{\tilde{\alpha}}(u(\varpi^{-1}x), 1) = \sigrt(u(x), 1)$. Hence the restriction $\sigrt^{\tilde{\alpha}} \big \vert_{(\Ubar^* \cap K^*)^{\tilde{\alpha}}}$ is the inflation of a representation of $\Uk$ through the composition 
\begin{align*}
(U^* \cap K^*)^{\tilde{\alpha}} &\longrightarrow U \cap K \longrightarrow^{red} \Uk\\
(u(\varpi^{-1} x), 1) & \, \,\hspace{.1cm} \mapsto u(x)\, \,\hspace{.1cm}  \mapsto u(\bar{x}),
\end{align*}
and we write $(\sigrt^{\tilde{\alpha}})^{\Uk}$ for $(\sigrt^{\tilde{\alpha}})^{(U^* \cap K^*)^{\tilde{\alpha}}}$. Likewise, the restriction $\sigrt^{\tilde{\alpha}} \big \vert_{(\Ubar^*\cap K^*)^{\tilde{\alpha}}}$ is the inflation of a representation of $\Ubark$ through the composition
\begin{align*}
(\Ubar^* \cap K^*)^{\tilde{\alpha}} &\longrightarrow \Ubar \cap K \longrightarrow^{red} \Ubark\\
(\bar{u}(\varpi x), 1) & \, \,\hspace{.1cm} \mapsto \bar{u}(x)\, \,\hspace{.1cm}  \mapsto \bar{u}(\bar{x}),
\end{align*} 
and we denote $(\sigrt^{\tilde{\alpha}})_{(\Ubar^* \cap K^*)^{\tilde{\alpha}}}$ by $(\sigrt^{\tilde{\alpha}})_{\Ubark}.$ 

\begin{lem} \label{lem:kt'coinvs}
\begin{enumerate}
\item $(\sigrt^{\tilde{\alpha}})^{\Uk}$ is generated as an $E$-vector space by the highest-weight vector of $\sigma_{\vec{r}}^{\alpha}$.
\item If $\vec{r'} \in \{0, \dots, p-1\}^f$ then there is an isomorphism of $\Tt \cap \Kt$-representations
$$(\sigrt^{\tilde{\alpha}})^{\Uk} \cong \tilde{\delta}_{\vec{r'}}$$  
if and only if $\vec{r'}$ satisfies $$\sum_{i = 0}^{f-1}r'_ip^i \equiv \sum_{i = 0}^{f-1} \left(r_i + \frac{p-1}{2} \right) \pmod{q - 1}.$$
\item The composition 
$$j_{\vec{r}}^{\tilde{\alpha}}: (\sigrt^{\tilde{\alpha}})^{\Uk} \hookrightarrow\sigrt^{\tilde{\alpha}} \twoheadrightarrow (\sigrt^{\tilde{\alpha}})_{\Ubark}$$
is an isomorphism of $\Tt \cap \Kt$-representations. 
\end{enumerate}
\end{lem}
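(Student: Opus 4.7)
The plan is to parallel Lemma \ref{lem:ukinvariants}, with the main new ingredient being a cocycle calculation comparing the splittings $\theta$ and $\theta'$ on the diagonal torus.

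For part (1), I would unwind the definition of $\sigrt^{\tilde{\alpha}}$ as in the paragraph preceding the lemma. The restriction of $\sigrt^{\tilde{\alpha}}$ to $(U^* \cap K^*)^{\tilde{\alpha}}$ factors through $\Uk$ and, via this factorization, coincides with the inflation of $\sigma_{\vec r}|_{\Uk}$. Hence the $\Uk$-invariant subspace of $V_{\vec r}$ under $\sigrt^{\tilde{\alpha}}$ is the same $E$-subspace as the $\Uk$-invariants under $\sigrt$, which by Lemma \ref{lem:ukinvariants}(1) is the highest-weight line of $\sigma_{\vec r}$. Since $\alpha$ normalizes the diagonal torus, this line is simultaneously the highest-weight line of $\sigma_{\vec r}^{\alpha}$, which gives (1).

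For part (2), I would compute the $\TKt$-action on this line through $\sigrt^{\tilde{\alpha}}$. The key step is to show that for $a \in \OF^\times$ and $\zeta \in \mu_2$,
\[
\tilde{\alpha}^{-1} \cdot (h(a), \zeta) \cdot \tilde{\alpha} = \bigl(h(a),\, \zeta \cdot (a,\varpi)_F\bigr)
\]
in $\Gt$. This follows from the defining property $\tilde{\alpha}\cdot(k,\theta(k))\cdot\tilde{\alpha}^{-1} = (\alpha k \alpha^{-1}, \theta'(\alpha k \alpha^{-1}))$ applied to $k = h(a)$, combined with the elementary calculations $\theta(h(a)) = 1$ and $\theta'(h(a)) = (a,\varpi)_F$ obtained directly from the explicit formulas of $\S$\ref{subsubsec:splittings}. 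Applying $\sigrt$ to the right-hand side and evaluating on the highest-weight vector yields the scalar $\zeta \cdot (a,\varpi)_F \cdot \delta_{\vec r}(a)$. The tame-symbol formula (\ref{eqn:tamesymbol}) gives $(a,\varpi)_F = \omega(a)^{(q-1)/2}$, and under the identification $\mu_2(F) \hookrightarrow E^\times$ this equals $\bar a^{(q-1)/2} = \delta_{\overrightarrow{\frac{p-1}{2}}}(a)$ (using that $\frac{q-1}{2} = \sum_{i=0}^{f-1}\frac{p-1}{2}\, p^i$). Multiplying the two characters, the resulting character of $\TKt$ on the highest-weight line is $\tilde{\delta}_{\vec r}\cdot \delta_{\overrightarrow{\frac{p-1}{2}}} = \tilde{\delta}_{\vec{r'}}$, where $\vec{r'}\in\{0,\dots,p-1\}^f$ is characterized by the congruence stated in the lemma (using that $\delta_{\vec s} = \delta_{\vec t}$ as characters of $\OF^\times$ iff $\sum s_i p^i \equiv \sum t_i p^i \pmod{q-1}$).

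For part (3), I would repeat the argument of Lemma \ref{lem:ukinvariants}(2). The restriction of $\sigrt^{\tilde{\alpha}}$ to $(\Ubar^*\cap K^*)^{\tilde{\alpha}}$ factors through $\Ubark$ and coincides with $\sigma_{\vec r}|_{\Ubark}$ under that factorization, so $(\sigrt^{\tilde{\alpha}})_{\Ubark}$ and $(\sigrt)_{\Ubark}$ are the same quotient of $V_{\vec r}$. The latter is one-dimensional, and the composition with the highest-weight line is nonzero because the highest-weight vector is not contained in $(\sigma_{\vec r}(\Ubark)-1)V_{\vec r}$. Hence $j_{\vec r}^{\tilde{\alpha}}$ is a vector-space isomorphism, and $\TKt$-equivariance is automatic since both source and target carry $\TKt$-actions induced from $\sigrt^{\tilde{\alpha}}$.

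The hard part will be the cocycle bookkeeping in (2): verifying that conjugation of $\tilde h(a)$ by $\tilde\alpha$ produces exactly the factor $(a,\varpi)_F$ (and no spurious Hilbert symbols from the choice of lift), and then cleanly identifying the resulting quadratic character of $\OF^\times$ with $\delta_{\overrightarrow{\frac{p-1}{2}}}$ through Remark \ref{rem:hilbsymprops}(5) and the tame-symbol formula.
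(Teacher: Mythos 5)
Your proposal is correct and follows essentially the same route as the paper: all three parts are deduced from Lemma \ref{lem:ukinvariants} by conjugation by $\tilde{\alpha}$, with the key computation being $\tilde{\alpha}^{-1}(h(a),\zeta)\tilde{\alpha} = (h(a), \zeta\cdot(a,\varpi)_F)$ (via $\theta(h(a))=1$, $\theta'(h(a))=(a,\varpi)_F$) and the identification of $a \mapsto (a,\varpi)_F$ with $\delta_{\overrightarrow{\frac{p-1}{2}}}$ through the tame symbol formula. The only cosmetic difference is that you write $(a,\varpi)_F=\omega(a)^{(q-1)/2}$ where the formula literally gives $\omega(a^{-1})^{(q-1)/2}$; since the character is quadratic these agree, so there is no gap.
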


\begin{proof}[Proof of Lemma \ref{lem:kt'coinvs}]
All three statements follow from Lemma \ref{lem:ukinvariants} under conjugation by $\tilde{\alpha}$. The calculation for (2) is the following: let $a \in \OF^\times$ and $\zeta \in \mu_2(F)$, so that $(t(a), \zeta) \in \Tt \cap \Kt$, and let $v \in (\sigrt^{\tilde{\alpha}})^{\Uk}$. Then 
\begin{align*}
\sigrt^{\tilde{\alpha}}(t(a), \zeta)v &= \sigrt\left( (\tilde{\alpha})^{-1}(t(a), \zeta)\tilde{\alpha}\right)v \\
& = \sigrt\left((t(a), \zeta \cdot (a, \varpi)_F)\right)v\\
& = \zeta \cdot (a, \varpi)_F \cdot \delta_{\vec{r}}(a)v.
\end{align*}
By (\ref{eqn:tamesymbol}), the character $a \mapsto (a, \varpi)_F$ of $\OF^\times$ is equal to $a \mapsto \overline{(a^{-1})}^{\frac{q-1}{2}}$, which in terms of our parametrization of smooth characters of $\OF^\times$ is equal to $\delta_{\overrightarrow{\frac{p-1}{2}}}$. Hence
$$\sigrt^{\tilde{\alpha}}(t(a), \zeta)v = \zeta \cdot \delta_{\overrightarrow{\frac{p-1}{2}}}(a) \cdot \delta_{\vec{r}}(a)v.$$
We have $\delta_{\overrightarrow{\frac{p-1}{2}}}(a) \cdot \delta_{\vec{r}}(a) = \delta_{\overrightarrow{r'}}(a)$ for all $a \in \OF^\times$ if and only if $\vec{r'}$ satisfies the condition given in (2). 
\end{proof}

Finally we state a lemma which will be useful in the definition and application of a Satake transform (Proposition \ref{prop:gensataketransformi}). It is the appropriate analogue of the fact that, in the context of representations on $\CC$-vector spaces, the Jacquet functor is left adjoint to parabolic induction. The proof is taken with only minor adaptations from notes of a course by Herzig (\cite{herzig:glnnotes}, Lemma 26), where a similar statement is proven for mod $p$ representations of $GL_n(F)$. 

\begin{lem} \label{lem:natlisom}
\begin{enumerate}
\item Let $\sigrt$ be any weight of $\Kt$. There is a natural isomorphism 
$$\Hom_{\Gt}\left(\ind_{\Kt}^{\Gt}\sigrt, \, \Ind_{\Bbt}^{\Gt}(-)\right) \cong \Hom_{\Tt}\left(\ind_{\Tt \cap \Kt}^{\Tt}\left((\sigrt)_{\Ubark}\right),\, -\right)$$
of functors from the category of smooth genuine $\Tt$-representations to the category of $E$-vector spaces.

\item Given a smooth genuine $\Tt$-representation $\pi$ and $f \in \Hom_{\Gt}(\ind_{\Kt}^{\Gt}\sigrt, \, \Ind_{\Bbt}^{\Gt}(\pi)),$ let $f_{\Tt}$ denote the image of $f$ in $\Hom_{\Tt}(\ind_{\Tt \cap \Kt}^{\Tt}((\sigrt)_{\Ubark}),\, \pi)$. Let $f'$ denote the element of $\Hom_{\Kt}(\sigrt, \, \Ind_{\Bbt}^{\Gt}(\pi) \big \vert_{\Kt})$ which corresponds to $f$ by Frobenius reciprocity, and let $(f_{\Tt})'$ denote the element of $\Hom_{\TKt}((\sigrt)_{\Ubark},\, \pi \big \vert_{\TKt})$ which corresponds to $f_{\Tt}$ by Frobenius reciprocity. Then for every $v \in \Vsigt$, 
$$f'(v)(1) = (f_{\Tt})'(p_{\Ubark}(v)).$$
\end{enumerate}
\end{lem}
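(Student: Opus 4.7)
My plan is to prove (1) as a composition of standard adjunctions and (2) by tracing a single element through that composition.

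For (1), I would chain together the following natural isomorphisms. First, Frobenius reciprocity for compact induction from the open subgroup $\Kt \subset \Gt$ gives
\[
\Hom_{\Gt}\!\left(\ind_{\Kt}^{\Gt}\sigrt,\, \Ind_{\Bbt}^{\Gt}(\pi)\right) \;\cong\; \Hom_{\Kt}\!\left(\sigrt,\, \Ind_{\Bbt}^{\Gt}(\pi)\big\vert_{\Kt}\right).
\]
Next, the Iwasawa decomposition $G = \Bbar K$ lifts to $\Gt = \Bbt\Kt$, and, viewing $\pi$ as a representation of $\Bbt = \Ubar^*\Tt$ by letting $\Ubar^*$ act trivially (as is implicit in forming $\Ind_{\Bbt}^{\Gt}(\pi)$), restriction of functions from $\Gt$ to $\Kt$ yields
\[
\Ind_{\Bbt}^{\Gt}(\pi)\big\vert_{\Kt} \;\cong\; \Ind_{\Bbt \cap \Kt}^{\Kt}\!\left(\pi\big\vert_{\Bbt \cap \Kt}\right).
\]
A second Frobenius reciprocity (with $\Ind = \ind$ on the compact group $\Kt$) produces
\[
\Hom_{\Kt}\!\left(\sigrt,\, \Ind_{\Bbt\cap\Kt}^{\Kt}(\pi\big\vert_{\Bbt\cap\Kt})\right) \;\cong\; \Hom_{\Bbt\cap\Kt}\!\left(\sigrt\big\vert_{\Bbt\cap\Kt},\, \pi\big\vert_{\Bbt\cap\Kt}\right).
\]
Since $(\Ubar\cap K)^*$ acts trivially on $\pi$, any such map kills the $\Ubark$-action on $\sigrt$ and hence factors through $p_{\Ubark}$, identifying this Hom space with $\Hom_{\TKt}((\sigrt)_{\Ubark},\, \pi\big\vert_{\TKt})$. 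A final Frobenius reciprocity for compact induction from the open subgroup $\TKt \subset \Tt$ produces $\Hom_{\Tt}(\ind_{\TKt}^{\Tt}((\sigrt)_{\Ubark}),\, \pi)$. Each step is natural in $\pi$, and their composition is the desired isomorphism.

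For (2), I would simply track a fixed $v \in \Vsigt$ through the chain. Under the first reciprocity, $f \mapsto f'$ with $f'(v) = f([1,v])$. The second identification replaces $f'$ by its restriction to $\Kt$; the third reciprocity then evaluates at $1 \in \Kt$, producing the $(\Bbt\cap\Kt)$-equivariant map $v \mapsto f'(v)(1)$. Factoring through $p_{\Ubark}$ gives the $\TKt$-equivariant map $(\sigrt)_{\Ubark} \to \pi$ sending $p_{\Ubark}(v)$ to $f'(v)(1)$, and this is precisely $(f_{\Tt})'$ by the final Frobenius reciprocity step. The claimed formula $f'(v)(1) = (f_{\Tt})'(p_{\Ubark}(v))$ is then immediate.

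The main point to verify carefully is the second step, namely the identification $\Ind_{\Bbt}^{\Gt}(\pi)\big\vert_{\Kt} \cong \Ind_{\Bbt\cap\Kt}^{\Kt}(\pi\big\vert_{\Bbt\cap\Kt})$. This rests on two ingredients: the Iwasawa decomposition $\Gt = \Bbt\Kt$ (which, combined with surjectivity of the natural restriction map, guarantees that the restriction to $\Kt$ is bijective), and the fact that $\Ind$ is not normalized in this paper so no modulus character appears. Once that is in hand, the rest of (1) is formal adjunction and (2) reduces to bookkeeping.
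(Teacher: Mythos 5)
Your argument is correct and follows essentially the same route as the paper: compact Frobenius reciprocity, the Mackey restriction along the Iwasawa decomposition $\Gt = \Bbt\Kt$, smooth Frobenius reciprocity, the universal property of the $\Ubark$-coinvariants, and a final compact Frobenius reciprocity, with (2) obtained by tracing an element through this chain exactly as in the paper's proof.
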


\begin{proof}[Proof of Lemma \ref{lem:natlisom}]
\begin{enumerate}
\item 
Compact Frobenius reciprocity gives a natural isomorphism of functors
\begin{align*}
\Hom_{\Gt}\left(\ind_{\Kt}^{\Gt}\sigrt, \, \Ind_{\Bbt}^{\Gt}( - )\right) \cong \Hom_{\Kt}\left( \sigrt, \, \Ind_{\Bbt}^{\Gt}(-) \big \vert_{\Kt} \right).
\end{align*}
The Mackey decomposition of the second factor with respect to $\Gt = \Bbt\Kt$ gives another natural isomorphism
\begin{align*}
\Hom_{\Kt}\left( \sigrt, \, \Ind_{\Bbt}^{\Gt}(-) \big \vert_{\Kt} \right) \cong \Hom_{\Kt} \left( \sigrt, \, \Ind_{\Bbt \cap \Kt}^{\Kt}( - \big \vert_{\Bbt \cap \Kt})\right).
\end{align*}
By smooth Frobenius reciprocity, 
\begin{align*}
\Hom_{\Kt} \left( \sigrt, \, \Ind_{\Bbt \cap \Kt}^{\Kt}( - \big \vert_{\Bbt \cap \Kt})\right) \cong \Hom_{\Bbt \cap \Kt} \left( \sigrt \big \vert_{\Bbt \cap \Kt},\, ( - \big \vert_{\Bbt \cap \Kt}) \right)
\end{align*}
The latter is a functor defined on $\Tt$-representations viewed as $\Bbt$-representations by inflation. Since $\Bbt \cap \Kt \cap \Tt = \TKt$, we may replace the restriction to $\Bbt \cap \Kt$ in the second argument with restriction to $\Tt \cap \Kt$. As for the first argument, recall that $\Bbt \cap \Kt = (\TKt) \cdot (\Ubar \cap K)^*$ and that $\sigrt \big \vert_{(\Ubar \cap K)^*}$ is the inflation of a representation of $\Ubark$. Hence by the universal property of the $\Ubark$-coinvariants we have a natural isomorphism
\begin{align*}
\Hom_{\Bbt \cap \Kt}\left( \sigrt \big \vert_{\Bbt \cap \Kt}, \, (- \big \vert_{\TKt})\right) \cong \Hom_{\TKt} \left( (\sigrt)_{\Ubark}, \, (- \big \vert_{\TKt}) \right).
\end{align*}
Finally, compact Frobenius reciprocity gives a natural isomorphism
\begin{align*}
\Hom_{\TKt} \left( (\sigrt)_{\Ubark}, \, (- \big \vert_{\TKt}) \right) \cong \Hom_{\Tt} \left( \ind_{\TKt}^{\Tt}\left((\sigrt)_{\Ubark} \right), \, - \right). 
\end{align*}

\item We trace the progression of $f' \in \Hom_{\Kt} \left( \sigrt, \, \Ind_{\Bbt}^{\Gt}(-) \big \vert_{\Kt}\right)$ through the second, third, and fourth isomorphisms in the proof of Part (1) of the present lemma. The Mackey isomorphism is simply restriction of functions in the second argument. By smooth Frobenius reciprocity, $f'$ corresponds to the map $f'' \in \Hom_{\Bbt \cap \Kt}\left( \sigrt \big \vert_{\Bbt \cap \Kt}, \, (- \big \vert_{\TKt})\right)$ defined by 
$f''(v) = f'(v)(1).$
Finally $(f_{\Tt})'$ is the image of $f''$ in $\Hom_{\TKt} \left( (\sigrt)_{\Ubark}, \, (-\big \vert_{\Tt \cap \Kt})\right)$ via the universal property of the $\Ubark$-coinvariants, i.e.,
$$(f_{\Tt})'(p_{\Ubark}(v)) = f''(v).$$
Thus $f'(v)(1) = (f_{\Tt})'(p_{\Ubark}(v))$. 
\end{enumerate}
\end{proof}

\begin{lem} \label{lem:natlisomkt}
The statement of Lemma \ref{lem:natlisom} holds when $\Kt$ is replaced by $\Kt'$ everywhere and $\sigrt$ is replaced by a weight $\sigrt^{\tilde{\alpha}}$ of $\Kt'$. 
\end{lem}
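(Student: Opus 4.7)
The plan is to repeat, verbatim, the four-step chain of natural isomorphisms used in the proof of Lemma \ref{lem:natlisom}, checking that each step remains valid after $\Kt$ is replaced by $\Kt'$ and $\sigrt$ is replaced by $\sigrt^{\tilde{\alpha}}$. The four ingredients are (i) compact Frobenius reciprocity for $\ind_{\Kt'}^{\Gt}$, (ii) the Mackey decomposition relative to the Iwasawa decomposition $\Gt = \Bbt \Kt'$, (iii) smooth Frobenius reciprocity for $\Ind_{\Bbt \cap \Kt'}^{\Kt'}$, and (iv) the universal property of the $\Ubark$-coinvariants of $\sigrt^{\tilde{\alpha}}$.

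First I would verify the two geometric inputs. The Iwasawa decomposition $\Gt = \Bbt \Kt'$ lifts the classical Iwasawa decomposition $G = \Bbar K'$, which is standard for the maximal compact subgroup $K'$. For (iii) I need the analogue of the decomposition $\Bbt \cap \Kt = (\TKt)(\Ubar^* \cap K^*)$, namely
$$\Bbt \cap \Kt' = (\Tt \cap \Kt')\cdot (\Ubar^* \cap K^*)^{\tilde{\alpha}},$$
where $\Tt \cap \Kt' = \Tt \cap \Kt$ because $\alpha$ centralizes $T$ (so $T \cap K' = T \cap K$, and the splittings $\theta$ and $\theta'$ agree on $U^*$ and $\Ubar^*$ by Remark \ref{rem:kspl}). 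This decomposition is obtained by conjugating the Iwahori-type decomposition of $\Bbar \cap K$ by $\tilde{\alpha}$.

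Next I would invoke Lemma \ref{lem:kt'coinvs}, which supplies precisely the ingredients needed for step (iv): the restriction $\sigrt^{\tilde{\alpha}}\big|_{(\Ubar^* \cap K^*)^{\tilde{\alpha}}}$ factors through reduction mod $\varpi$ to give a representation of $\Ubark$, and the $\TKt$-action on the coinvariants $(\sigrt^{\tilde{\alpha}})_{\Ubark}$ is understood explicitly. Thus the universal property of $\Ubark$-coinvariants supplies the required natural isomorphism. Chaining the four natural isomorphisms together proves part (1). For part (2), I would trace $f'$ through the Mackey step (restriction of functions to $\Kt'$), smooth Frobenius reciprocity (evaluation at the identity), and the universal property (which matches the coinvariant projection $p_{\Ubark}$) exactly as in the proof of Lemma \ref{lem:natlisom}, obtaining $f'(v)(1) = (f_{\Tt})'(p_{\Ubark}(v))$ for every $v \in \Vsigt$.

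I do not anticipate a substantive obstacle; the argument is essentially formal. The only point requiring real care is keeping track of the conjugation by $\tilde{\alpha}$ in the description of the action of $(\Ubar^* \cap K^*)^{\tilde{\alpha}}$ on $\Vsigt$ and in the induced $\TKt$-action on $(\sigrt^{\tilde{\alpha}})_{\Ubark}$; but this bookkeeping has already been isolated in Lemma \ref{lem:kt'coinvs}, so no new calculation beyond what is recorded there is required.
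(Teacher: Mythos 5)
Your proposal is correct and is essentially the paper's own proof: both arguments swap in the Mackey decomposition relative to $\Gt = \Bbt\Kt'$, use $\Bbt \cap \Kt' = (\Tt\cap\Kt)\cdot\tilde{\alpha}(\Ubar\cap K)^*(\tilde{\alpha})^{-1}$ together with the fact that $\sigrt^{\tilde{\alpha}}$ restricted to the conjugated unipotent part is inflated from $\Ubark$, and then observe that part (2) and its formula carry over unchanged. (Minor remark: the appeal to Remark \ref{rem:kspl} about the splittings is unnecessary, since $\Tt\cap\Kt' = \Tt\cap\Kt$ already follows from these being full preimages of $T\cap K' = T\cap K$.)
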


\begin{proof}[Proof of Lemma \ref{lem:natlisomkt}]
To prove the $\Kt'$-analogue of Lemma \ref{lem:natlisom} (1), one replaces the Mackey decomposition with respect to the decomposition $\Gt = \Bbt \Kt$ with the Mackey decomposition with respect to the alternative Iwasawa decomposition $\Gt = \Bbt \Kt'$, and notes that $\Bbt \cap \Kt' = (\Tt \cap \Kt) \cdot \tilde{\alpha}(\Ubar \cap K)^*(\tilde{\alpha})^{-1}$ while $\sigrt^{\tilde{\alpha}}\big \vert_{(\Ubar^* \cap K^*)^{\tilde{\alpha}}}$ is the inflation of a representation of $\Ubark$. The proof of the $\Kt'$-analogue of Lemma \ref{lem:natlisom} (2) goes through with only the obvious adaptations coming from the changes made to the proof of (1). In particular, the formula of Lemma \ref{lem:natlisom} (2) is the same for the $\Kt'$-analogue. 
\end{proof}

\section{Genuine spherical Hecke algebras and Hecke bimodules}
\label{sec:sphericalHAstructure}
\subsection{Intertwining operators for compact inductions of $\Kt$-weights}
\label{subsec:sphericalHAdefK}
Let $\sigrt$ be a weight of $\Kt$ and let $\ind_{\Kt}^{\Gt}\sigrt$ denote the compact induction. The endomorphism algebra $\End_{\Gt}(\ind_{\Kt}^{\Gt}\sigrt)$ is called the \textit{genuine spherical Hecke algebra of $\Gt$ with respect to $\sigrt$} and is  denoted by $\SHA$.

More generally, let $\sigrt$ and $\sigst$ be two, possibly distinct, weights of $\Kt$. Then the \textit{genuine spherical Hecke bimodule of $\Gt$ with respect to $\sigrt$ and $\sigst$} is defined to be $\Hom_{\Gt}(\ind_{\Kt}^{\Gt}\sigrt, \ind_{\Kt}^{\Gt}\sigst)$ and is denoted by $\SHAs$. Then $\SHAs$ is a left $\mathcal{H}(\Gt, \Kt, \sigst)$-module and a right $\SHA$-module. Moreover, given three weights $\sigrt$, $\sigst$, and $\tilde{\sigma}_{\vec{t}}$ of $\Kt$, there is a product
\begin{equation} \label{eqn:HAcomplaw}
\mathcal{H}(\Gt, \Kt, \tilde{\sigma}_{\vec{s}}, \tilde{\sigma}_{\vec{t}}) \times \mathcal{H}(\Gt, \Kt, \tilde{\sigma}_{\vec{r}}, \tilde{\sigma}_{\vec{s}}) \rightarrow \mathcal{H}(\Gt, \Kt, \tilde{\sigma}_{\vec{r}}, \tilde{\sigma}_{\vec{t}})
\end{equation}
induced by composition.

Let $\mathbb{H}(\Gt, \Kt, \sigrt, \sigst)$ denote the $E$-vector space of compactly supported functions $f: \Gt \rightarrow \Hom_{E}(\sigrt, \sigst)$ such that
$$f(k_1gk_2) = \sigst(k_1) \circ f(g) \circ \sigrt(k_2)$$
for all $k_1$, $k_2 \in \Kt$ and $g \in \Gt$. In other words, each $f \in \mathbb{H}(\Gt, \Kt, \sigrt, \sigst)$ is compactly supported and satisfies
$$f(k_1 g(1, \zeta) k_2) = \zeta \cdot \sigs(Pr(k_1)) \circ f(g) \circ \sigr(Pr(k_2))$$
for all $k_1$, $k_2 \in \Ks$, $\zeta \in \mu_2$, and $g \in \Gt$. 

Frobenius reciprocity gives a bijection between $\SHAs$ and $\mathbb{H}(\Gt, \Kt, \sigrt, \sigst)$, compatible with the bimodule structure on $\mathbb{H}(\Gt, \Kt, \sigrt, \sigst)$ defined by the following convolution product: for $f_1 \in \mathbb{H}(\Gt, \Kt, \tilde{\sigma}_{\vec{s}}, \tilde{\sigma}_{\vec{t}})$ and $f_2 \in  \mathbb{H}(\Gt, \Kt, \tilde{\sigma}_{\vec{r}}, \tilde{\sigma}_{\vec{s}})$,
$$(f_1* f_2)(g) = \sum_{x \in \Kt \setminus \Gt} f_1(gx^{-1}) \circ f_2(x).$$
In particular, Frobenius reciprocity gives an $E$-algebra isomorphism between $\SHA$ and the convolution algebra $\mathbb{H}(\Gt, \Kt, \sigrt)$, for each weight $\sigrt$ of $\Kt$.  

\subsection{Intertwining operators for compact inductions of $\Kt'$-weights}
\label{subsec:sphericalHAdefK'}
We may define genuine spherical Hecke bimodules $\SHAps$ of $\Kt'$ in the same way as for $\Kt$, replacing $\Kt$ with $\Kt'$ and $\sigrt$, $\sigst$ with the conjugate weights $\sigrt^{\tilde{\alpha}}$, $\sigst^{\tilde{\alpha}}$. The following lemma, a special case of a general fact about conjugate representations (\cite{abdellatif:thesis}, Cor. 2.3.6), ensures that every genuine spherical Hecke bimodule of $\Kt'$ is isomorphic to a genuine spherical Hecke bimodule of $\Kt$. 

\begin{lem} \label{lem:kt'shaisom}
For any weights $\sigrt$ and $\sigst$ of $\Kt$, there is a $\Gt$-linear isomorphism 
$$\SHAs \rightarrow \SHAps.$$ 
\end{lem}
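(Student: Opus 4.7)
The plan is to exploit the fact that $\tilde\alpha \in \widetilde{GL}_2(F)$ normalizes $\Gt$ by conjugation and sends $\Kt$ to $\Kt'$, so that twisting by $\tilde\alpha$ is an auto-equivalence of the category of smooth genuine $\Gt$-representations interchanging ``$\Kt$-data'' with ``$\Kt'$-data''.

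First I would observe that for any smooth genuine $\Gt$-representation $(\pi, V)$, the conjugate $\pi^{\tilde\alpha}$ on the same underlying space, defined by $\pi^{\tilde\alpha}(g) = \pi(\tilde\alpha^{-1} g \tilde\alpha)$, is again smooth and genuine. Twisting by $\tilde\alpha$ is then an auto-equivalence of this category (with quasi-inverse given by twisting by $\tilde\alpha^{-1}$), so it induces an $E$-linear bijection
\begin{equation*}
\Hom_{\Gt}(\pi_1, \pi_2) \;\cong\; \Hom_{\Gt}(\pi_1^{\tilde\alpha}, \pi_2^{\tilde\alpha})
\end{equation*}
which is literally the identity on underlying linear maps: a linear map intertwines $\pi_1$ with $\pi_2$ if and only if it intertwines the twists.

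Next, I would construct an explicit $\Gt$-equivariant isomorphism
\begin{equation*}
\Psi_{\vec{r}}\colon (\ind_{\Kt}^{\Gt}\sigrt)^{\tilde\alpha} \longrightarrow \ind_{\Kt'}^{\Gt} \sigrt^{\tilde\alpha}, \qquad \Psi_{\vec{r}}(f)(g) := f(\tilde\alpha^{-1} g \tilde\alpha).
\end{equation*}
The $\Kt'$-equivariance of $\Psi_{\vec{r}}(f)$ is a one-line computation using $\tilde\alpha^{-1} \Kt' \tilde\alpha = \Kt$ together with the definition of $\sigrt^{\tilde\alpha}$. Writing out the twisted $\Gt$-action on the source as $(g' \cdot_{\tilde\alpha} f)(x) = f(x \tilde\alpha^{-1} g' \tilde\alpha)$, one verifies directly that $\Psi_{\vec r}(g' \cdot_{\tilde\alpha} f)(g) = f(\tilde\alpha^{-1} g g' \tilde\alpha) = (g' \cdot \Psi_{\vec r}(f))(g)$. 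Bijectivity and compact support modulo $\Kt'$ are immediate because $g \mapsto \tilde\alpha^{-1} g \tilde\alpha$ is a self-homeomorphism of $\Gt$ carrying $\Kt$-cosets to $\Kt'$-cosets.

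Combining the two steps yields the desired isomorphism
\begin{equation*}
\SHAs \;\cong\; \Hom_{\Gt}\bigl((\ind_{\Kt}^{\Gt}\sigrt)^{\tilde\alpha}, (\ind_{\Kt}^{\Gt}\sigst)^{\tilde\alpha}\bigr) \;\cong\; \SHAps,
\end{equation*}
where the first arrow is the twist functor on Hom spaces and the second is post- and pre-composition by $\Psi_{\vec s}$ and $\Psi_{\vec r}^{-1}$. The resulting bijection is visibly independent of the choice of lift $\tilde\alpha$ of $\alpha$ (any two such lifts differ by a central $\zeta \in \mu_2$ whose action cancels between the source and target of a Hom) and is compatible with the convolution product (\ref{eqn:HAcomplaw}) after the analogous identification of the ambient genuine spherical Hecke algebras. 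The only real obstacle is notational: one must carefully distinguish the twisted $\Gt$-action on $(\ind_{\Kt}^{\Gt}\sigrt)^{\tilde\alpha}$ from the ordinary action on $\ind_{\Kt'}^{\Gt}\sigrt^{\tilde\alpha}$ when checking that $\Psi_{\vec r}$ intertwines them. Otherwise the argument is entirely formal, and indeed this mechanism is precisely the content of the general fact about conjugate representations cited from Abdellatif's thesis.
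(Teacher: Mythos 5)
Your proposal is correct and follows essentially the same route as the paper: the twist-by-$\tilde\alpha$ identification of $\Hom$ spaces (which is the identity on underlying linear maps) combined with the explicit isomorphism $f \mapsto \left(g \mapsto f(\tilde{\alpha}^{-1}g\tilde{\alpha})\right)$ from $(\ind_{\Kt}^{\Gt}\sigrt)^{\tilde\alpha}$ to $\ind_{\Kt'}^{\Gt}(\sigrt^{\tilde\alpha})$ is exactly the paper's argument. No gaps to report.
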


\begin{proof}[Proof of Lemma \ref{lem:kt'shaisom}] 
The identity map of $E$-vector spaces 
\begin{equation} \label{eqn:idmapHA} \Hom_{\Gt}\left(\ind_{\Kt}^{\Gt}\sigrt, \ind_{\Kt}^{\Gt}\sigst \right) \rightarrow \Hom_{\Gt}\left(\left(\ind_{\Kt}^{\Gt}\sigrt\right)^{\tilde{\alpha}}, \left( \ind_{\Kt}^{\Gt}\sigst \right)^{\tilde{\alpha}} \right)
\end{equation}
 is also an isomorphism of $\Gt$-modules. 
For any representation $\pi$ of $\Kt$, the map sending $f \in (\ind_{\Kt}^{\Gt}\pi)^{\tilde{\alpha}}$ to 
$$\Phi(f) = \left(g \mapsto f\left((\tilde{\alpha})^{-1}g \tilde{\alpha}\right) \right)$$
is a $\Gt$-linear isomorphism of  $(\ind_{\Kt}^{\Gt}\pi)^{\tilde{\alpha}}$ with $\ind_{\Kt'}^{\Gt}(\pi^{\tilde{\alpha}})$, inducing a $\Gt$-linear isomorphism 
\begin{equation} \label{eqn:conjmapHA}
\Hom_{\Gt}\left(\left(\ind_{\Kt}^{\Gt}\sigrt\right)^{\tilde{\alpha}}, \left( \ind_{\Kt}^{\Gt}\sigst \right)^{\tilde{\alpha}} \right) \rightarrow  \Hom_{\Gt}\left(\ind_{\Kt'}^{\Gt}\left(\sigrt^{\tilde{\alpha}}\right), \ind_{\Kt'}^{\Gt}\left(\sigst^{\tilde{\alpha}}\right)\right).
\end{equation}
The composition of (\ref{eqn:conjmapHA}) with (\ref{eqn:idmapHA}) is the desired $\Gt$-linear isomorphism.
\end{proof}

\subsection{Intertwining operators for compact inductions of $\Tt \cap \Kt$-representations}
\label{subsec:SHTdef}
Let $\pi_1$ and $\pi_2$ be two irreducible genuine representations of $\Tt \cap \Kt$. We define the \textit{genuine spherical Hecke bimodule of $\Tt$ with respect to $\Tt \cap \Kt$, $\pi_1$, and $\pi_2$} to be $\Hom_{\Tt}(\ind_{\Tt \cap \Kt}^{\Tt} \pi_1, \ind_{\Tt \cap \Kt}^{\Tt} \pi_2)$, and denote it by $\mathcal{H}(\Tt, \Tt \cap \Kt, \pi_1, \pi_2)$. It is only for formal reasons that we bother to define Hecke bimodules for pairs of nonisomorphic $\Tt \cap \Kt$-representations $\pi_1$, $\pi_2$: we show immediately (Lemma \ref{lem:SHTzero}) that if $\pi_1 \not \cong \pi_2$, then $\mathcal{H}(\Tt, \Tt \cap \Kt,  \pi_1, \pi_2) = 0$.

By Frobenius reciprocity, $\mathcal{H}(\Tt, \Tt \cap \Kt,  \pi_1, \pi_2)$ is isomorphic to the bimodule $\mathbb{H}(\Tt, \TKt,  \pi_1, \pi_2)$ of compactly supported functions $f: \Tt \rightarrow \Hom_{E}( \pi_1, \pi_2)$ such that 
$$f(k_1 t k_2) = \pi_1(k_1) \circ f(t) \circ \pi_2(k_2)$$
for all $k_1$, $k_2 \in \TKt$ and all $t \in \Tt$.
\begin{lem} \label{lem:SHTzero}
Let $\pi_1$, $\pi_2$ be two irreducible genuine representations of $\Tt \cap \Kt$. If $\pi_1 \not \cong \pi_2$ as $\TKt$-representations, then $\mathcal{H}(\Tt, \TKt, \pi_1, \pi_2) = 0$.
\end{lem}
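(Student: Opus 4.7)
The plan is to pass to the concrete realization $\mathbb{H}(\Tt, \TKt, \pi_1, \pi_2)$ via Frobenius reciprocity and then exploit the fact that $\Tt$ is abelian to force every function in the bimodule to take values in $\Hom_{\TKt}(\pi_1, \pi_2)$, which is zero by Schur's lemma.

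More precisely, let $f \in \mathbb{H}(\Tt, \TKt, \pi_1, \pi_2)$ and fix $t \in \Tt$. By Lemma \ref{lem:commutators}(1), $\Tt$ is abelian, so for every $k \in \TKt$ we have $kt = tk$ in $\Tt$. Applying the bi-equivariance of $f$ once with $(k_1, k_2) = (k, 1)$ and once with $(k_1, k_2) = (1, k)$ to the two sides of this identity yields
\[
\pi_2(k) \circ f(t) \;=\; f(kt) \;=\; f(tk) \;=\; f(t) \circ \pi_1(k)
\]
for all $k \in \TKt$. Thus $f(t) \in \Hom_{\TKt}(\pi_1, \pi_2)$ for every $t$.

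Since $\Tt$ is abelian, so is $\TKt$, and since $E$ is algebraically closed, each irreducible genuine $\TKt$-representation $\pi_i$ is one-dimensional. Schur's lemma then gives $\Hom_{\TKt}(\pi_1, \pi_2) = 0$ whenever $\pi_1 \not\cong \pi_2$. Hence $f(t) = 0$ for every $t \in \Tt$, so $f = 0$, and $\mathbb{H}(\Tt, \TKt, \pi_1, \pi_2) = 0$, which by Frobenius reciprocity yields $\mathcal{H}(\Tt, \TKt, \pi_1, \pi_2) = 0$.

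There is no real obstacle here; the only thing to be slightly careful about is that the argument uses commutativity of $\Tt$ in a genuinely essential way (it would fail for a non-abelian Levi), and that the conventions for the bimodule structure on $\mathbb{H}(\Tt, \TKt, \pi_1, \pi_2)$ place $\pi_2$ on the left and $\pi_1$ on the right (so that $f(t): V_{\pi_1}\to V_{\pi_2}$), which is what makes the two equalities above compose correctly.
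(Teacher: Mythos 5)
Your proof is correct and is essentially the paper's argument: both pass to $\mathbb{H}(\Tt, \TKt, \pi_1, \pi_2)$ via Frobenius reciprocity, use that irreducibles of the abelian group $\TKt$ are one-dimensional, and exploit commutativity of $\Tt$ to compare $f(kt)$ with $f(tk)$ and force $f=0$ when $\pi_1 \not\cong \pi_2$. Your extra framing via $f(t)\in\Hom_{\TKt}(\pi_1,\pi_2)$ and Schur is just a slight repackaging of the paper's scalar computation, and your remark about which of $\pi_1,\pi_2$ acts on which side is immaterial here since all values are scalars.
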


\begin{proof}[Proof of Lemma \ref{lem:SHTzero}] Since $\pi_1$ and $\pi_2$ are irreducible representations of an abelian group and so are one-dimensional, we have $\Hom_E( \pi_1, \pi_2) \cong E$. Then each $f \in \mathbb{H}(\Tt, \TKt,  \pi_1, \pi_2)$ must satisfy
$$f(k t) = \pi_1(k) f(t) = \pi_2(k) f(t)$$
for all $k \in \TKt$ and all $t \in \Tt$, which is possible for $f \neq 0$ if and only if $\pi_1 \cong \pi_2$ as representations of $\Tt \cap \Kt$. Hence $\mathbb{H}(\Tt, \TKt,  \pi_1, \pi_2) = \mathcal{H}(\Tt, \TKt, \pi_1, \pi_2) = 0$ if $\pi_1\not \cong \pi_2$.
\end{proof}

Fix once and for all a $\Tt \cap \Kt$-linear isomorphism $\iota: (\tilde{\sigma}_{\vec{0}})_{\Ubark} \rightarrow (\tilde{\sigma}_{\overrightarrow{p-1}})_{\Ubark}$. For each pair $\vec{r}, \vec{s}$ of vectors in $\{ 0 , \dots, p-1\}^f$, define a $\Tt \cap \Kt$-linear map $\iota_{\vec{r}, \vec{s}}: (\tilde{\sigma}_{\vec{r}})_{\Ubark} \rightarrow (\tilde{\sigma}_{\vec{s}})_{\Ubark}$ as follows:
\begin{equation} \label{eqn:iotarsdef}
\iota_{\vec{r}, \vec{s}} = \begin{cases}
1 & \text{ if } \vec{r} = \vec{s}\\
\iota & \text{ if } \vec{r} = \vec{0}, \, \vec{s} = \overrightarrow{p-1},\\
\iota^{-1} & \text{ if } \vec{r} = \overrightarrow{p-1}, \, \vec{s} = \vec{0},\\
0 & \text{ otherwise}.
\end{cases}
\end{equation}

\begin{lem} \label{lem:SHTwtcoinv}
Let $\sigrt$, $\sigst$ be any weights of $\Kt$. Then there is a unique function $\psi_n^{\vec{r}, \vec{s}} \in \mathbb{H}(\Tt, \Tt \cap \Kt, (\sigrt)_{\Ubark}, (\sigst)_{\Ubark})$ satisfying 
$$\psi_n^{\vec{r}, \vec{s}}(\tilde{h}(\varpi)^m) = \begin{cases}
\iota_{\vec{r}, \vec{s}} & \text{ if } m = n,\\
0 & \text{ if } m \neq n,
\end{cases}$$
and the set $\{\psi_n^{\vec{r}, \vec{s}}: \, n \in \ZZ\}$ is a basis for $\mathbb{H}(\Tt, \TKt, (\sigrt)_{\Ubark}, (\sigst)_{\Ubark})$ as an $E$-vector space.
\end{lem}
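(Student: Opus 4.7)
The strategy is to exploit that $\Tt$ is abelian (Lemma \ref{lem:commutators}(1)), which reduces the lemma to a direct computation with the Hecke algebra of the abelian quotient $\Tt/\TKt$. The Cartan-type decomposition $T = \coprod_{n \in \ZZ}(T \cap K)h(\varpi)^n$ lifts to
$$\Tt = \coprod_{n \in \ZZ}\TKt \cdot \tilde{h}(\varpi)^n,$$
because $\tilde{h}(\varpi)^n = (h(\varpi^n), \phi(n))$ lies in $\TKt$ only when $n = 0$. Any $f \in \mathbb{H}(\Tt, \TKt, (\sigrt)_{\Ubark}, (\sigst)_{\Ubark})$ is therefore determined by the values $\Phi_f(n) := f(\tilde{h}(\varpi)^n) \in \Hom_E((\sigrt)_{\Ubark}, (\sigst)_{\Ubark})$, and by compact support only finitely many of these are nonzero.

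Next I would use the bi-equivariance to constrain each $\Phi_f(n)$. Since $\Tt$ is abelian, $k \tilde{h}(\varpi)^n = \tilde{h}(\varpi)^n k$ for $k \in \TKt$, and combining this with the left and right equivariance of $f$ yields
$$(\sigst)_{\Ubark}(k) \circ \Phi_f(n) = \Phi_f(n) \circ (\sigrt)_{\Ubark}(k),$$
i.e., $\Phi_f(n) \in \Hom_{\TKt}((\sigrt)_{\Ubark}, (\sigst)_{\Ubark})$. By Lemma \ref{lem:ukinvariants} the coinvariants $(\sigrt)_{\Ubark}$ and $(\sigst)_{\Ubark}$ are one-dimensional with $\TKt$-action by $\tilde{\delta}_{\vec{r}}$ and $\tilde{\delta}_{\vec{s}}$ respectively, and Corollary \ref{cor:ukisoms} identifies this Hom-space as $E \cdot \iota_{\vec{r}, \vec{s}}$ when $\vec{r} = \vec{s}$ or $\{\vec{r}, \vec{s}\} = \{\vec{0}, \overrightarrow{p-1}\}$, and as zero otherwise (consistent with the definition (\ref{eqn:iotarsdef}) of $\iota_{\vec{r}, \vec{s}}$).

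For each $n \in \ZZ$ I would then define
$$\psi_n^{\vec{r}, \vec{s}}(k \tilde{h}(\varpi)^m) := \begin{cases}(\sigst)_{\Ubark}(k) \circ \iota_{\vec{r}, \vec{s}} & \text{if } m = n,\\ 0 & \text{if } m \neq n,\end{cases}$$
for $k \in \TKt$ and $m \in \ZZ$. Disjointness of the cosets makes this well-defined as a function on $\Tt$, and the formula satisfies bi-$\TKt$-equivariance because $\iota_{\vec{r}, \vec{s}}$ is $\TKt$-linear and $\Tt$ is abelian. Uniqueness of $\psi_n^{\vec{r}, \vec{s}}$ satisfying the prescribed values is immediate since those values determine $f$ on all of $\TKt \tilde{h}(\varpi)^m$. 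Any $f$ in the bimodule is then the finite sum $\sum_n c_n(f) \psi_n^{\vec{r}, \vec{s}}$ with $\Phi_f(n) = c_n(f) \iota_{\vec{r}, \vec{s}}$, and linear independence of the $\psi_n^{\vec{r}, \vec{s}}$ follows from their disjoint supports. The only step needing care is the global verification of the bi-equivariance of $\psi_n^{\vec{r}, \vec{s}}$ on $\Tt$, but this is a short check; there is no substantive obstacle beyond organizing the Hecke-algebra-of-an-abelian-group computation.
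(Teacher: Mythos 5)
Your proposal is correct and follows essentially the same route as the paper: reduce to the values on the coset representatives $\tilde{h}(\varpi)^m$ via the decomposition $\Tt = \coprod_n \TKt\,\tilde{h}(\varpi)^n$, observe that bi-equivariance forces each value to be a $\TKt$-intertwiner (hence a scalar multiple of $\iota_{\vec{r},\vec{s}}$, or zero outside the cases $\vec{r}=\vec{s}$ or $\{\vec{r},\vec{s}\}=\{\vec{0},\overrightarrow{p-1}\}$, by Corollary \ref{cor:ukisoms}), and conclude from the disjoint supports that the $\psi_n^{\vec{r},\vec{s}}$ form a basis. The only cosmetic difference is that you rederive the vanishing constraint directly from abelianness of $\Tt$, where the paper cites Lemma \ref{lem:SHTzero}, which is proved by the same computation.
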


\begin{proof}[Proof of Lemma \ref{lem:SHTwtcoinv}]
The statement that $\mathbb{H}(\Tt, \TKt, (\sigrt)_{\Ubark}, (\sigst)_{\Ubark}) = 0$ for $\sigrt$, $\sigst$ such that $\vec{r} \neq \vec{s}$ and $\{\vec{r}, \vec{s}\} \neq \{ \vec{0}, \overrightarrow{p-1}\}$ follows immediately from Lemma \ref{lem:SHTzero} and Corollary \ref{cor:ukisoms}. In the remaining cases, the target space of each $f \in \mathbb{H}(\Tt, \TKt, (\sigrt)_{\Ubark}, (\sigst)_{\Ubark})$ is one-dimensional, and $f$ is determined by its values on $\tilde{h}(\varpi)^m$, $m \in \ZZ$. The function $\psi^{\vec{r}, \vec{s}}_n$ is nonzero on $(\TKt)\tilde{h}(\varpi)^n$ and $\Tt = \amalg_{n \in \ZZ} \, \text{Supp}(\psi^{\vec{r}, \vec{s}}_n)$, so $\{ \psi_n^{\vec{r}, \vec{s}}\} $ is a basis for $\mathbb{H}(\Tt, \TKt, (\sigrt)_{\Ubark}, (\sigst)_{\Ubark})$ as an $E$-vector space. 
\end{proof}

Let $\tau^{\vec{r}, \vec{s}}_n$ denote the element of $\mathcal{H}(\Tt, \TKt, (\sigrt)_{\Ubark}, (\sigst)_{\Ubark})$ which corresponds to $\psi_n^{\vec{r}, \vec{s}}$ by Frobenius reciprocity. It follows from Lemma \ref{lem:SHTwtcoinv} that 
$\{ \tau^{\vec{r}, \vec{s}}_n: \, n \in \ZZ\}$ is a basis for $\mathcal{H}(\Tt, \TKt, (\sigrt)_{\Ubark}, (\sigst)_{\Ubark})$ as an $E$-vector space.

Following the convention of previous sections, we avoid duplicate notation when $\vec{r} = \vec{s}$ (so that $\tau^{\vec{r}}_n := \tau^{\vec{r}, \vec{r}}_n$, etc). In this case $\mathcal{H}(\Tt, \Tt \cap \Kt, (\sigrt)_{\Ubark})$ has an $E$-algebra structure given by composition, and is called a \textit{genuine spherical Hecke algebra of $\Tt$}. 

\begin{lem} \label{lem:SHTwtcoinvalg}
\begin{enumerate}
\item For any $\vec{r} \in \{ 0, \dots, p-1\}^{f}$, $(\tau^{\vec{r}}_1)^{n} = \tau^{\vec{r}}_{n}$ for all $n \in \ZZ$, and there is an $E$-algebra isomorphism $$\mathcal{H}(\Tt, \TKt, (\sigrt)_{\Ubark}) \rightarrow  E[(\tau^{\vec{r}}_1)^{\pm 1}].$$
\item Suppose that $\{ \vec{r}, \vec{s}\} = \{ \vec{0}, \overrightarrow{p-1}\}$. Then $\mathcal{H}(\Tt, \TKt, (\sigrt)_{\Ubark}, (\sigst)_{\Ubark})$ has the following Hecke bimodule structure: for $n$, $m \in \ZZ$, $\tau^{\vec{r}, \vec{s}}_n \circ \tau^{\vec{s}}_m = \tau^{\vec{r}}_m \circ \tau^{\vec{r}, \vec{s}}_n = \tau^{\vec{r}, \vec{s}}_{n+m}$, $\tau_n^{\vec{r}, \vec{s}} \circ \tau_m^{\vec{s}, \vec{r}} = \tau_{n+m}^{\vec{s}}$, and $\tau_m^{\vec{s}, \vec{r}} \circ \tau_n^{\vec{r}, \vec{s}} = \tau_{n+m}^{\vec{r}}.$
\end{enumerate}
\end{lem}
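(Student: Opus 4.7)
The plan is to transfer the problem to the convolution model of the Hecke bimodule via Frobenius reciprocity (the exact analogue for $\Tt$ of the isomorphism described in Section \ref{subsec:sphericalHAdefK} for $\Gt$). Under this identification, $\tau_n^{\vec r, \vec s}$ corresponds to the function $\psi_n^{\vec r, \vec s}$ of Lemma \ref{lem:SHTwtcoinv}, composition of operators translates to the convolution
\[
(\psi \ast \psi')(t) \;=\; \sum_{x \in (\TKt) \setminus \Tt} \psi(t x^{-1}) \circ \psi'(x),
\]
and the identity operator on $\ind_{\TKt}^{\Tt}(\sigrt)_{\Ubark}$ corresponds to the function supported on $\TKt$ with value $\mathrm{id} = \iota_{\vec r, \vec r}$ at the identity, namely $\psi_0^{\vec r}$.

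The key observation is that $\Tt/(\TKt) \cong \ZZ$ with coset representatives $\{\tilde{h}(\varpi)^k : k \in \ZZ\}$, and each $\psi_n^{?,?}$ is supported on the single coset $(\TKt)\tilde{h}(\varpi)^n$. Hence, whenever the middle $\Hom$-spaces align, the convolution $\psi_a^{?,?} \ast \psi_b^{?,?}$ is supported on $(\TKt)\tilde{h}(\varpi)^{a+b}$ with value $\iota_{?,?}\circ \iota_{?,?}$ at $\tilde{h}(\varpi)^{a+b}$ (a single term in the sum survives). Every assertion in the lemma then reduces, via the dictionary $\tau \leftrightarrow \psi$, to a composition of two $\iota$'s that can be read off directly from definition (\ref{eqn:iotarsdef}).

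For Part~(1), all indices equal $\vec r$, so $\iota_{\vec r, \vec r} = 1$ and the convolution formula gives $\psi_a^{\vec r} \ast \psi_b^{\vec r} = \psi_{a+b}^{\vec r}$ for all $a, b \in \ZZ$. In particular $\psi_1^{\vec r} \ast \psi_{-1}^{\vec r} = \psi_0^{\vec r}$, which is the identity element. Hence $\tau_1^{\vec r}$ is invertible with inverse $\tau_{-1}^{\vec r}$, and $\tau_n^{\vec r} = (\tau_1^{\vec r})^n$ for all $n \in \ZZ$. Combined with the basis statement of Lemma \ref{lem:SHTwtcoinv}, this exhibits the $E$-algebra isomorphism $\mathcal{H}(\Tt,\TKt,(\sigrt)_{\Ubark}) \cong E[(\tau_1^{\vec r})^{\pm 1}]$.

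For Part~(2), when $\{\vec r, \vec s\} = \{\vec 0, \overrightarrow{p-1}\}$, definition (\ref{eqn:iotarsdef}) yields $\iota_{\vec r, \vec r} = \iota_{\vec s, \vec s} = 1$ and $\iota_{\vec r, \vec s} \circ \iota_{\vec s, \vec r} = \iota_{\vec s, \vec r} \circ \iota_{\vec r, \vec s} = 1$. Each of the four stated bimodule relations then reduces to one of the identities $\iota_{\vec r, \vec s} \circ 1 = \iota_{\vec r, \vec s}$, $1 \circ \iota_{\vec r, \vec s} = \iota_{\vec r, \vec s}$, or $\iota \circ \iota^{\pm 1} = 1$, plugged into the single convolution computation above. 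No step presents a real obstacle; the only care needed is to align the side of composition of each $\tau$ (left vs.\ right action on the bimodule) with the corresponding side of the convolution, so that at each step the composition of the two $\iota_{?,?}$'s is type-defined.
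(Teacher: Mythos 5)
Your proposal is correct and follows essentially the same route as the paper: both pass through Frobenius reciprocity to the convolution model, sum over the coset representatives $\tilde{h}(\varpi)^j$ of $(\TKt)\backslash\Tt$, observe that exactly one term survives so that $\psi_a * \psi_b = \psi_{a+b}$ (with the appropriate composite of $\iota$'s from (\ref{eqn:iotarsdef})), and deduce invertibility of $\tau_1^{\vec r}$ and the isomorphism with $E[(\tau_1^{\vec r})^{\pm 1}]$ via the basis of Lemma \ref{lem:SHTwtcoinv}. The paper treats part (2) by the same computation, just as you do.
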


\begin{proof}[Proof of Lemma \ref{lem:SHTwtcoinvalg}]
\begin{enumerate}
\item We have $(\tau^{\vec{r}}_1)^{n} = \tau^{\vec{r}}_{n}$ if and only if $(\psi_1^{\vec{r}})^n = \psi_n^{\vec{r}}$ in $\mathbb{H}(\Tt, \TKt, (\sigrt)_{\Ubark})$. For any $n$, $m$, $k \in \ZZ$, 
\begin{align*}
(\psi^{\vec{r}}_n * \psi^{\vec{r}}_m)(\tilde{h}(\varpi)^k) & = \sum_{t \in (\TKt)\setminus \Tt} \psi^{\vec{r}}_n(\tilde{h}(\varpi)^kt^{-1})\circ \psi_m^{\vec{r}}(t)\\
& = \sum_{j \in \ZZ}\, \psi_n^{\vec{r}}(\tilde{h}(\varpi)^{k-j})\circ \psi_m^{\vec{r}}(\tilde{h}(\varpi)^j)
\end{align*}
The summand indexed by $j$ is nonzero only if both $j = m$ and $k - j = n$, so $(\psi^{\vec{r}}_n * \psi^{\vec{r}}_m)(\tilde{h}(\varpi)^k) = 0$ unless $k = n + m$. When $k = n + m$, we are left with 
$$(\psi_n^{\vec{r}} * \psi_m^{\vec{r}})(\tilde{h}(\varpi)^{n+m}) = \psi_n^{\vec{r}}(\tilde{h}(\varpi)^n)\circ \psi^{\vec{r}}_m(\tilde{h}(\varpi)^m) = 1,$$
so $\psi_n^{\vec{r}} * \psi_m^{\vec{r}} = \psi^{\vec{r}}_{n+m}.$ Passing back through Frobenius reciprocity, we have $\tau^{\vec{r}}_n \circ \tau^{\vec{r}}_m = \tau^{\vec{r}}_{n+m}$ for all $n$, $m \in \ZZ$. Hence $\tau^{\vec{r}}_1$ and $(\tau^{\vec{r}}_1)^{-1} = \tau^{\vec{r}}_{-1}$ generate $\SHT$ over $E$, and the map $\SHT \rightarrow E[(\tau^{\vec{r}}_1)^{\pm 1}]$ sending $\sum_{n \in \ZZ} a_n\tau_{n}^{\vec{r}}$ to $\sum_{n \in \ZZ} a_n(\tau^{\vec{r}}_1)^{n}$ is an isomorphism of $E$-algebras.
\item The product calculations are essentially the same as in the proof of (1).
\end{enumerate}
\end{proof}

Define $\mathbb{H}^{\leq 0}(\Tt, \Tt \cap \Kt, (\sigrt)_{\Ubark}, (\sigst)_{\Ubark})$ (resp., $\mathbb{H}^{< 0}(\Tt, \Tt \cap \Kt, (\sigrt)_{\Ubark}, (\sigst)_{\Ubark})$) to be the subset of $\mathbb{H}(\Tt, \Tt \cap \Kt, (\sigrt)_{\Ubark}, (\sigst)_{\Ubark})$ consisting of functions which are supported on $\amalg_{n \geq 0} (\Tt \cap \Kt)\tilde{h}(\varpi)^{-n}$ (resp., on $\amalg_{n > 0} (\Tt \cap \Kt) \tilde{h}(\varpi)^{-n}$), with the inherited product structure. Let $\mathcal{H}^{\leq 0}(\Tt, \TKt, (\sigrt)_{\Ubark}, (\sigst)_{\Ubark})$ (resp., $\mathcal{H}^{< 0 }(\Tt, \TKt, (\sigrt)_{\Ubark}, (\sigst)_{\Ubark})$) denote the corresponding subset of $\mathcal{H}(\Tt, \TKt, (\sigrt)_{\Ubark}, (\sigst)_{\Ubark})$.

From the definition of the $\tau_n^{\vec{r}}$ follows: 
\begin{lem} \label{lem:antidomisom}
The isomorphism of Lemma \ref{lem:SHTwtcoinv} (1) restricts to an $E$-algebra isomorphism
$$\mathcal{H}^{\leq 0}(\Tt, \TKt, (\sigrt)_{\Ubark}) \rightarrow E[(\tau_{1}^{\vec{r}})^{-1}] = E[\tau_{-1}^{\vec{r}}].$$
\end{lem}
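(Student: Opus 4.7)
The plan is to chase the explicit basis described in Lemma \ref{lem:SHTwtcoinv} through the isomorphism of Lemma \ref{lem:SHTwtcoinvalg}~(1).

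First, I would identify an $E$-basis of $\mathcal{H}^{\leq 0}(\Tt, \TKt, (\sigrt)_{\Ubark})$. By definition $\mathbb{H}^{\leq 0}$ consists of those $f \in \mathbb{H}(\Tt, \TKt, (\sigrt)_{\Ubark})$ supported on $\coprod_{n \geq 0}(\TKt)\tilde{h}(\varpi)^{-n}$, whereas $\psi_n^{\vec{r}}$ is supported on $(\TKt)\tilde{h}(\varpi)^n$. Hence $\psi_n^{\vec{r}} \in \mathbb{H}^{\leq 0}$ if and only if $n \leq 0$, and passing through Frobenius reciprocity shows $\{\tau_n^{\vec{r}} : n \leq 0\}$ is an $E$-basis for $\mathcal{H}^{\leq 0}(\Tt, \TKt, (\sigrt)_{\Ubark})$.

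Next, I would restrict the isomorphism of Lemma \ref{lem:SHTwtcoinvalg}~(1), which sends $\tau_n^{\vec{r}}$ to $(\tau_1^{\vec{r}})^n$, to the subspace just described. The image of $\{\tau_n^{\vec{r}} : n \leq 0\}$ is $\{(\tau_1^{\vec{r}})^n : n \leq 0\}$, whose $E$-linear span is $E[(\tau_1^{\vec{r}})^{-1}]$. The identification $(\tau_1^{\vec{r}})^{-1} = \tau_{-1}^{\vec{r}}$ is immediate from $(\tau_1^{\vec{r}})^n = \tau_n^{\vec{r}}$ (the first clause of Lemma \ref{lem:SHTwtcoinvalg}~(1)), giving also $E[\tau_{-1}^{\vec{r}}]$.

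Finally, I would verify that the restriction is an $E$-algebra map: the source is closed under composition because $\tau_n^{\vec{r}} \circ \tau_m^{\vec{r}} = \tau_{n+m}^{\vec{r}}$ and $n,m \leq 0$ force $n+m \leq 0$, the target is manifestly an $E$-subalgebra of $E[(\tau_1^{\vec{r}})^{\pm 1}]$, and the restriction of an algebra isomorphism to matched subalgebras is again an isomorphism. There is no real obstacle here; once the computation in Lemma \ref{lem:SHTwtcoinvalg}~(1) is in place, this statement is a formal bookkeeping exercise about support conditions.
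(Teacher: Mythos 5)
Your proposal is correct and matches what the paper intends: the paper gives no argument beyond ``from the definition of the $\tau_n^{\vec{r}}$,'' and your argument -- identify $\{\tau_n^{\vec{r}}: n \leq 0\}$ as the basis of $\mathcal{H}^{\leq 0}(\Tt, \TKt, (\sigrt)_{\Ubark})$ via the support condition on $\psi_n^{\vec{r}}$, push it through the isomorphism $\tau_n^{\vec{r}} \mapsto (\tau_1^{\vec{r}})^n$, and note closure under composition since $n+m \leq 0$ -- is exactly that bookkeeping made explicit. You also correctly read the cited isomorphism as the one of Lemma \ref{lem:SHTwtcoinvalg}~(1).
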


\begin{rem}
\label{rem:genwtcase}
There is no harm in promoting the fixed $\Tt \cap \Kt$-linear isomorphism $\iota: (\tilde{\sigma}_{\vec{0}})_{\Ubark} \rightarrow (\tilde{\sigma}_{\overrightarrow{p-1}})_{\Ubark}$ to an identification of the underlying vector spaces. Doing so induces an identification of $\tau_n^{\vec{0}, \overrightarrow{p-1}}$ with $\tau_n^{\overrightarrow{p-1}, \vec{0}}$ for each $n$, as well as $E$-algebra isomorphisms $\mathcal{H}(\Tt, \TKt, (\tilde{\sigma}_{\vec{0}})_{\Ubark}, (\tilde{\sigma}_{\overrightarrow{p-1}})_{\Ubark} )\cong \mathcal{H}(\Tt, \TKt, (\tilde{\sigma}_{\vec{0}})_{\Ubark}, (\tilde{\sigma}_{\overrightarrow{p-1}})_{\Ubark}) \cong E[(\tau_1)^{\pm 1}]$ where $\tau_1$ stands for the now-identified operators $\tau_1^{\vec{0}, \overrightarrow{p-1}} = \tau_1^{\overrightarrow{p-1}, \vec{0}}$. However, for shallow reasons of consistency with the notation for spherical Hecke bimodules of $\Gt$, we have chosen not to make this identification. 
\end{rem}

\subsection{Product structure of  genuine spherical Hecke bimodules of $\Gt$}
\label{subsec:SHAvsbasis}
In this section we describe the structure of $\SHAs$ under composition, showing in particular that when $\vec{r} = \vec{s}$ the spherical Hecke algebra $\SHA$ is isomorphic to a polynomial algebra in one operator over $E$. The goal of the section is to prove the following statement:

\begin{thm} \label{thm:SHAsstructure} 
Let $\sigrt$, $\sigst$ be two weights of $\Kt$. 
\begin{enumerate}
\item If $\vec{r} \neq \vec{s}$ and $\{ \vec{r}, \vec{s}\} \neq \{ \vec{0}, \overrightarrow{p-1}\}$, then $\SHAs = 0$. 
\item The genuine spherical Hecke algebra $\SHA$ is isomorphic as an $E$-algebra to a polynomial algebra in one operator, denoted by $\mathcal{T}^{\vec{r}}_1$ and defined in (\ref{eqn:tprimedef}). 
\item If $\{ \vec{r}, \vec{s}\} = \{ \vec{0}, \overrightarrow{p-1}\}$, then $\SHAs$ is an infinite-dimensional $E$-vector space with an explicit basis $\{ \mathcal{T}^{\vec{r}, \vec{s}}_n: \, n > 0\}$ defined in (\ref{eqn:tprimedef}), and the following Hecke bimodule structure: for all $n > 0$, $m \geq 0$,
$$\mathcal{T}_n^{\vec{r}, \vec{s}} \circ \mathcal{T}_m^{\vec{r}} = \mathcal{T}_m^{\vec{s}} \circ \mathcal{T}_n^{\vec{r}, \vec{s}} = \mathcal{T}_{n+m}^{\vec{r}, \vec{s}},$$
and for all $n > 0$, $m > 0$,
$$\mathcal{T}_n^{\vec{r}, \vec{s}} \circ \mathcal{T}_m^{\vec{s}, \vec{r}} = \mathcal{T}_{n+m}^{\vec{s}}, \text{ and } \mathcal{T}_m^{\vec{s}, \vec{r}} \circ \mathcal{T}_n^{\vec{r}, \vec{s}} = \mathcal{T}_{n+m}^{\vec{r}}.$$
\end{enumerate}
\end{thm}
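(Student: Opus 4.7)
The plan is to reduce the structure of $\SHAs$ to the completely understood structure of the torus-side bimodule $\mathcal{H}(\Tt, \TKt, (\sigrt)_{\Ubark}, (\sigst)_{\Ubark})$ via the generalized Satake transform $\mathcal{S}$ to be established in Proposition~\ref{prop:gensataketransformi}. By Corollary~\ref{cor:ukisoms} combined with Lemma~\ref{lem:SHTzero}, the target bimodule vanishes unless $\vec{r} = \vec{s}$ or $\{\vec{r},\vec{s}\} = \{\vec{0}, \overrightarrow{p-1}\}$; in the remaining cases its antidominant subbimodule $\mathcal{H}^{\leq 0}$ admits the $E$-basis $\{\tau^{\vec{r},\vec{s}}_{-n}\}_{n \geq 0}$ coming from Lemma~\ref{lem:SHTwtcoinv}, with the product structure described in Lemmas~\ref{lem:SHTwtcoinvalg} and~\ref{lem:antidomisom}.

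Assuming $\mathcal{S}$ is a bijective bimodule homomorphism from $\SHAs$ onto $\mathcal{H}^{\leq 0}(\Tt, \TKt, (\sigrt)_{\Ubark}, (\sigst)_{\Ubark})$, each part of the theorem falls out mechanically. Part~(1) is immediate from vanishing of the target. For part~(2) I define $\mathcal{T}^{\vec{r}}_1 := \mathcal{S}^{-1}(\tau^{\vec{r}}_{-1})$; composing $\mathcal{S}$ with Lemma~\ref{lem:antidomisom} then identifies $\SHA$ with $E[\tau^{\vec{r}}_{-1}] \cong E[\mathcal{T}^{\vec{r}}_1]$. For part~(3) I set $\mathcal{T}^{\vec{r},\vec{s}}_n := \mathcal{S}^{-1}(\tau^{\vec{r},\vec{s}}_{-n})$ for each $n > 0$; since $\{\tau^{\vec{r},\vec{s}}_{-n}\}_{n > 0}$ is an $E$-basis of the ``strictly antidominant'' subbimodule, the set $\{\mathcal{T}^{\vec{r},\vec{s}}_n\}_{n > 0}$ is an $E$-basis of $\SHAs$, and the claimed bimodule composition rules transport directly from Lemma~\ref{lem:SHTwtcoinvalg}~(2) via the bimodule homomorphism property of $\mathcal{S}$.

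The main technical obstacle is therefore the bijectivity of $\mathcal{S}$ onto the antidominant part. For injectivity, I would fix the largest $n_0$ in the support of $\phi \in \mathbb{H}(\Gt, \Kt, \sigrt, \sigst)$ and evaluate $\mathcal{S}(\phi)(\tilde{h}(\varpi)^{-n_0})$: the coset count in Lemma~\ref{lem:citranspose}, in particular the row $m = -n_0$, shows that exactly one $\Ubar^*$-coset contributes (coming from the representative $\tilde{h}^1_{2n_0 - 1, 0}$ of Lemma~\ref{lem:leftksreps}), so $\mathcal{S}(\phi)(\tilde{h}(\varpi)^{-n_0}) = 0$ would force $\phi$ to vanish on $\Ks \tilde{h}(\varpi)^{-n_0}(1,\zeta)\Ks$ for every $\zeta \in \mu_2$, contradicting maximality of $n_0$. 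A parallel triangularity argument gives surjectivity: for each $n \geq 0$ I would construct an explicit $\phi_n$ supported on a single double coset $\Ks\tilde{h}(\varpi)^{-n}(1,\zeta)\Ks$ (with $\zeta$ read off from the parity/Hilbert-symbol analysis in Lemma~\ref{lem:cartaniwasawa}) whose Satake image has leading term a nonzero multiple of $\tau^{\vec{r},\vec{s}}_{-n}$, and then invert the resulting triangular system. The Hilbert-symbol computations of Lemma~\ref{lem:cartaniwasawa} are essential throughout for tracking the sign $\zeta$ and confirming that the surviving coset pairs nontrivially with $(\sigst)_{\Ubark}$ under $p_{\Ubark}$.
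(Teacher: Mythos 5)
Your route is essentially the paper's: the paper also proves this theorem by transporting the known structure of $\mathcal{H}(\Tt, \TKt, (\sigrt)_{\Ubark}, (\sigst)_{\Ubark})$ back through the Satake transform of Proposition \ref{prop:gensataketransformi}, computing $\mathcal{S}_{\vec{r},\vec{s}}(\mathcal{T}^{\vec{r},\vec{s}}_n)=\tau^{\vec{r},\vec{s}}_{-n}$ exactly (Proposition \ref{prop:satakeproperties}) via the coset representatives of Lemma \ref{lem:leftksreps} and the Cartan--Iwasawa counts of Lemmas \ref{lem:cartaniwasawa} and \ref{lem:citranspose} reduced mod $q$, and then reading off the composition rules from Lemma \ref{lem:SHTwtcoinvalg} and the multiplicativity of $\mathcal{S}$ (Corollary \ref{cor:injimag}, Lemma \ref{lem:cowmap}). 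One pleasant simplification you should expect: no triangular inversion is needed, since the off-diagonal contributions at $m=-n+1$ occur $\tfrac{q-1}{2}$ times for each sign $\zeta$ and cancel, so the image of the explicit double-coset function is exactly $\tau^{\vec{r},\vec{s}}_{-n}$, not merely ``$\tau^{\vec{r},\vec{s}}_{-n}$ plus lower terms.''

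The step you cannot get from the Satake transform alone, and which your proposal never supplies, is the local analysis of a bimodule function supported on a single double coset $\Kt\tilde{h}(\varpi)^{-n}\Kt$ (the paper's Lemma \ref{lem:gshabasis}): the bi-equivariance relations force $\varphi(\tilde{h}(\varpi)^{-n})$ to factor through $(\sigrt)_{\Ubark}$, to have image inside the highest-weight line $(\sigst)^{\Uk}$, and to be $\TKt$-equivariant. This is needed in three places. First, in your injectivity argument the maximal-support evaluation only yields $p_{\Ubark}\circ\varphi(\tilde{h}(\varpi)^{-n_0})=0$; concluding $\varphi(\tilde{h}(\varpi)^{-n_0})=0$ requires knowing its image lies in $(\sigst)^{\Uk}$, where $p_{\Ubark}$ is injective by Lemma \ref{lem:ukinvariants}. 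Second, part (1) cannot be ``immediate from vanishing of the target'': when $(\sigrt)_{\Ubark}\not\cong(\sigst)_{\Ubark}$ the transform is the zero map into the zero space, so bijectivity is exactly the assertion to be proved; the vanishing of $\SHAs$ comes from the same equivariance constraints (plus Schur's lemma at $n=0$), not from $\mathcal{S}$. Third, your surjectivity step needs to know that a nonzero bimodule function actually exists on each coset $\Kt\tilde{h}(\varpi)^{-n}\Kt$ (and spans a one-dimensional space), which is again exactly the content of Lemma \ref{lem:gshabasis} and Corollary \ref{cor:shabasis}. With that lemma in hand your plan closes up and coincides with the paper's proof; also note that a genuine function cannot be supported on a single $\Ks$-double coset $\Ks\tilde{h}(\varpi)^{-n}(1,\zeta)\Ks$ -- genuineness forces the support to be the full $\mu_2$-saturation $\Kt\tilde{h}(\varpi)^{-n}\Kt$, so the sign $\zeta$ enters only through the values, as in Lemmas \ref{lem:satnontrivwts} and \ref{lem:wtr1dim}.
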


We will prove Theorem \ref{thm:SHAsstructure} by defining an injective Satake transform $\mathcal{S}_{\vec{r}, \vec{s}}: \SHAs  \rightarrow  \mathcal{H}(\Tt, \TKt, (\sigrt)_{\Ubark}, (\sigst)_{\Ubark})$ and then using the description of $\mathcal{H}(\Tt, \TKt, (\sigrt)_{\Ubark}, (\sigst)_{\Ubark})$ given in Lemma \ref{lem:SHTwtcoinv}.  

The statement and proof of the following proposition are closely based on the presentation of \cite{herzig:glnnotes} (especially Lemma 26 and Proposition 27), in which a mod $p$ Satake transform is given for $GL_n(F)$. The proof given there holds for $\Gt$ without significant changes, but we write it out for completeness. 

\begin{prop} \label{prop:gensataketransformi}
Let $\sigrt$, $\sigst$ be two weights of $\Kt$. We refer to Lemma \ref{lem:natlisom} for the definition of $(-)_{\Tt}$. 
\begin{enumerate}
\item There is a unique map 
$$\mathcal{S}_{\vec{r}, \vec{s}}: \SHAs  \rightarrow  \mathcal{H}(\Tt, \TKt, (\sigrt)_{\Ubark}, (\sigst)_{\Ubark})$$
such that $(f \circ \mathcal{T})_{\Tt} = f_{\Tt} \circ \mathcal{S}_{\vec{r}, \vec{s}}(\mathcal{T})$
for all $\mathcal{T} \in \SHAs,$ for all $f \in \Hom_{\Gt}(\ind_{\Kt}^{\Gt} \sigst, \, \Ind_{\Bbt}^{\Gt}(\pi))$, for every smooth genuine representation $\pi$ of $\Tt$. 

\item If $(\sigrt)_{\Ubark} \not \cong (\sigst)_{\Ubark}$ as $\TKt$-representations, then $\mathcal{S}_{\vec{r}, \vec{s}} = 0.$ Otherwise, 
$$\mathcal{S}_{\vec{r}, \vec{s}}(\mathcal{T})([1, p_{\Ubark}(v)]) = \sum_{t \in (\Tt \cap \Kt) \setminus \Tt} \left[t^{-1},
 \sum_{\bar{u} \in (\Ubar \cap K)^* \setminus \Ubar^*} p_{\Ubark} \left( \mathcal{T}'(v)(\bar{u}t)\right)\right]$$
for $\mathcal{T} \in \SHAs$, $t \in \Tt$, and $v \in V_{\vec{r}}$. 
Here $\mathcal{T}'$ denotes the element of $\Hom_{\Kt}(\sigrt, \ind_{\Kt}^{\Gt}\sigst \big \vert_{\Kt})$ which corresponds to $\mathcal{T} \in \SHAs$ by Frobenius reciprocity. The map $p_{\Ubark}$ is the projection $\tilde{\sigma} \rightarrow (\tilde{\sigma})_{\Ubark}$ for a weight $\tilde{\sigma}$ (with $\tilde{\sigma} = \sigrt$ on the left-hand side of the formula, and $\tilde{\sigma} = \sigst$ on the right-hand side). By $\Tt$-equivariance, the given values determine $\mathcal{S}_{\vec{r}, \vec{s}}(\mathcal{T})$. 
\item $\mathcal{S}_{\vec{r}, \vec{s}}$ is $E$-linear, and if $\tilde{\sigma}_{\vec{t}}$ is a third weight of $\Kt$, then 
$$\mathcal{S}_{\vec{r}, \vec{t}}(\underline{\mathcal{T}}\circ \mathcal{T}) = \mathcal{S}_{\vec{s}, \vec{t}}(\underline{\mathcal{T}}) \circ \mathcal{S}_{\vec{r}, \vec{s}}(\mathcal{T})$$
for all $\mathcal{T} \in \SHAs$, $\underline{\mathcal{T}}\in \mathcal{H}(\Gt, \Kt, \sigst, \tilde{\sigma}_{\vec{t}}).$ 
\end{enumerate}
\end{prop}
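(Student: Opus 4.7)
\textbf{Proof plan for Proposition \ref{prop:gensataketransformi}.}

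My plan is to produce $\mathcal{S}_{\vec{r}, \vec{s}}$ from the universal case of the natural isomorphism of Lemma \ref{lem:natlisom}, and then derive the explicit formula by unwinding all the identifications. Apply Lemma \ref{lem:natlisom} (1) with $\pi = \ind_{\TKt}^{\Tt}((\sigst)_{\Ubark})$, and let $f_0 \in \Hom_{\Gt}(\ind_{\Kt}^{\Gt}\sigst, \Ind_{\Bbt}^{\Gt}(\pi))$ be the map whose image $(f_0)_{\Tt}$ is the identity of $\ind_{\TKt}^{\Tt}((\sigst)_{\Ubark})$. For any $\mathcal{T} \in \SHAs$ define
\[
\mathcal{S}_{\vec{r}, \vec{s}}(\mathcal{T}) := (f_0 \circ \mathcal{T})_{\Tt} \in \Hom_{\Tt}\!\left(\ind_{\TKt}^{\Tt}((\sigrt)_{\Ubark}),\, \ind_{\TKt}^{\Tt}((\sigst)_{\Ubark})\right) = \mathcal{H}(\Tt, \TKt, (\sigrt)_{\Ubark}, (\sigst)_{\Ubark}).
\]
For any other $\pi$ and $f \in \Hom_{\Gt}(\ind_{\Kt}^{\Gt}\sigst, \Ind_{\Bbt}^{\Gt}(\pi))$, factor $f_{\Tt}$ through the universal arrow; the required identity $(f \circ \mathcal{T})_{\Tt} = f_{\Tt} \circ \mathcal{S}_{\vec{r}, \vec{s}}(\mathcal{T})$ then follows from naturality of the isomorphism in Lemma \ref{lem:natlisom} (1). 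Uniqueness is forced by specializing to $\pi = \ind_{\TKt}^{\Tt}((\sigst)_{\Ubark})$ and $f = f_0$.

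For the explicit formula, I will compute $((f_0 \circ \mathcal{T})_{\Tt})'(p_{\Ubark}(v))$ using Lemma \ref{lem:natlisom} (2): this equals $(f_0 \circ \mathcal{T})'(v)(1) = f_0(\mathcal{T}'(v))(1)$. The main step is to evaluate $f_0(F)(1)$ for an arbitrary $F \in \ind_{\Kt}^{\Gt}\sigst$. Using the Iwasawa decomposition $\Gt = \Ubar^*\Tt K^* = \Ubar^*\Tt \Kt$, write $F = \sum_{\bar u, t}[t^{-1}\bar u^{-1}, F(\bar u t)]$ with $\bar u$ running over $(\Ubar \cap K)^* \setminus \Ubar^*$ and $t$ over $\TKt \setminus \Tt$ (a finite sum by compact support of $F$ modulo $\Kt$). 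Since $[g, w] = g \cdot [1, w]$ and $f_0$ is $\Gt$-linear, $f_0([g, w])(1) = f_0([1, w])(g)$. For $g = t^{-1}\bar u^{-1} \in \Bbt$ and $\bar u^{-1}$ acting trivially on $\pi$ (as $\pi$ is inflated from $\Tt$), left $\Bbt$-equivariance in $\Ind_{\Bbt}^{\Gt}$ gives $f_0([1, w])(t^{-1}\bar u^{-1}) = \pi(t^{-1}) f_0([1, w])(1)$. The formula of Lemma \ref{lem:natlisom} (2), applied to $f_0$ (whose $\Tt$-shadow is the identity), gives $f_0([1, w])(1) = [1, p_{\Ubark}(w)]$, and a direct check with the $[\,\cdot\,,\,\cdot\,]$ notation shows $\pi(t^{-1})\cdot [1, p_{\Ubark}(w)] = [t^{-1}, p_{\Ubark}(w)]$. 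Assembling the pieces with $F = \mathcal{T}'(v)$ yields exactly the displayed formula. The vanishing statement when $(\sigrt)_{\Ubark} \not\cong (\sigst)_{\Ubark}$ is immediate from Lemma \ref{lem:SHTzero}, which asserts that the codomain of $\mathcal{S}_{\vec{r}, \vec{s}}$ is itself zero in that case.

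For part (3), $E$-linearity of $\mathcal{S}_{\vec{r}, \vec{s}}$ follows from $E$-linearity of $\mathcal{T} \mapsto f_0 \circ \mathcal{T}$ and of the natural isomorphism in Lemma \ref{lem:natlisom} (1); alternatively, linearity is visible from the explicit formula. For compatibility with composition, given $\mathcal{T} \in \SHAs$ and $\underline{\mathcal{T}} \in \mathcal{H}(\Gt, \Kt, \sigst, \tilde{\sigma}_{\vec{t}})$, apply the defining property twice: for any $f \in \Hom_{\Gt}(\ind_{\Kt}^{\Gt}\tilde{\sigma}_{\vec{t}}, \Ind_{\Bbt}^{\Gt}(\pi))$,
\[
(f \circ \underline{\mathcal{T}} \circ \mathcal{T})_{\Tt} = (f \circ \underline{\mathcal{T}})_{\Tt} \circ \mathcal{S}_{\vec{r}, \vec{s}}(\mathcal{T}) = f_{\Tt} \circ \mathcal{S}_{\vec{s}, \vec{t}}(\underline{\mathcal{T}}) \circ \mathcal{S}_{\vec{r}, \vec{s}}(\mathcal{T}).
\]
Comparing with $(f \circ (\underline{\mathcal{T}} \circ \mathcal{T}))_{\Tt} = f_{\Tt} \circ \mathcal{S}_{\vec{r}, \vec{t}}(\underline{\mathcal{T}} \circ \mathcal{T})$, and invoking the uniqueness from (1) at $\pi = \ind_{\TKt}^{\Tt}((\tilde{\sigma}_{\vec{t}})_{\Ubark})$ with $f$ chosen universal, yields the multiplicativity.

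The main obstacle is the bookkeeping in the second paragraph: writing $F = \mathcal{T}'(v)$ correctly as a finite sum of $[g, w]$'s relative to the Iwasawa decomposition $\Gt = \Ubar^*\Tt\Kt$, and then verifying the identity $t \cdot [1, w] = [t, w]$ for the compact induction on $\Tt$ with our support conventions. All other steps are formal consequences of Lemma \ref{lem:natlisom} and Lemma \ref{lem:SHTzero}.
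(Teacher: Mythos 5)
Your proposal is correct and follows essentially the same route as the paper: the paper defines $\mathcal{S}_{\vec{r}, \vec{s}}$ by applying the Yoneda lemma to precomposition with $\mathcal{T}$ under the natural isomorphism of Lemma \ref{lem:natlisom}, which is exactly your construction via the universal element $f_0$ with $(f_0)_{\Tt} = \mathrm{id}$, proves the vanishing case via Lemma \ref{lem:SHTzero}, and proves (3) by the same double application of the defining property plus uniqueness. For the explicit formula, the paper evaluates $(f_0' * \mathcal{T}')(v)(1)$ using a cited convolution formula together with the Iwasawa decomposition $\Gt = \Bbt \Kt$ and left $\Bbt$-equivariance, which is the same computation you carry out by hand by decomposing $\mathcal{T}'(v)$ into basic functions $[t^{-1}\bar{u}^{-1}, \cdot]$.
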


\begin{proof}[Proof of Proposition \ref{prop:gensataketransformi}] 
\begin{enumerate}
\item Let $\mathcal{T} \in \mathcal{H}(\Gt, \Kt, \sigrt, \sigst) = \Hom_{\Gt}(\ind_{\Kt}^{\Gt}\sigrt, \, \ind_{\Kt}^{\Gt}\sigst)$. Precomposition with $\mathcal{T}$ is a natural transformation 
$$\Hom_{\Gt}\left(\ind_{\Kt}^{\Gt}\sigst, \, \Ind_{\Bbt}^{\Gt}(-)\right) \longrightarrow \Hom_{\Gt}\left(\ind_{\Kt}^{\Gt}\sigrt, \, \Ind_{\Bbt}^{\Gt}(-)\right),$$
hence induces, via the natural isomorphism of Lemma \ref{lem:natlisom}, a natural transformation
$$\Hom_{\Tt}\left(\ind_{\TKt}^{\Tt}\left((\sigst)_{\Ubark}\right), -\right) \longrightarrow \Hom_{\Tt}\left(\ind_{\TKt}^{\Tt}\left((\sigrt)_{\Ubark}\right), -\right).$$
By the Yoneda Lemma, there is a unique map  $\mathcal{S}_{\vec{r}, \vec{s}}(\mathcal{T}) \in \Hom_{\Tt}\left(\ind_{\TKt}^{\Tt}\left((\sigrt)_{\Ubark} \right), \ind_{\TKt}^{\Tt}\left((\sigst)_{\Ubark} \right) \right) \cong \mathcal{H}\left(\Tt, \TKt, \left(\sigrt \right)_{\Ubark}, \left(\sigst \right)_{\Ubark}\right)$ such that 
\begin{equation} \label{eqn:satakelinearity}
(f \circ \mathcal{T})_{\Tt} = f_{\Tt} \circ \mathcal{S}_{\vec{r}, \vec{s}}(\mathcal{T})
\end{equation}
for all $f \in \Hom_{\Gt}\left(\ind_{\Kt}^{\Gt}\sigst, \, \Ind_{\Bbt}^{\Gt}(\pi)\right)$, for every smooth genuine representation $\pi$ of $\Tt.$ 

\item If $\left(\sigrt \right)_{\Ubark}$ and $\left(\sigst \right)_{\Ubark}$ are not isomorphic as $\TKt$-representations, then $\mathcal{S}_{\vec{r}, \vec{s}} = 0$ by Lemma \ref{lem:SHTzero}. Otherwise, let $f_0$ denote the unique element of  $\Hom_{\Gt}\left(\ind_{\Kt}^{\Gt}\sigst, \, \Ind_{\Bbt}^{\Gt}\left( \ind_{\TKt}^{\Tt}(\sigst)_{\Ubark} \right)\right)$ such that $(f_0)_{\Tt} = \ind_{\TKt}^{\Tt}(\iota_{\vec{s}, \vec{s}})$ (recall that we have chosen $\iota_{\vec{s}, \vec{s}}$ to be the identity map on $(\sigst)_{\Ubark}$), and let $f_0'$ denote the element of 
$\Hom_{\Kt}\left( \sigst, \, \Ind_{\Bbt}^{\Gt}\left(\ind_{\TKt}^{\Tt}(\sigst)_{\Ubark}\right)\big \vert_{\Kt}\right)$ which corresponds to $f_0$ by Frobenius reciprocity. 

The following equalities in $\Hom_{\Tt \cap \Kt}\left((\sigrt)_{\Ubark}, \, \ind_{\TKt}^{\Tt}(\sigst)_{\Ubark}\big \vert_{\Tt \cap \Kt}\right)$ are obtained by applying Frobenius reciprocity to both sides of (\ref{eqn:satakelinearity}): 
\begin{equation} \label{eqn:satderiveqn}
\left((f_0 \circ \mathcal{T})_{\Tt}\right)' = \left( \ind_{\TKt}^{\Tt}(\iota_{\vec{s}, \vec{s}}) \circ  \mathcal{S}_{\vec{r}, \vec{s}}(\mathcal{T})\right)' = \mathcal{S}_{\vec{r}, \vec{s}}(\mathcal{T})'.
\end{equation}
Let $v \in \Vsigt$ and let $(f_0 \circ \mathcal{T})'$ denote the element of $\Hom_{\Kt}\left(\sigrt, \, \Ind_{\Bbt}^{\Gt}(\ind_{\Tt \cap \Kt}^{\Tt}(\sigst)_{\Ubark})\right)$ which corresponds to $f_0 \circ \mathcal{T}$ by Frobenius reciprocity. By part (2) of Lemma \ref{lem:natlisom},
\begin{align*}
\left((f_0 \circ \mathcal{T})_{\Tt}\right)'(p_{\Ubark}(v)) & = \left( f_0 \circ \mathcal{T}\right)'(v)(1)\\
& = \left(f_0' * \mathcal{T}'\right)(v)(1)
\end{align*}
as elements of $\ind_{\Tt \cap \Kt}^{\Tt}(\sigst)_{\Ubark}$. Calculating the convolution product using Lemma 28 of \cite{herzig:glnnotes} for the second equality and using the fact that $\Ubar^* \cap \Kt = (\Ubar \cap K)^*$ for the third, we have
\begin{align*}
 \left(f_0' * \mathcal{T}'\right)(v)(1) & = \sum_{g \in \Kt \setminus \Gt} f_0'(\mathcal{T}'(v)(g))(g^{-1})\\
& = \sum_{t \in (\TKt) \setminus \Tt} \sum_{\bar{u} \in (\Ubar^* \cap \Kt) \setminus \Ubar^*} f_0'(\mathcal{T}'(\bar{u}t)v))(t^{-1}\bar{u}^{-1}) \\
& = \sum_{t \in (\TKt) \setminus \Tt} \sum_{\bar{u} \in (\Ubar \cap K)^* \setminus \Ubar^*} t^{-1} \cdot f_0'(\mathcal{T}'(\bar{u}t)v)(1)\\
& = \sum_{t \in (\TKt) \setminus \Tt} \sum_{\bar{u} \in (\Ubar \cap K)^* \setminus \Ubar^*} t^{-1}\bar{u}^{-1} \cdot \left((f_0)_{\Tt}\right)'\left(p_{\Ubark}(\mathcal{T}'(\bar{u}t)v)\right).\\
\end{align*}
The corresponding equality in $\Hom_{\Tt}(\ind_{\Tt \cap \Kt}^{\Tt}(\sigst)_{\Ubark}, \ind_{\Tt \cap \Kt}^{\Tt}(\sigrt)_{\Ubark})$ is 
\begin{align*}
\mathcal{S}_{\vec{r}, \vec{s}}(\mathcal{T})\left([1, p_{\Ubark}(v)] \right)& = \sum_{t \in (\TKt) \setminus \Tt} \left[ t^{-1}, \, \sum_{\bar{u} \in (\Ubar \cap K)^* \setminus \Ubar^*} \iota_{\vec{s}, \vec{s}} \circ p_{\Ubark} \circ\mathcal{T}'(\bar{u}t)(v) \right]\\
& = \sum_{t \in (\TKt) \setminus \Tt} \left[ t^{-1}, \, \sum_{\bar{u} \in (\Ubar \cap K)^* \setminus \Ubar^*} p_{\Ubark} \left(\mathcal{T}'(\bar{u}t)(v)\right) \right],
\end{align*}
which is the desired formula.

\item Both claims of (3) will follow from from (1). For the first claim, let $e \in E$. For every $\mathcal{T} \in \SHAs$, every smooth genuine representation $\pi$ of $\Tt$, and every $f \in \Hom_{\Gt}(\ind_{\Kt}^{\Gt}\sigst, \Ind_{\Bbt}^{\Gt}(\pi))$, we have
\begin{align*}
f_{\Tt} \circ \mathcal{S}_{\vec{r}, \vec{s}}(e\cdot\mathcal{T}) & = (f \circ e\mathcal{T})_{\Tt}\\
& = (ef \circ \mathcal{T})_{\Tt}\\
& = (ef)_{\Tt} \circ \mathcal{S}_{\vec{r}, \vec{s}}(\mathcal{T})\\
& = f_{\Tt} \circ e\cdot\mathcal{S}_{\vec{r}, \vec{s}}(\mathcal{T}).
\end{align*}
The uniqueness statement of (1) now implies that $\mathcal{S}_{\vec{r}, \vec{s}}(e \cdot -)$ and $e \cdot \mathcal{S}_{\vec{r}, \vec{s}}(-)$ are identical.

For the second claim, let $\sigrt$, $\sigst$, and $\tilde{\sigma}_{\vec{t}}$ be three weights of $\Kt$, and let $\pi$ be a smooth genuine representation of $\Tt$. Let $\mathcal{T} \in \SHAs$, $\underline{\mathcal{T}}\in \mathcal{H}(\Gt, \Kt, \sigst, \tilde{\sigma}_{\vec{t}})$, and $f \in \Hom_{\Gt}\left(\ind_{\Kt}^{\Gt} \tilde{\sigma}_{\vec{t}}, \Ind_{\Bbt}^{\Gt}(\pi)\right).$ 
Then 
\begin{align*}
f_{\Tt} \circ \mathcal{S}_{\vec{r}, \vec{t}}(\underline{\mathcal{T}}\circ \mathcal{T}) & = (f \circ \underline{\mathcal{T}}\circ \mathcal{T})_{\Tt}\\
& = (f \circ \underline{\mathcal{T}})_{\Tt} \circ \mathcal{S}_{\vec{r}, \vec{s}}(\mathcal{T})\\
& = f_{\Tt} \circ \mathcal{S}_{\vec{s}, \vec{t}}(\underline{\mathcal{T}}) \circ \mathcal{S}_{\vec{r}, \vec{s}}(\mathcal{T}).
\end{align*}
So for fixed $\underline{\mathcal{T}}$ the maps $\mathcal{S}_{\vec{r}, \vec{t}}(\underline{\mathcal{T}}, -)$ and $\mathcal{S}_{\vec{s}, \vec{t}}(\underline{\mathcal{T}}) \circ \mathcal{S}_{\vec{r}, \vec{s}}(-)$ agree on $\mathcal{H}(\Gt, \Kt, \sigrt, \sigst)$, and therefore are identical by the uniqueness statement of (1). Allowing $\underline{\mathcal{T}}$ to vary over $\mathcal{H}(\Gt, \Kt, \sigst, \tilde{\sigma}_{\vec{t}})$, we get the desired compatibility. 
\end{enumerate}
\end{proof}

Next, in Lemma \ref{lem:gshabasis} and Corollary \ref{cor:shabasis} we determine an explicit basis for $\mathbb{H}(\Gt, \Kt, \sigrt, \sigst)$ as an $E$-vector space, getting by proxy an $E$-basis for $\mathcal{H}(\Gt, \Kt, \sigrt, \sigst)$. The basis is normalized so as to be compatible with the system $\{ \iota_{\vec{r}, \vec{s}}: \vec{r}, \vec{s} \in \{0, \dots, p-1\}^f\}$ of $\Tt \cap \Kt$-linear maps chosen in (\ref{eqn:iotarsdef}).

\begin{lem} \label{lem:gshabasis}
Let $\sigrt$, $\sigst$ be two weights of $\Kt$, and let $\rho_{\vec{r}, \vec{s}}$ denote the following composition: 
\begin{displaymath}
\xymatrix{ \sigrt  \ar[rr]^{\rho_{\vec{r}, \vec{s}}} \ar[d]^{p_{\Ubark}} & & \sigst \\ (\sigrt)_{\Ubark} \ar[r]_{\iota_{\vec{r}, \vec{s}}} & (\sigst)_{\Ubark} \ar[r]_{j_{\vec{s}}^{-1}} & (\sigst)^{\Uk} \ar[u]^{i_{\vec{s}}^{\Uk}}}
\end{displaymath}

The space of functions in $\mathbb{H}(\Gt, \Kt, \sigrt, \sigst)$ with support in a double coset of the form $\Kt \tilde{h}(\varpi)^{-n} \Kt$, $n \geq 0$, is at most one-dimensional and is spanned by the function $\varphi^{\vec{r}, \vec{s}}_n$ defined as follows:
\begin{align*}
\varphi^{\vec{r}, \vec{s}}_0(\tilde{h}(\varpi)^m) &= \begin{cases}
1 & \text{ if } m = 0 \text{ and } \vec{r} = \vec{s},\\
0 & \text{ otherwise},
\end{cases}\\
\varphi^{\vec{r}, \vec{s}}_n(\tilde{h}(\varpi)^m) & = \begin{cases}
\rho_{\vec{r}, \vec{s}} & \text{ if } m = -n \text{ and } (\sigrt)_{\Ubark} \cong (\sigst)_{\Ubark} \text{ as } \TKt \text{-representations},\\
0 & \text{ if } |m| \neq n \text{ or }  (\sigrt)_{\Ubark}\not \cong (\sigst)_{\Ubark}  \text{ as } \TKt \text{-representations}.
\end{cases}
\end{align*}
\end{lem}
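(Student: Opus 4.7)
The plan is to convert the Hecke bimodule property into a Schur-type constraint on a single linear map and then exploit the weight structure from Section \ref{sec:weights}. Any $\varphi \in \mathbb{H}(\Gt, \Kt, \sigrt, \sigst)$ supported on the double coset $\Kt\tilde{h}(\varpi)^{-n}\Kt$ is determined by its value $A := \varphi(\tilde{h}(\varpi)^{-n}) \in \Hom_E(V_{\vec{r}}, V_{\vec{s}})$; conversely, any $A$ extends to such a $\varphi$ if and only if
\begin{equation}\label{eqn:stabprop}
\sigst\bigl(\tilde{h}(\varpi)^{-n}k\tilde{h}(\varpi)^n\bigr) \circ A = A \circ \sigrt(k)
\end{equation}
for every $k \in \Kt \cap \tilde{h}(\varpi)^n\Kt\tilde{h}(\varpi)^{-n}$. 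The case $n = 0$ is immediate: the stabilizer is all of $\Kt$, so \eqref{eqn:stabprop} says $A \in \Hom_{\Kt}(\sigrt, \sigst)$, and by Schur's lemma this space is spanned by $\mathrm{id}_{V_{\vec{r}}}$ if $\vec{r} = \vec{s}$ and is zero otherwise, matching $\varphi_0^{\vec{r}, \vec{s}}$.

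For $n \geq 1$ I would use the ``opposite Iwahori'' decomposition
$$\Kt \cap \tilde{h}(\varpi)^n\Kt\tilde{h}(\varpi)^{-n} = \tilde{u}(\varpi^{2n}\OF) \cdot (\TKt) \cdot \tilde{\bar{u}}(\OF),$$
inherited from its image in $G$ via the canonical splittings over $U$, $\Ubar$, and the splitting of $T \cap K$ in $K^*$, and check \eqref{eqn:stabprop} on each factor. For $k = \tilde{u}(\varpi^{2n}x)$ with $x \in \OF$, the conjugate $\tilde{h}(\varpi)^{-n}\tilde{u}(\varpi^{2n}x)\tilde{h}(\varpi)^n = \tilde{u}(x)$ ranges over $(U \cap K)^*$, while $\sigrt(\tilde{u}(\varpi^{2n}x)) = \mathrm{id}$ since $\varpi^{2n}x$ reduces to $0$ in $\mathfrak{k}$; thus \eqref{eqn:stabprop} forces $\mathrm{image}(A) \subseteq (\sigst)^{\Uk}$. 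Symmetrically, for $k = \tilde{\bar{u}}(y)$ with $y \in \OF$, the conjugate $\tilde{\bar{u}}(\varpi^{2n}y)$ acts trivially on $\sigst$ while $\sigrt(\tilde{\bar{u}}(y))$ ranges over $\sigrt(\Ubark)$; \eqref{eqn:stabprop} forces $A$ to vanish on $\ker(p_{\Ubark})$. Finally, for $k \in \TKt$, Lemma \ref{lem:commutators}(1) gives $\tilde{h}(\varpi)^{-n}k\tilde{h}(\varpi)^n = k$, so \eqref{eqn:stabprop} says $A$ is $\TKt$-equivariant.

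Combining these conditions, $A$ induces a $\TKt$-equivariant map $\bar{A}: (\sigrt)_{\Ubark} \to (\sigst)^{\Uk}$, and any such $\bar{A}$ lifts uniquely back to a valid $A$. By Lemma \ref{lem:ukinvariants} the source and target are both one-dimensional, carrying respectively the characters $\tilde{\delta}_{\vec{r}}$ and $\tilde{\delta}_{\vec{s}}$ of $\TKt$; hence $\Hom_{\TKt}((\sigrt)_{\Ubark}, (\sigst)^{\Uk})$ is one-dimensional when $\tilde{\delta}_{\vec{r}} \cong \tilde{\delta}_{\vec{s}}$ and zero otherwise. Corollary \ref{cor:ukisoms} then pinpoints the nontrivial cases as $\vec{r} = \vec{s}$ or $\{\vec{r}, \vec{s}\} = \{\vec{0}, \overrightarrow{p-1}\}$. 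In those cases the composition $\rho_{\vec{r}, \vec{s}} = i_{\vec{s}}^{\Uk} \circ j_{\vec{s}}^{-1} \circ \iota_{\vec{r}, \vec{s}} \circ p_{\Ubark}$ is nonzero and $\TKt$-equivariant by construction (each of its factors is), so it spans the space of admissible $A$; extending uniquely to $\Kt\tilde{h}(\varpi)^{-n}\Kt$ yields $\varphi_n^{\vec{r}, \vec{s}}$.

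The main technical point requiring care is the lift of the opposite Iwahori decomposition to $\Kt$: conjugation by $\tilde{h}(\varpi)^n$ of elements in $U^*$ or $\Ubar^*$ must produce no extraneous $\mu_2$-factor. This is guaranteed by the normality of $U^*$ in $\Bt$ and of $\Ubar^*$ in $\Bbt$ (Lemma \ref{lem:commutators}(2, 3)), which forces $\tilde{h}(\varpi)^n \tilde{u}(x)\tilde{h}(\varpi)^{-n} = \tilde{u}(\varpi^{2n}x)$ on the nose, and similarly for $\Ubar^*$, so the calculation reduces to its image in $G$ without further cocycle bookkeeping.
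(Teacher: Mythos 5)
Your proof is correct and takes essentially the same route as the paper's: both reduce $\varphi$ to its value $A$ at a single coset representative, translate the bimodule condition into an intertwining constraint over the stabilizer set, deduce that $A$ has image in $(\sigst)^{\Uk}$, factors through $p_{\Ubark}$, and is $\TKt$-equivariant, and then conclude by one-dimensionality of the source and target via Lemma \ref{lem:ukinvariants} and Corollary \ref{cor:ukisoms}. The only difference is cosmetic: where the paper cites Barthel--Livn\'e for the equivalence with these three properties, you verify it directly from the factorization $\tilde{u}(\varpi^{2n}\OF)\cdot(\TKt)\cdot\tilde{\bar{u}}(\OF)$ of the stabilizer, with normality of $U^*$ and $\Ubar^*$ (Lemma \ref{lem:commutators}) disposing of the cocycle, exactly as the paper does elsewhere (e.g.\ in Lemma \ref{lem:gtkscartandecomp}).
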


\begin{proof}[Proof of Lemma \ref{lem:gshabasis}]
The proof goes along the same lines as that of the similar statement for $G$ in \cite{abdellatif:thesis} Lemme 3.5.5. Suppose that a function $\varphi \in \mathbb{H}(\Gt, \Kt, \sigrt, \sigst)$ has support contained in $\Kt \tilde{h}(\varpi)^{-n} \Kt$. By definition of $\mathbb{H}(\Gt, \Kt, \sigrt, \sigst)$, 
\begin{equation} \label{eqn:ktequivariance}
\sigst(k_1) \circ \varphi(\tilde{h}(\varpi)^n) = \varphi(\tilde{h}(\varpi)^n) \circ \sigrt(k_2)
\end{equation}
whenever $k_1$, $k_2 \in \Kt$ satisfy 
\begin{equation} \label{eqn:ktcommreln}
k_1 \tilde{h}(\varpi)^n = \tilde{h}(\varpi)^n k_2.
\end{equation}

In the case $n = 0$, we have $\sigst(k) \circ \varphi((1, 1)) = \varphi((1, 1)) \circ \sigrt(k)$ for all $k \in \Kt$. Since $\sigst$ is an irreducible $\Kt$-representation, either $\varphi((1, 1))$ is an isomorphism or is zero. In the former case, i.e., if $\sigrt = \sigst$ and $\varphi((1, 1)) \neq 0$, Schur's Lemma implies that $\varphi((1, 1)) \in E^\times.$ 

In the case $n > 0$, two elements $k_1 \in \Kt$ and $k_2 \in \Ks$ satisfy (\ref{eqn:ktcommreln}) if and only if $Pr(k_2) = 
\left( \begin{array}{cc}
a    & b \\
c    & d 
  \end{array} \right) \in K$ with $v_F(b) \geq 2n$. A calculation shows that then $Pr(k_1) = 
\left( \begin{array}{cc}
a    & \varpi^{-2n}b \\
\varpi^{2n}c    & d 
  \end{array} \right)$ and $k_1 \in \Ks$. By (\ref{eqn:ktequivariance}) and the definition of $\sigrt$, $\sigst$,
\begin{equation} \label{eqn:kteqconsequence}
\sigma_{\vec{s}}
\left( \begin{array}{cc}
    a    & \varpi^{-2n}b \\
0   &  d
  \end{array} \right) \circ \varphi(\tilde{h}(\varpi)^n) = \varphi(\tilde{h}(\varpi)^n) \circ \sigma_{\vec{r}} 
\left( \begin{array}{cc}
a    & 0 \\
c    & d 
  \end{array} \right)
\end{equation}

for all such $a$, $b$, $c$, $d$. For the same reasons as in the proof of \cite{barthellivne:irredmodp} Lemma 7, the equality (\ref{eqn:kteqconsequence}) is equivalent to $\varphi(\tilde{h}(\varpi)^{-n})$ having the following three properties: (1) the image of $\varphi(\tilde{h}(\varpi)^{-n})$ is contained in $(\sigst)^{\Uk}$, (2) $\varphi(\tilde{h}(\varpi)^{-n})$ factors through the projection $p_{\Ubark}: \sigrt \rightarrow (\sigrt)_{\Ubark}$, and (3) $\sigst(t) \circ \varphi(\tilde{h}(\varpi)^{-n}) = \varphi(\tilde{h}(\varpi)^{-n}) \circ \sigrt(t)$ for all $t \in \Tt \cap \Kt.$ Due to properties (1) and (2), $\varphi(\tilde{h}(\varpi)^{-n})$ is a composition of the form given in the statement of the lemma, for some map $\iota: (\sigrt)_{\Ubark} \rightarrow (\sigst)_{\Ubark}$. By property (3) $\iota$ must be $\Tt \cap \Kt$-linear, and since $(\sigrt)_{\Ubark}$ and $(\sigst)_{\Ubark}$ are one-dimensional, $\iota_{\vec{r}, \vec{s}}$ is either 0 or a $\Tt \cap \Kt$-isomorphism. Such an isomorphism, if it exists, is unique up to a scalar and thus the choice does not affect the $E$-span of $\varphi$. Thus we may take $\iota = \iota_{\vec{r}, \vec{s}}$, and the resulting function $\varphi^{\vec{r}, \vec{s}}_n := \varphi$ spans the space of functions in $\mathbb{H}(\Gt, \Kt, \sigrt, \sigst)$ with support in $\Kt \tilde{h}(\varpi)^{-n}\Kt$.  
\end{proof}


As a corollary of Lemma \ref{lem:gshabasis}, we have:
\begin{cor}
\label{cor:shabasis}
$\mathbb{H}(\Gt, \Kt, \sigrt, \sigst) = 0$ if neither $\vec{r} = \vec{s}$ nor $\{\vec{r}, \vec{s}\} = \{ \vec{0}, \overrightarrow{p-1}\}$, and otherwise a basis for $\mathbb{H}(\Gt, \Kt, \sigrt, \sigst)$ as an $E$-vector space is given by
$$\begin{cases}
\{\varphi^{\vec{r}, \vec{s}}_n\}_{n \geq 0} & \text{ if } \vec{r} = \vec{s},\\
\{ \varphi^{\vec{r}, \vec{s}}_n\}_{n > 0} & \text{ if } \{\vec{r}, \vec{s} \} = \{ \vec{0}, \overrightarrow{p-1}\}.
\end{cases}$$
\end{cor}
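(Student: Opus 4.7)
The corollary should follow almost immediately from Lemma \ref{lem:gshabasis} combined with Corollary \ref{cor:ukisoms}. The plan is to decompose an arbitrary $f \in \mathbb{H}(\Gt, \Kt, \sigrt, \sigst)$ according to the Cartan decomposition, apply Lemma \ref{lem:gshabasis} on each piece, and then determine which of the $\varphi_n^{\vec{r}, \vec{s}}$ are actually nonzero.

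First, using the Cartan decomposition $\Gt = \coprod_{n \geq 0} \Kt \tilde{h}(\varpi)^{-n}\Kt$, any function $f \in \mathbb{H}(\Gt, \Kt, \sigrt, \sigst)$ is an $E$-linear combination $f = \sum_{n \geq 0} f_n$, where $f_n$ denotes the restriction of $f$ to $\Kt \tilde{h}(\varpi)^{-n}\Kt$ (extended by zero), and compact support of $f$ ensures that only finitely many $f_n$ are nonzero. Each $f_n$ belongs to $\mathbb{H}(\Gt, \Kt, \sigrt, \sigst)$ with support inside $\Kt \tilde{h}(\varpi)^{-n}\Kt$, so Lemma \ref{lem:gshabasis} gives $f_n \in E \cdot \varphi^{\vec{r}, \vec{s}}_n$. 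Thus $\{\varphi^{\vec{r}, \vec{s}}_n : n \geq 0\}$ spans $\mathbb{H}(\Gt, \Kt, \sigrt, \sigst)$, and since the supports are pairwise disjoint, those $\varphi^{\vec{r}, \vec{s}}_n$ which are nonzero are automatically linearly independent.

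It remains to identify which $\varphi^{\vec{r}, \vec{s}}_n$ vanish. Directly from the definition in Lemma \ref{lem:gshabasis}, $\varphi^{\vec{r}, \vec{s}}_0 \neq 0$ iff $\vec{r} = \vec{s}$, and for $n \geq 1$, $\varphi^{\vec{r}, \vec{s}}_n \neq 0$ iff $(\sigrt)_{\Ubark} \cong (\sigst)_{\Ubark}$ as $\TKt$-representations. The latter condition is precisely characterized by Corollary \ref{cor:ukisoms}: it holds iff $\vec{r} = \vec{s}$ or $\{\vec{r}, \vec{s}\} = \{\vec{0}, \overrightarrow{p-1}\}$.

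Combining these observations yields the three cases: if neither $\vec{r} = \vec{s}$ nor $\{\vec{r}, \vec{s}\} = \{\vec{0}, \overrightarrow{p-1}\}$, then every $\varphi^{\vec{r}, \vec{s}}_n$ vanishes and $\mathbb{H}(\Gt, \Kt, \sigrt, \sigst) = 0$; if $\vec{r} = \vec{s}$, every $\varphi^{\vec{r}}_n$ for $n \geq 0$ is nonzero; and if $\{\vec{r}, \vec{s}\} = \{\vec{0}, \overrightarrow{p-1}\}$ with $\vec{r} \neq \vec{s}$, then $\varphi_0^{\vec{r}, \vec{s}} = 0$ but $\varphi_n^{\vec{r}, \vec{s}} \neq 0$ for $n > 0$. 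There is no real obstacle here — the content is all in Lemma \ref{lem:gshabasis} and Corollary \ref{cor:ukisoms}; the only thing to double-check is that the indexing set in the $\{\vec{0}, \overrightarrow{p-1}\}$ case correctly starts at $n = 1$, which follows from the fact that $\varphi^{\vec{r}, \vec{s}}_0$ detects equality $\vec{r} = \vec{s}$ rather than mere isomorphism of coinvariants.
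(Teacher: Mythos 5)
Your proposal is correct and follows essentially the same route as the paper: Lemma \ref{lem:gshabasis} plus the Cartan decomposition give spanning and linear independence, and the vanishing/nonvanishing of the $\varphi^{\vec{r},\vec{s}}_n$ is read off from the definition of $\iota_{\vec{r},\vec{s}}$ together with Corollary \ref{cor:ukisoms}. The only cosmetic difference is that the paper verifies nonvanishing by evaluating $\varphi^{\vec{r},\vec{s}}_n$ on a highest-weight vector $v \in V_{\vec{r}}^{\Uk}$, which is the same observation you make implicitly via $\rho_{\vec{r},\vec{s}} \neq 0$.
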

\begin{proof}[Proof of Corollary \ref{cor:shabasis}]
The definition of $\iota_{\vec{r}, \vec{s}}$ (cf. (\ref{eqn:iotarsdef})) implies that the functions $\tilde{\phi}^{\vec{r}, \vec{s}}_n$ are all identically zero if $\vec{r} \neq \vec{s}$ and $\{ \vec{r}, \vec{s}\} \neq \{ \vec{0}, \overrightarrow{p-1}\}$. Otherwise, there exists a vector $v \in V_{\vec{r}}$ such that $\varphi^{\vec{r}, \vec{s}}_n(\tilde{h}(\varpi)^n)(v) \neq 0$ (for example, any $v \in V^{\Uk}$). Hence $\emptyset \neq \text{Supp}(\varphi^{\vec{r}, \vec{s}}_n)$, and by Lemma \ref{lem:gshabasis}, $\varphi^{\vec{r}, \vec{s}}_n$ spans the $E$-vector space of functions in $\mathbb{H}(\Gt, \Kt, \sigrt, \sigst)$ with support in $\Kt \tilde{h}(\varpi)^{-n}\Kt$. The Cartan decomposition $\Gt = \coprod_{n \leq 0} \Kt \tilde{h}(\varpi)^{-n}\Kt$ implies that the set $\{ \varphi^{\vec{r}, \vec{s}}_n: \, n \in\ZZ\}$ is linearly independent. 
\end{proof}

Let $\mathcal{T}_n^{\vec{r},\vec{s}}$ and $(\mathcal{T}_n^{\vec{r},\vec{s}})'$ denote, respectively, the elements of $\Hom_{\Gt}(\ind_{\Kt}^{\Gt}\sigrt, \, \ind_{\Kt}^{\Gt}\sigst)$ and of $\Hom_{\Kt}(\sigrt, \, \ind_{\Kt}^{\Gt}\sigst \big \vert_{\Kt})$ which correspond to $\varphi^{\vec{r}, \vec{s}}_n$ by Frobenius reciprocity. Explicitly, for $v \in V_{\vec{r}},$ $g \in \Gt$, and $f \in \ind_{\Kt}^{\Gt} \sigrt$, 
\begin{equation}
\label{eqn:tprimedef}
(\mathcal{T}^{\vec{r}, \vec{s}}_n)'(v)(g) = \varphi^{\vec{r}, \vec{s}}_n(g)(v),
\end{equation}
$$ \mathcal{T}^{\vec{r}, \vec{s}}_n(f)(g) = \sum_{\Kt x \in \Kt \setminus \Gt} \varphi^{\vec{r}, \vec{s}}_n(gx^{-1})(f(x)) = \sum_{\Kt x \in \Kt\setminus\Kt  \tilde{h}(\varpi)^{n}\Kt g} \varphi^{\vec{r}, \vec{s}}_n(gx^{-1})(f(x)).$$
If $\vec{r} = \vec{s}$, we will write $\mathcal{T}^{\vec{r}}_n$ instead of $\mathcal{T}^{\vec{r}, \vec{r}}_n$. We next explicitly determine the image of $\mathcal{T}^{\vec{r}, \vec{s}}_n$ under the Satake transform $\mathcal{S}_{\vec{r}, \vec{s}}$.

\begin{prop} \label{prop:satakeproperties}
If $\vec{r} = \vec{s}$ or if $\{\vec{r}, \vec{s} \} = \{ \vec{0}, \overrightarrow{p-1}\}$, then for $n > 0$,
$$\mathcal{S}_{\vec{r}, \vec{s}}(\mathcal{T}^{\vec{r}, \vec{s}}_n) = \tau^{\vec{r},\vec{s}}_{-n}.$$
If $\vec{r} = \vec{s}$, then $\mathcal{S}_{\vec{r}}(\mathcal{T}^{\vec{r}}_0) = 1$. 
\end{prop}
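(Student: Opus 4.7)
The plan is to apply the explicit formula of Proposition \ref{prop:gensataketransformi}(2) to $\mathcal{T} = \mathcal{T}_n^{\vec{r}, \vec{s}}$ and compute coefficient-by-coefficient. Since $\mu_2 \subset \TKt$, a set of representatives for $(\TKt) \setminus \Tt$ is $\{\tilde{h}(\varpi)^m\}_{m \in \ZZ}$, so the formula reads
\[
\mathcal{S}_{\vec{r}, \vec{s}}(\mathcal{T}_n^{\vec{r}, \vec{s}})([1, p_{\Ubark}(v)]) = \sum_{m \in \ZZ}\bigl[\tilde{h}(\varpi)^{-m},\,A_m(v)\bigr],
\]
where $A_m(v) = \sum_{\bar{u} \in (\Ubar \cap K)^* \setminus \Ubar^*} p_{\Ubark}\bigl((\mathcal{T}_n^{\vec{r},\vec{s}})'(v)(\bar{u}\tilde{h}(\varpi)^m)\bigr) = \sum_{\bar{u}} p_{\Ubark}\bigl(\varphi_n^{\vec{r}, \vec{s}}(\bar{u}\tilde{h}(\varpi)^m)(v)\bigr)$. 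The goal is to show that $A_m(v)=0$ for $m\neq -n$ and $A_{-n}(v)=\iota_{\vec{r},\vec{s}}(p_{\Ubark}(v))$, so that the whole sum collapses to $[\tilde{h}(\varpi)^n, \iota_{\vec{r}, \vec{s}}(p_{\Ubark}(v))]$, which is exactly $\tau_{-n}^{\vec{r}, \vec{s}}([1, p_{\Ubark}(v)])$.

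Since $\varphi_n^{\vec{r}, \vec{s}}$ is supported on $\Kt \tilde{h}(\varpi)^{-n}\Kt$, Lemma \ref{lem:citranspose} (with Lemma \ref{lem:cartaniwasawa}) tells us the inner sum is empty unless $|m|\leq n$, and at $m=-n$ that exactly one $\bar{u}$ contributes, namely $\bar{u}=1$. Invoking the defining formula of $\varphi_n^{\vec{r},\vec{s}}$ and the description of $\rho_{\vec{r},\vec{s}}$ in Lemma \ref{lem:gshabasis}, together with $p_{\Ubark}\circ i_{\vec{s}}^{\Uk}= j_{\vec{s}}$ from Lemma \ref{lem:ukinvariants}, gives
\[
A_{-n}(v) = p_{\Ubark}\bigl(\rho_{\vec{r}, \vec{s}}(v)\bigr) = p_{\Ubark}\bigl(i_{\vec{s}}^{\Uk} \circ j_{\vec{s}}^{-1} \circ \iota_{\vec{r}, \vec{s}} \circ p_{\Ubark}(v)\bigr) = \iota_{\vec{r}, \vec{s}}(p_{\Ubark}(v)).
\]

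The main obstacle is to show that $A_m(v)=0$ for $-n<m\leq n$. By $E$-linearity and $\TKt$-equivariance of $\mathcal{S}_{\vec{r},\vec{s}}(\mathcal{T}_n^{\vec{r},\vec{s}})$, it suffices to test on $v=v_0$, a highest weight vector of $\sigrt$. For each contributing $\bar{u}$, I would write $\bar{u}\tilde{h}(\varpi)^m = k_1\tilde{h}(\varpi)^{-n}k_2$ using the coset representatives of $S_n$ from Lemma \ref{lem:leftksreps} together with the Bruhat/Iwahori decomposition \eqref{eqn:ktbruhat} of $K^*$, and compute
\[
p_{\Ubark}\bigl(\sigst(k_1)\rho_{\vec{r},\vec{s}}(\sigrt(k_2)v_0)\bigr)
\]
explicitly. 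Since $\rho_{\vec{r},\vec{s}}(\sigrt(k_2)v_0)\in(\sigst)^{\Uk}$, applying $\sigst(k_1)$ and then $p_{\Ubark}$ reduces (via the Iwahori decomposition of $k_1$ and the $\TKt$-equivariance of $p_{\Ubark}$) to a finite sum indexed by $\mathfrak{k}$ or $\mathfrak{k}^{\times}$, whose summand is a product of Hilbert-symbol factors and a power $\bar{x}^{j}$ of the residue parameter. The parameter $j$, determined by $\vec{r},\vec{s},n,m$, is congruent to a nontrivial residue $\pmod{q-1}$ precisely when $m\neq -n$, so the standard vanishing $\sum_{x\in\mathfrak{k}^{\times}}\bar{x}^{j}\equiv 0\pmod{p}$ for $j\not\equiv 0\pmod{q-1}$ forces $A_m(v_0)=0$. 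This step will require a careful bookkeeping of signs from the cocycle \eqref{eqn:sl2cocycle} and of the $\mu_2$-factor $\zeta$ in the Cartan decomposition, and I expect it to be the technical heart of the proof.

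The assertion $\mathcal{S}_{\vec{r}}(\mathcal{T}^{\vec{r}}_0)=1$ is immediate: $\mathcal{T}^{\vec{r}}_0$ is the identity endomorphism of $\ind_{\Kt}^{\Gt}\sigrt$, and Proposition \ref{prop:gensataketransformi}(3) ensures that $\mathcal{S}_{\vec{r},\vec{r}}$ respects the algebra structure and hence sends the identity to the identity $\tau^{\vec{r}}_0=1$ of $\SHT$. Alternatively, the formula specializes directly: $\varphi_0^{\vec{r}}$ is supported at $1\in\Gt$ with value $\mathrm{id}_{V_{\vec{r}}}$, so $A_0(v)=p_{\Ubark}(v)$ and $A_m(v)=0$ for $m\neq 0$.
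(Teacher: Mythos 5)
Your overall framework is the one the paper uses: apply the explicit formula of Proposition \ref{prop:gensataketransformi}(2), control the contributing cosets via Lemmas \ref{lem:cartaniwasawa} and \ref{lem:citranspose}, and identify the $m=-n$ term through $p_{\Ubark}\circ\rho_{\vec{r},\vec{s}}=\iota_{\vec{r},\vec{s}}\circ p_{\Ubark}$ (that identity is exactly what the paper invokes, and your treatment of $m=-n$ and of $\mathcal{S}_{\vec{r}}(\mathcal{T}^{\vec{r}}_0)=1$ is fine). The gap is the step you yourself call the technical heart: the vanishing of $A_m$ for $-n<m\le n$. The mechanism you propose --- that after grouping by the leading residue the summand is $\bar{x}^{j}$ with $j\not\equiv 0\pmod{q-1}$ \emph{precisely when} $m\neq -n$, so a nontrivial character sum kills everything --- is false as stated. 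For instance, for $\vec{r}=\vec{s}=\vec{0}$ and $-n<m<n$ with $m-n$ even, every summand is the \emph{same} nonzero vector (a fixed sign times $\rho_{\vec{r},\vec{0}}v$), i.e.\ the character in your sense is trivial; the sum vanishes only because the number of contributing cosets, $q^{n+m-1}(q-1)$ (or $q^{2n}$ when $m=n$), is $\equiv 0\pmod{p}$. That is, the vanishing there comes from the mod-$q$ reduction of the counts in Lemma \ref{lem:cartaniwasawa}, which your sketch never uses.

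In fact two distinct mechanisms are needed, and the paper separates them (Lemmas \ref{lem:satnontrivwts} and \ref{lem:wtr1dim}). When $\vec{s}\neq\vec{0}$, for $-n<m\le n$ the element $k_1$ in your decomposition $\bar{u}\tilde{h}(\varpi)^m=k_1\tilde{h}(\varpi)^{-n}k_2$ does \emph{not} lie in the Iwahori subgroup: its reduction involves the Weyl element $w(1)$, so each individual summand is a scalar multiple of $p_{\Ubark}\circ\sigma_{\vec{s}}(w(1))\circ\rho_{\vec{r},\vec{s}}(v)$, which already vanishes because $\sigma_{\vec{s}}(w(1))$ carries the highest-weight line to the lowest-weight line, and the latter lies in the kernel of $p_{\Ubark}$ once $\vec{s}\neq\vec{0}$; no character sum enters. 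When $\vec{s}=\vec{0}$ the summands are $\zeta\cdot\rho_{\vec{r},\vec{0}}v$ and do not vanish individually; there the cancellation comes from the counts: they are $\equiv 0\pmod{q}$ for all $-n<m\le n$ except $m=-n+1$, where the index set splits evenly, $\frac{q-1}{2}$ cosets with $\zeta=1$ and $\frac{q-1}{2}$ with $\zeta=-1$, and the two halves cancel --- equivalently a quadratic character sum, which is the only place a sum of the type you describe actually occurs. So your plan is salvageable, but to complete it you must (i) import the mod-$q$ count of Lemma \ref{lem:cartaniwasawa} together with the $\mu_2$-sign $\zeta$ it records, and (ii) observe the Weyl-element/coinvariant vanishing for $\vec{s}\neq\vec{0}$; the exponent criterion alone will not produce the middle-range vanishing.
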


\begin{proof}[Proof of Proposition \ref{prop:satakeproperties}]
In order to de-clutter the notation, set $\mathcal{T}_n:= \mathcal{T}^{\vec{r}, \vec{s}}_n$ and $\varphi_n:= \varphi^{\vec{r}, \vec{s}}_{n}$ for the duration of the proof. We will pass to $\Hom_{\Tt \cap \Kt}((\sigrt)_{\Ubark}, \, \ind_{\Tt \cap \Kt}^{\Tt}(\sigst)_{\Ubark})$ and show that $\mathcal{S}_{\vec{r}, \vec{s}}(\mathcal{T}_n)' = \tau_{-n}'$. Fix $n \geq 0$, $m \in \ZZ$, and $v \in V_{\vec{r}}$. Using the formula for $\mathcal{S}_{\vec{r}, \vec{s}}(\mathcal{T}_n)'$ found in the proof of Proposition \ref{prop:gensataketransformi} (2) and the definition (\ref{eqn:tprimedef}) of $\mathcal{T}'_n$, 
{\tiny
\begin{align*}
\mathcal{S}_{\vec{r}, \vec{s}}(\mathcal{T}_n)'(p_{\Ubark}v)(\tilde{h}(\varpi)^m) & = \sum_{\bar{u} \in  (\Ubar \cap K)^* \setminus \Ubar^*} p_{\Ubark}\left( \mathcal{T}_n'(v)(\bar{u}\tilde{h}(\varpi)^m)\right)\\
& =  \sum_{\substack{\bar{u} \in  (\Ubar \cap K)^* \setminus \Ubar^*:\\ (\Ubar \cap K)^*\bar{u}\tilde{h}(\varpi)^m \subset \Kt \tilde{h}(\varpi)^{-n} \Kt}} p_{\Ubark} \circ \phit_n(\bar{u}\tilde{h}(\varpi)^m)v \\
& =\left(\sum_{\substack{\bar{u} \in  (\Ubar \cap K)^* \setminus \Ubar^*:\\ (\Ubar \cap K)^*\bar{u}\tilde{h}(\varpi)^{m} \subset \Ks \tilde{h}(\varpi)^{-n} \Ks}} \hspace{-1cm} p_{\Ubark} \circ \phit_n(\bar{u}\tilde{h}(\varpi)^m)v\, +\hspace{-1cm} \sum_{\substack{\bar{u} \in  (\Ubar \cap K)^* \setminus \Ubar^*:\\ (\Ubar \cap K)^*\bar{u}\tilde{h}(\varpi)^m \subset \Ks \tilde{h}(\varpi)^{-n}(1, -1)\Ks}} \hspace{-1cm} p_{\Ubark} \circ \phit_n(\bar{u}\tilde{h}(\varpi)^m)v \right).
\end{align*}
}

Lemma \ref{lem:satnontrivwts} specifies the inner summands in the above formula when $\vec{s} \neq 0$, and Lemma \ref{lem:wtr1dim} does the same for $\vec{s} = \vec{0}$. 
\begin{lem} \label{lem:satnontrivwts}
Suppose that $\vec{s} \neq \vec{0}$ and suppose that $(\sigrt)_{\Ubark} \cong (\sigst)_{\Ubark}$ as $\TKt$-representations. If $v \in V_{\vec{r}}$ and if the triple ($n \geq 0$, $m \in \ZZ$, $\tilde{\bar{u}} \in (\Ubar \cap K)^* \setminus \Ubar^*$) satisfies
$$K^*\tilde{\bar{u}}\tilde{h}(\varpi)^m \subset  K^* \tilde{h}(\varpi)^{-n}(1, \zeta)K^*,$$
then
$$p_{\Ubark}\left( \varphi^{\vec{r}, \vec{s}}_n(\tilde{\bar{u}}\tilde{h}(\varpi)^m)(v) \right) = \begin{cases}
p_{\Ubark}v & \text{ if } m = n = 0 \text{ and } \vec{r} = \vec{s},\\
p_{\Ubark}\circ \rho_{\vec{r}, \vec{s}}v & \text{ if } m = -n < 0,\\
0 & \text{ otherwise}. 
\end{cases}$$
\end{lem}

\begin{proof}[Proof of Lemma \ref{lem:satnontrivwts}]
We continue to write $\varphi_n$ for $\varphi^{\vec{r}, \vec{s}}_n$, $\bar{u}$ for $Pr(\tilde{\bar{u}})$, and we refer to Lemma \ref{lem:gshabasis} for the definitions of $\varphi_n$ and $\rho_{\vec{r}, \vec{s}}$. The proof breaks up into the following cases:
\begin{enumerate}
\item \underline{$m = n = 0$.}  Then $\tilde{\bar{u}}\tilde{h}(\varpi)^m = \tilde{\bar{u}} \in (\Ubar \cap K)^*$, so $\zeta$ must equal 1 and 
\begin{align*}
p_{\Ubark} \left( \varphi_n(\tilde{\bar{u}}\tilde{h}(\varpi)^m)(v)  \right) & = p_{\Ubark} \left(\varphi_0(\tilde{\bar{u}})(v) \right) = p_{\Ubark} \circ \sigma_{\vec{s}}(\bar{u}) \circ \varphi_0((1, 1))\,( v).
\end{align*}
If $\vec{r} = \vec{s}$ then $\varphi_0((1, 1)) = 1$, and since $\bar{u} \in \Ubar \cap K$ we have
\begin{align*}
p_{\Ubark} \circ  \sigma_{\vec{s}}(\bar{u}) \circ \varphi_0((1, 1))\, v & = p_{\Ubark} \circ \sigma_{\vec{r}}(\bar{u})(v) = p_{\Ubark}v.
\end{align*} 
If $\vec{r} \neq \vec{s}$, then $\varphi_0((1, 1)) = 0$, so $p_{\Ubark} \left( \varphi_n(\tilde{\bar{u}}\tilde{h}(\varpi)^m)(v)  \right) = 0.$

\item \underline{$m = -n < 0$.} We have
$$\tilde{\bar{u}}\tilde{h}(\varpi)^{-n} \in \Kt \tilde{h}(\varpi)^{-n} \Kt$$
if and only if $\tilde{\bar{u}} \in (\Ubar \cap K)^*$, so again $\zeta = 1$, and 
\begin{align*}
p_{\Ubark} \left( \varphi_n(\tilde{\bar{u}}\tilde{h}(\varpi)^m)(v)  \right) & = p_{\Ubark} \circ \sigma_{\vec{s}}(\bar{u})\circ  \varphi_n(\tilde{h}(\varpi)^{-n})\, (v) = p_{\Ubark} \circ \sigma_{\vec{s}}(\bar{u})\circ  \rho_{\vec{r}, \vec{s}}\, (v) = p_{\Ubark}\circ \rho_{\vec{r}, \vec{s}}v.
\end{align*}

\item \underline{$-n < m < n$.}  Applying the transpose operation to the decomposition of $K^*\tilde{h}(\varpi)^{-n}K^*$ given in Lemma \ref{lem:leftksreps}, we have in this case (up to multiplication by $(\Ubar \cap K)^*$ on the left) that $\tilde{\bar{u}} = \tilde{\bar{u}}(\lambda\varpi^{-2n+1})$ for some $\lambda \in \cI_{2n-1}$ such that $v_F(\lambda) = n - m - 1$. Then, applying the explicit Bruhat decomposition and calculating, we have
\begin{align*}
\tilde{\bar{u}}(\lambda\varpi^{-2n+1}) \tilde{h}(\varpi)^m & = \tilde{u}(\lambda^{-1}\varpi^{2n-1})\tilde{h}(-\lambda^{-1}\varpi^{-n+m+1})\tilde{w}(1)\tilde{h}(\varpi)^{-n}\tilde{u}(\lambda^{-1}\varpi^{2n-2m-1})(1, (-1, \varpi^m)_F \eta_n(\lambda)).
\end{align*}
The conditions $-n < m < n$ and $v_F(\lambda) = n - m - 1$ imply that $\lambda^{-1}\varpi^{2n-1} \in \varpi\OF$, $-\lambda^{-1}\varpi^{-n + m + 1} \in \OF^\times$, and $\lambda^{-1}\varpi^{2n-2m-1} \in \varpi\OF$. Using the fact that $\sigma_{\vec{s}}$ and $\sigma_{\vec{r}}$ factor through reduction $\pmod{\varpi}$, we have
\begin{align*}
p_{\Ubark}\left(\varphi_n\left( \tilde{\bar{u}}\tilde{h}(\varpi)^m\right)(v) \right) & =(-1, \varpi^m)_F \eta_n(\lambda) \cdot p_{\Ubark}\circ \sigma_{\vec{s}}\left(h(-\lambda^{-1}\varpi^{-n+m+1})\tilde{w}(1)\right) \circ \varphi_n(\tilde{h}(\varpi)^{-n})(v)\\
& = (-1, \varpi^m)_F \eta_n(\lambda) \cdot \delta_{\vec{s}}\left(-\lambda^{-1}\varpi^{-n+m+1}\right) \cdot p_{\Ubark}\circ \sigma_{\vec{s}}(w(1))\circ\rho_{\vec{r}, \vec{s}}v,
\end{align*}
which is equal to 0 since, as $\vec{s} \neq 0$, the image of $\sigma_{\vec{s}}(w(1))\circ \rho_{\vec{r}, \vec{s}}$ lies in the kernel of $p_{\Ubark}$. 

\item \underline{$0 < m = n$.} 
Again applying the transpose operation to the result of Lemma \ref{lem:leftksreps}, we have in this case that $\tilde{\bar{u}} = \tilde{\bar{u}}(\lambda\varpi^{-2n})$ for some $\lambda \in \cI_{2n}$. For such $\lambda$,
\begin{align*}
\tilde{\bar{u}}(\lambda\varpi^{-2n})\tilde{h}(\varpi)^{-n}  = \tilde{w}(1)\tilde{h}(\varpi)^{-n}\tilde{u}(-\lambda)\tilde{w}(-1)(-1, \varpi^n)_F,
\end{align*}
so $\zeta = (-1, \varpi)_F$ and 
\begin{align*}
p_{\Ubark}\left(\varphi_n(\tilde{\bar{u}}\tilde{h}(\varpi)^m)(v)\right) & = (-1, \varpi^n)_F \cdot p_{\Ubark} \circ \sigma_{\vec{s}}(w(1)) \circ \rho_{\vec{r}, \vec{s}} \circ \sigma_{\vec{r}}(u(-\lambda)w(-1))(v)
\end{align*}
which again is equal to 0 since the image of $\sigma_{\vec{s}}(w(1)) \circ \rho_{\vec{r}, \vec{s}}$ lies in the kernel of $p_{\Ubark}$. 
\end{enumerate}
\end{proof}

If instead $\vec{s} = \vec{0}$ is taken in the situation of Lemma \ref{lem:satnontrivwts}, then $\sigma_{\vec{s}}$ is the trivial representation of $K$, $p_{\Ubark} = 1$, and we obtain the following statement: 

\begin{lem} \label{lem:wtr1dim}
Suppose that $(\sigrt)_{\Ubark} \cong (\tilde{\sigma}_{\vec{0}})_{\Ubark}$ as $\Tt \cap \Kt$-representations, i.e., suppose that $\vec{r} \in \{ \vec{0}, \overrightarrow{p-1}\}$. If $v \in V_{\vec{r}}$ and if the triple ($n \geq 0$, $m \in \ZZ$, and $\tilde{\bar{u}} \in (\Ubar \cap K)^* \setminus \Ubar^*)$ satisfies 
$$K^* \tilde{\bar{u}}\tilde{h}(\varpi)^m \subset K^* \tilde{h}(\varpi)^{-n}(1, \zeta)K^*,$$
then 
\begin{align*}
p_{\Ubark}\left(\phit_n(\tilde{\bar{u}}\tilde{h}(\varpi)^m)(v) \right) & = \phit_n(\tilde{\bar{u}}\tilde{h}(\varpi)^m)(v)  = \begin{cases}
v & \text{ if } m = n = 0 \text{ and } \vec{r} = \vec{s},\\
\zeta \cdot \rho_{\vec{r}, \vec{0}}v & \text{ if } 0 < n \text{ and } - n \leq  m < n,\\
\zeta \cdot \rho_{\vec{r}, \vec{0}} \circ \sigma_{\vec{r}} 
\left(w(1) \right)v & \text{ if } 0 < m = n.
\end{cases}
\end{align*}
In particular, if $\vec{r} = \vec{s} = \vec{0}$, then $p_{\Ubark}\left(\phit_n(\tilde{\bar{u}}\tilde{h}(\varpi)^m(v)) \right) = \zeta \cdot v$ for all $n$, $m$, and $\tilde{\bar{u}}$ satisfying the conditions of the lemma. 
\end{lem}

Lemmas \ref{lem:satnontrivwts} and \ref{lem:wtr1dim} imply that for any weights $\sigrt$, $\sigst$ of $\Kt$, and for a fixed triple ($n\geq 0$, $m \in \ZZ$, $\zeta\in \mu_2$), the value of $p_{\Ubar}\circ \varphi_n(\bar{u}\tilde{h}(\varpi)^m)v$ is independent of the choice of a representative $\tilde{\bar{u}}$ from the index set $ \{ \tilde{\bar{u}} \in (\Ubar \cap K)^*\setminus \Ubar^*: \, (\Ubar \cap K)^* \tilde{\bar{u}}\tilde{h}(\varpi)^m \subset K^* \tilde{h}(\varpi)^{-n}(1, \zeta)K^*\}.$ To finish the evaluation of $\mathcal{S}_{\vec{r}, \vec{s}}(\mathcal{T}_n)'(p_{\Ubark}v)(\tilde{h}(\varpi)^m)$, it only remains to count the order (mod $q$) of each such index set. These orders were determined over $\ZZ$ in Lemma \ref{lem:cartaniwasawa} (via Lemma \ref{lem:citranspose}). Reducing modulo $q$ in the formulae of Lemma \ref{lem:cartaniwasawa}, 
\begin{equation}
\label{eqn:ordercounts}
\# \{ \bar{u} \in (\Ubar \cap K)^*\setminus \Ubar^*: \, (\Ubar \cap K)^* \bar{u}\tilde{h}(\varpi)^m \subset K^* \tilde{h}(\varpi)^{-n}(1, \zeta)K^*\} \equiv
\begin{cases}
1\pmod{q} & \text{ if } m = -n \text{ and } \zeta = 1,\\
\frac{q-1}{2}\pmod{q} & \text{ if } m = -n + 1,\\
0\pmod{q} & \text{ otherwise. }
\end{cases}
\end{equation}

Hence if $\vec{s} \neq \vec{0}$, we deduce from Lemma \ref{lem:satnontrivwts} and (\ref{eqn:ordercounts}) that 
\begin{align*}
\mathcal{S}_{\vec{r}, \vec{s}}(\mathcal{T}_n)'(p_{\Ubark}v)(\tilde{h}(\varpi)^m) & = \begin{cases}
p_{\Ubark}v & \text{ if } m = n = 0 \text{ and } \vec{r} = \vec{s},\\
p_{\Ubark} \circ \rho_{\vec{r}, \vec{s}}v & \text{ if } m = -n < 0,\\
0 & \text{ otherwise}.
\end{cases}
\end{align*}

From Lemma \ref{lem:wtr1dim} and (\ref{eqn:ordercounts}) we get the following formula for $\vec{s} = \vec{0}$:  
\begin{align*}
\mathcal{S}_{\vec{r}, \vec{0}}(\mathcal{T}_n)'(p_{\Ubark}v)(\tilde{h}(\varpi)^m) & = \begin{cases}
v  & \text{ if } m = n = 0 \text{ and } \vec{r} = \vec{0},\\
\rho_{\vec{r}, \vec{0}}v & \text{ if } m = -n < 0,\\
0 & \text{ otherwise},
\end{cases}
\end{align*}
where the vanishing when $m = -n+1$ is due to the fact that 
\begin{align*}
\mathcal{S}_{\vec{r}, \vec{0}}(\mathcal{T}_n)'(p_{\Ubark}v)(\tilde{h}(\varpi)^m) & = 
|S_{n, m, 1}|\cdot \rho_{\vec{r}, \vec{0}}v - |S_{n, m, -1}| \cdot \rho_{\vec{r}, \vec{0}}v = \left(\frac{q-1}{2} \right)\rho_{\vec{r}, \vec{0}}v - \left( \frac{q-1}{2}\right)\rho_{\vec{r}, \vec{0}}v =  0. 
\end{align*}

On the other hand, $$\tau_{-n}'(p_{\Ubark}v)(\tilde{h}(\varpi)^m) = \psi^{\vec{r}, \vec{s}}_{-n}(\tilde{h}(\varpi)^m)(p_{\Ubark}v) = \begin{cases}
p_{\Ubark}v & \text{ if } n = m = 0 \text{ and } \vec{r} = \vec{s},\\
\iota_{\vec{r}, \vec{s}}(p_{\Ubark}v) & \text{ if } m = -n \text{ and } \vec{r} = \vec{s} \text{ or } \{ \vec{r}, \vec{s}\} = \{ \vec{0}, \overrightarrow{p-1}\},\\
0 & \text{ otherwise.}
\end{cases}$$

It follows from the definition of $\rho_{\vec{r}, \vec{s}}$ that $\iota_{\vec{r}, \vec{s}} \circ p_{\Ubark} = p_{\Ubark} \circ \rho_{\vec{r}, \vec{s}}$, and $p_{\Ubark} \circ \rho_{\vec{r}, \vec{0}} = \rho_{\vec{r}, \vec{0}}$. Thus the formulae for $\mathcal{S}_{\vec{r}, \vec{s}}(\mathcal{T}_n)'(p_{\Ubark}v)(\tilde{h}(\varpi)^m)$ and $\tau_{-n}'(p_{\Ubark}v)(\tilde{h}(\varpi)^m)$ agree, so  $\mathcal{S}_{\vec{r}, \vec{s}}(\mathcal{T}_n)' = \tau_{-n}'$ if $n > 0$ and $\mathcal{S}_{\vec{r}}(\mathcal{T}_{0})' = \tau_{0}'$ if $\vec{r} = \vec{s}$. Passing back to $\mathcal{H}(\Tt, \Tt\cap \Kt, (\sigrt)_{\Ubark}, (\sigst)_{\Ubark})$ through the equivalence of Frobenius reciprocity, we get the statement of the proposition. 
\end{proof}

In particular, $\mathcal{S}_{\vec{r}, \vec{s}}$ is injective. Proposition \ref{prop:satakeproperties}, together with the description of $\mathcal{H}^{\leq 0}(\Tt, \TKt, (\sigrt)_{\Ubark}, (\sigst)_{\Ubark})$ from Lemma \ref{lem:antidomisom}, gives the following corollary. 

\begin{cor}
\label{cor:injimag}
\begin{enumerate}
\item If $\vec{r} = \vec{s}$, then $\mathcal{S}_{\vec{r}, \vec{s}}$ is an $E$-algebra isomorphism $$\SHA \rightarrow E[\tau_{-1}^{\vec{r}}] \cong \mathcal{H}^{\leq 0}(\Tt, \TKt, (\sigrt)_{\Ubark}).$$ Thus $\SHA$ is a polynomial algebra over $E$ in the single operator $\mathcal{S}_{\vec{r}}^{-1}(\tau^{\vec{r}}_{-1}) = \mathcal{T}^{\vec{r}}_1.$ 
\item For each pair $\vec{r} \neq \vec{s}$, the map $\mathcal{S}_{\vec{r}, \vec{s}}$ is an $E$-linear bijection $$\SHAs \rightarrow \mathcal{H}^{<0}(\Tt, \TKt, (\sigrt)_{\Ubark}, (\sigst)_{\Ubark}),$$
and the family of maps $\{\mathcal{S}_{-, -}\}$ respects the Hecke bimodule structure on each side.
\end{enumerate}
\end{cor}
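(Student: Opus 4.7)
The plan is to deduce both parts by combining the explicit computation of $\mathcal{S}_{\vec{r},\vec{s}}$ on a basis, performed in Proposition \ref{prop:satakeproperties}, with the descriptions of the target spaces given in Lemma \ref{lem:SHTwtcoinv} and Lemma \ref{lem:antidomisom}.

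First, by Corollary \ref{cor:shabasis}, the set $\{\mathcal{T}_n^{\vec{r},\vec{s}}\}$ indexed by $n \geq 0$ (if $\vec{r}=\vec{s}$) or by $n > 0$ (if $\{\vec{r},\vec{s}\}=\{\vec{0},\overrightarrow{p-1}\}$) is an $E$-basis of $\SHAs$. Proposition \ref{prop:satakeproperties} sends this basis to $\{\tau_{-n}^{\vec{r},\vec{s}}\}$ over the same index set, which by Lemma \ref{lem:SHTwtcoinv} together with the definitions of the subspaces $\mathcal{H}^{\leq 0}$ and $\mathcal{H}^{<0}$ is an $E$-basis of $\mathcal{H}^{\leq 0}(\Tt,\TKt,(\sigrt)_{\Ubark})$ in case (1) and of $\mathcal{H}^{<0}(\Tt,\TKt,(\sigrt)_{\Ubark},(\sigst)_{\Ubark})$ in case (2). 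Hence $\mathcal{S}_{\vec{r},\vec{s}}$ is an $E$-linear bijection onto the claimed target in each case.

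For part (1), specializing Proposition \ref{prop:gensataketransformi}(3) to $\vec{r}=\vec{s}=\vec{t}$ shows that $\mathcal{S}_{\vec{r}}$ is an $E$-algebra homomorphism; together with the bijectivity established above, this upgrades to an $E$-algebra isomorphism $\SHA \to \mathcal{H}^{\leq 0}(\Tt,\TKt,(\sigrt)_{\Ubark})$. Lemma \ref{lem:antidomisom} then identifies the target with the polynomial algebra $E[\tau_{-1}^{\vec{r}}]$. Since $\mathcal{S}_{\vec{r}}(\mathcal{T}_1^{\vec{r}}) = \tau_{-1}^{\vec{r}}$ by Proposition \ref{prop:satakeproperties}, the polynomial generator pulls back to $\mathcal{T}_1^{\vec{r}}$, yielding $\SHA = E[\mathcal{T}_1^{\vec{r}}]$.

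For part (2), bijectivity is already in hand. The statement that the family $\{\mathcal{S}_{-,-}\}$ respects the Hecke bimodule structure is exactly Proposition \ref{prop:gensataketransformi}(3) applied to arbitrary triples of weights $(\vec{r},\vec{s},\vec{t})$: the left $\mathcal{H}(\Gt,\Kt,\sigst)$-action on $\SHAs$ corresponds under $\mathcal{S}_{\vec{s}}$ and $\mathcal{S}_{\vec{r},\vec{s}}$ to the left $\mathcal{H}^{\leq 0}(\Tt,\TKt,(\sigst)_{\Ubark})$-action on $\mathcal{H}^{<0}(\Tt,\TKt,(\sigrt)_{\Ubark},(\sigst)_{\Ubark})$, and symmetrically on the right. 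I do not anticipate a serious obstacle: all heavy lifting was performed in Proposition \ref{prop:satakeproperties} and Lemma \ref{lem:gshabasis}, and the present corollary is a formal packaging of those results together with the algebraic structure of the torus Hecke bimodules supplied by Lemma \ref{lem:SHTwtcoinvalg}.
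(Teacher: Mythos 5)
Your proposal is correct and follows essentially the same route as the paper: the paper likewise deduces the corollary by combining the explicit values $\mathcal{S}_{\vec{r},\vec{s}}(\mathcal{T}^{\vec{r},\vec{s}}_n)=\tau^{\vec{r},\vec{s}}_{-n}$ from Proposition \ref{prop:satakeproperties} with the bases of Corollary \ref{cor:shabasis} and Lemma \ref{lem:SHTwtcoinv}, the identification of $\mathcal{H}^{\leq 0}$ in Lemma \ref{lem:antidomisom}, and the multiplicativity/bimodule compatibility of Proposition \ref{prop:gensataketransformi}(3). The only point you pass over silently is the degenerate case $\vec{r}\neq\vec{s}$ with $\{\vec{r},\vec{s}\}\neq\{\vec{0},\overrightarrow{p-1}\}$, where both sides vanish and the claim is trivial, so nothing is lost.
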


From Corollary \ref{cor:injimag} and Lemma \ref{lem:kt'weights} we get an analogous description of the genuine spherical Hecke bimodules of $\Gt$ with respect to $\Kt'$:

\begin{cor}\label{cor:injimagkt'}
Let $\mathcal{T}^{\tilde{\alpha}, \vec{r}, \vec{s}}_{1}$ (resp., $\mathcal{T}_1^{\tilde{\alpha}, \vec{r}}$) denote the image of $\mathcal{T}^{\vec{r}, \vec{s}}_1$ (resp. $\mathcal{T}_1^{\vec{r}}$) under the isomorphism of Lemma \ref{lem:kt'weights}. 
\begin{enumerate}
\item The composition of the inverse of the isomorphism of Lemma \ref{lem:kt'weights} (taking $\vec{r} = \vec{s}$) with $\mathcal{S}_{\vec{r}}$ is an $E$-algebra isomorphism 
$$\mathcal{H}(\Gt, \Kt', \sigrt^{\tilde{\alpha}}) \rightarrow \mathcal{H}^{\leq 0}(\Tt, \Tt \cap \Kt, (\sigrt)_{\Ubark}).$$
Thus $\mathcal{H}(\Gt, \Kt', \sigrt^{\tilde{\alpha}})$ is isomorphic to a polynomial algebra over $E$ in the single operator $\mathcal{T}^{\tilde{\alpha}, \vec{r}}_1$. 
\item For each pair $\vec{r} \neq \vec{s}$, there is an $E$-linear bijection
$$\mathcal{H}(\Gt, \Kt', \sigrt^{\tilde{\alpha}}, \sigst^{\tilde{\alpha}}) \rightarrow \mathcal{H}^{< 0 }(\Tt, \Tt \cap \Kt, (\sigrt)_{\Ubark}, (\sigst)_{\Ubark})$$
which is compatible with the Hecke bimodule structure on each side.
\end{enumerate}
\end{cor}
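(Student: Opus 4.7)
The plan is to obtain Corollary \ref{cor:injimagkt'} as a formal transport-of-structure across the family of isomorphisms provided by Lemma \ref{lem:kt'shaisom}, combined with the structural results of Corollary \ref{cor:injimag}. Since every genuine spherical Hecke bimodule of $\Gt$ with respect to $\Kt'$ is already identified with a genuine spherical Hecke bimodule with respect to $\Kt$ at the level of $E$-vector spaces, the only substantive point is that this identification is compatible with the bimodule product (\ref{eqn:HAcomplaw}).

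First I would verify the following functoriality statement: for any three weights $\sigrt$, $\sigst$, $\tilde{\sigma}_{\vec{t}}$ of $\Kt$, the $E$-linear isomorphisms of Lemma \ref{lem:kt'shaisom} fit into a commutative square with the composition products, i.e., the images of $\underline{\mathcal{T}} \in \mathcal{H}(\Gt, \Kt, \sigst, \tilde{\sigma}_{\vec{t}})$ and $\mathcal{T} \in \SHAs$ compose to the image of $\underline{\mathcal{T}} \circ \mathcal{T}$. This is essentially automatic: the map (\ref{eqn:idmapHA}) is the identity on the underlying set of intertwiners, while (\ref{eqn:conjmapHA}) is pre-/post-composition with the fixed $\Gt$-equivariant isomorphism $\Phi \colon (\ind_{\Kt}^{\Gt}\pi)^{\tilde{\alpha}} \to \ind_{\Kt'}^{\Gt}(\pi^{\tilde{\alpha}})$, $\Phi(f)(g) = f((\tilde{\alpha})^{-1}g\tilde{\alpha})$. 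Both operations commute with composition of intertwiners, so the square commutes.

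With this compatibility established, part (1) is immediate: taking $\vec{r} = \vec{s}$, Lemma \ref{lem:kt'shaisom} becomes an $E$-algebra isomorphism $\SHA \to \mathcal{H}(\Gt, \Kt', \sigrt^{\tilde{\alpha}})$ carrying $\mathcal{T}_1^{\vec{r}}$ to $\mathcal{T}_1^{\tilde{\alpha}, \vec{r}}$ by definition. Composing with the isomorphism $\mathcal{S}_{\vec{r}} \colon \SHA \to E[\tau_{-1}^{\vec{r}}]$ from Corollary \ref{cor:injimag}(1) yields the claimed $E$-algebra isomorphism and exhibits $\mathcal{H}(\Gt, \Kt', \sigrt^{\tilde{\alpha}})$ as a polynomial algebra in $\mathcal{T}_1^{\tilde{\alpha}, \vec{r}}$. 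Part (2) follows by composing the $E$-linear bijection of Lemma \ref{lem:kt'shaisom} with $\mathcal{S}_{\vec{r}, \vec{s}}$ from Corollary \ref{cor:injimag}(2); bimodule compatibility for this composite follows from the compatibility in step one together with the bimodule compatibility already recorded in Corollary \ref{cor:injimag}(2).

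The only potential obstacle is the verification of the functoriality in the first step, but this reduces to a direct unwinding of the formula for $\Phi$ and presents no real difficulty; no further input from the Satake formula or from the explicit basis $\{\mathcal{T}_n^{\vec{r}, \vec{s}}\}$ is needed, so the statement is genuinely a corollary rather than an independent calculation.
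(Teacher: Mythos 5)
Your proposal is correct and matches the paper's intent exactly: the paper states this as an immediate consequence of Corollary \ref{cor:injimag} together with the conjugation isomorphism of Lemma \ref{lem:kt'shaisom} (the citation of Lemma \ref{lem:kt'weights} in the statement is effectively to that transport-by-$\tilde{\alpha}$ construction), giving no separate argument. Your only added step, checking that the maps $\mathcal{T} \mapsto \Phi \circ \mathcal{T} \circ \Phi^{-1}$ respect the composition product (\ref{eqn:HAcomplaw}), is the routine verification the paper leaves implicit, and it is carried out correctly.
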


We conclude this section by using the Satake transform to calculate compositions of elements of compatible genuine spherical Hecke bimodules. 

\begin{lem} \label{lem:cowmap}
\begin{enumerate}
\item For each $n \geq 0$ and $\vec{r} \in \{0, \dots, p-1\}^f$, the following equality holds in $\SHA$: 
$$(\mathcal{T}^{\vec{r}}_1)^n = \mathcal{T}^{\vec{r}}_n.$$
\item If $\{\vec{r}, \vec{s}\} = \{ \vec{0}, \overrightarrow{p-1}\}$, then for each $n \geq 0$ and $m \geq 0$,
$$\mathcal{T}^{\vec{s}, \vec{r}}_n\circ \mathcal{T}^{\vec{r}, \vec{s}}_m = (\mathcal{T}^{\vec{r}}_1)^{n+m} \text{ and } \mathcal{T}^{\vec{r}, \vec{s}}_m \circ \mathcal{T}^{\vec{s}, \vec{r}}_n= (\mathcal{T}^{\vec{s}}_1)^{n+m}.$$
\end{enumerate}
\end{lem}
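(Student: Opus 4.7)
The plan is to transport both identities to the torus side via the injective Satake transforms of Corollary \ref{cor:injimag} and reduce every composition to an explicit product of $\tau$-operators. The key inputs I will use are: (a) the formulas $\mathcal{S}_{\vec{r}}(\mathcal{T}^{\vec{r}}_n) = \tau^{\vec{r}}_{-n}$ and $\mathcal{S}_{\vec{r}, \vec{s}}(\mathcal{T}^{\vec{r}, \vec{s}}_n) = \tau^{\vec{r}, \vec{s}}_{-n}$ from Proposition \ref{prop:satakeproperties}, (b) the multiplicativity $\mathcal{S}_{\vec{r}, \vec{t}}(\underline{\mathcal{T}} \circ \mathcal{T}) = \mathcal{S}_{\vec{s}, \vec{t}}(\underline{\mathcal{T}}) \circ \mathcal{S}_{\vec{r}, \vec{s}}(\mathcal{T})$ of Proposition \ref{prop:gensataketransformi}(3), (c) the torus-side product formulas of Lemma \ref{lem:SHTwtcoinvalg}, and (d) the injectivity of each $\mathcal{S}_{\vec{r}, \vec{s}}$ on its source bimodule. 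Because all compositional structure collapses to the explicit $\tau$-operators on the torus, each identity becomes a one-line check.

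For (1), I apply $\mathcal{S}_{\vec{r}}$ to both sides. Since $\mathcal{S}_{\vec{r}}$ is an $E$-algebra map (Corollary \ref{cor:injimag}(1)),
\[
\mathcal{S}_{\vec{r}}\bigl((\mathcal{T}^{\vec{r}}_1)^n\bigr) = \bigl(\mathcal{S}_{\vec{r}}(\mathcal{T}^{\vec{r}}_1)\bigr)^n = (\tau^{\vec{r}}_{-1})^n = \tau^{\vec{r}}_{-n},
\]
the last equality by Lemma \ref{lem:SHTwtcoinvalg}(1). Proposition \ref{prop:satakeproperties} gives $\mathcal{S}_{\vec{r}}(\mathcal{T}^{\vec{r}}_n) = \tau^{\vec{r}}_{-n}$ as well (with the convention $\mathcal{S}_{\vec{r}}(\mathcal{T}^{\vec{r}}_0) = 1 = \tau^{\vec{r}}_0$ covering the edge case $n = 0$). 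Injectivity of $\mathcal{S}_{\vec{r}}$ then yields $(\mathcal{T}^{\vec{r}}_1)^n = \mathcal{T}^{\vec{r}}_n$ in $\SHA$.

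For (2), I fix $n, m \geq 1$ so that $\mathcal{T}^{\vec{s}, \vec{r}}_n$ and $\mathcal{T}^{\vec{r}, \vec{s}}_m$ are the nonzero basis elements produced in Corollary \ref{cor:shabasis}. By Proposition \ref{prop:gensataketransformi}(3) applied to the composable pair $\mathcal{T}^{\vec{r}, \vec{s}}_m \in \mathcal{H}(\Gt, \Kt, \sigrt, \sigst)$ and $\mathcal{T}^{\vec{s}, \vec{r}}_n \in \mathcal{H}(\Gt, \Kt, \sigst, \sigrt)$,
\[
\mathcal{S}_{\vec{r}}\bigl(\mathcal{T}^{\vec{s}, \vec{r}}_n \circ \mathcal{T}^{\vec{r}, \vec{s}}_m\bigr) = \mathcal{S}_{\vec{s}, \vec{r}}(\mathcal{T}^{\vec{s}, \vec{r}}_n) \circ \mathcal{S}_{\vec{r}, \vec{s}}(\mathcal{T}^{\vec{r}, \vec{s}}_m) = \tau^{\vec{s}, \vec{r}}_{-n} \circ \tau^{\vec{r}, \vec{s}}_{-m} = \tau^{\vec{r}}_{-(n+m)},
\]
using Proposition \ref{prop:satakeproperties} for the middle equality and Lemma \ref{lem:SHTwtcoinvalg}(2) for the last. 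On the other hand, part (1) combined with Proposition \ref{prop:satakeproperties} gives $\mathcal{S}_{\vec{r}}\bigl((\mathcal{T}^{\vec{r}}_1)^{n+m}\bigr) = \mathcal{S}_{\vec{r}}(\mathcal{T}^{\vec{r}}_{n+m}) = \tau^{\vec{r}}_{-(n+m)}$. Injectivity of $\mathcal{S}_{\vec{r}}$ then gives the first identity; the second follows by exchanging the roles of $\vec{r}$ and $\vec{s}$ throughout.

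The only real obstacle is bookkeeping: one must keep the sign convention $\mathcal{S}_{\vec{r}, \vec{s}}(\mathcal{T}^{\vec{r}, \vec{s}}_n) = \tau^{\vec{r}, \vec{s}}_{-n}$ consistent, pair each Hecke operator with its correct source and target weights when invoking the multiplicativity of $\mathcal{S}$, and apply the correct case of Lemma \ref{lem:SHTwtcoinvalg}(2) with the right permutation of $\vec{r}$ and $\vec{s}$. Once these alignments are made, the proof is a short chain of substitutions that invokes injectivity at the last step.
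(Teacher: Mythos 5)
Your proof is correct and follows essentially the same route as the paper's own argument: apply the Satake transform, use its multiplicativity (Proposition \ref{prop:gensataketransformi}(3)) and the explicit values $\mathcal{S}_{\vec{r}, \vec{s}}(\mathcal{T}^{\vec{r}, \vec{s}}_n) = \tau^{\vec{r}, \vec{s}}_{-n}$ from Proposition \ref{prop:satakeproperties}, compute the composition on the torus side via Lemma \ref{lem:SHTwtcoinvalg}, and conclude by injectivity of $\mathcal{S}_{\vec{r}}$. Your restriction to $n, m \geq 1$ in part (2) is harmless and in fact the natural range, since $\mathcal{T}^{\vec{r}, \vec{s}}_0 = 0$ when $\vec{r} \neq \vec{s}$.
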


\begin{proof}[Proof of Lemma \ref{lem:cowmap}]
\begin{enumerate}
\item Since $\mathcal{S}_{\vec{r}}$ is a homomorphism of $E$-algebras (Proposition \ref{prop:gensataketransformi} (3)), 
$$\mathcal{S}_{\vec{r}}\left((\mathcal{T}^{\vec{r}}_1)^n\right) =\left( \mathcal{S}^{\vec{r}}( \mathcal{T}^{\vec{r}}_1)\right)^n.$$
By Proposition \ref{prop:satakeproperties}, $\mathcal{S}_{\vec{r}}\left( \mathcal{T}^{\vec{r}}_1\right)^n = (\tau^{\vec{r}}_{-1})^n,$ which is equal to $\tau_{-n}^{\vec{r}}$ by Lemma \ref{lem:SHTwtcoinvalg} (1).\\ Then $(\mathcal{T}^{\vec{r}}_1)^n = \mathcal{S}_{\vec{r}}^{-1}(\tau^{\vec{r}}_{-n}) = \mathcal{T}^{\vec{r}}_n$. 

\item Suppose that $\{\vec{r}, \vec{s}\} = \{ \vec{0}, \overrightarrow{p-1} \}$, and let $n \geq 0$ and $m \geq 0$. By Proposition \ref{prop:gensataketransformi} (3) again, 
$$\mathcal{S}_{\vec{r}}(\mathcal{T}^{\vec{s}, \vec{r}}_n\circ \mathcal{T}^{\vec{r}, \vec{s}}_m) = \mathcal{S}_{\vec{s}, \vec{r}}(\mathcal{T}^{\vec{s}, \vec{r}}_n) \circ \mathcal{S}_{\vec{r}, \vec{s}}(\mathcal{T}^{\vec{r}, \vec{s}}_m).$$
By Proposition \ref{prop:satakeproperties}, $$\mathcal{S}_{\vec{s}, \vec{r}}(\mathcal{T}^{\vec{s}, \vec{r}}_n) \circ \mathcal{S}_{\vec{r}, \vec{s}}(\mathcal{T}^{\vec{r}, \vec{s}}_m) = \tau^{\vec{s}, \vec{r}}_{-n} \circ \tau^{\vec{r}, \vec{s}}_{-m},$$
and by Lemma \ref{lem:SHTwtcoinvalg} (2), $ \tau^{\vec{s}, \vec{r}}_{-n} \circ \tau^{\vec{r}, \vec{s}}_{-m} = \tau^{\vec{r}}_{-(n+m)}.$
Then $\mathcal{T}^{\vec{s}, \vec{r}}_n \circ \mathcal{T}^{\vec{r}, \vec{s}}_m = \mathcal{S}_{\vec{r}}^{-1}(\tau_{-(n+m)}^{\vec{r}}) = \mathcal{T}^{\vec{r}}_{n+m}$, which is equal to $(\mathcal{T}^{\vec{r}}_{1})^{n+m}$ by (1) of the present lemma. The second equality of (2) is proved in the same way after exchanging $\vec{s}$ and $\vec{r}$.  
\end{enumerate}
\end{proof}

\section{Explicit description of the Hecke action}
\label{sec:heckedescription}

In this section we give an explicit formula for the action of $\mathcal{H}(\Gt, \Kt, \tilde{\sigma}_{\vec{r}})$ on the compact induction $\ind_{\Kt}^{\Gt}(\tilde{\sigma}_{\vec{r}})$. It is enough to specify the action of the generator $\mathcal{T}_1^{\vec{r}}$ on the basic functions $[g, v]$ for $g \in S$ and $v \in V_{\vec{r}}$. After the proof of Proposition \ref{prop:heckeaction}, we give a second description of this action in terms of the Bruhat-Tits tree of $SL_2(F)$. In the following section we derive some consequences, notably the freeness of $\ind_{\Kt}^{\Gt}(\tilde{\sigma}_{\vec{r}})$ as an $\mathcal{H}(\Gt, \Kt, \tilde{\sigma}_{\vec{r}}$-module. 

\subsection{Formula for the Hecke action}
\label{subsec:heckeformula}

\begin{prop}
\label{prop:heckeaction}
Let $v \in V_{\vec{r}}$. Let $\mathcal{T}^{\vec{r}}$ denote the operator $\mathcal{T}^{\vec{r}}_1$ defined in (\ref{eqn:tprimedef}), and let $\rho_{\vec{r}}$ denote the endomorphism of $\tilde{\sigma}_{\vec{r}}$ defined in Lemma \ref{lem:gshabasis}. Unless otherwise noted, let $n \geq 1$. 
\begin{enumerate}
\item
\begin{align*}
(-1, \varpi)_F \cdot \mathcal{T}^{\vec{r}}\left([(1, 1), v] \right) = & \sum_{\lambda \in I_2} [\tilde{h}^0_{2, \lambda}, \, \rho_{\vec{r}}\circ \sigma_{\vec{r}}\left(u(-\lambda) \right)v] + \sum_{0 \neq \lambda \in \cI_1} [\tilde{h}^1_{1, \lambda}, \, \rho_{\vec{r}} \circ \sigma_{\vec{r}}\left(w(-\lambda)\right)v]\\
& + [\tilde{h}^1_{1, 0}, \, \sigma_{\vec{r}}(w(1)) \circ \rho_{\vec{r}} \circ \sigma_{\vec{r}}(w(-1))v].
\end{align*}

\item Let $\kappa \in \cI_{2n}$. Then
\begin{align*}
(-1, \varpi)_F \cdot \mathcal{T}^{\vec{r}}\left([\tilde{h}^0_{2n, \kappa},\, v]\right) & = \sum_{\lambda \in \cI_2} [\tilde{h}^0_{2n + 2, \kappa + \lambda\varpi^{2n}}, \,  \rho_{\vec{r}} \circ \sigma_{\vec{r}}\left(u(-\lambda) \right)v] + \sum_{0 \neq \lambda \in \cI} \eta_1(\lambda) \cdot [\tilde{h}^0_{2n, \kappa_{+\lambda}}, \rho_{\vec{r}} \circ \sigma_{\vec{r}}(w(-\lambda))v]\\
& + (-1, \varpi)_F \cdot [\tilde{h}^0_{2n-2, [\kappa]_{2n-2}}, \, \sigma_{\vec{r}}\left(u \left(\frac{\kappa-[\kappa]_{2n-2}}{\varpi^{2n-2}}\right)w(1) \right) \circ \rho_{\vec{r}}\circ \sigma_{\vec{r}}\left(w(-1) \right)v ].
\end{align*}

\item Let $n \geq 2$ and let $\kappa \in \cI_{2n-1}$ such that $0 \leq v_F(\kappa) \leq 2n-3$. Then
\begin{align*}
(-1, \varpi)_F \cdot \mathcal{T}^{\vec{r}}\left([\tilde{h}^1_{2n-1, \kappa}, \, v] \right)= &  \sum_{\lambda \in \cI_2} [\tilde{h}^1_{2n+1, \kappa + \varpi^{2n-1}\lambda}, \, \rho_{\vec{r}} \circ \sigma_{\vec{r}}(u(-\lambda)v] + \sum_{0 \neq \lambda \in \cI} \eta_1(\lambda) \cdot [\tilde{h}^1_{2n-1, \kappa_{+\lambda}},  \, \rho_{\vec{r}}\circ \sigma_{\vec{r}}(w(-\lambda))v]\\
& + (-1, \varpi)_F \cdot[\tilde{h}^1_{2n-3, [\kappa]_{2n-3}}, \,\sigma_{\vec{r}}\left(u \left(\frac{\kappa-[\kappa]_{2n-3}}{\varpi^{2n-3}}\right)w(1) \right) \circ \rho_{\vec{r}}\circ \sigma_{\vec{r}}\left(w(-1) \right)v ].
\end{align*}

\item Let $\kappa \in \cI_{2n-1}$ such that $v_F(\kappa)= 2n-2$, and write $\kappa_{2n-2}$ for the unique element of $\mathfrak{k}$ such that $\kappa\varpi^{-2n+2} \equiv [\kappa_{2n-2}] \pmod{\varpi}$.
\begin{align*}
(-1, \varpi)_F\cdot \mathcal{T}^{\vec{r}}\left([\tilde{h}^1_{2n-1, \kappa},\, v] \right) =& \sum_{\lambda \in \cI_2} [\tilde{h}^1_{2n+1, \kappa + \varpi^{2n-1}\lambda}, \, \rho_{\vec{r}}\circ \sigma_{\vec{r}}(u(-\lambda))v]\\
& +\sum_{\substack{\lambda \in \cI_1\\ \lambda \notin \{0, -[\kappa_{2n-2}]\}}} \left((-\lambda^{-1} - [\kappa_{2n-2}]^{-1}), \varpi) \right)_F [\tilde{h}^1_{2n-1, \kappa_{+\lambda}},  \rho_{\vec{r}} \circ \sigma_{\vec{r}}(w(-\lambda))v]\\
& + \eta_n(\kappa) \cdot [\tilde{h}^1_{2n-1, 0}, \, \sigma_{\vec{r}}\left(u\left( \frac{\kappa - [\kappa]_{2n-3}}{\varpi^{2n-3}}\right) w(1)\right)\circ  \rho_{\vec{r}} \circ \sigma_{\vec{r}}(w(-1))v]\\
&  +(-1, \varpi)_F \cdot  [\tilde{h}^1_{2n-3, 0}, \, \rho_{\vec{r}} \circ \sigma_{\vec{r}}(w([\kappa_{2n-2}]))v]. 
\end{align*}

\item
\begin{align*}
(-1, \varpi)_F \cdot \mathcal{T}^{\vec{r}}\left([\tilde{h}^1_{2n-1, 0}, \, v] \right) = & \sum_{\substack{\lambda \in \cI_2\\ v_F(\lambda) = 0}} [\tilde{h}^1_{2n+1, \lambda\varpi^{2n-1}}, \, \rho_{\vec{r}} \circ \sigma_{\vec{r}}\left( u\left(-\lambda\right)\right)v ] + \sum_{0 \neq \lambda \in \cI_1} [\tilde{h}^1_{2n+1, \lambda\varpi^{2n}}, \, \rho_{\vec{r}} \circ \sigma_{\vec{r}}(w(-\lambda))v ]\\
& +[\tilde{h}^1_{2n+1, 0}, \, \sigma_{\vec{r}} \left(w(1) \right) \circ \rho_{\vec{r}} \circ \sigma_{\vec{r}}(w(-1))v] + \sum_{\substack{\lambda \in \cI_2\\ v_F(\lambda) = 1}} (\lambda, \varpi)_F \cdot [\tilde{h}^1_{2n-1, \lambda \varpi^{2n-3}}, \,  \rho_{\vec{r}}v]\\
& + (-1, \varpi)_F \cdot [\tilde{h}^1_{2n-3, 0}, \, \rho_{\vec{r}}v]. 
\end{align*}

\end{enumerate}
\end{prop}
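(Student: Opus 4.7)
The operator $\mathcal{T}^{\vec{r}} = \mathcal{T}^{\vec{r}}_1$ corresponds under Frobenius reciprocity to $\varphi^{\vec{r}}_1 \in \mathbb{H}(\Gt, \Kt, \sigrt)$, supported on $\Kt \tilde{h}(\varpi)^{-1}\Kt$ with $\varphi^{\vec{r}}_1(\tilde{h}(\varpi)^{-1}) = \rho_{\vec{r}}$ (Lemma \ref{lem:gshabasis}). Plugging $f = [g_0, v]$ into the convolution formula (\ref{eqn:tprimedef}) and noting that only the coset representative $x = g_0^{-1}$ contributes gives
\[
\mathcal{T}^{\vec{r}}\bigl([g_0, v]\bigr)(y) \;=\; \varphi^{\vec{r}}_1(y g_0)\, v.
\]
Writing $\mathcal{T}^{\vec{r}}([g_0, v]) = \sum_g [g, w_g]$ with $g \in S$, one reads off that $w_g = \varphi^{\vec{r}}_1(g^{-1} g_0)\, v$ and that the sum is over those $g$ satisfying $g^{-1}g_0 \in \Kt\tilde{h}(\varpi)^{-1}\Kt$. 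The proposition then amounts to making this identity fully explicit by choosing, for each such $g$, a normal form $g^{-1}g_0 = k_1 \tilde{h}(\varpi)^{-1} k_2$ with $k_1, k_2 \in \Kt$ and invoking bi-equivariance to produce $w_g = \sigma_{\vec{r}}(k_1) \circ \rho_{\vec{r}} \circ \sigma_{\vec{r}}(k_2)\, v$.

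\textbf{Enumeration of neighbours.} Lemma \ref{lem:leftksreps} gives $S_1 = S_1^0 \amalg S_1^1$ as a complete set of $\Ks$-coset representatives in $\Ks \tilde{h}(\varpi)^{-1}\Ks$, and Remark \ref{rem:poscartan} supplies the relation $\Ks\tilde{h}(\varpi)\Ks = \Ks\tilde{h}(\varpi)^{-1}(1,(-1,\varpi)_F)\Ks$, which accounts for the global factor $(-1,\varpi)_F$ on the left-hand side of every formula. For $g_0 \in S_n$, multiplying the Cartan coset decomposition on the right by $g_0^{-1}$ and reducing each product to the form $\Kt g^{-1}$ with $g \in S$ shows that the support lies in $S_{n+1} \amalg S_n \amalg S_{n-1}$. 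The three families in each formula correspond respectively to ``upward'' neighbours in $S_{n+1}^0$ indexed by $\lambda \in \cI_2$, ``lateral'' neighbours indexed by $\lambda \in \cI_1$, and isolated ``downward'' neighbours in $S_{n-1}$, with the precise labels found by an explicit product computation using (\ref{eqn:ht0def})--(\ref{eqn:ht1def}), the explicit Bruhat decomposition (\ref{eqn:ubarbruhat}), and the commutation relations $\tilde{u}(x)\tilde{h}(\varpi)^n = \tilde{h}(\varpi)^n \tilde{u}(x\varpi^{-2n})$ and $\tilde{\bar{u}}(x)\tilde{h}(\varpi)^n = \tilde{h}(\varpi)^n \tilde{\bar{u}}(x\varpi^{2n})$.

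\textbf{Cocycle bookkeeping.} Writing each $g^{-1}g_0 = k_1 \tilde{h}(\varpi)^{-1} k_2$ requires careful tracking of the Kubota cocycle (\ref{eqn:sl2cocycle}); the Hilbert-symbol factors $\eta_n(\lambda)$, $(\lambda, \varpi)_F$, and $((-\lambda^{-1}-[\kappa_{2n-2}]^{-1}),\varpi)_F$ appearing in the formulas are precisely the values of $\Delta$ that arise when products of preferred lifts of unipotent and torus elements are rearranged into this normal form. The endomorphism $\rho_{\vec{r}} = i_{\vec{r}}^{\Uk} \circ j_{\vec{r}}^{-1} \circ p_{\Ubark}$, together with the fact that $\sigma_{\vec{r}}$ factors through reduction modulo $\varpi$, then produces the explicit terms $\rho_{\vec{r}}\circ \sigma_{\vec{r}}(u(-\lambda))v$, $\rho_{\vec{r}}\circ \sigma_{\vec{r}}(w(-\lambda))v$, and the mixed expressions $\sigma_{\vec{r}}(w(1)) \circ \rho_{\vec{r}}\circ \sigma_{\vec{r}}(w(-1))v$ that label each downward neighbour.

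\textbf{Main obstacle.} The genuinely delicate part is cases (4) and (5), where $g_0 = \tilde{h}^1_{2n-1,\kappa}$ depends qualitatively on $v_F(\kappa)$ via the piecewise definition (\ref{eqn:ht1def}), so that the list and labels of the $S_{n-1}$-neighbours are not uniform in the parameter $\lambda$ enumerating $S_1^1$. In case (4) one must separate the subcases $\lambda = -[\kappa_{2n-2}]$, $\lambda = 0$, and generic $\lambda \in \cI_1$, each producing a distinct Hilbert-symbol correction whose collective effect is encoded by the factors $\eta_n(\kappa)$ and $((-\lambda^{-1}-[\kappa_{2n-2}]^{-1}),\varpi)_F$; case (5) involves an analogous bifurcation according to whether multiplication by $\tilde{h}(\varpi)^{-1}$ keeps the downward neighbour at Cartan level $n-1$ in its natural form or drops it to the degenerate representative $\tilde{h}^1_{2n-3,0}$. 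Once these subcases have been systematically treated and the cocycle contributions collected, the formulas of parts (1)--(5) follow.
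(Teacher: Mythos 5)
Your proposal is correct and follows essentially the same route as the paper: unwind the convolution defining $\mathcal{T}^{\vec{r}}_1$ against the basic function, use the explicit coset system $S$ of Lemma \ref{lem:leftksreps} to index the neighbouring cosets, and obtain each coefficient by putting the relevant group element into Cartan normal form relative to $\tilde{h}(\varpi)^{-1}$ while tracking the Kubota-cocycle signs. The case-by-case computations you defer (the values $\varphi^{\vec{r}}_1(g'^{-1})$ for $g' \in S_1$, which supply the global factor $(-1,\varpi)_F$, and the normal forms of the products, including the delicate subcases of parts (4) and (5)) are exactly the content of the paper's Lemmas \ref{lem:phiinvertreps} and \ref{lem:sproducts}, so your outline matches the paper's proof in both structure and substance.
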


\begin{proof}[Proof of Proposition \ref{prop:heckeaction}]
In this proof we will write $\varphi^{\vec{r}}$ for the function $\varphi_{1}^{\vec{r}}$ defined in Lemma \ref{lem:gshabasis}. Since $\mathcal{T}^{\vec{r}}$ corresponds to $\varphi^{\vec{r}}$ by Frobenius reciprocity, 
\begin{equation}
\label{eqn:tphifrobrec}
\mathcal{T}^{\vec{r}}([g, v]) = \sum_{g'\Kt \in \Gt/\Kt}[gg', \varphi^{\vec{r}}(g'^{-1})(v)] = \sum_{g' \in S}[gg', \varphi^{\vec{r}}(g'^{-1})(v)]
\end{equation}
for each $g \in \Gt$ and $v \in V_{\vec{r}}$. 

\begin{lem}
\label{lem:phiinvertreps}
\begin{enumerate}
\item Let $\lambda \in \mathcal{I}_2$. Then
$$\varphi^{\vec{r}}\left((\tilde{h}^0_{2, \lambda})^{-1} \right) = (-1, \varpi)_F \cdot \rho_{\vec{r}} \circ \sigma_{\vec{r}} \left( u(-\lambda)\right).$$

\item Let $0 \neq \lambda \in \mathcal{I}_1$. Then
$$\varphi^{\vec{r}}\left( (\tilde{h}^1_{1,\lambda})^{-1}\right) = (-1, \varpi)_F \cdot \rho_{\vec{r}} \circ \sigma_{\vec{r}}(h(-\lambda)w(1)).$$

\item $$\varphi^{\vec{r}} \left( (\tilde{h}^1_{1, 0})^{-1}\right) = (-1, \varpi)_F \cdot \sigma_{\vec{r}}\left(w(1) \right) \circ \rho_{\vec{r}} \circ \sigma_{\vec{r}}\left(w(-1) \right). $$
\end{enumerate}
\end{lem}

\begin{proof}[Proof of Lemma \ref{lem:phiinvertreps}]
\begin{enumerate}
\item For $\lambda \in \cI_2$,
$$\tilde{h}^0_{2,\lambda} := \tilde{h}(\varpi)\tilde{u}(\lambda\varpi^{-2})(1, (-1, \varpi)_F) = \tilde{u}(\lambda)\tilde{h}(\varpi)(1, (-1, \varpi)_F),
$$
so 
$$\varphi^{\vec{r}}\left((\tilde{h}^0_{2, \lambda})^{-1} \right) = (-1, \varpi)_F \cdot \varphi^{\vec{r}}\left(\tilde{h}(\varpi)^{-1} \right) \circ \sigma_{\vec{r}} \left( u(-\lambda)\right) = (-1, \varpi)_F \cdot \rho_{\vec{r}} \circ \sigma_{\vec{r}} \left( u(-\lambda)\right) $$

\item For $0 \neq \lambda \in \cI_1$,
$$\tilde{h}^1_{1, \lambda} : = \tilde{u} \left(\lambda\varpi^{-1} \right) \cdot (1, \eta_1(\lambda)) = 
\tilde{\bar{u}}(\lambda^{-1}\varpi) \tilde{h}(\varpi)^{-1}\tilde{u}(\lambda\varpi)\tilde{w}(-1)\tilde{h}(-\lambda^{-1}),$$
so
\begin{align*}
\varphi^{\vec{r}} \left((\tilde{h}^1_{1, \lambda})^{-1} \right) & = \varphi^{\vec{r}} \left(\tilde{h}(-\lambda)\tilde{w}(1)\tilde{u}(-\lambda\varpi)\tilde{h}(\varpi)\tilde{\bar{u}}(-\lambda^{-1}\varpi) \right)\\
& = (-1, \varpi)_F \cdot \sigma_{\vec{r}}\left(h(-\lambda)\bar{u}(\lambda\varpi)\right) \circ \varphi^{\vec{r}}(\tilde{h}(\varpi)^{-1})\circ \sigma_{\vec{r}}\left(u(\lambda^{-1}\varpi)w(1)\right).
\end{align*}

Using the fact that $\sigma_{\vec{r}}$ factors through reduction modulo $\varpi$, we get
$$\varphi^{\vec{r}}\left( (\tilde{h}^1_{1,\lambda})^{-1}\right) = (-1, \varpi)_F \cdot \varphi^{\vec{r}}\left(\tilde{h}(\varpi)^{-1} \right) \circ \sigma_{\vec{r}}(w(-\lambda)) = (-1, \varpi)_F \cdot \rho_{\vec{r}} \circ \sigma_{\vec{r}}(w(-\lambda)).$$

\item Finally $\tilde{h}^1_{1, 0} = \tilde{h}(\varpi)^{-1} = \tilde{w}(1)\tilde{h}(\varpi)\tilde{w}(-1)\cdot (1, (-1, \varpi)_F),$ so 
$$ \varphi^{\vec{r}} \left( (\tilde{h}^1_{1, 0})^{-1}\right) = (-1, \varpi)_F \cdot \sigma_{\vec{r}}\left(w(1) \right) \circ \varphi^{\vec{r}}(\tilde{h}(\varpi)^{-1}) \circ \sigma_{\vec{r}}\left(w(-1) \right) = (-1, \varpi)_F \cdot \sigma_{\vec{r}}\left( w(1)\right) \circ \rho_{\vec{r}} \circ \sigma_{\vec{r}}\left(w(-1)\right). $$
\end{enumerate}
\end{proof}

For each pair $g$, $g' \in S\times S$, the following lemma gives an expression for the product $gg'$ of the form $hk$, where $h \in S$ and $k \in \Kt$. 

\begin{lem}
\label{lem:sproducts}
Let $n \geq 1$. 
\begin{enumerate}
\item Let $\kappa \in \cI_{2n}$. Then:\\
\begin{enumerate}
\item for $\lambda \in \cI_2$,
$$\tilde{h}^0_{2n, \kappa}\tilde{h}^0_{2, \lambda} = \tilde{h}^0_{2n+2, \kappa + \varpi^{2n}\lambda};$$
\item for $0 \neq \lambda \in \cI_1$, 
\begin{align*}
\tilde{h}^0_{2n, \kappa}\tilde{h}^1_{1, \lambda} &= \tilde{h}^0_{2n, \kappa_{+\lambda}}\tilde{u}(x_{\lambda}) \cdot (1, \eta_1(\lambda))
\end{align*}
for some $x_{\lambda} \in \OF$;
\item 
$$\tilde{h}^0_{2n, \kappa}\tilde{h}^1_{1, 0} = \tilde{h}^0_{2n-2, [\kappa]_{2n-2}}\tilde{u}\left(\frac{\kappa - [\kappa]_{2n-2}}{\varpi^{2n-2}} \right)\cdot (1, (-1, \varpi)_F).$$\\
\end{enumerate}

\item Let $0 \neq \kappa \in \cI_{2n-1}$. Then:\\
\begin{enumerate}
\item for $\lambda \in \cI_2$,
$$ \tilde{h}^1_{2n-1, \kappa}\tilde{h}^0_{2, \lambda} = \tilde{h}^1_{2n+1, \,\kappa + \lambda\varpi^{2n-1}};$$
\item for $0 \neq \lambda \in \cI_1,$
\begin{enumerate}
\item if $0 \leq v_F(\kappa) \leq  2n-3$, 
\begin{align*} \tilde{h}^1_{2n-1, \kappa}\tilde{h}^1_{1, \lambda} & = \tilde{h}^1_{2n-1,\kappa_{+\lambda}}\tilde{u}(x_{\lambda})\cdot(1, \eta_1(\lambda))
\end{align*}
for some $x_{\lambda} \in \OF$;
\item if $v_F(\kappa) = 2n-2$ and $\kappa \varpi^{-2n+2} \not \equiv - \lambda \pmod{\varpi}$, 
\begin{align*}
\tilde{h}^1_{2n-1, \kappa}\tilde{h}^1_{1, \lambda}& =  \tilde{h}^1_{2n-1,\kappa_{+\lambda}} \tilde{u}(x_{\lambda})\cdot (1, \eta_n(\kappa)\eta_1(\lambda)\eta_n(\kappa_{+\lambda}))\\
& = \tilde{h}^1_{2n-1,\kappa_{+\lambda}} \tilde{u}(x_{\lambda})\left(1, \left(-\lambda^{-1}- [\kappa_{2n-2}]^{-1}, \varpi\right)_F\right)
\end{align*}
for some $x_{\lambda} \in \OF$;
\item if $v_F(\kappa) = 2n-2$ and $\kappa \varpi^{-2n+2} \equiv -\lambda \pmod{\varpi}$,
$$ \tilde{h}^1_{2n-1, \kappa}\tilde{h}^1_{1, \lambda} = \tilde{h}^1_{2n-3, 0} \tilde{u}(x_{\lambda})\cdot (1, (-1, \varpi)_F)$$
for some $x_{\lambda} \in \OF$.\\
\end{enumerate}

\item 
\begin{enumerate}
\item If $0 \leq v_F(\kappa) \leq 2n-3$, then 
$$ \tilde{h}^1_{2n-1, \kappa}\tilde{h}^1_{1, 0} = \tilde{h}^1_{2n-3, [\kappa]_{2n-3}}\tilde{u}\left(\frac{\kappa - [\kappa]_{2n-3}}{\varpi^{2n-3}} \right) \cdot (1, (-1, \varpi)_F);$$

\item if $v_F(\kappa) = 2n-2$, then
\begin{align*}
\tilde{h}^1_{2n-1, \kappa} \tilde{h}^1_{1, 0}& = \tilde{h}^1_{2n-1, 0} \tilde{u}\left(\frac{\kappa - [\kappa]_{2n-3}}{\varpi^{2n-3}}\right) \cdot (1, \eta_n(\kappa)). 
\end{align*}
\end{enumerate}
\end{enumerate}

\item
\begin{enumerate}
\item for $\lambda \in \cI_2$ such that $v_F(\lambda) = 0$,
$$\tilde{h}^1_{2n-1, 0}\tilde{h}^0_{2, \lambda} = \tilde{h}^1_{2n+1,\lambda \varpi^{2n-1}};$$
\item for $\lambda \in \cI_2$ such that $v_F(\lambda) = 1$,
\begin{align*}
\tilde{h}^1_{2n-1, 0}\tilde{h}^0_{2, \lambda} & = \tilde{h}^1_{2n-1, \lambda\varpi^{2n-3}}(1, (\lambda, \varpi)_F);
\end{align*}
\item
$$\tilde{h}^1_{2n-1, 0}\tilde{h}^0_{2, 0} = \tilde{h}^1_{2n-3, 0}\cdot (1, (-1, \varpi)_F);$$
\item for $\lambda \in \cI_1$ (either 0 or not),
$$\tilde{h}^1_{2n-1,0} \tilde{h}^1_{1, \lambda} = \tilde{h}^1_{2n+1, \lambda\varpi^{2n}}.$$
\end{enumerate}

\end{enumerate}
\end{lem}

Proposition \ref{prop:heckeaction} now follows from (\ref{eqn:tphifrobrec}) by applying Lemmas \ref{lem:phiinvertreps} and \ref{lem:sproducts} together with the relation 
$$[gk, v] = [g, \tilde{\sigma}_{\vec{r}}(k)v] \text{ for all } g \in \Gt, \, k \in \Kt, \, v \in V_{\vec{r}}.$$
In particular, the terms $x_{\lambda} \in \OF$ which appear in Lemma \ref{lem:sproducts} disappear because the image of $\rho_{\vec{r}}$ is contained in the $(U \cap K)^*$-invariant subspace of $V_{\vec{r}}$. 
\end{proof}

\begin{cor}
\label{cor:supportradius}
Let $0 \neq f \in \ind_{\Kt}^{\Gt}(\tilde{\sigma}_{\vec{r}})$ and let $n := \text{max}\left(\{i \in \NN: \, \text{Supp}(f) \cap \Kt \tilde{h}(\varpi)^{i}\Kt \neq \emptyset\}\right)$. Then
$$\text{max}\left(\{i \in \NN : \, \text{Supp}(\mathcal{T}^{\vec{r}}(f)) \cap \Kt \tilde{h}(\varpi)^{i}\Kt \neq \emptyset\} \right) = n + 1.$$ 
\end{cor}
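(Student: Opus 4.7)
The plan is to reduce to basic functions $[g, v]$ via Lemma \ref{lem:leftksreps}, apply the explicit Hecke action of Proposition \ref{prop:heckeaction} term by term, and exploit the fact that the radius-$(n+1)$ contributions from distinct $g \in S_n$ land in pairwise disjoint sets of $K^*$-cosets.

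First I would write $f = \sum_{g \in S}[g, v_g]$ uniquely, with only finitely many $v_g \in V_{\vec{r}}$ nonzero; the hypothesis then becomes that $n$ is the largest index $i$ such that some $g \in S_i$ has $v_g \neq 0$. A direct inspection of the five cases of Proposition \ref{prop:heckeaction} shows that each $\mathcal{T}^{\vec{r}}([g, v_g])$ with $g \in S_i$ is supported on $\Kt \tilde{h}(\varpi)^{i+1}\Kt \cup \Kt \tilde{h}(\varpi)^{i}\Kt \cup \Kt \tilde{h}(\varpi)^{i-1}\Kt$ (omitting the latter two when $i = 0$), so $\mathcal{T}^{\vec{r}}(f)$ is supported in the union of the $\Kt\tilde{h}(\varpi)^i\Kt$ for $i \leq n + 1$. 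This proves the upper bound, and shows moreover that any nonzero radius-$(n+1)$ contribution to $\mathcal{T}^{\vec{r}}(f)$ must come entirely from $\sum_{g \in S_n}\mathcal{T}^{\vec{r}}([g, v_g])$.

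Next I would extract from Proposition \ref{prop:heckeaction} the following disjointness: the radius-$(n+1)$ support of $\mathcal{T}^{\vec{r}}([g, v_g])$ is contained in $\{\tilde{h}^0_{2n+2, \mu}K^* : [\mu]_{2n} = \kappa\}$ when $g = \tilde{h}^0_{2n, \kappa}$, and in $\{\tilde{h}^1_{2n+1, \mu}K^* : [\mu]_{2n-1} = \kappa\}$ when $g = \tilde{h}^1_{2n-1, \kappa}$ (in either the $\kappa = 0$ or the $\kappa \neq 0$ case). These sets are pairwise disjoint as $g$ varies over $S_n$, so the radius-$(n+1)$ component of $\mathcal{T}^{\vec{r}}(f)$ vanishes if and only if the radius-$(n+1)$ component of $\mathcal{T}^{\vec{r}}([g, v_g])$ vanishes for every individual $g \in S_n$.

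The main obstacle is this individual nonvanishing. For $g = \tilde{h}^0_{2n, \kappa}$ or $g = \tilde{h}^1_{2n-1, \kappa}$ with $\kappa \neq 0$, the radius-$(n+1)$ part has the shape $c\sum_{\lambda \in \cI_2}[h_\lambda, \rho_{\vec{r}}\sigma_{\vec{r}}(u(-\lambda))v_g]$ with $c \in E^\times$ and distinct cosets $h_\lambda K^*$, so it vanishes only if $p_{\Ubark}(\sigma_{\vec{r}}(u(-\lambda))v_g) = 0$ for every $\lambda \in \mathfrak{k}$ (using that $\sigma_{\vec{r}}$ factors through reduction mod $\varpi$). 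Expanding $v_g = \sum_{\vec{j}}c_{\vec{j}}v_{\vec{j}}$ in the tensor basis of $V_{\vec{r}} = \bigotimes_i (Sym^{r_i}E^2)^{Fr^i}$, the coefficient of the highest weight vector in $\sigma_{\vec{r}}(u(\lambda))v_g$ works out to the polynomial $P(\lambda) = \sum_{\vec{j}}c_{\vec{j}}\lambda^{\sum_i j_i p^i}$, whose exponents are distinct base-$p$ integers in $[0, q-1]$; if $v_g \neq 0$ then $P$ is a nonzero polynomial of degree at most $q-1$ and so cannot vanish on the $q$-element set $\mathfrak{k}$. The remaining case $g = \tilde{h}^1_{2n-1, 0}$ is handled the same way, except that here the relevant $\lambda$-sum ranges only over $\mathfrak{k}^\times$; unless $\vec{r} = \overrightarrow{p-1}$ the polynomial-degree bound still forces $v_g = 0$, while in the borderline case $\vec{r} = \overrightarrow{p-1}$ one finds that $v_g$ must lie in $E\cdot(v_{\overrightarrow{p-1}} - v_{\vec{0}})$, and then the auxiliary term $[\tilde{h}^1_{2n+1, 0}, \sigma_{\vec{r}}(w(1))\rho_{\vec{r}}\sigma_{\vec{r}}(w(-1))v_g]$ appearing in Proposition \ref{prop:heckeaction}(5) is readily computed (using that $w(\pm 1)$ swaps $v_{\vec{0}}$ and $v_{\overrightarrow{p-1}}$ up to a sign $(-1)^{(p-1)f} = 1$) to be a nonzero multiple of $[\tilde{h}^1_{2n+1, 0}, v_{\overrightarrow{p-1}}]$.
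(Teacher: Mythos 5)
Your proof is correct, and its skeleton is the same as the paper's: the upper bound on the support read off from Proposition \ref{prop:heckeaction}, the observation that the radius-$(n+1)$ contributions of distinct $g \in S_n$ land on disjoint families of cosets indexed by $S_{n+1}$ (the paper packages this through the sets $C_g$), and the reduction to nonvanishing of the radius-$(n+1)$ part of a single $\mathcal{T}^{\vec{r}}([g,v])$. Where you diverge is the final ingredient: the paper identifies the coefficients $\rho_{\vec{r}}\circ\sigma_{\vec{r}}(u([a]))$ with the Barthel--Livn\'e functionals $L_a$ and cites their Lemma 21 ($\bigcap_{a\in\mathfrak{k}}\ker L_a=0$), whereas you reprove this directly via the polynomial $P(\lambda)$ of highest-weight coefficients; this is legitimate because $\ker p_{\Ubark}$ coincides with the span of the non-highest monomials (the augmentation submodule sits inside that span and both have codimension one), so vanishing of $\rho_{\vec{r}}\circ\sigma_{\vec{r}}(u(-\lambda))v$ is indeed vanishing of $P$. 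Your handling of $g=\tilde{h}^1_{2n-1,0}$ is actually more careful than the paper's: there the $u$-type terms only force $P$ to vanish on $\mathfrak{k}^\times$, the degree bound fails exactly for $\vec{r}=\overrightarrow{p-1}$, and you correctly invoke the extra term $[\tilde{h}^1_{2n+1,0},\,\sigma_{\vec{r}}(w(1))\circ\rho_{\vec{r}}\circ\sigma_{\vec{r}}(w(-1))v]$ of Proposition \ref{prop:heckeaction}(5) to exclude $v\in E\cdot(v_{\overrightarrow{p-1}}-v_{\vec{0}})$, a point the paper glosses over when it passes from $a\in\mathfrak{k}^\times$ to all $a\in\mathfrak{k}$. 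The citation buys the paper brevity; your route buys a self-contained argument and an explicit treatment of this borderline case. Two cosmetic remarks: the base case $g=(1,1)$ (i.e.\ $n=0$) should be mentioned alongside your ``$\kappa$'' cases, though it is covered by the same computation since all $\lambda\in\cI_2$ occur there; and the supports in question are right cosets $\Kt h^{-1}$ rather than left cosets $hK^*$, which does not affect the disjointness you need.
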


\begin{proof}[Proof of Corollary \ref{cor:supportradius}]
It is clear from Proposition \ref{prop:heckeaction} that $\text{Supp}(\mathcal{T}^{\vec{r}}(f)) \subset \coprod_{0 \leq i \leq n+1} \Kt \tilde{h}(\varpi)^{i}\Kt$, so we only have to prove that the intersection of $\text{Supp}(\mathcal{T}^{\vec{r}}(f))$ with $\Kt \tilde{h}(\varpi)^{n + 1}\Kt$ is nonempty.

We first define a certain set $C_g \subset \Kt \tilde{h}(\varpi)^{i + 1}\Kt$ for each $g \in S_i$, $i \geq 0$:
\begin{align*}
C_g & := 
\begin{cases}
 \{(\tilde{h}^0_{2i+2, \kappa + \varpi^{2i}\lambda})^{-1}: \, \lambda \in \cI_2& \text{ if } i \geq 1 \text{ and } g = \tilde{h}^0_{2i, \kappa} \in S^0_{i},\\
 \{(\tilde{h}^1_{2i+1, \kappa +\varpi^{2i-1}\lambda})^{-1}: \, \lambda \in \cI_2 & \text{ if } i \geq 1 \text{ and } g = \tilde{h}^1_{2i-1, \kappa} \in S^1_i,\\
 \{h^{-1}: \, h \in S_1\} & \text{ if } g = (1, 1).
\end{cases}
\end{align*}
It follows from Lemma \ref{lem:leftksreps} that the union $\coprod_{g \in S_i}\coprod_{h \in C_g}\Kt h$ is disjoint and equal to $\Kt \tilde{h}(\varpi)^{i + 1}\Kt$, and it follows from Proposition \ref{prop:heckeaction} that for each $i \geq 0$ and $g \in S_i$ we have $\text{Supp}(\mathcal{T}^{\vec{r}}[g, v]) \cap \Kt \tilde{h}(\varpi)^{i + 1}\Kt \subset \coprod_{h \in C_g}\Kt h$. 

Since $\text{Supp}(f) \cap \Kt \tilde{h}(\varpi)^{n}\Kt \neq \emptyset$, there exists some $g \in S_n$ such that $f(g^{-1}) \neq 0$. Let $v = f(g^{-1})$; then $f(g^{-1}) = [g, v](g^{-1})$. Now let $h \in C_g$. Since $\text{Supp}(f) \cap \Kt \tilde{h}(\varpi)^{i}\Kt = \emptyset$ for $i > n$, it follows from Proposition \ref{prop:heckeaction} that 
\begin{align*}
\mathcal{T}^{\vec{r}}(f)(h) = \sum_{g' \in S_n} \mathcal{T}^{\vec{r}}([g', f(g'^{-1})])(h) = \sum_{\substack{g' \in S_n \\ h \in C_{g'}}}\mathcal{T}^{\vec{r}}([g', f(g'^{-1})])(h) = \mathcal{T}^{\vec{r}}([g, v])(h).
\end{align*}

We claim that $\sum_{h \in C_g}\mathcal{T}^{\vec{r}}([g, v])(h) \neq 0$. If $\sum_{h \in C_g} \mathcal{T}^{\vec{r}}([g, v])(h) = 0$, then by Proposition \ref{prop:heckeaction} we would have
$$v \in \bigcap_{\substack{\lambda \in \cI_2\\ v_F(\lambda)= 0}} \rho_{\vec{r}} \circ \sigma_{\vec{r}}(u(-\lambda)) v.$$ 
Since $\sigma_{\vec{r}}$ factors through reduction modulo $\varpi$, this is equivalent to having
$$v \in \bigcap_{a \in \mathfrak{k}}\rho_{\vec{r}} \circ \sigma_{\vec{r}}(u([a])).$$
But $\rho_{\vec{r}} \circ \sigma_{\vec{r}}\left(u([a])\right)$ is exactly the functional $L_{a}$ defined in \cite{barthellivne:irredmodp} $\S$5, and by Lemma 21 of \textit{loc.~cit.} the intersection $\bigcap_{a \in \mathfrak{k}} \text{ker}(L_{a})$ is trivial. By assumption $v = f(g^{-1}) \neq 0$, and hence there exists an $h \in C_g$ such that $\mathcal{T}^{\vec{r}}([g, v])(h) \neq 0$. Therefore $h \in \text{Supp}(\mathcal{T}^{\vec{r}}(f)) \cap \Kt \tilde{h}(\varpi)^{n+1}\Kt,$ so the intersection is nonempty. 
\end{proof}

\subsection{Description in terms of the tree of $SL_2(F)$}
\label{subsec:tree}

Let $\mathcal{X}$ denote the Bruhat-Tits tree of $G = SL_2(F)$ (see \cite{serre:trees} for a reference). Recall that the vertices of $\mathcal{X}$ are identified with the stabilizers of homothety classes of lattices in $F \times F$, and that two vertices $v$, $v'$ are connected by an edge if and only if there is a lattice $L$ belonging to the class stabilized by $v$, and a lattice $L'$ belonging to the class stabilized by $v'$, such that $\varpi L \subset L' \subset L$. Let $d(v_1, v_2)$ denote the integer-valued distance between two vertices $v_1$, $v_2$, defined as the number of edges traversed in the shortest path from $v_2$ to $v_2$; the action of $G$ on $\text{Vert}(\mathcal{X})$ preserves this distance. Let $v_0$ denote the vertex corresponding to $K$, the stabilizer of the homothety class of $\OF \times \OF$; then the set $\{v \in \text{Vert}(\mathcal{X}): \, 2 \big \vert d(v_0, v)\}$ is identified with $G/K$ as a $G$-module, while the set $\{v \in \text{Vert}(\mathcal{X}): \, 2 \not \big \vert d(v_0, v)\}$ is identified with $G/K'$ as a $G$-module. 

We identify the quotient $\Gt/\Kt$ with $G/K$ by identifying $\tilde{h}\Kt$ with $Pr(\tilde{h})K$ for each $\tilde{h} \in S$, and hence at the level of sets we can identify $\Gt/\Kt$ with the vertices of $\mathcal{X}$ lying at even distance from $v_0$. In particular the circle $C_{2n}$ of vertices lying at radius $2n$ from $v_0$ is identified with $S_n$, the distance between $\tilde{h}^i_{m, \kappa}$ and $\tilde{h}^i_{m, \kappa'}$ is given by $2\left(m - v_F(\kappa - \kappa')\right)$ for $\kappa \neq \kappa' \in \cI_{m}$, and $d(\tilde{h}^i_{m, \kappa}, \tilde{h}^i_{m-2, \kappa'}) = 2$ if and only if $\kappa' = [\kappa]_{m-2}$.  

For any weight $\tilde{\sigma}_{\vec{r}}$ of $\Kt$, we can now view the compact induction $\ind_{\Kt}^{\Gt}(\tilde{\sigma}_{\vec{r}})$ as the space of compactly supported sections of a local system on $\mathcal{X}$, as follows. Put an equivalence relation $\sim$ on $\Gt \times V_{\vec{r}}$ by setting $(gk, w) \sim (g, \tilde{\sigma}_{\vec{r}}(k)w)$ for each $g \in \Gt$, $k \in \Kt$, and $w \in V_{\vec{r}}$; then the fiber of our desired local system over a vertex $v$ of $\mathcal{X}$ is empty if $d(v_0, v)$ is odd, while if $d(v_0, v)$ is even and $\tilde{h} \in S$ is identified with $v$, then the fiber over $v$ is equal to the set of equivalence classes $[(\tilde{h}k, w)]$ in $\Gt \times V_{\vec{r}} / \sim$ as $k$ runs over $\Kt$ and $w$ runs over $V_{\vec{r}}$. 

Now the action of $\mathcal{T}^{\vec{r}}_1$ has the following description in terms of $\mathcal{X}$. If $[g, w]$ is a basic function in $\ind_{\Kt}^{\Gt}(\sigrt)$ and $v \in \text{Vert}(\mathcal{X})$ is the vertex of $\mathcal{X}$ identified with $g\Kt$, then Proposition \ref{prop:heckeaction} says that the support of $\mathcal{T}^{\vec{r}}_1([g, w])$ is contained in the set $\{v' \in \text{Vert}(\mathcal{X}): \, d(v', v) = 2\}$. If $v \in C_{2n}$ for some $n > 0$, then the latter set consists of $q^2 + q$ vertices: namely, $q^2$ vertices of $C_{2n+2}$, $q-1$ vertices of $C_{2n} \setminus \{v\}$, and one vertex in $C_{2n-2}$, while if $v = v_0$ then the set is just $C_{2n + 2}= C_2$. The Corollary \ref{cor:supportradius} implies that the support of $\mathcal{T}^{\vec{r}}_1([g, w])$ contains at least one of the vertices in $C_{2n+2}$, and hence that $\mathcal{T}^{\vec{r}}_1$ expands, by exactly two increments, the radius of support of any compactly supported section of the local system on $\mathcal{X}$.

\section{Universal modules for genuine spherical Hecke algebras}

\subsection{Universal modules for spherical Hecke algebras of $\Tt$} The universal module for the genuine spherical Hecke algebra $\SHT$ is the compact induction $\ind_{\Tt \cap \Kt}^{\Tt}\left((\sigrt)_{\Ubark}\right)$. Its structure is very simple and is presented here mainly for easy reference in the proofs of Proposition \ref{prop:SHAaction} and Theorem \ref{thm:pswtisom}. 

\begin{lem}
\label{lem:SHTfree}
Let $\sigrt$ be any weight of $\Kt$. Then $\ind_{\Tt \cap \Kt}^{\Tt}\left((\sigrt)_{\Ubark}\right)$ is a free $\SHT$-module. A free basis is given by the single element $[1, p_{\Ubark}v]$, where $v$ is any nonzero vector in $V_{\vec{r}}^{\Ubark}.$ 
\end{lem}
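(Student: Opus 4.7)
The plan is to exhibit an explicit $\SHT$-linear isomorphism between $\SHT$ (viewed as a left module over itself) and $M := \ind_{\TKt}^{\Tt}((\sigrt)_{\Ubark})$, given by $\mathcal{T} \mapsto \mathcal{T}([1, p_{\Ubark}v])$. The map is automatically $\SHT$-linear by the associativity of composition, so the content reduces to showing it is an $E$-linear bijection.

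First I would identify an $E$-basis for $M$. By Lemma \ref{lem:ukinvariants}, the space $(\sigrt)_{\Ubark}$ is one-dimensional; since $V_{\vec{r}}^{\Ubark} = (\sigrt)^{\Uk}$ is spanned by the highest-weight vector and the composition $j_{\vec{r}}$ is an isomorphism, any nonzero $v \in V_{\vec{r}}^{\Ubark}$ has $p_{\Ubark}v \neq 0$, hence $(\sigrt)_{\Ubark} = E \cdot p_{\Ubark}v$. Together with the coset decomposition $\Tt = \coprod_{n \in \ZZ}(\TKt)\tilde{h}(\varpi)^n$, this yields
\[M = \bigoplus_{n \in \ZZ} E \cdot [\tilde{h}(\varpi)^{-n}, p_{\Ubark}v],\]
where $[\tilde{h}(\varpi)^{-n}, p_{\Ubark}v]$ is supported on $(\TKt)\tilde{h}(\varpi)^n$.

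Next I would compute $\tau_n^{\vec{r}} \cdot [1, p_{\Ubark}v]$. Tracing through the Frobenius reciprocity of $\S$\ref{subsec:SHTdef}, the endomorphism $\tau_n^{\vec{r}}$ acts on $M$ by convolution with the corresponding function $\psi_n^{\vec{r}} \in \mathbb{H}(\Tt, \TKt, (\sigrt)_{\Ubark})$, which by Lemma \ref{lem:SHTwtcoinv} is supported on $(\TKt)\tilde{h}(\varpi)^n$ with value $\iota_{\vec{r},\vec{r}} = \mathrm{id}$ at $\tilde{h}(\varpi)^n$. Since $[1, p_{\Ubark}v]$ is supported on $\TKt$, a direct evaluation of the convolution gives
\[\tau_n^{\vec{r}} \cdot [1, p_{\Ubark}v] = [\tilde{h}(\varpi)^{-n}, p_{\Ubark}v]\]
for each $n \in \ZZ$.

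Combining these observations, the $\SHT$-linear map $\mathcal{T} \mapsto \mathcal{T}([1, p_{\Ubark}v])$ sends the $E$-basis $\{\tau_n^{\vec{r}}\}_{n\in \ZZ}$ of $\SHT$ (from Lemma \ref{lem:SHTwtcoinvalg}(1)) bijectively onto the $E$-basis $\{[\tilde{h}(\varpi)^{-n}, p_{\Ubark}v]\}_{n \in \ZZ}$ of $M$, and is therefore an isomorphism. This exhibits $M$ as a free $\SHT$-module of rank one with generator $[1, p_{\Ubark}v]$. There is no real obstacle here; the only item requiring care is the explicit convolution computation, which is immediate from the definitions and the support of $\psi_n^{\vec{r}}$.
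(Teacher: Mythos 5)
Your proof is correct and is essentially the paper's own argument: the paper likewise takes $\{[\tilde{h}(\varpi)^n, p_{\Ubark}v]\}_{n \in \ZZ}$ as an $E$-basis of $\ind_{\TKt}^{\Tt}\left((\sigrt)_{\Ubark}\right)$ and records the identity $(\tau_{-1}^{\vec{r}})^n([1, p_{\Ubark}v]) = [\tilde{h}(\varpi)^n, p_{\Ubark}v]$, which is your convolution computation up to indexing. One notational caveat: your identification of $V_{\vec{r}}^{\Ubark}$ with $(\sigrt)^{\Uk}$ is not literally correct (the $\Ubark$-invariants form the lowest-weight line, which is killed by $p_{\Ubark}$ once $\vec{r} \neq \vec{0}$), but reading the hypothesis as $v \in (\sigrt)^{\Uk}$, so that Lemma \ref{lem:ukinvariants} gives $p_{\Ubark}v \neq 0$, is exactly the reading the paper's own proof uses.
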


\begin{proof}[Proof of Lemma \ref{lem:SHTfree}]
Let $0 \neq v \in V_{\vec{r}}^{\Ubark}$; then $p_{\Ubark}v \neq 0$. An $E$-vector space basis for $\ind_{\Tt \cap \Kt}(\sigrt)_{\Ubark}$ is given by $\{ [\tilde{h}(\varpi)^n, p_{\Ubark}v]: n \in \ZZ\}$. For $n \in \ZZ$, 
\begin{align*}
(\tau^{\vec{r}}_{-1})^n([1, p_{\Ubark}v]) = [\tilde{h}(\varpi)^n, p_{\Ubark}v],
\end{align*}
so $[1, p_{\Ubark}v]$ is a free $\SHT$-basis for $\ind_{\Tt \cap \Kt}^{\Tt}(\sigrt)_{\Ubark}.$ 
\end{proof}

Given a genuine representation $\pi$ of $\Tt$, the Hecke algebra $\SHT$ acts on the weight space $\Hom_{\Tt \cap \Kt}((\sigrt)_{\Ubark}, \pi \big \vert_{\Tt \cap \Kt})$ from the right. Likewise, $\mathcal{H}(\Tt, \Tt \cap \Kt', (\sigrt^{\tilde{\alpha}})_{\Ubark})$ acts on $\Hom_{\Tt \cap \Kt'}((\sigrt^{\tilde{\alpha}})_{\Ubark}, \pi \big \vert_{\Tt \cap \Kt'})$. These actions are clearly scalar; we next determine the eigenvalue of the respective generators. 

\begin{lem}
\label{lem:shtheckeaction}
Let $\pi$ be any smooth genuine representation of $\Tt$ and let $\sigrt$ be a weight of $\Kt$. Then
\begin{enumerate}
\item For each $f' \in \Hom_{\Tt \cap \Kt}((\sigrt)_{\Ubark}, \pi\big \vert_{\Tt \cap \Kt})$, 
$$f' \cdot \tau_{-1}^{\vec{r}} = \pi(\tilde{h}(\varpi))\cdot f'.$$
\item For each $f' \in \Hom_{\Tt \cap \Kt'}((\sigrt^{\tilde{\alpha}})_{\Ubark}, \pi\big \vert_{\Tt \cap \Kt'})$, 
$$f' \cdot \tau_{-1}^{\tilde{\alpha}, \vec{r}} = \pi(\tilde{h}(\varpi)) \cdot f'.$$
\end{enumerate}
\end{lem}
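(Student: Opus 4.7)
The plan is to unwind both sides through Frobenius reciprocity, reducing everything to a single computation: that $\tau_{-1}^{\vec{r}}$ sends the basic element $[1,v]$ to $[\tilde{h}(\varpi),v]$.

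First I would set up the Hecke action explicitly. Given $f'\in \Hom_{\Tt\cap\Kt}((\sigrt)_{\Ubark},\,\pi|_{\Tt\cap\Kt})$, let $f\in\Hom_{\Tt}(\ind_{\Tt\cap\Kt}^{\Tt}(\sigrt)_{\Ubark},\,\pi)$ be the map corresponding to $f'$ by Frobenius reciprocity, so that $f([1,v])=f'(v)$ for every $v\in(\sigrt)_{\Ubark}$. The right action of $\tau\in\SHT=\End_{\Tt}(\ind_{\Tt\cap\Kt}^{\Tt}(\sigrt)_{\Ubark})$ on $f'$ is by definition the map whose Frobenius image is $f\circ\tau$, i.e.\ $(f'\cdot\tau)(v)=(f\circ\tau)([1,v])$.

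The key step is to identify $\tau_{-1}^{\vec{r}}([1,v])$. By definition $\tau_{-1}^{\vec{r}}$ corresponds to $\psi_{-1}^{\vec{r}}\in\mathbb{H}(\Tt,\Tt\cap\Kt,(\sigrt)_{\Ubark})$, which is supported on $(\Tt\cap\Kt)\tilde{h}(\varpi)^{-1}$ with value $\iota_{\vec{r},\vec{r}}=\mathrm{id}$ at $\tilde{h}(\varpi)^{-1}$. Under the correspondence between $\mathbb{H}$ and $\End_{\Tt}(\ind_{\Tt\cap\Kt}^{\Tt}(\sigrt)_{\Ubark})$ via Frobenius reciprocity, the image of $[1,v]$ is the function $g\mapsto\psi_{-1}^{\vec{r}}(g)(v)$; comparing supports and values with the basic-function convention $[g_0,v](g)=\sigma(gg_0)v$ (supported on $(\Tt\cap\Kt)g_0^{-1}$), we get
\begin{equation*}
\tau_{-1}^{\vec{r}}([1,v])=[\tilde{h}(\varpi),v].
\end{equation*}
This matches the specialization $n=1$ of the identity $(\tau_{-1}^{\vec{r}})^n[1,p_{\Ubark}v]=[\tilde{h}(\varpi)^n,p_{\Ubark}v]$ recorded in the proof of Lemma~\ref{lem:SHTfree}.

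Combining the two steps, and using $\Tt$-equivariance of $f$,
\begin{equation*}
(f'\cdot\tau_{-1}^{\vec{r}})(v)=f([\tilde{h}(\varpi),v])=\pi(\tilde{h}(\varpi))f([1,v])=\pi(\tilde{h}(\varpi))f'(v),
\end{equation*}
which is exactly (1). Part (2) requires no new idea: since $\Tt\cap\Kt=\Tt\cap\Kt'$ as subgroups of $\Gt$ (both equal $Pr^{-1}(T\cap K)$, because $T\cap K=T\cap K'$ by Remark~\ref{rem:kspl}), the construction of $\ind_{\Tt\cap\Kt'}^{\Tt}(\sigrt^{\tilde\alpha})_{\Ubark}$, its associated Hecke algebra, and the generator $\tau_{-1}^{\tilde\alpha,\vec{r}}$ are all formally parallel to the $\Kt$-case, and the same Frobenius-reciprocity calculation—noting that $\tau_{-1}^{\tilde\alpha,\vec{r}}([1,v])=[\tilde{h}(\varpi),v]$ for $v\in(\sigrt^{\tilde\alpha})_{\Ubark}$—yields the formula. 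There is no real obstacle here; the only subtle point is making sure the basic-function/Frobenius-reciprocity normalization is tracked consistently, so that the support-and-value computation for $\psi_{-1}^{\vec{r}}$ really produces $[\tilde{h}(\varpi),v]$ rather than $[\tilde{h}(\varpi)^{-1},v]$.
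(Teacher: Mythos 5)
Your proof is correct and is essentially the paper's argument: both reduce to the fact that $\psi_{-1}^{\vec{r}}$ is supported on the single coset $(\Tt\cap\Kt)\tilde{h}(\varpi)^{-1}$ with value the identity, so that the Hecke action produces exactly the scalar $\pi(\tilde{h}(\varpi))$. The only difference is presentational — the paper evaluates the convolution $f'*\psi_{-1}^{\vec{r}}$ directly on the weight space, while you transport the computation through Frobenius reciprocity to the universal module via $\tau_{-1}^{\vec{r}}([1,v])=[\tilde{h}(\varpi),v]$ (already recorded in the proof of Lemma \ref{lem:SHTfree}) and then use $\Tt$-equivariance; likewise, both treat part (2) as a formal repetition using $\Tt\cap\Kt=\Tt\cap\Kt'$.
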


\begin{proof}[Proof of Lemma \ref{lem:shtheckeaction}]
\begin{enumerate}
\item Let $f'$ be any element of $\Hom_{\Tt \cap \Kt}((\sigrt)_{\Ubark}, \pi \big \vert_{\Tt \cap \Kt})$ and let $v \in V_{\vec{r}}$.
\begin{align*}  (f' \cdot \tau^{\vec{r}}_{-1})(p_{\Ubark}v) &= (f' * \psi_{-1}^{\vec{r}})(p_{\Ubark}v)\\
& = \sum_{t \in (\Tt \cap \Kt) \setminus \Tt}\pi (t^{-1}) \cdot f'\left(\psi_{-1}^{\vec{r}}(t)(p_{\Ubark}v)\right)\\
 & = \sum_{n \in \ZZ} \pi (\tilde{h}(\varpi)^{-n}) \cdot f'\left(\psi_{-1}^{\vec{r}}(\tilde{h}(\varpi)^n)(p_{\Ubark}v) \right)\\
& =\pi (\tilde{h}(\varpi)) \cdot f'\left(\psi_{-1}^{\vec{r}}(\tilde{h}(\varpi)^{-1})( p_{\Ubark}v) \right)\\
& =\pi (\tilde{h}(\varpi)) \cdot f'(p_{\Ubark}v). 
\end{align*}

\item The proof is essentially identical to that of part (1). 

\end{enumerate}
\end{proof}

Finally we check that each genuine character of $\Tt$ can be constructed as the tensor product of a universal module for some $\SHT$ with a certain character of a spherical Hecke algebra. The following lemma is a prototype for the parametrization of genuine representations of $\Gt$ defined in the next section. 

\begin{lem}
\label{lem:shtquotients}
If $\pi$ is a smooth genuine character of $\Tt$, $\sigrt$ is a weight of $\Kt$ such that $\Hom_{\Tt \cap \Kt}((\sigrt)_{\Ubark}, \pi \big \vert_{\Tt \cap \Kt}) \neq 0$, $\theta$ is the character of $\SHT$ defined by $\theta(\tau^{\vec{r}}_{-1})= \pi(\tilde{h}(\varpi))$, and $\theta^{\tilde{\alpha}}$ is the character of $\mathcal{H}(\Tt, \Tt \cap \Kt', (\tilde{\sigma}^{\tilde{\alpha}}_{\vec{r}'})_{\Ubark})$ defined by $\theta^{\tilde{\alpha}}(\tau^{\tilde{\alpha},\vec{r}}_{-1}) = \pi(\tilde{h}(\varpi))$, then $$\pi \cong \ind_{\Tt \cap \Kt}^{\Tt}(\sigrt)_{\Ubark} \otimes_{\SHT}\theta \cong \ind_{\Tt \cap \Kt'}^{\Tt}(\tilde{\sigma}^{\tilde{\alpha}}_{\vec{r}'})_{\Ubark}\otimes_{\mathcal{H}(\Tt, \Tt \cap \Kt', (\tilde{\sigma}^{\tilde{\alpha}}_{\vec{r}'})_{\Ubark})}\theta^{\tilde{\alpha}}$$
where $\vec{r'} \in \{0, \dots, p-1\}^f$ satisfies $\sum_{i = 0}^{f-1}r_i'p^i \equiv \sum_{i = 0}^{f-1}(r_i + \frac{p-1}{2})p^i \pmod{q-1}.$ 
\end{lem}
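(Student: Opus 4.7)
The plan is to construct an explicit surjection onto $\pi$ from each of the two universal modules via compact Frobenius reciprocity, verify that this surjection factors through the prescribed tensor product with the Hecke character, and then conclude by a one-dimensional count using the freeness result of Lemma \ref{lem:SHTfree}.

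First, by hypothesis there is a nonzero $\phi \in \Hom_{\Tt \cap \Kt}((\sigrt)_{\Ubark},\, \pi \big \vert_{\Tt \cap \Kt})$. Compact Frobenius reciprocity sends it to a $\Tt$-equivariant map $\widetilde{\phi}: \ind_{\Tt \cap \Kt}^{\Tt}(\sigrt)_{\Ubark} \to \pi$; since $\pi$ is one-dimensional and $\phi \neq 0$, this map is surjective. Next, apply Lemma \ref{lem:shtheckeaction}(1): the right $\SHT$-action on $\Hom_{\Tt \cap \Kt}((\sigrt)_{\Ubark},\, \pi \big \vert_{\Tt \cap \Kt})$ satisfies $\phi \cdot \tau^{\vec{r}}_{-1} = \pi(\tilde{h}(\varpi)) \cdot \phi = \theta(\tau^{\vec{r}}_{-1}) \cdot \phi$. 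Since $\SHT \cong E[(\tau^{\vec{r}}_1)^{\pm 1}]$ (Lemma \ref{lem:SHTwtcoinvalg}(1)) is generated by $\tau^{\vec{r}}_{-1}$ and its inverse, the $\SHT$-action on $\phi$ factors through $\theta$. Equivalently (by the adjunction between Frobenius reciprocity and tensor product), the map $\widetilde{\phi}$ descends to a $\Tt$-equivariant surjection
\[
\overline{\phi}: \ind_{\Tt \cap \Kt}^{\Tt}(\sigrt)_{\Ubark} \otimes_{\SHT} \theta \longrightarrow \pi.
\]
By Lemma \ref{lem:SHTfree} the source is free of rank one over $\SHT$, so the tensor product is a one-dimensional $E$-vector space. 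A surjection between one-dimensional spaces is an isomorphism, yielding the first claimed equivalence.

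For the second equivalence, the congruence condition on $\vec{r'}$ together with Lemma \ref{lem:kt'coinvs}(2)--(3) gives a $\Tt \cap \Kt = \Tt \cap \Kt'$-linear isomorphism $(\tilde{\sigma}^{\tilde{\alpha}}_{\vec{r'}})_{\Ubark} \cong \tilde{\delta}_{\vec{r}} \cong (\sigrt)_{\Ubark}$. Combined with the hypothesis $\Hom_{\Tt \cap \Kt}((\sigrt)_{\Ubark},\, \pi \big \vert_{\Tt \cap \Kt}) \neq 0$, this yields $\Hom_{\Tt \cap \Kt'}((\tilde{\sigma}^{\tilde{\alpha}}_{\vec{r'}})_{\Ubark},\, \pi \big \vert_{\Tt \cap \Kt'}) \neq 0$. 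Repeating the three-step argument of the previous paragraph, now invoking Lemma \ref{lem:shtheckeaction}(2) in place of Lemma \ref{lem:shtheckeaction}(1) and the $\Kt'$-analogue of Lemma \ref{lem:SHTfree} (whose proof is identical, since it only uses the structure of $\Tt$ and the torus intersection $\Tt \cap \Kt'$), produces the second isomorphism.

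I do not anticipate a real obstacle here: the result is essentially a formal consequence of (i) the rank-one freeness of the universal $\SHT$-module and (ii) the computation of how $\tau^{\vec{r}}_{-1}$ acts on Hom spaces, both of which are already established. The only point where care is needed is confirming that the eigenvalue of $\tau^{\vec{r}}_{-1}$ on $\phi$ is exactly $\pi(\tilde{h}(\varpi))$ (so that it agrees with $\theta(\tau^{\vec{r}}_{-1})$), which is precisely the content of Lemma \ref{lem:shtheckeaction}, and that the $\Kt'$-analogue of the coinvariants is isomorphic to $(\sigrt)_{\Ubark}$ exactly under the stated congruence, which is Lemma \ref{lem:kt'coinvs}.
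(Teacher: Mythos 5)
Your proof is correct and follows essentially the same route as the paper's: both arguments come down to matching the two pieces of data that determine a genuine character of $\Tt$ — its restriction to $\Tt \cap \Kt = \Tt \cap \Kt'$ (where Lemma \ref{lem:kt'coinvs}(2) supplies the congruence condition on $\vec{r'}$) and its value at $\tilde{h}(\varpi)$ (encoded in $\theta$, resp.\ $\theta^{\tilde{\alpha}}$). Your version merely spells out what the paper compresses into ``the universal property of the tensor product,'' namely the descent of the Frobenius-reciprocity surjection through $\theta$ via Lemma \ref{lem:shtheckeaction} and the one-dimensionality of the tensor product via the rank-one freeness of Lemma \ref{lem:SHTfree}.
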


\begin{proof}[Proof of Lemma \ref{lem:shtquotients}]
The genuine character $\pi$ is determined by the data $\pi \big \vert_{\Tt \cap \Kt}$ and $\pi(\tilde{h}(\varpi))$. If and only if $\Hom_{\Tt \cap \Kt}((\sigrt)_{\Ubark}, \pi \big \vert_{\Tt \cap \Kt}) \neq 0$, we have $\pi \big \vert_{\Tt \cap \Kt} \cong (\sigrt)_{\Ubark}$ as $\Tt \cap \Kt$-representations. In addition, by Lemma \ref{lem:kt'coinvs} (2) we have $\pi \big \vert_{\Tt \cap \Kt'} \cong (\tilde{\sigma}_{\vec{r'}}^{\tilde{\alpha}})_{\Ubark}$ as $\Tt \cap \Kt = \Tt \cap \Kt'$-representations if and only if $\vec{r'}$ satisfies the condition given in the lemma. By definition of $\theta$ and $\theta^{\tilde{\alpha}}$, the value of each tensor product on $\tilde{h}(\varpi)$ matches that of $\pi$. The result follows from the universal property of the tensor product. 
\end{proof}

\subsection{Universal modules for spherical Hecke algebras of  $\Gt$} 
\label{subsec:univmodsha}
The universal module for the genuine spherical Hecke algebra $\SHA$ is the compact induction $\ind_{\Kt}^{\Gt}(\sigrt)$. We continue to write $\mathcal{T}^{\vec{r}}$ for the operator $\mathcal{T}^{\vec{r}}_1$. 

\begin{prop}
\label{prop:SHAfree}
Let $\sigrt$ be any weight of $\Kt$. Then $\ind_{\Kt}^{\Gt}(\sigrt)$ is a free $\SHA$-module. 
\end{prop}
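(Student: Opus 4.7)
The plan is to produce an explicit free $E[\mathcal{T}^{\vec{r}}]$-basis by filtering $\ind_{\Kt}^{\Gt}(\sigrt)$ by support radius and reducing to a graded argument. For $n \geq -1$, let $\mathcal{M}_n$ denote the finite-dimensional $E$-subspace of $\ind_{\Kt}^{\Gt}(\sigrt)$ consisting of functions supported on $\coprod_{0 \leq i \leq n}\Kt\tilde{h}(\varpi)^i\Kt$, with the convention $\mathcal{M}_{-1} = 0$. This gives a separated exhaustive filtration $\ind_{\Kt}^{\Gt}(\sigrt) = \bigcup_{n \geq 0}\mathcal{M}_n$. The key input is Corollary \ref{cor:supportradius}: if $0 \neq f \in \mathcal{M}_{n-1}\setminus \mathcal{M}_{n-2}$ then $\mathcal{T}^{\vec{r}}(f) \in \mathcal{M}_n \setminus \mathcal{M}_{n-1}$. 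Consequently, $\mathcal{T}^{\vec{r}}$ is injective on $\ind_{\Kt}^{\Gt}(\sigrt)$, and for each $n \geq 0$ it descends to a well-defined injective $E$-linear map $\mathcal{T}^{\vec{r}} \colon \mathcal{M}_{n-1}/\mathcal{M}_{n-2} \hookrightarrow \mathcal{M}_n/\mathcal{M}_{n-1}$ on graded pieces.

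Next I would choose inductively an $E$-linear complement $U_n \subset \mathcal{M}_n/\mathcal{M}_{n-1}$ of the image $\mathcal{T}^{\vec{r}}(\mathcal{M}_{n-1}/\mathcal{M}_{n-2})$ (with $U_0 = \mathcal{M}_0$), lift each $U_n$ through the projection $\mathcal{M}_n \twoheadrightarrow \mathcal{M}_n/\mathcal{M}_{n-1}$ to a subspace $\widetilde{U}_n \subset \mathcal{M}_n$, and fix an $E$-basis $\mathcal{B}_n$ of $\widetilde{U}_n$. An easy induction using the injectivity of $\mathcal{T}^{\vec{r}}$ on graded pieces yields the direct-sum decomposition $\mathcal{M}_n/\mathcal{M}_{n-1} = \bigoplus_{0 \leq k \leq n}(\mathcal{T}^{\vec{r}})^{n-k}U_k$. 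Setting $\mathcal{B} = \bigsqcup_{n \geq 0}\mathcal{B}_n$, I would claim that $\mathcal{B}$ is a free $\SHA$-basis of $\ind_{\Kt}^{\Gt}(\sigrt)$; since $\SHA = E[\mathcal{T}^{\vec{r}}]$ by Corollary \ref{cor:injimag}, this completes the proof.

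Surjectivity of the natural map $\bigoplus_{b \in \mathcal{B}}\SHA \cdot b \to \ind_{\Kt}^{\Gt}(\sigrt)$ would follow by a second induction on $n$: given $f \in \mathcal{M}_n$, use the graded decomposition to subtract a suitable $E$-linear combination of elements $(\mathcal{T}^{\vec{r}})^{n-k}b$ (with $b \in \mathcal{B}_k$, $0 \leq k \leq n$) so that the difference lies in $\mathcal{M}_{n-1}$, then apply the inductive hypothesis.

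Injectivity is the step I expect to be the main obstacle. Suppose $\sum_{k,i}P_{k,i}(\mathcal{T}^{\vec{r}})b_{k,i} = 0$ with $b_{k,i} \in \mathcal{B}_k$ and polynomials $P_{k,i} \in E[X]$ not all zero, and set $N = \max\{k + \deg P_{k,i} : P_{k,i} \neq 0\}$. The relation lies in $\mathcal{M}_N$, and reducing modulo $\mathcal{M}_{N-1}$ kills every summand with $k + \deg P_{k,i} < N$. The directness of $\mathcal{M}_N/\mathcal{M}_{N-1} = \bigoplus_k (\mathcal{T}^{\vec{r}})^{N-k}U_k$, combined with the injectivity of each iterate $(\mathcal{T}^{\vec{r}})^{N-k}$ on $U_k$, forces the leading coefficient of every $P_{k,i}$ realizing the maximum to vanish, contradicting the definition of $N$. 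The subtlety throughout is the faithful propagation of strict support growth through powers of $\mathcal{T}^{\vec{r}}$, which is exactly what Corollary \ref{cor:supportradius} was designed to deliver.
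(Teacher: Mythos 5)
Your argument is correct and is essentially the paper's own proof (which follows Barthel--Livn\'e, Theorem 19): both filter $\ind_{\Kt}^{\Gt}(\sigrt)$ by support radius, use Corollary \ref{cor:supportradius} as the sole input, and build a free basis over $\SHA \cong E[\mathcal{T}^{\vec{r}}]$ by a noncanonical choice of complements at each radius. Your passage to the graded pieces $\mathcal{M}_n/\mathcal{M}_{n-1}$ with the induced injective maps is just a tidier packaging of the paper's inductive completion of the sets $B_n$ to bases.
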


\begin{proof}[Proof of Proposition \ref{prop:SHAfree}]
It follows from Corollary \ref{cor:supportradius} that if $f \in \ind_{\Kt}^{\Gt}(\sigrt)$ and if $\mathcal{T}^{\vec{r}}(f)$ is supported in $\coprod_{0 \leq i \leq n+1} \Kt \tilde{h}(\varpi)^{i}\Kt$, then $f$ is supported in $\coprod_{0 \leq i \leq n} \Kt \tilde{h}(\varpi)^{i}\Kt$. (In other words, if $\mathcal{T}^{\vec{r}}(f)$ is supported in the ball of radius $2n+2$ around the unit vertex of the tree $\mathcal{X}$, then $f$ is supported in the ball of radius $2n$ around the unit vertex.) Now a free $\mathcal{H}(\Gt, \Kt, \sigrt)$-basis for $\ind_{\Kt}^{\Gt}(\sigrt)$ can be (noncanonically) constructed using the method used in the proof of \cite{barthellivne:irredmodp} Theorem 19.

For completeness, here is the construction. Fix a basis $\mathcal{B}$ of $V_{\vec{r}}$ and put $A_0 = \{ [1, b]\}_{b \in \mathcal{B}}$: then $A_0$ is a basis for the space of functions in $\ind_{\Kt}^{\Gt}(\sigrt)$ which are supported in $\Kt$. Suppose that $n \geq 0$ and that we have, for each $i$ such that $0 \leq i \leq n$, a set $A_{i}$ of functions in $\ind_{\Kt}^{\Gt}(\sigrt)$ such that
\begin{enumerate}
\item each element of $A_i$ is supported in $\Kt\tilde{h}(\varpi)^{i}\Kt$, and
\item $B_n := \coprod_{\substack{i, k \geq 0\\ i + k \leq n}} \{ \left(\mathcal{T}^{\vec{r}}\right)^k(f): \, f \in A_i\}$ is a basis for the space of functions in $\ind_{\Kt}^{\Gt}(\sigrt)$ which are supported in $\coprod_{i = 0}^{n} \Kt \tilde{h}(\varpi)^{i}\Kt.$ 
\end{enumerate}
If it is true that the set $$B_{n+1}:= \coprod_{\substack{0 \leq i, k; \, i \leq  n\\ i + k \leq n+1}} \{\left(\mathcal{T}^{\vec{r}}\right)^k(f): \, f \in A_i\}$$ is linearly independent, then there exists a set $A_{n+1}$ of functions supported in $\Kt \tilde{h}(\varpi)^{n+1}\Kt$ such that $A_{n+1} \amalg B_{n+1}$ is a basis for the space of functions in $\ind_{\Kt}^{\Gt}(\sigrt)$ with support in $\coprod_{i = 0}^{n+1}\Kt \tilde{h}(\varpi)^i\Kt$. If $B_{n+1}$ is not linearly independent, then there exist $f,$ $f'$ with support in $\coprod_{i = 0}^{n}\Kt \tilde{h}(\varpi)^{i}\Kt$ such that $\mathcal{T}^{\vec{r}}(f) + f' = 0$. Then by Corollary \ref{cor:supportradius}, the support of $f$ lies in $\coprod_{i = 0}^{n-1}\Kt \tilde{h}(\varpi)^{i}\Kt.$ But now both $\mathcal{T}^{\vec{r}}(f)$ and $f'$ belong to the linearly independent set $B_n$, so $f = f' = 0$. Therefore $B_{n+1}$ is linearly independent. 
\end{proof}

\begin{lem}
\label{lem:cokernelnonzero}
Let $\sigrt$ be any weight of $\Kt$, let $\lambda \in E$, and let $\Theta_{\lambda}$ be the character of $\SHA$ defined by $\Theta_{\lambda}(\mathcal{T}^{\vec{r}}) = \lambda.$ Set $\pi(\vec{r}, \lambda):= \ind_{\Kt}^{\Gt}\sigrt \otimes_{\SHA} \Theta_{\lambda}.$ Then 
\begin{enumerate}
\item $\pi(\vec{r}, \lambda)$ is an infinite-dimensional genuine representation of $\Gt$. 
\item If $\rho$ is a nonzero quotient of $\pi(\vec{r}, \lambda)$, then $\rho$ contains $\sigrt$ and $\mathcal{T}^{\vec{r}}$ acts by $\lambda$ on $\Hom_{\Kt}(\sigrt, \, \rho\big \vert_{\Kt})$. 
\end{enumerate}
\end{lem}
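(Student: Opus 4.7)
For part (1), the strategy is to combine Proposition \ref{prop:SHAfree} with the polynomial-algebra description from Corollary \ref{cor:injimag}. Writing $\ind_{\Kt}^{\Gt}\sigrt = \bigoplus_{b \in B}\SHA\cdot b$ for some free $\SHA$-basis $B$ and tensoring with $\Theta_{\lambda}$ gives
$$\pi(\vec{r}, \lambda) \;\cong\; \bigoplus_{b \in B}\bigl(\SHA\otimes_{\SHA}\Theta_{\lambda}\bigr) \;\cong\; \bigoplus_{b \in B} E,$$
so the claim reduces to showing $|B| = \infty$. The plan is to use Corollary \ref{cor:supportradius}, which says $\mathcal{T}^{\vec{r}}$ expands the support radius of any function by exactly $2$. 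Let $V_n \subset \ind_{\Kt}^{\Gt}\sigrt$ be the subspace of functions supported in $\coprod_{0 \leq j \leq n}\Kt\tilde{h}(\varpi)^j\Kt$; by Lemma \ref{lem:leftksreps}, $\dim_E V_n$ grows exponentially in $n$ (of order $q^{2n}$). If $B$ were finite of cardinality $N$ and all $b \in B$ had support radius at most $M$, then $V_n$ would be spanned by the at most $N(n-M+1)$ elements of the form $(\mathcal{T}^{\vec{r}})^k b$ with $k+M \leq n$, giving a linear bound that contradicts the exponential growth of $\dim_E V_n$. Hence $|B|$ is infinite and $\pi(\vec{r}, \lambda)$ is infinite-dimensional. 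Since $\ind_{\Kt}^{\Gt}\sigrt$ is a genuine $\Gt$-representation (inherited from $\sigrt$) and $\pi(\vec{r}, \lambda)$ is a nonzero quotient, genuineness is immediate.

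For part (2), the approach is compact Frobenius reciprocity together with the universal property of the tensor product. Given a nonzero quotient $q\colon \pi(\vec{r}, \lambda) \twoheadrightarrow \rho$, the composition $\ind_{\Kt}^{\Gt}\sigrt \twoheadrightarrow \pi(\vec{r}, \lambda) \twoheadrightarrow \rho$ is a nonzero element of $\Hom_{\Gt}(\ind_{\Kt}^{\Gt}\sigrt,\, \rho)$, which Frobenius reciprocity identifies with a nonzero $\Kt$-map $\sigrt \to \rho\big\vert_{\Kt}$; this map must be injective because $\sigrt$ is irreducible, so $\rho$ contains $\sigrt$. For the Hecke eigenvalue, observe that the right action of $\SHA = \End_{\Gt}(\ind_{\Kt}^{\Gt}\sigrt)$ on $\Hom_{\Gt}(\ind_{\Kt}^{\Gt}\sigrt,\, \rho)$ by precomposition is compatible under Frobenius reciprocity with the right action of $\SHA$ on $\Hom_{\Kt}(\sigrt,\, \rho\big\vert_{\Kt})$. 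Since every $\Gt$-map $\ind_{\Kt}^{\Gt}\sigrt \to \rho$ factors through $\pi(\vec{r}, \lambda)$, on which $\mathcal{T}^{\vec{r}}$ acts by the scalar $\lambda$, precomposition by $\mathcal{T}^{\vec{r}}$ acts as $\lambda$ on both $\Hom$-spaces, as required.

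The only substantive obstacle is the infinite-dimensionality assertion in (1), and the key input there is the sharp radius-of-support statement of Corollary \ref{cor:supportradius}: without the exact-shift property of $\mathcal{T}^{\vec{r}}$, one could not contradict the exponential growth of $\dim_E V_n$ by the polynomial bound coming from a hypothetical finite generating set. Once infinite-dimensionality is in hand, part (2) is entirely formal.
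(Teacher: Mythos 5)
Your part (2) is essentially the paper's argument (Frobenius reciprocity plus the precomposition action of $\SHA$), with one caveat: the assertion that \emph{every} $\Gt$-map $\ind_{\Kt}^{\Gt}\sigrt \to \rho$ factors through $\pi(\vec{r}, \lambda)$ is not justified and is not needed. It suffices to observe, as the paper does, that the one map you actually use -- the composite $\ind_{\Kt}^{\Gt}\sigrt \twoheadrightarrow \pi(\vec{r}, \lambda) \twoheadrightarrow \rho$ -- kills $(\mathcal{T}^{\vec{r}}-\lambda)$ by construction, hence its image under Frobenius reciprocity is a nonzero $\lambda$-eigenvector in $\Hom_{\Kt}(\sigrt, \rho\big\vert_{\Kt})$, which also gives the embedding $\sigrt \hookrightarrow \rho$. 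Likewise your identification $\pi(\vec{r},\lambda) \cong \bigoplus_{b \in B} E$ for a free basis $B$, and the genuineness claim, are fine and match the paper.

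The genuine gap is in your counting step in part (1). For an \emph{arbitrary} free $\SHA$-basis $B$ with all elements of support radius at most $M$, the containment $V_n \subseteq \mathrm{span}\{(\mathcal{T}^{\vec{r}})^k b : k + M \leq n\}$ does not follow from Corollary \ref{cor:supportradius}: that corollary controls the top radius of each individual $(\mathcal{T}^{\vec{r}})^k b$, but it does not preclude cancellation of the top-radius parts among different terms of the (unique) expansion, so an element of small support radius could a priori require large powers of $\mathcal{T}^{\vec{r}}$. Concretely, if $a_0, a$ are elements of an adapted basis with radii $0$ and $1$, replacing $a$ by $a - \mathcal{T}^{\vec{r}}a_0$ still gives a free basis, and then $a = (a - \mathcal{T}^{\vec{r}}a_0) + \mathcal{T}^{\vec{r}}a_0$ lies in $V_1$ but its expansion violates your constraint; so the spanning claim is a property of a \emph{particular} basis, not a formal consequence of freeness plus the radius-shift property, and inside your proof by contradiction it cannot be waved through. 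The repair is short and is in effect what the paper does: use the specific free basis $A = \coprod_i A_i$ constructed in the proof of Proposition \ref{prop:SHAfree}, which is built precisely so that $B_n = \coprod_{i+k\leq n}\{(\mathcal{T}^{\vec{r}})^k f : f \in A_i\}$ is an $E$-basis of $V_n$. For that basis your count gives $\dim_E V_n = \sum_{i \leq n}(n-i+1)|A_i|$, which would be bounded linearly in $n$ if $|A| < \infty$, contradicting the exponential growth of $\dim_E V_n$ coming from Lemma \ref{lem:leftksreps}; hence $|A| = \infty$ and $\pi(\vec{r},\lambda) \cong \bigoplus_{a \in A}E$ is infinite-dimensional. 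With that substitution your argument coincides with the paper's.
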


\begin{proof}[Proof of Lemma \ref{lem:cokernelnonzero}]
\begin{enumerate}
\item The representation $\pi(\vec{r}, \lambda)$ is isomorphic to the quotient $(\ind_{\Kt}^{\Gt}\sigrt)/(\mathcal{T}^{\vec{r}}-\lambda).$ The image of $(\mathcal{T}^{\vec{r}} - \lambda)$ in $\ind_{\Kt}^{\Gt}\sigrt$ consists of genuine functions on $\Gt$, so the quotient $\pi(\vec{r}, \lambda)$ is genuine as well. The infinite-dimensionality of $\pi(\vec{r}, \lambda)$ follows from Proposition \ref{prop:SHAfree}; a vector space basis is given by the image in $\pi(\vec{r}, \lambda)$ of the free $\mathcal{H}(\Gt, \Kt, \tilde{\sigma}_{\vec{r}})$-basis constructed in the proof of that proposition. 

\item The composition of the quotients $\ind_{\Kt}^{\Gt}\sigrt \rightarrow \pi(\vec{r}, \lambda)$ and $\pi(\vec{r}, \lambda)\rightarrow \rho$ is nonzero, so by Frobenius reciprocity there exists a corresponding injection $\sigrt \hookrightarrow \rho$. The Hecke algebra $\SHA$ acts on $\Hom_{\Gt}(\ind_{\Kt}^{\Gt}\sigrt, \, \rho)$ by precomposition, and the image of the identity map of $\ind_{\Kt}^{\Gt}\sigrt$ in $\Hom_{\Gt}(\ind_{\Kt}^{\Gt}\sigrt, \, \rho)$ is an eigenvector for $\mathcal{T}^{\vec{r}}$ with eigenvalue $\lambda$. Passing to $\Hom_{\Kt}(\sigrt, \, \rho\big \vert_{\Kt})$ by Frobenius reciprocity, we get the second statement of the lemma. 
\end{enumerate}
\end{proof} 

Let $\rho$ be any smooth genuine representation of $\Gt$ and let $\sigrt$ be a $\Kt$-weight. Suppose that $\sigrt$ is contained in $\rho$, i.e., that $\Hom_{\Kt}(\sigrt, \rho \big \vert_{\Kt}) \neq 0$. Then $\SHA$ acts on $\Hom_{\Kt}(\sigrt, \rho \big \vert_{\Kt})$ from the right via Frobenius reciprocity. If the action of $\SHA$ admits an eigenvector with associated eigenvalue $\lambda$, then there exists a $\Gt$-linear map $\pi(\vec{r}, \lambda) \rightarrow \rho$; if $\rho$ is also irreducible, then it follows that $\rho$ is a quotient of $\pi(\vec{r}, \lambda)$. 

Let $\rho$ be a smooth genuine irreducible representation of $\Gt$. A pair $(\vec{r}, \lambda)$, with $\vec{r} \in \{0, \dots, p-1 \}^f$ and $\lambda \in E$, is a \textit{parameter for $\rho$ with respect to $\Kt$} if $\rho$ is a quotient of $\pi(\vec{r}, \lambda)$. 

Lemma \ref{lem:cokernelnonzero} has an obvious analogue for $\Kt'$: 

\begin{lem} \label{lem:kt'cokernelnonzero}
Let $\sigrt^{\tilde{\alpha}}$ be a weight of $\Kt'$, let $\lambda \in E$, and let $\Theta_{\lambda}^{\tilde{\alpha}}$ be the character of $\mathcal{H}_{E}(\Gt, \Kt', \sigrt^{\tilde{\alpha}})$ defined by $\Theta_{\lambda}^{\tilde{\alpha}}(\mathcal{T}^{\tilde{\alpha}}_1) = \lambda$. Set $\pi'(\vec{r}, \lambda) := \ind_{\Kt'}^{\Gt}\sigrt^{\tilde{\alpha}}\otimes_{\mathcal{H}(\Gt, \Kt', \sigrt^{\tilde{\alpha}})}\Theta_{\lambda}^{\tilde{\alpha}}.$ Then
\begin{enumerate}
\item $\pi'(\vec{r}, \lambda)$ is an infinite-dimensional genuine representation of $\Gt$.
\item If $\rho$ is a quotient of $\pi'(\vec{r}, \lambda)$, then $\rho$ contains $\sigrt^{\tilde{\alpha}}$ and $\mathcal{T}^{\tilde{\alpha}, \vec{r}}_1$ acts by $\lambda$ on $\Hom_{\Kt'}(\sigrt^{\tilde{\alpha}}, \rho \big \vert_{\Kt'}).$ 
\end{enumerate}
\end{lem}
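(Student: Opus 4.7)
The proposal is to mirror the proof of Lemma \ref{lem:cokernelnonzero} verbatim, substituting $\Kt'$ for $\Kt$ and using the $\Kt'$-analogues of the earlier structural results that have already been established in the paper.

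For part (1), I would first identify $\pi'(\vec{r}, \lambda)$ with the quotient $(\ind_{\Kt'}^{\Gt}\sigrt^{\tilde{\alpha}})/(\mathcal{T}^{\tilde{\alpha}, \vec{r}}_1 - \lambda)$. Genuineness is immediate: the image of $(\mathcal{T}^{\tilde{\alpha}, \vec{r}}_1 - \lambda)$ consists of genuine functions on $\Gt$, so the quotient is genuine as well. For the infinite-dimensionality claim, one needs a $\Kt'$-version of Proposition \ref{prop:SHAfree}, i.e. that $\ind_{\Kt'}^{\Gt}\sigrt^{\tilde{\alpha}}$ is free over $\mathcal{H}(\Gt, \Kt', \sigrt^{\tilde{\alpha}})$. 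The cleanest route is to transport the structure through the $\Gt$-linear isomorphism $\Phi: (\ind_{\Kt}^{\Gt}\sigrt)^{\tilde{\alpha}} \to \ind_{\Kt'}^{\Gt}\sigrt^{\tilde{\alpha}}$ used in Lemma \ref{lem:kt'shaisom}; combined with the isomorphism of Hecke algebras from Corollary \ref{cor:injimagkt'}, this identifies the whole module structure with the one studied in Proposition \ref{prop:SHAfree}, so freeness (and hence infinite-dimensionality) is inherited. As a vector-space basis for $\pi'(\vec{r}, \lambda)$ one takes the image of any such free $\mathcal{H}(\Gt, \Kt', \sigrt^{\tilde{\alpha}})$-basis of $\ind_{\Kt'}^{\Gt}\sigrt^{\tilde{\alpha}}$.

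For part (2), let $\rho$ be any nonzero quotient of $\pi'(\vec{r}, \lambda)$. The composition
\[
\ind_{\Kt'}^{\Gt}\sigrt^{\tilde{\alpha}} \twoheadrightarrow \pi'(\vec{r}, \lambda) \twoheadrightarrow \rho
\]
is nonzero, and by Frobenius reciprocity it corresponds to a nonzero $\Kt'$-linear map $\sigrt^{\tilde{\alpha}} \hookrightarrow \rho\big\vert_{\Kt'}$, which is injective since $\sigrt^{\tilde{\alpha}}$ is irreducible. Thus $\sigrt^{\tilde{\alpha}}$ is contained in $\rho$. Next, $\mathcal{H}(\Gt, \Kt', \sigrt^{\tilde{\alpha}})$ acts on $\Hom_{\Gt}(\ind_{\Kt'}^{\Gt}\sigrt^{\tilde{\alpha}}, \rho)$ by precomposition; since the chosen surjection onto $\rho$ factors through $\pi'(\vec{r}, \lambda)$, it is killed by $\mathcal{T}^{\tilde{\alpha}, \vec{r}}_1 - \lambda$, i.e. it is an eigenvector with eigenvalue $\lambda$. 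Passing back through Frobenius reciprocity, $\mathcal{T}^{\tilde{\alpha}, \vec{r}}_1$ acts by $\lambda$ on the corresponding element of $\Hom_{\Kt'}(\sigrt^{\tilde{\alpha}}, \rho\big\vert_{\Kt'})$.

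There is no real obstacle here: every tool needed has already been developed in the $\Kt'$-setting (Lemma \ref{lem:kt'weights}, Lemma \ref{lem:kt'shaisom}, Lemma \ref{lem:natlisomkt}, Corollary \ref{cor:injimagkt'}), so the argument of Lemma \ref{lem:cokernelnonzero} transfers line by line. The only point that deserves a moment of care is checking that the conjugation map $\Phi$ intertwines the Hecke actions so that the freeness statement and the scalar eigenvalue $\lambda$ pass correctly between the two sides; this however is built into the way $\mathcal{T}^{\tilde{\alpha}, \vec{r}}_1$ was defined in Corollary \ref{cor:injimagkt'}, so no new computation is required.
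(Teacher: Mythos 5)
Your proof is correct and follows essentially the same route as the paper: the paper deduces the lemma from the single isomorphism $\pi'(\vec{r}, \lambda) \cong \left(\pi(\vec{r}, \lambda)\right)^{\tilde{\alpha}}$ (Lemma \ref{lem:pipi'isom}), whose content is exactly the transport through $\Phi$ intertwining $\mathcal{T}^{\vec{r}}_1$ with $\mathcal{T}^{\tilde{\alpha}, \vec{r}}_1$ that you invoke. Your re-running of the Frobenius reciprocity argument of Lemma \ref{lem:cokernelnonzero}~(2) directly in the $\Kt'$-setting is a harmless variant of the same idea and is complete as written.
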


Lemma \ref{lem:kt'cokernelnonzero} follows from:

\begin{lem} \label{lem:pipi'isom}
Let $\vec{r} \in \{0, \dots, p-1\}^f$ and $\lambda \in E$, and define $\pi'(\vec{r}, \lambda)$ as in the statement of Lemma \ref{lem:kt'cokernelnonzero}.  Then $\left(\pi(\vec{r}, \lambda)\right)^{\tilde{\alpha}} \cong \pi'(\vec{r}, \lambda).$
\end{lem}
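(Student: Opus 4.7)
The proof will unwind the definitions of the conjugation functor, the cokernel modules, and the Hecke operator $\mathcal{T}_1^{\tilde{\alpha},\vec{r}}$, and then compose the isomorphisms already built in Lemma \ref{lem:kt'shaisom} and Corollary \ref{cor:injimagkt'}. First I will record the explicit $\Gt$-equivariant isomorphism
$$\Phi : \bigl(\ind_{\Kt}^{\Gt}\sigrt\bigr)^{\tilde{\alpha}} \longrightarrow \ind_{\Kt'}^{\Gt}\bigl(\sigrt^{\tilde{\alpha}}\bigr), \qquad \Phi(f)(g) = f\bigl((\tilde{\alpha})^{-1} g \tilde{\alpha}\bigr),$$
from the proof of Lemma \ref{lem:kt'shaisom}, noting that conjugation by $\tilde{\alpha}$ preserves $\Gt$ (since conjugation by $\alpha \in GL_2(F)$ preserves $SL_2(F)$).

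Next I will observe that for \emph{any} $\Gt$-equivariant endomorphism $\mathcal{T}$ of $\ind_{\Kt}^{\Gt}\sigrt$, the same set-map remains $\Gt$-equivariant when viewed as an endomorphism of $(\ind_{\Kt}^{\Gt}\sigrt)^{\tilde{\alpha}}$, simply because the underlying $E$-vector space and the function $\mathcal{T}$ are unchanged. Composing with $\Phi$ transports this endomorphism to $\ind_{\Kt'}^{\Gt}(\sigrt^{\tilde{\alpha}})$. By the very definition of $\mathcal{T}_1^{\tilde{\alpha},\vec{r}}$ given in Corollary \ref{cor:injimagkt'}, the endomorphism of $\ind_{\Kt'}^{\Gt}\sigrt^{\tilde{\alpha}}$ obtained in this way from $\mathcal{T}_1^{\vec{r}}$ is precisely $\mathcal{T}_1^{\tilde{\alpha},\vec{r}}$; in symbols, $\Phi \circ \mathcal{T}_1^{\vec{r}} = \mathcal{T}_1^{\tilde{\alpha},\vec{r}} \circ \Phi$.

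Finally I will pass to the cokernels. Conjugation by $\tilde{\alpha}$ is exact, so
$$\bigl(\pi(\vec{r},\lambda)\bigr)^{\tilde{\alpha}} = \bigl(\ind_{\Kt}^{\Gt}\sigrt\bigr)^{\tilde{\alpha}} \Big/ \bigl(\mathcal{T}_1^{\vec{r}} - \lambda\bigr)\bigl(\ind_{\Kt}^{\Gt}\sigrt\bigr)^{\tilde{\alpha}}.$$
Applying $\Phi$ to numerator and denominator and using the intertwining relation from the previous step, the image of the denominator is $(\mathcal{T}_1^{\tilde{\alpha},\vec{r}} - \lambda)\ind_{\Kt'}^{\Gt}\sigrt^{\tilde{\alpha}}$, so the induced map on quotients is a $\Gt$-linear isomorphism onto $\pi'(\vec{r},\lambda)$.

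There is no serious obstacle here; the entire content of the lemma is the compatibility between the Hecke-algebra isomorphism of Lemma \ref{lem:kt'shaisom} and the conjugation functor, which is essentially tautological once both sides are written out. The only minor point requiring care is keeping straight that the definition of $\mathcal{T}_1^{\tilde{\alpha},\vec{r}}$ already bakes in the conjugation by $\tilde{\alpha}$, so that no extra bookkeeping with the cocycle in (\ref{eqn:gl2cocycle}) is needed. Lemma \ref{lem:kt'cokernelnonzero} then follows immediately from Lemma \ref{lem:cokernelnonzero} by transport of structure along this isomorphism.
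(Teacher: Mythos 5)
Your argument is correct and follows essentially the same route as the paper: both recall the isomorphism $\Phi$ from Lemma \ref{lem:kt'shaisom}, note that (by its very definition via that lemma) $\mathcal{T}_1^{\tilde{\alpha},\vec{r}}$ is the transport of $\mathcal{T}_1^{\vec{r}}$ along $\Phi$, and conclude that $\Phi$ descends to an isomorphism of the cokernel modules. Your extra remarks on exactness of conjugation and the intertwining relation $\Phi \circ \mathcal{T}_1^{\vec{r}} = \mathcal{T}_1^{\tilde{\alpha},\vec{r}} \circ \Phi$ simply make explicit what the paper leaves implicit.
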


\begin{proof}[Proof of Lemma \ref{lem:pipi'isom}]
Recall that in the proof of Lemma \ref{lem:kt'shaisom} was defined, for a weight $\sigrt$ of $\Kt$, an isomorphism $\Phi: \left(\ind_{\Kt}^{\Gt}\sigrt\right)^{\tilde{\alpha}} \rightarrow \ind_{\Kt'}^{\Gt}(\sigrt^{\tilde{\alpha}})$ such that $\Phi(h)(g) = h((\tilde{\alpha})^{-1}g\tilde{\alpha})$ for all $h \in \left(\ind_{\Kt}^{\Gt}\sigrt\right)^{\tilde{\alpha}}$ and $g \in \Gt$. Via the identity map $\SHA \rightarrow \End_{\Gt}\left((\ind_{\Kt'}^{\Gt}\sigrt)^{\tilde{\alpha}} \right)$, we may view $\mathcal{T}^{\vec{r}}$ as an endomorphism of $\left(\ind_{\Kt}^{\Gt}\sigrt \right)^{\tilde{\alpha}}$. Then the map $\End_{\Gt}\left((\ind_{\Kt}^{\Gt}\sigrt)^{\tilde{\alpha}} \right) \rightarrow \mathcal{H}(\Gt, \Kt', \sigrt^{\tilde{\alpha}})$ induced by $\Phi$ sends $\mathcal{T}^{\vec{r}}$ to $\mathcal{T}^{\tilde{\alpha}}_{1}$. Thus $\Phi$ induces an isomorphism $\left(\pi(\vec{r}, \lambda) \right)^{\tilde{\alpha}} \rightarrow \pi'(\vec{r}, \lambda).$ 
\end{proof}

If $\rho$ is a smooth genuine irreducible representation of $\Gt$, we will say that $(\vec{r}, \lambda)$ is a \textit{parameter for $\rho$ with respect to $\Kt'$} if $\rho$ is a quotient of $\pi'(\vec{r}, \lambda)$.

\begin{prop}\label{prop:possparamexists}
If $\rho$ is a smooth genuine irreducible admissible representation of $\Gt$, then $\rho$ has a parameter with respect to each of $\Kt$ and $\Kt'$. 
\end{prop}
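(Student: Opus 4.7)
The plan is to combine the existence of a weight in $\rho$ (from Proposition \ref{prop:ktweights}(2)) with the polynomial algebra structure of $\SHA$ (Corollary \ref{cor:injimag}) and admissibility, to produce an eigenvalue for $\mathcal{T}^{\vec{r}}$ on the weight space. First, by Proposition \ref{prop:ktweights}(2), there exists some weight $\sigrt$ of $\Kt$ with $\Hom_{\Kt}(\sigrt, \rho\big\vert_{\Kt}) \neq 0$. By Frobenius reciprocity, this $E$-vector space is isomorphic to $\Hom_{\Gt}(\ind_{\Kt}^{\Gt}\sigrt, \rho)$, and it carries a natural right action of $\SHA = \End_{\Gt}(\ind_{\Kt}^{\Gt}\sigrt)$ by precomposition.

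Next I would invoke admissibility: since $\rho$ is admissible and $\sigrt$ is finite-dimensional (indeed, it is a finite tensor product of Symmetric power representations of $SL_2(\mathfrak{k})$), the space $\Hom_{\Kt}(\sigrt, \rho\big\vert_{\Kt})$ is finite-dimensional over $E$. Indeed, any element of this Hom-space has image contained in $\rho^{\Kt^{(1)}}$ for a sufficiently deep congruence subgroup $\Kt^{(1)}$ (since $\sigrt$ is smooth and finite-dimensional), and this invariant space is finite-dimensional by admissibility. Now by Corollary \ref{cor:injimag}, $\SHA = E[\mathcal{T}^{\vec{r}}]$ is a polynomial algebra in one variable, so the action of $\mathcal{T}^{\vec{r}}$ on this nonzero finite-dimensional space must have an eigenvector: since $E$ is algebraically closed, there exists $\lambda \in E$ and $0 \neq \varphi \in \Hom_{\Kt}(\sigrt, \rho\big\vert_{\Kt})$ with $\varphi \cdot \mathcal{T}^{\vec{r}} = \lambda \cdot \varphi$.

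The corresponding $\Gt$-linear map $\tilde{\varphi}: \ind_{\Kt}^{\Gt}\sigrt \to \rho$ then factors through the cokernel module $\pi(\vec{r}, \lambda) = \ind_{\Kt}^{\Gt}\sigrt /(\mathcal{T}^{\vec{r}} - \lambda)$ to give a nonzero map $\pi(\vec{r}, \lambda) \to \rho$, which is surjective by irreducibility of $\rho$. Thus $\rho$ is a quotient of $\pi(\vec{r}, \lambda)$, so $(\vec{r}, \lambda)$ is a parameter for $\rho$ with respect to $\Kt$. The argument for $\Kt'$ is identical after replacing Proposition \ref{prop:ktweights}(2) by Lemma \ref{lem:kt'weights}(2), Corollary \ref{cor:injimag} by Corollary \ref{cor:injimagkt'}, and $\pi(\vec{r}, \lambda)$ by $\pi'(\vec{r}, \lambda)$.

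The only nontrivial step is the finite-dimensionality of the weight space from admissibility; everything else is a formal consequence of the polynomial structure of the spherical Hecke algebra and Frobenius reciprocity. I do not expect any serious obstacle.
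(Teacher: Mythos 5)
Your proof is correct and follows essentially the same route as the paper: produce a weight via Proposition \ref{prop:ktweights}(2) (resp.\ Lemma \ref{lem:kt'weights}(2)), use admissibility to get a finite-dimensional weight space, extract an eigenvector for the single Hecke generator, and conclude via Frobenius reciprocity and irreducibility. Your extra detail on why admissibility forces finite-dimensionality of $\Hom_{\Kt}(\sigrt,\rho\big\vert_{\Kt})$ is a welcome elaboration of a step the paper leaves implicit, but it is not a different argument.
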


\begin{proof}[Proof of Proposition \ref{prop:possparamexists}]
If $\rho$ is smooth and genuine, then $\rho$ contains a $\Kt$-weight $\sigrt$ (by Proposition \ref{prop:ktweights} (2)) and a $\Kt'$-weight $\sigst^{\tilde{\alpha}}$ (by Proposition \ref{lem:kt'weights} (2)), and for these weights we have $\Hom_{\Kt}(\sigrt, \rho\big \vert_{\Kt}) \neq 0$ and $\Hom_{\Kt'}(\sigst^{\tilde{\alpha}}, \rho\big \vert_{\Kt'}) \neq 0$. If $\rho$ is admissible, then the weight spaces are finite-dimensional, hence admit eigenvectors for the actions of $\SHA$ and $\mathcal{H}(\Gt, \Kt', \sigst^{\tilde{\alpha}})$ respectively. Let $\lambda$ denote an eigenvalue for the action of $\mathcal{T}^{\vec{r}}$ on $\Hom_{\Kt}(\sigrt, \rho\big \vert_{\Kt})$ and $\lambda'$ denote an eigenvalue for the action of $\mathcal{T}^{\tilde{\alpha}\vec{r}}_1$ on $\Hom_{\Kt'}(\sigst^{\tilde{\alpha}}, \rho\big \vert_{\Kt'})$. Then there exist nonzero $\Gt$-linear maps $\pi(\vec{r}, \lambda) \rightarrow \rho$ and $\pi'(\vec{s}, \lambda') \rightarrow \rho$, which are surjective if $\rho$ is irreducible. 
\end{proof}

A parameter $(\vec{r}, \lambda)$ (with respect to either $\Kt$ or $\Kt'$) will be called \textit{supersingular} if $\lambda = 0$, and \textit{nonsupersingular} otherwise. We will call $\pi(\vec{r}, \lambda)$ and $\pi'(\vec{r}, \lambda)$ \textit{cokernel modules} due to their construction as the cokernel of the Hecke algebra on its universal module.   

The following theorem is the main classification result. The proof is given in $\S$\ref{sec:classnproof}.  

\begin{thm} \label{thm:irredadmissclassn}
The smooth, genuine, irreducible, admissible $E$-representations of $\Gt$ fall into two disjoint classes:
\begin{enumerate}
\item those which have only nonsupersingular parameters,
\item those which have only supersingular parameters.
\end{enumerate}
The representations in the first class are exactly the genuine principal series representations of $\Gt$. All representations in the second class are supercuspidal.
\end{thm}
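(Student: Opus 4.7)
The plan is to build the classification on top of the explicit principal series description afforded by Theorem \ref{thm:pswtisom}, the dictionary of Theorem \ref{thm:dictionary}, and the ``no obstruction'' feature of the metaplectic change-of-weight map emphasized in \S\ref{subsec:plan}(iii). I would first show that for every $\vec{r}$ and every $\lambda \in E^\times$, the cokernel module $\pi(\vec{r},\lambda)$ is already an irreducible genuine principal series. Concretely, using Lemma \ref{lem:natlisom} and Proposition \ref{prop:gensataketransformi}, the pair $((\sigrt)_{\Ubark},\lambda)$ picks out a unique genuine character $\chi$ of $\Tt$, and Frobenius reciprocity together with the $\Kt$-weight count in $\Ind_{\Bbt}^{\Gt}(\chi)$ produces a nonzero $\Gt$-map $\pi(\vec{r},\lambda) \to \Ind_{\Bbt}^{\Gt}(\chi)$. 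Because the change-of-weight map is an isomorphism for $\widetilde{SL}_2(F)$ (the Satake calculation in Proposition \ref{prop:satakeproperties} shows no obstruction appears), this map is an isomorphism; in particular both sides are irreducible. This step is essentially the content of Theorem \ref{thm:pswtisom} and is the technical heart of the argument.

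Given this, I would settle the disjointness claim as follows. Let $\rho$ be smooth, genuine, irreducible, admissible with a nonsupersingular parameter $(\vec{r},\lambda)$ with respect to $\Kt$. By Lemma \ref{lem:cokernelnonzero} there is a surjection $\pi(\vec{r},\lambda) \twoheadrightarrow \rho$, and by the previous paragraph $\pi(\vec{r},\lambda)$ is irreducible, so this surjection is an isomorphism and $\rho$ is an irreducible principal series $\Ind_{\Bbt}^{\Gt}(\chi)$. Theorem \ref{thm:dictionary} then lists every $(\vec{s},\mu)$, with respect to either $\Kt$ or $\Kt'$, for which $\rho$ is a quotient of $\pi(\vec{s},\mu)$ (resp.~$\pi'(\vec{s},\mu)$); in every case the scalar coordinate equals $\lambda$, hence is nonzero. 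Thus any representation with a single nonsupersingular parameter has only nonsupersingular parameters, giving disjointness of the two classes and exhibiting the first class as exactly the irreducible genuine principal series.

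For the last sentence of the theorem I would argue that any $\rho$ in the second class is supercuspidal. Suppose, for contradiction, that $\rho$ is an irreducible admissible subquotient of some parabolic induction $\Ind_{\Bbt}^{\Gt}(\chi)$ of a genuine character of $\Tt$. By the first paragraph $\Ind_{\Bbt}^{\Gt}(\chi)$ is itself irreducible, so $\rho \cong \Ind_{\Bbt}^{\Gt}(\chi)$ is an irreducible principal series; but then by what has just been shown $\rho$ has a nonsupersingular parameter, contradicting the hypothesis. Hence $\rho$ is supercuspidal. Combined with Proposition \ref{prop:possparamexists} (which guarantees $\rho$ has at least one parameter with respect to each maximal compact) and the partition into supersingular versus nonsupersingular, this completes the classification.

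The main obstacle, as flagged above, is the first paragraph: establishing that every $\pi(\vec{r},\lambda)$ with $\lambda \in E^\times$ is irreducible and identifying it with a principal series. This is precisely where the metaplectic theory diverges from the Barthel--Livn\'e picture for $GL_2(F)$: the absence of an obstruction to the change-of-weight isomorphism, visible in the explicit Satake formulae of Proposition \ref{prop:satakeproperties} and the polynomial structure from Corollary \ref{cor:injimag}, is what rules out genuine special representations and forces every nonsupersingular cokernel module to be simple.
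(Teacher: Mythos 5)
Your overall route is the same as the paper's: defer everything to Theorem \ref{thm:pswtisom} (every nonsupersingular cokernel module is an irreducible genuine principal series, via the Satake/change-of-weight machinery), use Proposition \ref{prop:possparamexists} for existence of parameters, and then read off the dichotomy. That matches the argument given in $\S$\ref{sec:classnproof}, so the structure is sound and the technical heart is correctly located.

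There is, however, one concrete gap in your disjointness step. You claim that Theorem \ref{thm:dictionary} ``lists every $(\vec{s},\mu)$, with respect to either $\Kt$ or $\Kt'$, for which $\rho$ is a quotient of $\pi(\vec{s},\mu)$.'' It does not: that theorem only classifies isomorphisms \emph{among nonsupersingular cokernel modules}. A supersingular parameter for $\rho$ would mean that $\rho$ is a quotient of $\pi(\vec{s},0)$ or $\pi'(\vec{s},0)$, and these modules are not known to be irreducible, so ``quotient of'' is strictly weaker than an equivalence of cokernel modules; Theorem \ref{thm:dictionary} is silent about this possibility. To exclude it you need the eigenvalue argument: if $\rho\cong\Ind_{\Bbt}^{\Gt}(\mu\cdot\chi_{\psi})$ were a quotient of $\pi(\vec{s},0)$, then by Lemma \ref{lem:cokernelnonzero} (2) the operator $\mathcal{T}^{\vec{s}}_1$ would act by $0$ on $\Hom_{\Kt}(\tilde{\sigma}_{\vec{s}},\rho\vert_{\Kt})$, while Proposition \ref{prop:SHAaction} forces it to act by $\mu\cdot\chi_{\psi}(\tilde{h}(\varpi))\in E^{\times}$ (and likewise for $\Kt'$). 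This is exactly the first paragraph of the proof of Corollary \ref{cor:dictionary2}, which is the statement you should cite here in place of Theorem \ref{thm:dictionary}; with that substitution your proof closes up and coincides with the paper's.
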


\begin{rem}
\label{rem:admisshyp}
We comment here on two questions raised by Theorem \ref{thm:irredadmissclassn}.
\begin{enumerate}
\item \textit{Necessity of the admissibility hypothesis:} The dichotomy of Theorem \ref{thm:irredadmissclassn} applies to the class of smooth, genuine, irreducible representations $\pi$ such that $\Hom_{\Kt}(\tilde{\sigma}_{\vec{r}}, \pi \big \vert_{\Kt})$ contains an eigenvector for $\mathcal{H}(\Gt, \Kt, \tilde{\sigma}_{\vec{r}})$ whenever $\tilde{\sigma}_{\vec{r}}$ is a weight of $\pi$. In the cases of $GL_2(F)$ (\cite{barthellivne:irredmodp} Proposition 32) and $SL_2(F)$ (\cite{abdellatif:thesis} Th\'eor\`eme 3.5.36, using $\text{char}(F) \neq 2$), in fact every smooth irreducible $E$-representation satisfies this eigenvector condition, so in those cases the dichotomy between nonsupersingular and supersingular parameters applies to all smooth irreducible representations. It is not clear whether the admissibility hypothesis can be similarly removed here.
\item \textit{Existence of supersingular and supercuspidal representations:} Using the argument of \cite{barthellivne:ordunram} Theorem 10 and our Proposition \ref{prop:SHAfree}, we can show that there exist smooth irreducible genuine representations of $\Gt$ which have supersingular parameters; if such a representation is also admissible, then it is supercuspidal, but we cannot confirm that this occurs.
\end{enumerate}
\end{rem}

\section{Nonsupercuspidal representations of $\Gt$}
\label{sec:psreps}
In this section we study the genuine representations of $\Gt$ which arise by parabolic induction; in particular, we consider smooth inductions to $\Gt$ of smooth genuine characters of $\Bbt$. We show that every such representation is irreducible (Theorem \ref{thm:pswtisom}) and that there are no intertwiners between $\Gt$-representations induced from distinct characters of $\Bbt$. We also give dictionaries (Theorem \ref{thm:dictionary}, Corollary \ref{cor:dictionary2}, and Corollary \ref{cor:dictionary3}) between parametrizations of the nonsupercuspidal representations: by parameters with respect to $\Kt$, by parameters with respect to $\Kt'$, and by characters of $F^\times$. 

\subsection{Hecke action on weight spaces of nonsupercuspidal representations}
\label{subsec:heckeactionps}

By Lemma \ref{lem:commutators} (2), the abelianization of $\Bbt$ is the quotient $\Bbt/\Ubar^*= \Tt$. Therefore any genuine character of $\Bbt$ arises by inflation of a genuine character of $\Tt$. 

\begin{prop} \label{prop:SHAaction}
Let $\pi$ be a smooth genuine character of $\Tt$, let $\sigrt$ be a weight of $\Kt$, let $\tau^{\vec{r}}_{-1}$ denote the generator of $\SHT$ defined in $\S$\ref{subsec:SHTdef}, and let $\mathcal{T}^{\vec{r}}$ denote the generator of $\SHA$ defined in (\ref{eqn:tprimedef}).
\begin{enumerate}
\item For each $f' \in \text{Hom}_{\Kt}(\sigrt, \text{Ind}_{\Bbt}^{\Gt}(\pi)\big \vert_{\Kt}),$
$$f'\cdot \mathcal{T}_1^{\vec{r}} = \pi(\tilde{h}(\varpi)) \cdot f'.$$
\item For each $f' \in \text{Hom}_{\Kt'}(\sigrt^{\tilde{\alpha}}, \text{Ind}_{\Bbt}^{\Gt}(\pi)\big \vert_{\Kt'}),$
$$f' \cdot \mathcal{T}_1^{\tilde{\alpha}, \vec{r}} = \pi(\tilde{h}(\varpi))\cdot f'.$$
\end{enumerate}
\end{prop}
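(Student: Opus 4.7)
The plan is to translate the action of $\mathcal{T}^{\vec{r}}_1$ on the weight space through the Satake transform, so that it becomes the already-computed action of $\tau^{\vec{r}}_{-1}$ (or $\tau^{\tilde{\alpha},\vec{r}}_{-1}$) on the torus side.

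For part (1), given $f' \in \Hom_{\Kt}(\sigrt,\Ind_{\Bbt}^{\Gt}(\pi)\big\vert_{\Kt})$, compact Frobenius reciprocity identifies $f'$ with an element $f \in \Hom_{\Gt}(\ind_{\Kt}^{\Gt}\sigrt,\Ind_{\Bbt}^{\Gt}(\pi))$. The right $\SHA$-action on the Hom-space is by precomposition, so the element corresponding to $f'\cdot\mathcal{T}^{\vec{r}}_1$ is $f\circ\mathcal{T}^{\vec{r}}_1$. Applying the natural isomorphism of Lemma \ref{lem:natlisom}(1) to both sides, and invoking the defining property of the Satake transform (Proposition \ref{prop:gensataketransformi}(1)) together with the value $\mathcal{S}_{\vec{r}}(\mathcal{T}^{\vec{r}}_1)=\tau^{\vec{r}}_{-1}$ computed in Proposition \ref{prop:satakeproperties}, I get
\[
(f\circ\mathcal{T}^{\vec{r}}_1)_{\Tt} \;=\; f_{\Tt}\circ\mathcal{S}_{\vec{r}}(\mathcal{T}^{\vec{r}}_1) \;=\; f_{\Tt}\circ\tau^{\vec{r}}_{-1}.
\]
Passing back through compact Frobenius reciprocity on the torus side turns the right-hand side into $(f_{\Tt})'\cdot\tau^{\vec{r}}_{-1}\in\Hom_{\TKt}((\sigrt)_{\Ubark},\pi\big\vert_{\TKt})$. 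Now Lemma \ref{lem:shtheckeaction}(1) gives $(f_{\Tt})'\cdot\tau^{\vec{r}}_{-1}=\pi(\tilde{h}(\varpi))\cdot(f_{\Tt})'$. Since every isomorphism in sight is $E$-linear and natural, this scalar identity pulls back along the chain of bijections to the asserted identity $f'\cdot\mathcal{T}^{\vec{r}}_1 = \pi(\tilde{h}(\varpi))\cdot f'$ in $\Hom_{\Kt}(\sigrt,\Ind_{\Bbt}^{\Gt}(\pi)\big\vert_{\Kt})$.

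For part (2), I run the same argument verbatim with $\Kt$ replaced by $\Kt'$ and $\sigrt$ replaced by $\sigrt^{\tilde{\alpha}}$: Lemma \ref{lem:natlisomkt} provides the analogous natural isomorphism $\Hom_{\Gt}(\ind_{\Kt'}^{\Gt}\sigrt^{\tilde{\alpha}},\Ind_{\Bbt}^{\Gt}(-))\cong\Hom_{\Tt}(\ind_{\Tt\cap\Kt'}^{\Tt}((\sigrt^{\tilde{\alpha}})_{\Ubark}),-)$, Corollary \ref{cor:injimagkt'} provides the Satake isomorphism sending $\mathcal{T}^{\tilde{\alpha},\vec{r}}_1$ to $\tau^{\tilde{\alpha},\vec{r}}_{-1}$, and Lemma \ref{lem:shtheckeaction}(2) supplies the eigenvalue $\pi(\tilde{h}(\varpi))$ on the torus side.

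There is no genuine obstacle here; the content of the proposition is entirely packaged in the compatibility statement of the Satake transform from Proposition \ref{prop:gensataketransformi}(1) combined with the explicit image computation of Proposition \ref{prop:satakeproperties}. The only thing to be careful about is keeping straight which direction the Hecke action goes under each Frobenius reciprocity (it acts on the right in every incarnation, by precomposition or convolution, so the composition $f\circ\mathcal{T}$ always lines up correctly with $f_{\Tt}\circ\mathcal{S}_{\vec{r}}(\mathcal{T})$); this is the step at which one would expect a sign or direction error if any, but it is resolved cleanly by the uniqueness clause in Proposition \ref{prop:gensataketransformi}(1).
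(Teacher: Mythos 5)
Your argument for part (1) is correct and is essentially the paper's own proof in streamlined form: both rest on the defining property of the Satake transform (Proposition \ref{prop:gensataketransformi}(1)), the computation $\mathcal{S}_{\vec{r}}(\mathcal{T}^{\vec{r}}_1)=\tau^{\vec{r}}_{-1}$ of Proposition \ref{prop:satakeproperties}, and Lemma \ref{lem:shtheckeaction}(1); the paper routes the bookkeeping through an auxiliary map $\mathcal{F}_0$ with $(\mathcal{F}_0)_{\Tt}=1$, while you use bijectivity and $E$-linearity of $f\mapsto f_{\Tt}$ directly, which is fine.

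Part (2), however, has a genuine gap. You claim the argument runs ``verbatim'' on the $\Kt'$ side, citing Corollary \ref{cor:injimagkt'} as providing a Satake isomorphism sending $\mathcal{T}^{\tilde{\alpha},\vec{r}}_1$ to $\tau^{\tilde{\alpha},\vec{r}}_{-1}$. It does not: the isomorphism in that corollary is \emph{defined} by first transporting $\mathcal{H}(\Gt,\Kt',\tilde{\sigma}_{\vec{r}}^{\tilde{\alpha}})$ to $\mathcal{H}(\Gt,\Kt,\sigrt)$ via the conjugation isomorphism of Lemma \ref{lem:kt'shaisom} and then applying $\mathcal{S}_{\vec{r}}$, so it lands in $\mathcal{H}^{\leq 0}(\Tt,\TKt,(\sigrt)_{\Ubark})$ (the coinvariants of the \emph{unconjugated} weight, with image $\tau^{\vec{r}}_{-1}$), and, crucially, no compatibility of that composite with the adjunction of Lemma \ref{lem:natlisomkt} is asserted anywhere. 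What your argument needs is a $\Kt'$-analogue of Proposition \ref{prop:gensataketransformi}(1) \emph{together with} the analogue of Proposition \ref{prop:satakeproperties}, i.e.\ a transform $\mathcal{S}^{\tilde{\alpha}}_{\vec{r}}$ satisfying $(f\circ\mathcal{T})_{\Tt}=f_{\Tt}\circ\mathcal{S}^{\tilde{\alpha}}_{\vec{r}}(\mathcal{T})$ relative to Lemma \ref{lem:natlisomkt} and the identity $\mathcal{S}^{\tilde{\alpha}}_{\vec{r}}(\mathcal{T}^{\tilde{\alpha},\vec{r}}_1)=\tau^{\tilde{\alpha},\vec{r}}_{-1}$. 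The formal (Yoneda) part transfers, but the explicit value does not come for free: the computation in Proposition \ref{prop:satakeproperties} rests on the $K^*$-specific coset analysis of Lemmas \ref{lem:leftksreps}, \ref{lem:cartaniwasawa}, and \ref{lem:citranspose}, which would have to be redone for $K'^*$ or transferred by conjugation by $\tilde{\alpha}$ --- and conjugation by $\tilde{\alpha}$ acts nontrivially on $\Tt$, twisting by Hilbert symbols (cf.\ the computation in Lemma \ref{lem:kt'coinvs}), so an unchecked transfer could introduce a spurious factor such as $(-1,\varpi)_F$ into the eigenvalue. The paper avoids all of this by reducing (2) to (1): it uses the explicit isomorphism $\Phi\colon(\ind_{\Kt}^{\Gt}\sigrt)^{\tilde{\alpha}}\to\ind_{\Kt'}^{\Gt}(\sigrt^{\tilde{\alpha}})$, which intertwines $\mathcal{T}^{\vec{r}}_1$ with $\mathcal{T}^{\tilde{\alpha},\vec{r}}_1$, while leaving the target $\Ind_{\Bbt}^{\Gt}(\pi)$ untouched, so the eigenvalue $\pi(\tilde{h}(\varpi))$ carries over on the nose. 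To repair your proof, either carry out the $\Kt'$-side Satake computation honestly, or insert this reduction-by-$\Phi$ step in place of the appeal to Corollary \ref{cor:injimagkt'}.
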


\begin{proof}[Proof of Proposition \ref{prop:SHAaction}]
 To ease notation, put $\mathcal{T}_1:= \mathcal{T}^{\vec{r}}$, $\mathcal{T}^{\tilde{\alpha}}_1 := \mathcal{T}^{\tilde{\alpha}, \vec{r}}_1$, $\tau_{-1}:= \tau_{-1}^{\vec{r}}$, and $\psi_{-1}:= \psi_{-1}^{\vec{r}}$ for the duration of the proof.
\begin{enumerate}
\item Let $f'$ be any element of $\Hom_{\Kt}(\sigrt,\text{Ind}_{\Bbt}^{\Gt}\pi\big \vert_{\Kt})$, and let $f$ denote the element of $\Hom_{\Gt}(\ind_{\Kt}^{\Gt}\sigrt, \, \text{Ind}_{\Bbt}^{\Gt}\pi)$ which corresponds to $f'$ by Frobenius reciprocity. Let $\mathcal{F}_0$ denote the unique element of 
$$ \Hom_{\Gt}\left(\ind_{\Kt}^{\Gt}\sigrt, \, \Ind_{\Bbt}^{\Gt}\left(\ind_{\TKt}^{\Tt}(\sigrt)_{\Ubark} \right)\right)$$ satisfying 
$$(\mathcal{F}_0)_{\Tt} = 1 \in \End_{\Tt}\left(\ind_{\TKt}^{\Tt}(\sigrt)_{\Ubark}\right).$$ 
Then $f_{\Tt} = f_{\Tt} \circ (\mathcal{F}_0)_{\Tt},$ so $f = \Ind_{\Bbt}^{\Gt}(f_{\Tt}) \circ \mathcal{F}_0.$

By Proposition \ref{prop:gensataketransformi} (1),
$$(\mathcal{F}_0 \circ \mathcal{T}_1)_{\Tt} = \mathcal{S}_{\vec{r}}(\mathcal{T}_1) = \mathcal{S}_{\vec{r}}(\mathcal{T}_1) \circ (\mathcal{F}_0)_{\Tt},$$
so
$$\mathcal{F}_0 \circ \mathcal{T}_1 = \Ind_{\Bbt}^{\Gt}(\mathcal{S}_{\vec{r}}(\mathcal{T}_1)) \circ \mathcal{F}_0.$$

Thus $f \circ \mathcal{T}_1 = \Ind_{\Bbt}^{\Gt}(f_{\Tt}) \circ \mathcal{F}_0 \circ \mathcal{T}_1 = \Ind_{\Bbt}^{\Gt}(f_{\Tt}) \circ \Ind_{\Bbt}^{\Gt}(\mathcal{S}_{\vec{r}}(\mathcal{T}_1)) \circ \mathcal{F}_0 = \Ind_{\Bbt}^{\Gt}(f_{\Tt} \circ \tau_{-1}) \circ \mathcal{F}_0,$
using Proposition \ref{prop:satakeproperties} for the last equality. 

By Frobenius reciprocity $f_{\Tt} \circ \tau_{-1}$ corresponds to $(f_{\Tt})' \cdot \tau_{-1} \in \Hom_{\Tt \cap \Kt}((\sigrt)_{\Ubark}, \, \pi \big \vert_{\Tt \cap \Kt})$. By (1) of this proposition, $(f_{\Tt})' \cdot \tau_{-1} = \pi(\tilde{h}(\varpi)) \cdot (f_{\Tt})',$ so $f_{\Tt} \circ \tau_{-1} = \pi(\tilde{h}(\varpi)) \cdot f_{\Tt}$. Then $\Ind_{\Bbt}^{\Gt}(f_{\Tt} \circ \tau_{-1}) = \pi(\tilde{h}(\varpi)) \cdot \Ind_{\Bbt}^{\Gt}(f_{\Tt})$, so 
$$f \circ \mathcal{T}_1  = \pi(\tilde{h}(\varpi)) \cdot \Ind_{\Bbt}^{\Gt}(f_{\Tt}) \circ \mathcal{F}_0 = \pi(\tilde{h}(\varpi))\cdot f.$$
Passing back to $\Hom_{\Kt}(\sigrt, \Ind_{\Bbt}^{\Gt}\pi \big \vert_{\Kt})$, we have $f'\cdot \mathcal{T} = \pi(\tilde{h}(\varpi))\cdot f'.$

\item We will show that $\mathcal{T}^{\tilde{\alpha}}_1$ acts on $\Hom_{\Gt}\left(\ind_{\Kt'}^{\Gt}\sigrt^{\tilde{\alpha}}, \Ind_{\Bbt}^{\Gt}(\pi) \right)$ by the scalar $\pi(\tilde{h}(\varpi))$; then (3) follows by Frobenius reciprocity. We again use the map $\Phi$ defined in the proof of Lemma \ref{lem:kt'shaisom}, precomposing with the identity map of vector spaces to obtain an isomorphism $\SHA \rightarrow \mathcal{H}(\Gt, \Kt', \sigrt^{\tilde{\alpha}})$ which we again denote by $\Phi$. This isomorphism satisfies $\mathcal{T}^{\tilde{\alpha}}_1(\Phi(h)) = \Phi(\mathcal{T}_1(h))$ for all $h \in \ind_{\Kt}^{\Gt}\sigrt$. 

Given $f \in \Hom_{\Gt}\left(\ind_{\Kt'}^{\Gt}\sigrt^{\tilde{\alpha}}, \Ind_{\Bbt}^{\Gt}(\pi) \right)$, let $\mathfrak{f} = \Phi^{-1}(f) \in  \Hom_{\Gt}\left(\ind_{\Kt}^{\Gt}\sigrt, \Ind_{\Bbt}^{\Gt}(\pi) \right)$; i.e., 
$$f(\Phi(h)) = \mathfrak{f}(h)$$
for all $h \in \ind_{\Kt}^{\Gt}\sigrt$. We have to show that 
\begin{equation} \label{eqn:kt'eval}
\left(f \circ \mathcal{T}^{\tilde{\alpha}}_1\right)(\Phi(h)) = \pi(\tilde{h}(\varpi))\cdot f(\Phi(h))
\end{equation}
for all $h \in \ind_{\Kt}^{\Gt}\sigrt$. But the left-hand side of (\ref{eqn:kt'eval}) is equal to
\begin{align*}f \circ \Phi(\mathcal{T}_1)(\Phi(h)) & = f \left(\Phi(\mathcal{T}(h))\right)\\
& = (\mathfrak{f} \circ \mathcal{T}_1)(h),
\end{align*}
while the right-hand side of (\ref{eqn:kt'eval}) is equal to $\pi(\tilde{h}(\varpi))\cdot \mathfrak{f}(\Phi(h))$. By (2) we have $(\mathfrak{f} \circ \mathcal{T}_1)(h) = \pi(\tilde{h}(\varpi))\cdot \mathfrak{f}(\Phi(h))$, so (\ref{eqn:kt'eval}) holds. 
\end{enumerate}
\end{proof}

\subsection{Genuine characters of $\Tt$} \label{subsec:gencharsTt} 
We begin by recalling the parametrization of genuine complex characters of $\Tt$. 
\subsubsection{Genuine $\CC$-valued characters of $\Tt$} The genuine $\CC$-valued characters of $\Tt$ have been classified in terms of a certain map $\gamma: F^\times \times \hat{F} \rightarrow \mu_4(\CC)$, the \textit{Weil index} defined in \cite{weil:unitaires}. Here $\hat{F}$ denotes the group of $\CC$-valued continuous additive characters of $F$. Fixing a nontrivial character $\psi \in \hat{F}$, one obtains a function $\gamma_F(-, \psi): F^\times \rightarrow \mu_4(\CC)$. Define
\begin{align*}
\chi_{\psi}: \Tt &\rightarrow \mu_4(\CC)\\
(h(x), \zeta)& \mapsto  \zeta \cdot \gamma_F(x, \psi)^{-1}.
\end{align*}
Then $\chi_{\psi}$ is a smooth genuine $\CC$-character of $\Tt$. The following fact is mentioned in \cite{gansavin:epsilon}:
\begin{lem} \label{lem:ttgenchars}
Let $\psi$ be a nontrivial $\CC$-valued additive character of $F$. There is a bijection, depending on $\psi$, of 
$$\{ \text{smooth $E$-valued characters of $F^\times$}\} \rightarrow \{ \text{smooth genuine $\CC$-valued characters of $\Tt$} \}, $$
given by $\mu \mapsto \mu \cdot \chi_{\psi}.$
\end{lem}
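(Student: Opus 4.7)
The plan is to establish this as an essentially formal consequence of two facts: (a) $\chi_\psi$ is a bona fide smooth genuine character of $\Tt$, and (b) $\Tt$ sits in a central extension $1 \to \mu_2 \to \Tt \to F^\times \to 1$ whose abelianization as a group is $F^\times$ (via $Pr$). Once both are in hand, the claimed map $\mu \mapsto \mu \cdot \chi_\psi$, where $\mu$ is pulled back to $\Tt$ along $Pr$, is clearly well-defined, and bijectivity follows by dividing by $\chi_\psi$ in the group of $\CC^\times$-valued characters of $\Tt$.

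First I would verify that $\chi_\psi$ is a character. The only nontrivial point is multiplicativity: for $x$, $y \in F^\times$ and $\zeta_1$, $\zeta_2 \in \mu_2$, the cocycle formula gives
\[
(h(x), \zeta_1)\cdot (h(y), \zeta_2) = (h(xy), \zeta_1 \zeta_2 (x, y)_F),
\]
so multiplicativity of $\chi_\psi$ reduces exactly to the classical cocycle identity for the Weil index, $\gamma_F(xy, \psi) = \gamma_F(x, \psi)\gamma_F(y, \psi)(x, y)_F$ (up to convention), which is one of its defining properties in \cite{weil:unitaires}. Continuity of $\gamma_F(-, \psi)$ on $F^\times$ gives smoothness of $\chi_\psi$, and genuineness is immediate from the factor $\zeta$ in the definition. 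By Lemma \ref{lem:commutators}(1) the group $\Tt$ is abelian, so we may work entirely inside the group $\widehat{\Tt}$ of smooth $\CC^\times$-valued characters of $\Tt$.

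Next I would establish the bijection. The projection $Pr: \Tt \to T \cong F^\times$ is a surjective homomorphism with kernel $\{(1, 1), (1, -1)\}\cong \mu_2$, so pullback along $Pr$ identifies the smooth characters of $F^\times$ with exactly those smooth characters of $\Tt$ that are trivial on $\mu_2$ (equivalently, the non-genuine ones). For injectivity of $\mu \mapsto \mu \cdot \chi_\psi$: if $\mu \cdot \chi_\psi = \mu' \cdot \chi_\psi$ in $\widehat{\Tt}$, then $\mu \circ Pr = \mu' \circ Pr$, and surjectivity of $Pr$ forces $\mu = \mu'$. For surjectivity: given any smooth genuine character $\chi$ of $\Tt$, the quotient $\chi \cdot \chi_\psi^{-1}$ is smooth and satisfies $\chi \chi_\psi^{-1}(1, -1) = (-1)(-1) = 1$, hence is trivial on $\ker(Pr)$ and descends to a unique smooth character $\mu$ of $F^\times$ with $\mu \circ Pr = \chi \cdot \chi_\psi^{-1}$, i.e. $\chi = \mu \cdot \chi_\psi$.

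I do not expect a real obstacle here; the main thing to be careful about is the normalization convention for $\gamma_F$ so that the cocycle identity matches Kubota's cocycle (\ref{eqn:sl2cocycle}) on the nose. The dependence on $\psi$ in the statement is explained by the fact that replacing $\psi$ by $\psi_a$ (for $a \in F^\times$) multiplies $\gamma_F(-, \psi)$ by the unramified-or-ramified quadratic character $(-, a)_F$, so the bijection is genuinely only canonical up to a choice of class in $F^\times/(F^\times)^2$, consistent with the discussion in the introduction.
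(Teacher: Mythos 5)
Your argument is correct, but note that the paper does not actually prove this lemma at all: it is stated as a fact ``mentioned in'' Gan--Savin \cite{gansavin:epsilon}, with the genuinely used mod~$p$ version (Lemma \ref{lem:gencharparam}) obtained afterwards by transporting $\gamma_F(-,\psi)^{-1}$ into $\mu_4(E)$. So there is no in-paper proof to compare with; what you have written is the standard verification that the citation suppresses, and it is sound. Your two ingredients are exactly the right ones: multiplicativity of $\chi_\psi$ is equivalent, via $\Delta(h(x),h(y))=(x,y)_F$, to the Weil-index identity $\gamma_F(x,\psi)\gamma_F(y,\psi)=\gamma_F(xy,\psi)(x,y)_F$ of Theorem \ref{thm:raoweilindex}(2) (using $(x,y)_F=(x,y)_F^{-1}$), smoothness follows because $\gamma_F(-,\psi)$ factors through the finite group $F^\times/(F^\times)^2$, and the bijectivity is the formal ``divide by $\chi_\psi$'' argument on the character group of the abelian group $\Tt$ (Lemma \ref{lem:commutators}(1)), using that $Pr$ is an open surjection with kernel $\mu_2$ so that smooth characters of $F^\times$ correspond to smooth non-genuine characters of $\Tt$. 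Two small points worth flagging: first, as printed the lemma pairs ``$E$-valued characters of $F^\times$'' with ``$\CC$-valued genuine characters of $\Tt$,'' which is not literally well-formed; the complex-case statement you have proved (with $\CC$-valued $\mu$) is the intended one, and the $E$-valued bookkeeping is what Lemma \ref{lem:gencharparam} then handles after fixing an identification $\mu_4(\CC)\cong\mu_4(E)$. Second, your closing remark on the $\psi$-dependence, $\gamma_F(c,\psi_a)=(a,c)_F\,\gamma_F(c,\psi)$, is exactly the formula the paper invokes before Corollary \ref{cor:dictionary3}, so your normalization matches the paper's conventions.
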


\subsubsection{Genuine $E$-valued characters of $\Tt$} 
\label{subsubsec:genechartt}
We would like to have a similar parametrization of the genuine $E$-valued characters of $\Tt$. It is well-known that there is no nontrivial $E$-valued continuous additive character of $F$. However, given a nontrivial $\CC$-valued additive character $\psi$ of $F$, we may postcompose the resulting $\mu_4(\CC)$-valued function $\gamma_F(-, \psi)^{-1}$ with an isomorphism of $\mu_4(\CC)$ with $\mu_4(E)$ to obtain a $\mu_4(E)$-valued function on $F^\times$. We denote this function again by $\gamma_F(-, \psi)^{-1}$, and define an $E$-valued character of $\Tt$ by 
\begin{equation} \label{eqn:genchar} 
\chi_{\psi}\left(h(a), \zeta \right) := \zeta \gamma_F(a, \psi)^{-1}.
\end{equation}

Thus we directly transport Lemma \ref{lem:ttgenchars} to obtain a parametrization of genuine $E$-valued characters of $\Tt$:
\begin{lem}
\label{lem:gencharparam}
Let $\psi$ be a $\CC$-valued additive character of $F$, and let $\chi_{\psi}$ be the genuine $E$-valued character of $\Tt$ defined in (\ref{eqn:genchar}). Then there is a bijection
$$\{\text{smooth $E$-valued characters of $F^\times$} \} \rightarrow \{ \text{smooth genuine $E$-valued characters of $\Tt$}\}$$
given by $\mu \mapsto \mu \cdot \chi_{\psi}$, where $\mu$ is viewed as a (non-genuine) character of $\Tt$ by inflation. 
\end{lem}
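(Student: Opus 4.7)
The plan is to reduce the $E$-valued statement to the corresponding $\CC$-valued statement (Lemma~\ref{lem:ttgenchars}) by transporting $\chi_\psi$ across a fixed group isomorphism $\mu_4(\CC)\cong\mu_4(E)$, which exists because $E$ is algebraically closed of characteristic $p\neq 2$ and therefore contains a primitive fourth root of unity. The key intermediate claim is that $\chi_\psi$, as defined in \eqref{eqn:genchar}, really is a smooth genuine character of $\Tt$; once we have this, the bijection in the statement is almost formal.

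First I would verify that $\chi_\psi$ is multiplicative. Using the cocycle description of $\Tt$ from \S\ref{subsubsec:splittings}, one has $(h(a),\zeta_1)\cdot(h(b),\zeta_2)=(h(ab),\zeta_1\zeta_2(a,b)_F)$, so the required identity $\chi_\psi((h(a),\zeta_1))\,\chi_\psi((h(b),\zeta_2)) = \chi_\psi((h(a),\zeta_1)(h(b),\zeta_2))$ reduces to
\[
\gamma_F(a,\psi)^{-1}\gamma_F(b,\psi)^{-1} = (a,b)_F\,\gamma_F(ab,\psi)^{-1},
\]
which is the standard cocycle identity for the Weil index (\cite{weil:unitaires}). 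This identity is a statement in $\mu_4(\CC)$, and since we obtained $\chi_\psi$ by postcomposing the $\CC$-valued function with a group isomorphism $\mu_4(\CC)\cong\mu_4(E)$, the identity transports to $E$. Smoothness of $\chi_\psi$ is inherited from smoothness of $\gamma_F(-,\psi)$ on $F^\times$, and $\chi_\psi$ is genuine by construction (its value at $(1,-1)$ is $-1$, sent to the nontrivial element of $\mu_4(E)$).

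Next I would check that for any smooth $E$-valued character $\mu:F^\times\to E^\times$, the product $\mu\cdot\chi_\psi$, defined by $(h(a),\zeta)\mapsto \mu(a)\,\chi_\psi((h(a),\zeta))$ (with $\mu$ inflated through the projection $(h(a),\zeta)\mapsto a$, which is \emph{not} a group homomorphism but whose failure is captured by the cocycle), is a smooth genuine character of $\Tt$. The multiplicativity of $\mu\cdot\chi_\psi$ follows from that of $\chi_\psi$ together with multiplicativity of $\mu$, because the cocycle correction is absorbed entirely into the $\chi_\psi$ factor. Conversely, given any smooth genuine character $\chi:\Tt\to E^\times$, the quotient $\chi/\chi_\psi$ is smooth and nongenuine, hence factors through $\Tt/\mu_2\cong T\cong F^\times$, so defines a smooth character $\mu:F^\times\to E^\times$ with $\chi=\mu\cdot\chi_\psi$. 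This shows surjectivity of $\mu\mapsto\mu\cdot\chi_\psi$; injectivity is immediate since $\chi_\psi$ is pointwise nonzero.

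The only potential obstacle is the verification that the Weil index cocycle identity remains meaningful after transport to characteristic $p$; this is routine once one notes that the identity is already an identity in the finite group $\mu_4$, so it is preserved by any group isomorphism $\mu_4(\CC)\to\mu_4(E)$. No calculation specific to the characteristic of the coefficient field intervenes, and the whole argument runs in parallel to the proof of Lemma~\ref{lem:ttgenchars}.
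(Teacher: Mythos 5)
Your argument is correct and is essentially the route the paper takes: the paper defines $\chi_{\psi}$ by postcomposing $\gamma_F(-,\psi)^{-1}$ with a fixed isomorphism $\mu_4(\CC)\cong\mu_4(E)$ and then transports Lemma \ref{lem:ttgenchars} directly, leaving implicit exactly the verifications you spell out via the Weil-index cocycle identity of Theorem \ref{thm:raoweilindex} and the fact that the identity lives in $\mu_4$ and so survives any such isomorphism. One small correction: the projection $\Tt\to F^\times$, $(h(a),\zeta)\mapsto a$, \emph{is} a group homomorphism, since the cocycle $\Delta(h(a),h(b))=(a,b)_F$ only twists the central $\mu_2$-component; hence $\mu$ inflates to an honest character of $\Tt$ and the multiplicativity of $\mu\cdot\chi_{\psi}$ is even more immediate than your parenthetical suggests.
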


\subsubsection{The Weil index} 
Here we recall some properties of the Weil index and develop explicit expressions for certain values, which will be needed in later sections. 

Let $L$ be either a local field (e.g., $F$) or a finite field (e.g., $\mathfrak{k}$), and let $\psi$ be a nontrivial additive $\CC$-valued character of $L$. Let $\gamma_{L, \psi}$ denote the Weil index defined in \cite{weil:unitaires} Theorem 2. This is a $\mu_4(\CC)$-valued function, depending on $\psi$, defined on the Witt group of quadratic forms over $L$. For $a \in L^\times$, let $\gamma_{L, \psi}(a)$ denote the value of $\gamma_{L,\psi}$ on the class of the quadratic form $\mathfrak{q}(x) = ax^2$. The following theorem, first proven by Weil \cite{weil:unitaires}, is stated here as in Ranga Rao \cite{rao:weilrepn}.

\begin{thm}[\cite{rao:weilrepn} Theorem A.4] \label{thm:raoweilindex} 
Let $\gamma_L(a, \psi) = \gamma_{L, \psi}(a)/\gamma_{L, \psi}(1).$ Then
\begin{enumerate}
\item $\gamma_L(ac^2, \psi) = \gamma_L(a, \psi)$ for any $a$, $c \in L^\times$, 
\item $\gamma_L(a, \psi)\gamma_L(b, \psi) = \gamma_L(ab, \psi)(a, b)_F.$  
\end{enumerate}
\end{thm}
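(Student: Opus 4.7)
My plan is to deduce both properties from Weil's Fourier-analytic construction of $\gamma_{L,\psi}$, which associates to each nondegenerate quadratic form $Q$ on a finite-dimensional $L$-vector space an eighth root of unity via the functional equation
\[
\widehat{\psi \circ Q}(y) \;=\; \gamma_{L,\psi}(Q)\,|\det Q|^{-1/2}\,\psi \circ Q(-y)
\]
(interpreted distributionally). I will need two structural consequences of this definition: (a) \emph{equivalence-invariance}, i.e., $\gamma_{L,\psi}(Q) = \gamma_{L,\psi}(Q')$ whenever $Q$ and $Q'$ are isometric — proved by a linear change of variable in the Fourier integral; and (b) \emph{multiplicativity on orthogonal sums}, $\gamma_{L,\psi}(Q \perp Q') = \gamma_{L,\psi}(Q)\gamma_{L,\psi}(Q')$ — proved by Fubini applied to the direct-sum decomposition.

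Property (1) is then immediate: the one-dimensional quadratic forms $x \mapsto ax^2$ and $x \mapsto ac^2x^2$ are isometric via the substitution $x \mapsto c^{-1}x$, so (a) gives $\gamma_{L,\psi}(ac^2) = \gamma_{L,\psi}(a)$, and dividing by $\gamma_{L,\psi}(1)$ yields $\gamma_L(ac^2,\psi) = \gamma_L(a,\psi)$.

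For property (2) the core step is to establish the binary-form identity
\[
\gamma_{L,\psi}(\langle a, b\rangle) \;=\; (a,b)_L \cdot \gamma_{L,\psi}(\langle ab, 1\rangle).
\]
Granted this, applying (b) to both sides produces $\gamma_{L,\psi}(a)\gamma_{L,\psi}(b)$ on the left and $\gamma_{L,\psi}(ab)\gamma_{L,\psi}(1)$ on the right, and rearranging gives precisely $\gamma_L(a,\psi)\gamma_L(b,\psi) = \gamma_L(ab,\psi)(a,b)_L$, which is (2).

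The main obstacle is the boxed binary-form identity. Both forms $\langle a,b\rangle$ and $\langle ab,1\rangle$ have dimension $2$ and common discriminant $ab \bmod (L^\times)^2$; by Witt's classification they are isometric iff their Hasse invariants $(a,b)_L$ and $(1,ab)_L = 1$ agree. So the qualitative half of the identity — that $\gamma_{L,\psi}$ of the two forms agree whenever $(a,b)_L = 1$ — is immediate from (a). To upgrade this into the precise \emph{numerical} relation that the ratio of Weil indices realizes exactly the Hilbert symbol (and not, e.g., some other $\mu_2$-valued cocycle trivialized on squares), one performs a direct Fourier-analytic comparison: completing the square to diagonalize $\langle a,b\rangle$ relative to $\langle ab, 1\rangle$ reduces the ratio to an explicit Gauss sum over the character $\psi$ composed with a product of two linear-in-a-quadratic-variable expressions, whose evaluation is precisely the tame Hilbert symbol in our residue-characteristic-not-$2$ setting. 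This Gauss-sum identification is the substantive content of the theorem; the rest of the argument is purely formal.
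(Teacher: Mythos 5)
First, a point of reference: the paper itself does not prove this statement; it is quoted from Rao (Theorem A.4), going back to Weil, and the proof environment carrying this label in the paper actually proves Theorem \ref{thm:raoweilindexof}. So there is no in-paper argument to match, and your proposal has to stand on its own.

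Your skeleton is the right one: isometry invariance and multiplicativity over orthogonal sums of the Weil index, the substitution $x \mapsto c^{-1}x$ for part (1), and the observation that part (2) is equivalent to the binary-form identity $\gamma_{L,\psi}(\langle a,b\rangle) = (a,b)_L\,\gamma_{L,\psi}(\langle ab,1\rangle)$. Part (1) is complete. But your treatment of (2) stops exactly where the real work begins. Isometry invariance only yields the identity when $(a,b)_L = 1$; when $(a,b)_L = -1$ the two forms are not isometric, and nothing in your formal apparatus constrains the ratio of their Weil indices, which a priori is just some eighth root of unity --- you must show it equals $-1$, not merely that the discrepancy vanishes on the isometric locus. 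The sketch you give for this step does not parse: both $\langle a,b\rangle$ and $\langle ab,1\rangle$ are already diagonal, so there is no ``completing the square to diagonalize one relative to the other,'' and no change of variables relates them in the anisotropic case. What is actually required is a genuine computation, for instance: evaluate $\gamma_{F,\psi}(c)$ explicitly as a normalized quadratic Gauss sum for representatives $c \in \{1, u, \varpi, u\varpi\}$ of $F^\times/(F^\times)^2$ (using $\gamma_{\mathfrak{k}}(\bar{\psi})^2 = \left(\frac{-1}{\mathfrak{k}}\right)$) and check the cocycle relation against the tame symbol (\ref{eqn:tamesymbol}); or follow Weil and show that $\gamma_{\psi}$ factors through the Witt group and that its value on the anisotropic quaternion norm form $\langle 1,-a,-b,ab\rangle$ is $-1$. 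Until one of these is carried out, what you yourself call ``the substantive content of the theorem'' remains unproven. (A minor point: the symbol in (2) should read $(a,b)_L$; for $L = \mathfrak{k}$ a finite field of odd characteristic the identity holds with trivial symbol, since there $\gamma_{\mathfrak{k}}(a,\bar{\psi})$ is the Legendre symbol and is multiplicative.)
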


Another theorem stated by Rao \cite{rao:weilrepn} gives a more detailed description of $\gamma_F(a, \psi)$ when $a \in \OF$:
\begin{thm}[\cite{rao:weilrepn} Theorem A.11 with correction]
\label{thm:raoweilindexof}
Suppose $F$ is a nonarchimedean local field with finite residue field $\mathfrak{k}$ of odd characteristic, and let $m$ denote the conductor of $\psi$. Given any integer $n$, let $\delta(n) \in \{0, 1\}$ denote its parity. For $y \in \OF$, define 
$$\bar{\psi}(x + \varpi y) = \psi(\varpi^{-m-1}x).$$
Then 
\begin{enumerate}
\item $\bar{\psi}$ is a nontrivial character of $\mathfrak{k}$ and 
$$\gamma_{F, \psi}(1) = \left( \gamma_{\mathfrak{k}, \bar{\psi}}(1) \right)^{\delta(m)},$$

\item If $a \in \OF$ and $a = u \cdot \varpi^{v_F(a)}$ with $u \in \OF^\times$,  
$$\gamma_{F}(a, \psi) = \begin{cases} 
1 & \text{ if } \delta(m) = \delta(v_F(a)) = 0,\\
(u, \varpi)_F\cdot \gamma_{\mathfrak{k}}(\bar{\psi}) & \text{ if } \delta(m) = 0 \text{ and } \delta(v_F(a)) = 1,\\
(u, \varpi)_F & \text{ if } \delta(m) = 1\text{ and } \delta(v_F(a)) = 0,\\
\gamma_{\mathfrak{k}}(\bar{\psi})^{-1} & \text{ if } \delta(m) = \delta(v_F(a)) = 1.
\end{cases}$$
In particular, if $a \in \OF^\times,$ 
$$\gamma_F(a, \psi) = (a, \varpi)_F^{m}.$$
\end{enumerate}
\end{thm}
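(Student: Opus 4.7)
The plan is to reduce the computation of $\gamma_F(a,\psi)$ to a small list of base cases using the multiplicativity and square-invariance of Theorem \ref{thm:raoweilindex}, and then to evaluate those base cases by matching the Gaussian integral that defines $\gamma_{F,\psi}$ against a Gauss sum over the residue field $\mathfrak{k}$. Throughout, I would organize the argument around the dichotomies $\delta(m) \in \{0,1\}$ and $\delta(v_F(a)) \in \{0,1\}$.

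First I would verify that $\bar\psi$ in (1) is well-defined and nontrivial on $\mathfrak{k}$. Since $m$ is the conductor of $\psi$, the character $\psi$ vanishes on $\varpi^{-m}\OF$ but not on $\varpi^{-m-1}\OF$, so $\psi(\varpi^{-m-1}(x+\varpi y)) = \psi(\varpi^{-m-1}x)$ for any $y \in \OF$, giving a well-defined character of $\OF/\varpi\OF = \mathfrak{k}$; nontriviality follows from the choice of $m$. Next I would compute $\gamma_{F,\psi}(1)$ by writing the defining regularized Gaussian integral $\int \psi(x^2)\,dx$ as a sum over cosets of $\varpi^{\lceil (m+1)/2 \rceil}\OF$ inside $\varpi^{-\lceil (m+1)/2\rceil}\OF$. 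When $m$ is even, a change of variables $x = \varpi^{-m/2}y$ shows the integrand $\psi(\varpi^{-m}y^2)$ is trivial on $\OF$, so the sum collapses to $1$. When $m$ is odd, the analogous substitution $x = \varpi^{-(m+1)/2}y$ turns the integral into the Gauss sum defining $\gamma_{\mathfrak{k},\bar\psi}(1)$. This yields $\gamma_{F,\psi}(1) = \gamma_{\mathfrak{k},\bar\psi}(1)^{\delta(m)}$.

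For (2), by square invariance we may assume $a \in \{1, u, \varpi, u\varpi\}$ where $u$ is a fixed non-square unit; an arbitrary $a = u'\varpi^{v_F(a)}$ differs from one of these representatives by a square if and only if $\omega(u')$ is a square in $\mathfrak{k}^\times$ (where $\omega(u')$ denotes the Teichm\"uller component of $u'$), which by Remark \ref{rem:hilbsymprops}(5) is recorded by $(u',\varpi)_F$. The value of $\gamma_F$ on a general $a$ is then $\gamma_F$ of the base representative, multiplied by an appropriate Hilbert symbol correction via the identity in Theorem \ref{thm:raoweilindex}(2). The four base cases are evaluated directly: $\gamma_F(1,\psi)/\gamma_{F,\psi}(1) = 1$ trivially; for $\gamma_F(\varpi,\psi)$, the substitution $x = \varpi^{-\lceil m/2 \rceil}y$ swaps the parities and produces the reciprocal Gauss sum structure, giving $\gamma_{\mathfrak{k},\bar\psi}(1)$ in the mixed parity case; and $\gamma_F(u,\psi)$ and $\gamma_F(u\varpi,\psi)$ are deduced by multiplicativity once $\gamma_F(u,\psi)$ is computed. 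A short separate check (using that $u$ is a nonsquare) shows $\gamma_F(u,\psi) = (u,\varpi)_F^{m}$, which combined with the previous two yields the full table. The particular case $a \in \OF^\times$ then collapses to $\gamma_F(a,\psi) = (a,\varpi)_F^m$.

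The main technical obstacle is keeping track of sign conventions and the normalization of the Fourier integral with sufficient care that the stated correction of Rao's formula is visible: the subtlety is precisely in the $\delta(m)=0$, $\delta(v_F(a))=1$ case, where a factor of $\gamma_{\mathfrak{k}}(\bar\psi)$ appears that is easy to conflate with its inverse. I would write out this case in full detail, carefully tracking the direction of the change of variables and the sign of the Hilbert symbol correction from Theorem \ref{thm:raoweilindex}(2), and use the identity $\gamma_{\mathfrak{k}}(\bar\psi)\cdot\gamma_{\mathfrak{k}}(\bar\psi)^{-1} = 1$ as a sanity check against the multiplicative relation $\gamma_F(u,\psi)\gamma_F(\varpi,\psi)(u,\varpi)_F = \gamma_F(u\varpi,\psi)$ to pin down the correct sign.
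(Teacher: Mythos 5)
Your plan would work, but it takes a genuinely different and heavier route than the paper. The paper never evaluates a Gaussian integral: part (1) is simply quoted from Rao, and part (2) is then purely formal. Its key observation is that $\gamma_{F,\psi}(a)=\gamma_{F,\psi_a}(1)$ where $\psi_a(x)=\psi(ax)$ has conductor $m+v_F(a)$, so applying (1) to both $\psi_a$ and $\psi$ gives $\gamma_F(a,\psi)=\gamma_{\mathfrak{k}}(\overline{\psi_a})^{\delta(m+v_F(a))}\big/\gamma_{\mathfrak{k}}(\bar{\psi})^{\delta(m)}$; since $\overline{\psi_a}=\bar{\psi}_{\bar{u}}$ depends only on the unit part $u$ of $a$, Rao's finite-field identity $\gamma_{\mathfrak{k}}(\bar{u},\bar{\psi})=\left(\frac{\bar{u}}{\mathfrak{k}}\right)=(u,\varpi)_F$ turns this one formula into the four-case table just by reading off parities. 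You instead re-derive the analytic input: proving (1) from the regularized integral and handling (2) by reduction to the square classes $\{1,u,\varpi,u\varpi\}$ with direct base-case evaluations plus multiplicativity. Your route buys self-containedness and an independent check of the sign being ``corrected''; the paper's buys brevity and a single uniform formula with no regularization to police. If you carry yours out, note two points: the ``collapses to $1$'' step for even $m$ requires verifying that the contributions from $v_F(x)<-m/2$ vanish (the stabilization that the paper outsources to Rao), and in your square-class reduction no Hilbert-symbol correction is needed at all, since Theorem \ref{thm:raoweilindex}(1) already makes $\gamma_F(\cdot,\psi)$ constant on square classes; the symbol $(u,\varpi)_F$ enters only through your separate computation of $\gamma_F(u,\psi)$ for a nonsquare unit, which for odd $m$ is exactly the classical Gauss-sum twist identity the paper cites as Rao's Theorem A.9(i).
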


\begin{proof}[Proof of Theorem \ref{thm:raoweilindex}]
The proof of (1) is given in \cite{rao:weilrepn}. We give a proof of (2) since our formula differs from the one given in \cite{rao:weilrepn} Thm. A.11 (and appears to disagree with it: for example, we claim that $\gamma_F(-, \psi)$ is nontrivial on units when the conductor of $\psi$ is odd; our formula is consistent with Gan-Savin's remark in $\S$2.6 of \cite{gansavin:epsilon}, while the apparent statement in Rao is not). If $a = u \cdot \varpi^{v_F(a)} \in \OF^*$ and $\psi$ is a nontrivial additive $\CC$-character of $F$ with conductor $m$, then the conductor of $\psi_a = (x \mapsto \psi(ax))$ is $m + v_F(a)$. By part (1) of the theorem, we have
\begin{equation} \label{eqn:gammakformula}
\gamma_{F}(a, \psi) = \frac{\gamma_{F}(\psi_a)}{\gamma_F(\psi)} = \frac{(\gamma_{\mathfrak{k}}(\bar{\psi_a}))^{\delta(m + v_F(a))}}{(\gamma_\mathfrak{k}(\bar{\psi}))^{\delta(m)}}.
\end{equation}
The character $\bar{\psi_a}$ sends $x + \varpi\OF$ to $\psi(\varpi^{-m - v_F(a)-1}ax) = \psi(\varpi^{-m -1}ux),$ so 
$$\bar{\psi_a} = \bar{\psi_u} = \bar{\psi}_{\bar{u}}.$$
We have
$$\gamma_\mathfrak{k}(\bar{u}, \bar{\psi}) = \frac{\gamma_{\mathfrak{k}}(\bar{\psi}_{\bar{u}})}{\gamma_\mathfrak{k}(\bar{\psi})} = \left( \frac{\bar{u}}{\mathfrak{k}}\right),$$
where the first equality is the definition of $\gamma_\mathfrak{k}(\bar{u}, \bar{\psi})$ and the second is \cite{rao:weilrepn} Thm. A.9 (i). 
Under our assumptions on $F$, the Legendre symbol $\left(\frac{\bar{u}}{\mathfrak{k}} \right)$ is equal to the Hilbert symbol $(u, \varpi)_F$, so
$$\gamma_\mathfrak{k}(\bar{\psi_a})= \gamma_\mathfrak{k}(\bar{u}, \bar{\psi}) \cdot \gamma_\mathfrak{k}(\bar{\psi}) = (u, \varpi)_F \cdot \gamma_\mathfrak{k}(\bar{\psi}).$$
Substituting $(u, \varpi)_F \cdot \gamma_\mathfrak{k}(\bar{\psi})$ in (\ref{eqn:gammakformula}) gives
\begin{equation} \label{eqn:gammaFformula}
\gamma_{F}(a, \psi) = \frac{\left( (u, \varpi)_F \cdot \gamma_\mathfrak{k}(\bar{\psi}) \right)^{\delta(m + v_F(a))}}{\left( \gamma_\mathfrak{k}(\bar{\psi})\right)^{\delta(m)}},
\end{equation}
which depends on both the parity of $m$ and also of $v_F(a)$. Considering (\ref{eqn:gammaFformula}) in each of the four cases, we get (2).
\end{proof}

\subsection{Weight spaces of principal series representations} 
\label{subsec:psisotypes}
By Lemma \ref{lem:commutators} (2), any genuine character of $\Bbt$ is the inflation of a genuine character of $\Tt$. Fix a nontrivial additive $\CC$-valued character $\psi$ and for any smooth character $\mu$ of $F^\times$, let $\mu \cdot \chi_\psi$ also denote the inflation to $\Bbt$ of the $E$-valued character of $\Tt$ considered in $\S$\ref{subsubsec:genechartt}. By Lemma \ref{lem:gencharparam}, every genuine $E$-valued character of $\Tt$ appears as $\mu \cdot \chi_{\psi}$ for some character $\mu$ of $F^\times$. Hence any genuine principal series representation of $\Gt$ is isomorphic to $\text{Ind}_{\Bbt}^{\Gt}(\mu\cdot \chi_{\psi})$ for some smooth character $\mu$ of $F^\times.$ 

\begin{prop} \label{prop:psweights}
Fix a nontrivial additive character $\psi: F \rightarrow \CC$ and denote the conductor of $\psi$ by $m$. Let $\mu$ be a smooth $E$-valued character of $F^\times$ and view $\mu \cdot \chi_{\psi}$ (defined in (\ref{eqn:genchar})) as a genuine character of $\Bbt$ by inflation from $\Tt$.
\begin{enumerate}
\item Suppose $\mu \big \vert_{\OF^\times} \neq (-, \varpi)_F^m$. Then there is a unique weight $\sigrt$ of $\Kt$ such that if $\sigst$ is any weight of $\Kt$,
$$\text{dim}_{E}\,\text{Hom}_{\Kt}(\sigst, \text{Ind}_{\Bbt}^{\Gt}(\mu \cdot \chi_{\psi}) \big \vert_{\Kt}) = \begin{cases}
1 & \text{ if } \vec{s} = \vec{r},\\
0 & \text{ otherwise}.
\end{cases}$$ The weight $\sigrt$ satisfies $1 < \text{dim}_{E}\,\sigrt < q$, i.e., $\vec{r} \notin \{ \vec{0}, \overrightarrow{p-1}\}$. 

\item Suppose $\mu \big \vert_{\OF^\times} = (-, \varpi)_F^m$. Then 
$$\text{dim}_{E}\,\text{Hom}_{\Kt}(\sigst, \text{Ind}_{\Bbt}^{\Gt}(\mu \cdot \chi_{\psi}) \big \vert_{\Kt}) = \begin{cases}
1 & \text{ if } \vec{s} = \vec{0} \text{ or } \vec{s} = \overrightarrow{p-1},\\
0 & \text{otherwise}.
\end{cases}$$

\item Suppose $\mu \big \vert_{\OF^\times} \neq (-, \varpi)_F^{m+1}$. Then there is a unique weight $\sigrt^{\tilde{\alpha}}$ of $\Kt'$ such that if $\sigst^{\tilde{\alpha}}$ is any weight of $\Kt'$, 
$$\text{dim}_{E}\,\text{Hom}_{\Kt'}(\sigst^{\tilde{\alpha}}, \text{Ind}_{\Bbt}^{\Gt}(\mu \cdot \chi_{\psi}) \big \vert_{\Kt'}) = \begin{cases}
1 & \text{ if } \vec{s} = \vec{r},\\
0 & \text{ otherwise}.
\end{cases}$$ The weight $\sigrt^{\tilde{\alpha}}$ satisfies $1 < \text{dim}_{E}\,\sigrt^{\tilde{\alpha}} < q$, i.e., $\vec{r} \notin \{ \vec{0}, \overrightarrow{p-1}\}$. 

\item Suppose $\mu \big \vert_{\OF^\times} = (-, \varpi)_F^{m+1}$. Then 
$$\text{dim}_{E}\,\text{Hom}_{\Kt'}(\sigst^{\tilde{\alpha}}, \text{Ind}_{\Bbt}^{\Gt}(\mu \cdot \chi_{\psi}) \big \vert_{\Kt'}) = \begin{cases}
1 & \text{ if } \vec{s} = \vec{0} \text{ or } \vec{s} = \overrightarrow{p-1},\\
0 & \text{otherwise}.
\end{cases}$$
\end{enumerate}
\end{prop}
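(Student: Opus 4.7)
The plan is to prove all four parts by the same mechanism, reducing everything to matching one-dimensional characters of $\TKt$ via the natural isomorphisms established in Lemmas \ref{lem:natlisom} and \ref{lem:natlisomkt}. For parts (1) and (2), I would take the smooth genuine $\Tt$-character $\mu\cdot\chi_{\psi}$ as the variable argument in Lemma \ref{lem:natlisom}(1) and apply Frobenius reciprocity on both sides to obtain a natural $E$-linear isomorphism
\[
\Hom_{\Kt}\bigl(\sigst, \Ind_{\Bbt}^{\Gt}(\mu \cdot \chi_{\psi})\big \vert_{\Kt}\bigr) \cong \Hom_{\TKt}\bigl((\sigst)_{\Ubark}, \mu\cdot\chi_{\psi}\big \vert_{\TKt}\bigr).
\]
Since both arguments are one-dimensional $\TKt$-characters, this Hom-space has dimension at most one, with equality precisely when the two characters agree. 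By Lemma \ref{lem:ukinvariants}, $(\sigst)_{\Ubark} \cong \tilde{\delta}_{\vec{s}}$.

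The next step is to compute the restriction $\mu\cdot\chi_{\psi}\big \vert_{\TKt}$ explicitly: for $(h(a), \zeta) \in \TKt$ with $a \in \OF^\times$, definition (\ref{eqn:genchar}) gives $(\mu\cdot\chi_{\psi})(h(a), \zeta) = \zeta\cdot\mu(a)\cdot\gamma_F(a, \psi)^{-1}$. By Theorem \ref{thm:raoweilindexof}(2), for $a \in \OF^\times$ one has $\gamma_F(a, \psi) = (a, \varpi)_F^m$, which is its own inverse. Recalling (as in the proof of Lemma \ref{lem:kt'coinvs}) that the character $a \mapsto (a, \varpi)_F$ of $\OF^\times$ equals $\delta_{\overrightarrow{\frac{p-1}{2}}}$, the matching condition reduces to the identity
\[
\delta_{\vec{s}} \;=\; \mu\big \vert_{\OF^\times} \cdot \delta_{\overrightarrow{\frac{p-1}{2}}}^{\,m}
\]
of smooth characters of $\OF^\times$. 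The map $\vec{r} \mapsto \delta_{\vec{r}}$ is a bijection from $\{0, \dots, p-1\}^f$ onto the set of tamely ramified characters of $\OF^\times$, except that $\delta_{\vec{0}}$ and $\delta_{\overrightarrow{p-1}}$ both represent the trivial character. Consequently the displayed equation has either a unique solution $\vec{s} \notin \{\vec{0}, \overrightarrow{p-1}\}$ (when $\mu\big \vert_{\OF^\times} \cdot (-, \varpi)_F^m \neq 1$), or exactly the two solutions $\vec{s} \in \{\vec{0}, \overrightarrow{p-1}\}$ (when $\mu\big \vert_{\OF^\times} = (-, \varpi)_F^m$), yielding (1) and (2).

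Parts (3) and (4) follow by the identical argument using Lemma \ref{lem:natlisomkt} in place of Lemma \ref{lem:natlisom}, producing
\[
\Hom_{\Kt'}\bigl(\sigst^{\tilde{\alpha}}, \Ind_{\Bbt}^{\Gt}(\mu\cdot\chi_\psi)\big \vert_{\Kt'}\bigr) \cong \Hom_{\TKt}\bigl((\sigst^{\tilde{\alpha}})_{\Ubark}, \mu\cdot\chi_\psi\big \vert_{\TKt}\bigr).
\]
Parts (2) and (3) of Lemma \ref{lem:kt'coinvs} show that $(\sigst^{\tilde{\alpha}})_{\Ubark}$, viewed as a $\TKt$-character, sends $(h(a),\zeta)$ to $\zeta\cdot\delta_{\vec{s}}(a)\cdot\delta_{\overrightarrow{\frac{p-1}{2}}}(a)$. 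The matching condition becomes $\delta_{\vec{s}} = \mu\big \vert_{\OF^\times}\cdot(-,\varpi)_F^{m+1}$ (using that $(-,\varpi)_F$ has order two), and the same bijection/trichotomy analysis yields cases (3) and (4). The main bookkeeping obstacle is tracking parities: the integer $m$ from the Weil-index computation couples the conductor of $\psi$ to the sign twist on $\TKt$, and the $\tilde{\alpha}$-conjugation on the $\Kt'$ side contributes an extra factor of $(-,\varpi)_F$, shifting the parity from $m$ to $m+1$. Everything else is formal.
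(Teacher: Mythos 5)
Your proposal is correct and follows essentially the same route as the paper: both reduce the weight multiplicity to a matching of one-dimensional $\TKt$-characters via Lemma \ref{lem:natlisom} (resp.\ Lemma \ref{lem:natlisomkt}) and Frobenius reciprocity, then use Theorem \ref{thm:raoweilindexof} together with the identification of $(a,\varpi)_F$ with $\delta_{\overrightarrow{\frac{p-1}{2}}}$ and the fact that $\vec{s}\mapsto\delta_{\vec{s}}$ hits each tame character once except for the coincidence $\delta_{\vec{0}}=\delta_{\overrightarrow{p-1}}$, with Lemma \ref{lem:kt'coinvs} supplying the extra $(-,\varpi)_F$ twist on the $\Kt'$ side. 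No gaps.
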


\begin{proof}[Proof of Proposition \ref{prop:psweights}]
Let $\mu$ be any smooth character of $F^\times$ and let $\sigrt$ be any weight of $\Kt$. Each of $\mu\cdot \chi_{\psi}$ and $(\sigrt)_{\Ubark}$ is a one-dimensional representation of $\Tt \cap \Kt$, so the space $\Hom_{\TKt}( (\sigrt)_{\Ubark}, \, \mu \cdot \chi_{\psi} \big \vert_{\TKt})$ is one-dimensional if $(\sigrt)_{\Ubark} \cong \mu\cdot \chi_{\psi}\big \vert_{\TKt}$ as $\TKt$-representations, and zero otherwise.

By Frobenius reciprocity, $$\text{dim}_E \, \Hom_{\TKt}\left( (\sigrt)_{\Ubark}, \, \mu \cdot \chi_{\psi} \big \vert_{\TKt}\right) = \text{dim}_E\, \Hom_{\Tt}\left(\ind_{\TKt}^{\Tt}\left((\sigrt)_{\Ubark} \right),\, \mu \cdot \chi_{\psi} \right).$$ By Lemma \ref{lem:natlisom} (1) followed by Frobenius reciprocity again, 
\begin{align*}
\text{dim}_E\, \Hom_{\Tt}\left(\ind_{\TKt}^{\Tt}\left((\sigrt)_{\Ubark} \right),\, \mu \cdot \chi_{\psi} \right) &= \text{dim}_E\, \Hom_{\Gt}\left(\ind_{\Kt}^{\Gt} \sigrt, \, \Ind_{\Bbt}^{\Gt}(\mu \cdot \chi_{\psi})\right) \\ & = \text{dim}_E\, \Hom_{\Kt}\left(\sigrt, \, \Ind_{\Bbt}^{\Gt}(\mu \cdot \chi_{\psi})\big \vert_{\Kt}\right).
\end{align*}
Thus we have 
$$\text{dim}_E\, \Hom_{\Kt}(\sigrt, \, \Ind_{\Bbt}^{\Gt}(\mu \cdot \chi_{\psi})\big \vert_{\Kt}) = \begin{cases} 1 & \text{ if } (\sigrt)_{\Ubark} \cong \mu \cdot \chi_{\psi}\big \vert_{\Tt \cap \Kt} \text{ as } \TKt\text{-representations},\\
0 & \text{ otherwise. }\end{cases}$$
  
Recall that every smooth $E$-character of $\OF^\times$ is isomorphic to $\delta_{\vec{s}}$ for exactly one $\vec{s} \in \{ 0, \dots, p-1\}^f\setminus\{\overrightarrow{p-1}\}$ (see $\S$\ref{subsubsec:coefffield} for the definition of $\delta_{\vec{s}}$), and that $\delta_{\vec{0}} = \delta_{\overrightarrow{p-1}}$. Furthermore, each smooth genuine representation of $\Tt \cap \Kt$ is isomorphic to $\tilde{\delta}_{\vec{s}}:= \delta_{\vec{s}} \otimes \epsilon$, where $\delta_{\vec{s}}$ is viewed as a character of $\Tt \cap \Ks$ by inflation. In particular, the $\Tt \cap \Kt$-representation on $(\sigrt)_{\Ubark}$ is $\tilde{\delta}_{\vec{r}}$.

By Theorem \ref{thm:raoweilindexof}, $\gamma_F(a, \psi)^{-1} = (a, \varpi)^m_F$ for all $a \in \OF^\times$. Under the condition of case (1) of the proposition, the restriction of $\mu\cdot \chi_{\psi}$ to $\Tt \cap \Ks$ factors through a nontrivial smooth character of $\OF^\times$, i.e., through $\delta_{\vec{s}}$ for some $\vec{s} \notin \{ \vec{0}, \overrightarrow{p-1}\}$, so we have a $\TKt$-isomorphism $(\sigrt)_{\Ubark} \cong \mu \cdot \chi_{\psi}$ if and only if $\vec{s} = \vec{r}$. Since $\vec{r}$ is neither $\vec{0}$ nor $\overrightarrow{p-1}$, the dimension of $\sigrt$ is strictly between 1 and $q$. 

In case (2) of the proposition, the restriction of $\mu \cdot \chi_{\psi}$ to $\Tt \cap \Ks$ factors through the trivial character of $\OF^\times$. Thus we have a $\TKt$-isomorphism $(\sigrt)_{\Ubark} \cong \mu \cdot \chi_{\psi}$ exactly when $\vec{r} = \vec{0}$ or $\vec{r} = \overrightarrow{p-1}$, i.e., exactly when $\text{dim}_E\, \sigrt = 1$ or $q$. 

The proofs of (3) and (4) are similar. The difference between the statements (1) and (3) (resp,. between (2) and (4)) is a consequence of the fact (Lemma \ref{lem:kt'coinvs}) that for a weight $\sigrt^{\tilde{\alpha}}$ of $\Kt'$, the $\Tt \cap \Kt$-representation on $(\sigrt^{\tilde{\alpha}})_{\Ubark}$ is $\tilde{\delta}_{\vec{r}} \cdot (-, \varpi)_F$ rather than $\tilde{\delta}_{\vec{r}}$.
\end{proof} 

We maintain the notations $\mathcal{T}_n:= \mathcal{T}_n^{\vec{r}}$, $\tau_{-n}:= \tau_{-n}^{\vec{r}}$, $\mathcal{S} : = \mathcal{S}_{\vec{r}}$ for the remainder of this section, and continue to refer to the map $\mathcal{F}_0$ defined in the proof of Proposition  \ref{prop:SHAaction}. Define another map

\begin{equation} \label{eqn:wtisomcand}
\mathcal{F}: \text{ind}_{\Kt}^{\Gt}(\sigrt) \otimes_{\SHA, \mathcal{S}_{\vec{r}}}\SHT \rightarrow \text{Ind}_{\Bbt}^{\Gt}\left(\text{ind}_{\Tt \cap \Kt}^{\Tt}\left((\sigrt)_{\Ubark}\right)\big \vert_{\Tt \cap \Kt}\right).
\end{equation}

by setting
$\mathcal{F}(f\cdot \tau_{-n}) : = \left( \Ind_{\Bbt}^{\Gt}(\tau_{-n})\circ \mathcal{F}_0 \right)(f)$ 
for $f \in \ind_{\Kt}^{\Gt}(\sigrt)$, $n \in \ZZ$. Then $\mathcal{F}$ is clearly $\Gt$-linear, and has the following $\SHA$-linearity:
\begin{align*}
\left(\mathcal{F}\circ\mathcal{T}_1\right)(f \cdot \tau_{-n}) &= 
\left(\Ind_{\Bbt}^{\Gt}(\tau_{-n})\circ \mathcal{F}_0 \circ \mathcal{T}_1\right)(f)\\
& = \left(\Ind_{\Bbt}^{\Gt}(\tau_{-n} \circ \mathcal{S}(\mathcal{T}_1) \circ \mathcal{F}_0 \right)(f)\\
& = \left(\Ind_{\Bbt}^{\Gt}(\tau_{-(n+1)}  \circ \mathcal{F}_0\right)(f).
\end{align*}

\begin{prop} \label{prop:wtisomuniv}
The map $\mathcal{F}:  \text{ind}_{\Kt}^{\Gt}(\sigrt) \otimes_{\SHA} \SHT \rightarrow \text{Ind}_{\Bbt}^{\Gt}\left(\text{ind}_{\Tt \cap \Kt}^{\Tt}\left((\sigrt)_{\Ubark}\right)\right)$ defined in (\ref{eqn:wtisomcand}) is an injective map of $(\Gt, \, \SHT)$-modules. If $\vec{r} \neq \vec{0}$, then $\mathcal{F}$ is an isomorphism. 
\end{prop}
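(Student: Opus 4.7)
\textbf{Reduction to $\mathcal{F}_0$.} The map $\mathcal{F}$ is $(\Gt,\SHT)$-bilinear and satisfies $\mathcal{F}(f\otimes\tau_{-N}) = \Ind_{\Bbt}^{\Gt}(\tau_{-N})(\mathcal{F}_0(f))$. Since $\tau_{-N}$ is a unit in $\SHT$ by Corollary \ref{cor:injimag}, the operator $\Ind_{\Bbt}^{\Gt}(\tau_{-N})$ is an $E$-linear automorphism of the target. Combined with the freeness of $\ind_{\Kt}^{\Gt}\sigrt$ over $\SHA$ (Proposition \ref{prop:SHAfree}) and the identification of $\SHT$ as the localization of $\mathcal{S}_{\vec{r}}(\SHA)\cong E[\tau_{-1}]$ at $\tau_{-1}$, every element of the source may be written as $f\otimes\tau_{-N}$ with $f\otimes\tau_{-N}=0$ iff $f=0$; moreover $\mathcal{F}(f\otimes\tau_{-N})=0$ iff $\mathcal{F}_0(f)=0$. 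Hence injectivity of $\mathcal{F}$ reduces to injectivity of $\mathcal{F}_0$.

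\textbf{Injectivity of $\mathcal{F}_0$.} Lemma \ref{lem:natlisom}(2) gives the explicit formula $\mathcal{F}_0([1,v])(k)=[1,p_{\Ubark}(\sigrt(k)v)]$ for $k\in\Kt$, extended to all of $\Gt = \Bbt\Kt$ by $\Bbt$-equivariance, and then to arbitrary basic functions $[g,v]$ by $\Gt$-equivariance. Given a nonzero $f=\sum_i[g_i,v_i]\in\ind_{\Kt}^{\Gt}\sigrt$, I would select $g_j$ of maximal Cartan distance from the identity among the support elements. A careful bookkeeping of the $\tilde{h}(\varpi)$-valuations of the $\Tt$-components in the Iwasawa decompositions of $g_i^{-1}g_j^{-1}k$ (for $k\in\Kt$) should show that, within the $\tilde{h}(\varpi)^0$-graded piece of $\ind_{\TKt}^{\Tt}(\sigrt)_{\Ubark}$, only the $[g_j,v_j]$ summand contributes to $\mathcal{F}_0(f)(g_j^{-1}k)$, the other contributions landing in strictly different graded pieces. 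Irreducibility of $\sigrt$ as a $\Kt$-representation and surjectivity of $p_{\Ubark}$ then yield some $k$ with $p_{\Ubark}(\sigrt(k)v_j)\neq0$, giving $\mathcal{F}_0(f)\neq 0$.

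\textbf{Surjectivity when $\vec{r}\neq\vec{0}$.} Applying Mackey with the Iwasawa decomposition $\Gt=\Bbt\Kt$, and using that $\Tt$ is abelian (Lemma \ref{lem:commutators}(1)) so that $(\ind_{\TKt}^{\Tt}(\sigrt)_{\Ubark})\big\vert_{\TKt}$ decomposes as an infinite sum of copies of $\tilde{\delta}_{\vec{r}}$ indexed by $n\in\ZZ$, the target restricted to $\Kt$ becomes $\Ind_{\Bbt}^{\Gt}(\ind_{\TKt}^{\Tt}(\sigrt)_{\Ubark})\big\vert_{\Kt}\cong\bigoplus_{n\in\ZZ}\Ind_{\Bbt\cap\Kt}^{\Kt}(\tilde{\delta}_{\vec{r}})$, with each summand identified via inflation with the genuine mod $p$ principal series $\Ind_{\overline{B}(\mathfrak{k})}^{SL_2(\mathfrak{k})}(\delta_{\vec{r}})\otimes\epsilon$ of $\Kt$. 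The vectors $\mathcal{F}([1,v]\otimes\tau_{-n})$ populate a copy of $\sigrt$ in each graded summand, and translation by $\tilde{h}(\varpi)\in\Gt$ mixes the grading (as $\tilde{h}(\varpi)$ does not normalize $\Bbt\cap\Kt$), so the image of $\mathcal{F}$ is a $\Gt$-submodule with nonzero projection to every graded piece. The main obstacle is completing this to full surjectivity: for $\vec{r}=\vec{0}$ this fails because the finite-level induction $\Ind_{\overline{B}(\mathfrak{k})}^{SL_2(\mathfrak{k})}(\mathrm{triv})$ splits in characteristic $p$ (using $q+1\equiv 1\pmod p$) as $\mathrm{triv}\oplus\sigma_{\overrightarrow{p-1}}$, and the image of $\mathcal{F}$ misses the Steinberg summand at each grade; for $\vec{r}\neq\vec{0}$---including the subtle case $\vec{r}=\overrightarrow{p-1}$, in which the finite-level structure matches $\vec{r}=\vec{0}$ but where the $\Gt$-dynamics of $\tilde{h}(\varpi)$-translation interact with the explicit Hecke action (Proposition \ref{prop:heckeaction}) to recover the missing pieces---the combined $\Gt$- and $\SHT$-actions suffice to exhaust the target.
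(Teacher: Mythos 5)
Your reduction of injectivity to injectivity of $\mathcal{F}_0$ is fine and matches what the paper does implicitly (every element of $\ind_{\Kt}^{\Gt}\sigrt \otimes_{\SHA}\SHT$ can be written as $f \otimes \tau$ with $\tau$ a unit of $\SHT$, and $\Ind_{\Bbt}^{\Gt}(\tau)$ is an automorphism of the target). But your proof that $\mathcal{F}_0$ is injective is only a sketch: the ``careful bookkeeping of $\tilde{h}(\varpi)$-valuations'' is exactly the nontrivial content, and it amounts to the Cartan--Iwasawa comparison that the paper develops in Lemma \ref{lem:cartaniwasawa} and packages into the injectivity of the Satake transform. The paper avoids this bookkeeping at this point by a softer argument you could adopt: if $\ker(\mathcal{F}_0)\neq 0$ it is a smooth genuine subrepresentation of $\ind_{\Kt}^{\Gt}\sigrt$, hence contains a $\Kt$-weight $\sigst$; Frobenius reciprocity then produces a nonzero $\Phi\in\mathcal{H}(\Gt,\Kt,\sigst,\sigrt)$ with $\mathcal{F}_0\circ\Phi=0$, whence $\mathcal{S}_{\vec{s},\vec{r}}(\Phi)=(\mathcal{F}_0\circ\Phi)_{\Tt}=0$, contradicting the injectivity of $\mathcal{S}_{\vec{s},\vec{r}}$ (Proposition \ref{prop:satakeproperties}, Corollary \ref{cor:injimag}).

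The surjectivity step is a genuine gap: after setting up the graded Mackey picture you assert that ``the combined $\Gt$- and $\SHT$-actions suffice to exhaust the target'' and explicitly concede that completing this is ``the main obstacle.'' Having nonzero projection to every graded piece is far too weak --- the image is not a graded submodule, and nothing in your argument isolates where the hypothesis $\vec{r}\neq\vec{0}$ actually enters (your finite-level splitting remark for $\vec{r}=\vec{0}$, while correct since $q+1\equiv 1 \pmod p$, does not by itself show what the image misses, and your suggestion that $\vec{r}=\overrightarrow{p-1}$ needs the explicit Hecke formulas of Proposition \ref{prop:heckeaction} signals that the mechanism is missing). The paper's mechanism is concrete: take $v$ a highest-weight vector and compute $f_0=\mathcal{F}([1,v])$ explicitly via Lemma \ref{lem:natlisom}(2); the key point is that for $\vec{r}\neq\vec{0}$ the lowest-weight line is killed by $p_{\Ubark}$, so $\mathrm{supp}(f_0)$ is exactly the big cell $\Bbt\cdot(U\cap K)^*$. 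Identifying functions supported there with $\mathcal{C}^\infty_c\bigl(F,\ \ind_{\TKt}^{\Tt}((\sigrt)_{\Ubark})\bigr)$, one sees $f_0$ corresponds to the indicator function of $\OF$ with value $[1,p_{\Ubark}v]$; acting by $\SHT$, $\Tt$ and $U^*$ then generates all functions supported on the big cell, and their $\Gt$-translates generate the whole induction. Without an argument of this kind (or some substitute) your proposal establishes at most injectivity, and even that only modulo the unproved triangularity claim.
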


\begin{proof}[Proof of Proposition \ref{prop:wtisomuniv}]
The argument of \cite{herzig:glnnotes} Theorem 32 goes through with only the obvious adaptations. We repeat the argument here for completeness. For injectivity of $\mathcal{F}$, it suffices to show that $$\mathcal{F}_0: \ind_{\Kt}^{\Gt}\sigrt \rightarrow \Ind_{\Bbt}^{\Gt}\left(\ind_{\Tt \cap \Kt}^{\Tt}\left( (\sigrt)_{\Ubark} \right) \right)$$ is injective. Suppose that $\mathcal{F}_0$ has a nonzero kernel. Then $\text{ker}(\mathcal{F}_0)$ is a genuine subrepresentation of $\ind_{\Kt}^{\Gt}\sigrt$, so $\text{ker}(\mathcal{F}_0)$ contains some weight $\sigst$ of $\Kt$. The $\Kt$-linear injection $\sigst \hookrightarrow \text{ker}(\mathcal{F}_0)\big \vert_{\Kt}$ corresponds by Frobenius reciprocity to a nonzero $\Gt$-linear homomorphism $\ind_{\Kt}^{\Gt}\sigst \rightarrow \text{ker}(\mathcal{F}_0)$, which we may compose with the $\Gt$-linear inclusion $\text{ker}(\mathcal{F}_0) \hookrightarrow \ind_{\Kt}^{\Gt}\sigrt$ to get a nonzero element $\Phi \in \mathcal{H}_{E}(\Gt, \Kt, \sigst, \sigrt)$ such that $\mathcal{F}_0 \circ \Phi = 0$. Then
\begin{align*}
0  = (\mathcal{F}_0 \circ \Phi)_{\Tt} = 1_{\ind_{\Tt \cap \Kt}^{\Tt}((\sigrt)_{\Ubark})} \circ \mathcal{S}_{\vec{s}, \vec{r}}(\Phi),
\end{align*}
so $\mathcal{S}_{\vec{s}, \vec{r}}(\Phi) = 0.$ But $\mathcal{S}_{\vec{s}, \vec{r}}$ is injective, so $\Phi = 0$, contradicting the assumption that $\text{ker}(\mathcal{F}_0) \neq 0$. 

Next we show surjectivity under the assumption that $\vec{r} \neq 0.$ Let $0 \neq v \in \Vsigt^{\Uk}$ and write 
$$f_0 = \mathcal{F}([1, v]).$$
Then, unwinding the chain of isomorphisms in Lemma \ref{lem:natlisom}, we have the following formula for $f_0$:  
$$f_0(g) = [t,\, p_{\Ubark}\circ \sigrt(k)v ],$$
where $g = tuk$ for $t \in \Tt$, $u \in \Ubar^*$, and $k \in \Kt$, and where $p_{\Ubark}$ denotes the projection $\sigrt \rightarrow (\sigrt)_{\Ubark}$.

Let $e_{\vec{0}}$ (resp., $e_{\vec{r}}$) denote a basis for the one-dimensional highest-weight (resp., lowest-weight) space of $\sigma_{\vec{r}}$. Let $k \in \Kt$ and consider $Pr(k) \in K$. By the Bruhat decomposition of $G(\mathfrak{k})$, the image of $Pr(k)$ in $G(\mathfrak{k})$ lies in either $w(-1)B(\mathfrak{k})$ or in $\Bbt(\mathfrak{k})U(\mathfrak{k}).$ In the former case, then $\sigrt(k)e_{\vec{0}} =  \zeta \sigma_{\vec{r}}\left( w(-1) \right) \circ \sigma_{\vec{r}}(b) e_{\vec{0}}$ 
for some $\zeta \in \mu_2$ and $b \in B \cap K$. Since $\sigma_{\vec{r}}(b) e_{\vec{0}} \subset E\cdot e_{\vec{0}}$, we then have $\sigrt(k)e_{\vec{0}} \subset E\cdot e_{\vec{r}},$ hence (since $\vec{r} \neq \vec{0}$), $p_{\Ubark}\circ\sigrt(k)v = 0$.

Suppose $k \in \Kt$ is in the latter case, i.e., that the image of $Pr(k)$ in $G(\mathfrak{k})$ lies in $\bar{B}(\mathfrak{k})U(\mathfrak{k}).$ Then $Pr(k) \in (\overline{B} \cap K) \cdot (U \cap K)$ and $\sigrt(k)e_{\vec{0}} \subset E \cdot e_{\vec{0}}$, so $p_{\Ubark} \circ \sigrt(k)v \neq 0.$ Hence the support of $f_0$ is equal to $\Bbt\cdot (U \cap K)^*.$ 
 In particular, if $a \in \OF^\times$, then
\begin{equation} \label{eqn:f0form}
f_0 \left(
  \left( \begin{array}{cc}
1      & \varpi^{r}a \\
0      & 1 
    \end{array} \right), 1 \right)  = \begin{cases} [1,\, p_{\Ubark}(v)] & \text{ if } r \geq 0 \\
\left[ \left(
    \left( \begin{array}{cc}
\varpi^r a        & 0 \\
0        & \varpi^{-r}a^{-1} 
      \end{array} \right), (-1, \varpi)_F^r \right),\, p_{\Ubark} \circ \sigma_{\vec{r}}
    \left(w(1) \right)v \, \right] = 0 & \text{ if } r < 0.
\end{cases}
\end{equation}

We proceed in two steps: first, we will show that $f_0$ generates the subspace of $\text{Ind}_{\Bbt}^{\Gt}\left(\text{ind}_{\Tt \cap \Kt}^{\Tt}\left((\sigrt)_{\Ubark} \right) \right)$ consisting of functions with support in $\Bbt \cdot (U \cap K)^*$. Then we will show that functions in this subspace generate all of $\text{Ind}_{\Bbt}^{\Gt}\left(\text{ind}_{\Tt \cap \Kt}^{\Tt}\left((\sigrt)_{\Ubark} \right) \right)$ under the $\Gt$- and $\SHT$-actions. 

Define a map 
$$\Big \{ f \in \text{Ind}_{\Bbt}^{\Gt}\left(\text{ind}_{\Tt \cap \Kt}^{\Tt}\left((\sigrt)_{\Ubark} \right) \right) : \text{ supp}(f)\subset \Bbt \cdot (U \cap K)^* \Big \} \longrightarrow \mathcal{C}_c^{\infty}\left(F, \,\text{ind}_{\Tt \cap \Kt}^{\Tt}\left((\sigrt)_{\Ubark} \right)\right)$$
by 
$$ f \mapsto \left(\underline{f}: c \mapsto f \left(u(c), 1\right) \right),$$
and note that extension by zero is an inverse. Hence $f \mapsto \underline{f}$ defines an isomorphism of $E$-vector spaces, and is also a map of $\SHA$-modules. The subspace of $\text{Ind}_{\Bbt}^{\Gt}\left(\text{ind}_{\Tt \cap \Kt}^{\Tt}\left((\sigrt)_{\Ubark} \right) \right)$ with support in $\Bbt \cdot (U \cap K)^*$ is $\Bt$-stable, and the induced $\Bt$-module structure on $\mathcal{C}_c^{\infty}(F,\, \text{ind}_{\Tt \cap \Kt}^{\Tt}\left((\sigrt)_{\Ubark} \right)$ is given by 
\begin{align*} 
\left( 
 u(c), 1\right)f'(d) & = \underline{f}(c + d),\\
\left(h(x), \zeta \right)f'(d) & = \underline{f}(x^{-2}d).
\end{align*}

By (\ref{eqn:f0form}), for $c \in F$ we have 
$$\underline{f_0}(c)  = \begin{cases}
[1,\, p_{\Ubark}(v)] & \text{ if } c \in \OF,\\
0 & \text{ otherwise}.\end{cases} $$

Then for $n \in \ZZ$, 
$$(\underline{f_0} \cdot \mathcal{T}^n)(c) =\left(\tilde{h}(\varpi)^{n}\cdot \underline{f_0} \right)(c) =  \begin{cases}
\left[\tilde{h}(\varpi)^n, p_{\Ubark}(v)\right] & \text{ if } c \in \OF,\\
0 & \text{ otherwise},
\end{cases} $$
so the $\SHT$-module generated by $\underline{f_0}$ contains all constant functions in $\mathcal{C}^\infty_c\left(F, \, \text{ind}_{\Tt \cap \Kt}^{\Tt} \left((\sigrt)_{\Ubark}\right)\right)$ which are supported on $\OF$. Acting by $\Tt$ and by $U^*$ gives all constant functions on scalings and translations of $\OF$, and the set of these functions spans $\mathcal{C}^\infty_c\left(F, \, \text{ind}_{\Tt \cap \Kt}^{\Tt} \left((\sigrt)_{\Ubark}\right)\right)$. Hence $f_0$ generates all functions in $\text{Ind}_{\Bbt}^{\Gt}\left(\text{ind}_{\Tt \cap \Kt}^{\Tt}\left((\sigrt)_{\Ubark}\right)\right)$ which have support in $\Bbt \cdot (U \cap K)^*.$ The set of such functions generates all of $\text{Ind}_{\Bbt}^{\Gt}\left(\text{ind}_{\Tt \cap \Kt}^{\Tt}\left((\sigrt)_{\Ubark}\right)\right)$ under the action of $\Gt$, so $\mathcal{F}$ is surjective. 
\end{proof}

By Proposition \ref{prop:SHAaction}, the action of the Hecke operator $\mathcal{T}^{\vec{r}}_1$ on any nontrivial weight space of a principal series representation $\Ind_{\Bbt}^{\Gt}(\mu\cdot \chi_{\psi})$ has a unique eigenvalue, namely $\mu\cdot \chi_{\psi}(\tilde{h}(\varpi))$. From now on, write 
\begin{equation}
\label{eqn:lambdamupsi}
\lambda_{\mu, \psi}:= \mu\cdot \chi_{\psi}(\tilde{h}(\varpi)).
\end{equation}

\begin{cor} \label{cor:shtcharwtinj}
Let $\sigrt$ be a weight of $\Kt$ with $\vec{r} \neq \vec{0}$, and let $\mu$ be a smooth character of $F^\times$ such that $\text{dim}_E\, \Hom_{\Kt}(\sigrt, \Ind_{\Bbt}^{\Gt}(\mu \cdot \chi_{\psi})) = 1.$ Define a character $\theta_{\mu, \psi}$ of $\SHT$ by setting
$$\theta_{\mu, \psi}(\tau_{-1}^{\vec{r}}) = \lambda_{\mu, \psi}.$$

Then there is a  $\Gt$-linear isomorphism
$$\pi(\vec{r}, \lambda_{\mu, \psi}) \longrightarrow \Ind_{\Bbt}^{\Gt}(\ind_{\Tt \cap \Kt}^{\Tt}(\sigrt)_{\Ubark}) \otimes_{\SHA, \, \mathcal{S}_{\vec{r}}}\theta_{\mu , \psi} $$
\end{cor}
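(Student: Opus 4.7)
The plan is to deduce the corollary from the universal isomorphism of Proposition \ref{prop:wtisomuniv} by base change along the character $\theta_{\mu,\psi}$. Since $\vec{r}\neq\vec{0}$, that proposition supplies a $(\Gt,\SHT)$-bilinear isomorphism
$$\mathcal{F}:\ind_{\Kt}^{\Gt}(\sigrt)\otimes_{\SHA,\mathcal{S}_{\vec{r}}}\SHT \longrightarrow \Ind_{\Bbt}^{\Gt}\bigl(\ind_{\TKt}^{\Tt}(\sigrt)_{\Ubark}\bigr).$$
Applying the functor $-\otimes_{\SHT}\theta_{\mu,\psi}$ to both sides of $\mathcal{F}$ and using associativity of tensor products together with $\SHT\otimes_{\SHT}\theta_{\mu,\psi}\cong\theta_{\mu,\psi}$ yields a $\Gt$-linear isomorphism
$$\ind_{\Kt}^{\Gt}(\sigrt)\otimes_{\SHA,\mathcal{S}_{\vec{r}}}\theta_{\mu,\psi} \;\xrightarrow{\sim}\; \Ind_{\Bbt}^{\Gt}\bigl(\ind_{\TKt}^{\Tt}(\sigrt)_{\Ubark}\bigr)\otimes_{\SHT}\theta_{\mu,\psi}.$$

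To identify the left-hand side with $\pi(\vec{r},\lambda_{\mu,\psi})$, I use Proposition \ref{prop:satakeproperties}, which gives $\mathcal{S}_{\vec{r}}(\mathcal{T}_1^{\vec{r}})=\tau_{-1}^{\vec{r}}$. Consequently, the $\SHA$-structure on $\theta_{\mu,\psi}$ obtained via $\mathcal{S}_{\vec{r}}$ is the character $\mathcal{T}_1^{\vec{r}}\mapsto\theta_{\mu,\psi}(\tau_{-1}^{\vec{r}})=\lambda_{\mu,\psi}$, which is precisely the character $\Theta_{\lambda_{\mu,\psi}}$ of Lemma \ref{lem:cokernelnonzero}. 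Hence the left-hand side equals $\pi(\vec{r},\lambda_{\mu,\psi})$ by definition. To identify the right-hand side with the target in the statement, I invoke Corollary \ref{cor:injimag}(1) and Lemma \ref{lem:SHTwtcoinvalg}(1): the subring $\mathcal{S}_{\vec{r}}(\SHA)=E[\tau_{-1}^{\vec{r}}]$ of $\SHT=E[(\tau_{-1}^{\vec{r}})^{\pm 1}]$ is such that $\SHT$ is its localization at $\tau_{-1}^{\vec{r}}$. Since $\lambda_{\mu,\psi}=\mu\cdot\chi_{\psi}(\tilde{h}(\varpi))\in E^\times$ (both $\mu(\varpi)$ and $\gamma_F(\varpi,\psi)^{-1}$ lie in $E^\times$), the element $\tau_{-1}^{\vec{r}}$ already acts invertibly on $\theta_{\mu,\psi}$ as an $\SHA$-module via $\mathcal{S}_{\vec{r}}$, so the natural surjection from the $\SHA$-tensor product to the $\SHT$-tensor product is an isomorphism.

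The content is entirely carried by Proposition \ref{prop:wtisomuniv} and the formula $\mathcal{S}_{\vec{r}}(\mathcal{T}_1^{\vec{r}})=\tau_{-1}^{\vec{r}}$; the remaining work is formal bookkeeping of Hecke bimodule structures and the elementary localization argument identifying the two tensor products on the right. No significant obstacle is anticipated.
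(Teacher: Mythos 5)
Your proposal is correct and follows essentially the same route as the paper: specialize the $(\Gt,\SHT)$-isomorphism of Proposition \ref{prop:wtisomuniv} along $\theta_{\mu,\psi}$ and use $\mathcal{S}_{\vec{r}}(\mathcal{T}^{\vec{r}}_1)=\tau^{\vec{r}}_{-1}$ to recognize the domain as $\pi(\vec{r},\lambda_{\mu,\psi})$. Your additional localization argument (using $\lambda_{\mu,\psi}\in E^\times$ to identify the $\SHA$- and $\SHT$-tensor products on the target) merely makes explicit a point the paper leaves implicit, and is valid.
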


\begin{proof}[Proof of Corollary \ref{cor:shtcharwtinj}]
Since $\vec{r} \neq 0$, Proposition \ref{prop:wtisomuniv} demonstrates that the map $\mathcal{F}: \ind_{\Kt}^{\Gt}(\sigrt) \otimes_{\SHA, \mathcal{S}_{\vec{r}}}\SHT \rightarrow \Ind_{\Bbt}^{\Gt}\left(\ind_{\Tt \cap \Kt}^{\Tt}\left(\sigrt \right)_{\Ubark} \right)$ defined in (\ref{eqn:wtisomcand}) is an isomorphism of $\Gt$- and $\SHT$-modules. Specializing the action of $\SHT$ to $\theta_{\mu \cdot \psi}$ on each side, we obtain an isomorphism of $\Gt$-modules
$$\ind_{\Kt}^{\Gt}(\sigrt)\otimes_{\SHA, \, \mathcal{S}_{\vec{r}}}\theta_{\mu, \psi} \rightarrow \Ind_{\Bbt}^{\Gt}(\mu \cdot \chi_{\psi}) \otimes_{\SHT}\theta_{\mu,\psi},$$
and domain of this isomorphism is equal to $\pi(\vec{r}, \lambda_{\mu, \psi})$ since $\theta_{\mu,\psi} \circ \mathcal{S}^{-1}_{\vec{r}}(\mathcal{T}^{\vec{r}}_1) = \lambda_{\mu, \psi}.$
\end{proof}

\begin{lem} \label{lem:pswtauxlem}
Under the hypotheses of Corollary \ref{cor:shtcharwtinj}, there is a $\Gt$-linear isomorphism 
$$\Ind_{\Bbt}^{\Gt}\left(\mu \cdot \chi_{\psi}\right) \rightarrow \Ind_{\Bbt}^{\Gt}\left(\ind_{\Tt \cap \Kt}^{\Tt}(\sigrt)_{\Ubark}\right) \otimes_{\SHT}\theta_{\mu, \psi}.$$
\end{lem}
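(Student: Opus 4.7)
The plan is to deduce this from Lemma \ref{lem:shtquotients} by applying $\Ind_{\Bbt}^{\Gt}$ to the $\Tt$-isomorphism it provides, and then checking that $\Ind_{\Bbt}^{\Gt}$ commutes with the specialization $(-)\otimes_{\SHT}\theta_{\mu,\psi}$.

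First I would verify the hypothesis $\Hom_{\TKt}((\sigrt)_{\Ubark},\, \mu\cdot\chi_{\psi}\big|_{\TKt}) \neq 0$ needed to invoke Lemma \ref{lem:shtquotients}. Under the standing assumption that $\dim_{E}\Hom_{\Kt}(\sigrt,\,\Ind_{\Bbt}^{\Gt}(\mu\cdot\chi_{\psi})\big|_{\Kt}) = 1$, the chain of natural isomorphisms from Lemma \ref{lem:natlisom} (1) and Frobenius reciprocity (exactly as used in the proof of Proposition \ref{prop:psweights}) shows that $(\sigrt)_{\Ubark} \cong \mu\cdot\chi_{\psi}\big|_{\TKt}$ as $\TKt$-characters, so this Hom-space is in fact one-dimensional. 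Moreover $\theta_{\mu,\psi}(\tau^{\vec{r}}_{-1}) = \lambda_{\mu,\psi} = \mu\cdot\chi_{\psi}(\tilde{h}(\varpi))$, so Lemma \ref{lem:shtquotients} produces a $\Tt$-equivariant isomorphism
$$\ind_{\TKt}^{\Tt}\bigl((\sigrt)_{\Ubark}\bigr) \otimes_{\SHT} \theta_{\mu,\psi} \;\xrightarrow{\;\sim\;}\; \mu\cdot\chi_{\psi}.$$

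The next step is to apply $\Ind_{\Bbt}^{\Gt}$ to this isomorphism and commute induction past the tensor product. By Lemma \ref{lem:SHTwtcoinvalg} (1) we have $\SHT \cong E[(\tau^{\vec{r}}_{-1})^{\pm 1}]$, and under this identification $\theta_{\mu,\psi}$ corresponds to the cyclic module $\SHT/(\tau^{\vec{r}}_{-1}-\lambda_{\mu,\psi})$. Hence $(-)\otimes_{\SHT}\theta_{\mu,\psi}$ is the cokernel of multiplication by $(\tau^{\vec{r}}_{-1}-\lambda_{\mu,\psi})$, and the desired commutation of $\Ind_{\Bbt}^{\Gt}$ with this tensor product reduces to right-exactness of smooth parabolic induction. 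Since $\Gt/\Bbt \cong G/B \cong \mathbb{P}^{1}(F)$ is compact, the functor $\Ind_{\Bbt}^{\Gt}$ coincides with compact induction and is exact; in particular it preserves cokernels. The $\SHT$-action on $\Ind_{\Bbt}^{\Gt}(\ind_{\TKt}^{\Tt}(\sigrt)_{\Ubark})$ used to form the right-hand side of the Lemma is the one induced by functoriality from the identification $\SHT = \End_{\Tt}(\ind_{\TKt}^{\Tt}(\sigrt)_{\Ubark})$, so by naturality the image of $\tau^{\vec{r}}_{-1}-\lambda_{\mu,\psi}$ on the induced representation is exactly $\Ind_{\Bbt}^{\Gt}$ applied to multiplication by $\tau^{\vec{r}}_{-1}-\lambda_{\mu,\psi}$ on the coefficient module; the two cokernels therefore agree.

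There is no real obstacle here beyond a careful accounting of the two $\SHT$-actions and the observation that $\Gt/\Bbt$ is compact so that smooth induction is exact. Once Lemma \ref{lem:shtquotients} is in hand, combining these yields the stated isomorphism immediately, and in particular fits together with Corollary \ref{cor:shtcharwtinj} to give the weight-theoretic isomorphism $\pi(\vec{r},\lambda_{\mu,\psi}) \cong \Ind_{\Bbt}^{\Gt}(\mu\cdot\chi_{\psi})$ that will underlie the classification of genuine principal series representations (Theorem \ref{thm:pswtisom}).
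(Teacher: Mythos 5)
Your proposal is correct, and it follows the same two-step skeleton as the paper: invoke Lemma \ref{lem:shtquotients} to identify $\mu\cdot\chi_{\psi}$ with $\ind_{\TKt}^{\Tt}\left((\sigrt)_{\Ubark}\right)\otimes_{\SHT}\theta_{\mu,\psi}$ (your verification of its hypothesis via Lemma \ref{lem:natlisom} and Frobenius reciprocity, as in Proposition \ref{prop:psweights}, is exactly what the paper leaves implicit), and then commute $\Ind_{\Bbt}^{\Gt}$ past the specialization $(-)\otimes_{\SHT}\theta_{\mu,\psi}$. Where you differ is in how the commutation is justified: the paper imports the Corollary to Theorem 32 of \cite{herzig:glnnotes}, whose ingredients are Noetherianness of $\SHT$, exactness of $\Ind_{\Bbt}^{\Gt}$, and flatness of the universal module $\ind_{\TKt}^{\Tt}\left((\sigrt)_{\Ubark}\right)$ over $\SHT$ (Lemma \ref{lem:SHTfree}), whereas you argue directly that, since $\lambda_{\mu,\psi}\in E^{\times}$ and $\SHT\cong E[(\tau_{-1}^{\vec{r}})^{\pm 1}]$, the character $\theta_{\mu,\psi}$ is the cyclic module $\SHT/(\tau_{-1}^{\vec{r}}-\lambda_{\mu,\psi})$, so the specialization is the cokernel of a single endomorphism and is preserved by the exact functor $\Ind_{\Bbt}^{\Gt}$ (exactness holding because $\Bbt\backslash\Gt$ is compact, so smooth induction agrees with compact induction), the two $\SHT$-actions matching because the action on the induced representation is the functorial one used to define $\mathcal{F}$ in (\ref{eqn:wtisomcand}). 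Your route is slightly more elementary and self-contained in this rank-one setting: it shows that flatness and Noetherianness are not actually needed here, only right-exactness of parabolic induction and the fact that $\theta_{\mu,\psi}$ is cut out by a principal ideal; the paper's citation of Herzig buys uniformity with the higher-rank situation, where the character of the torus Hecke algebra is a quotient by several generators and the flatness hypothesis controls the comparison.
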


\begin{proof}[Proof of Lemma \ref{lem:pswtauxlem}]
Since $\SHT$ is Noetherian and $\Ind_{\Bbt}^{\Gt}(-)$ is an exact functor, and also (by Lemma \ref{lem:SHTfree}) the universal module $\ind_{\Tt \cap \Kt}^{\Tt}(\sigrt)_{\Ubark}$ is a flat $\SHT$-module, the proof of the Corollary to Theorem 32 of \cite{herzig:glnnotes} goes through to give a $\Gt$-linear isomorphism $$\Ind_{\Bbt}^{\Gt}\left(\ind_{\Tt \cap \Kt}^{\Tt}((\sigrt)_{\Ubark})\otimes_{\SHT} \theta_{\mu, \psi}\right) \rightarrow \Ind_{\Bbt}^{\Gt}\left(\ind_{\Tt \cap \Kt}^{\Tt}((\sigrt)_{\Ubark})\right) \otimes_{\SHT}\theta_{\mu, \psi}.$$ Under the hypotheses of Corollary \ref{cor:shtcharwtinj}, Lemma \ref{lem:shtquotients} implies that we have also have an $\Gt$-linear isomorphism $\Ind_{\Bbt}^{\Gt}(\mu \cdot \chi_{\psi}) \rightarrow \Ind_{\Bbt}^{\Gt}\left(\ind_{\Tt \cap \Kt}^{\Tt}((\sigrt)_{\Ubark})\otimes_{\SHT} \theta_{\mu, \psi}\right).$
\end{proof}

\begin{thm} \label{thm:pswtisom}
Let $\psi: F \rightarrow \CC$ be a nontrivial additive character, let $\mu: F^\times \rightarrow E^\times$ be a smooth character, and let $\chi_{\psi}$ be as defined in (\ref{eqn:genchar}).
\begin{enumerate}
\item $\Ind_{\Bbt}^{\Gt}(\mu \cdot \chi_{\psi})$ is an irreducible $\Gt$-representation. 
\item Let $\sigrt$ be any weight of $\Kt$ such that $\Hom_{\Kt}(\sigrt, \Ind_{\Bbt}^{\Gt}(\mu \cdot \chi_{\psi})) \neq 0$, and let $\lambda_{\mu, \psi} \in E$ be as defined in (\ref{eqn:lambdamupsi}). Then there is a $\Gt$-linear isomorphism
$$\pi(\vec{r}, \lambda_{\mu, \psi}) \rightarrow \Ind_{\Bbt}^{\Gt}(\mu \cdot \chi_{\psi}).$$
\end{enumerate}
\end{thm}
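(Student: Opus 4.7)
The plan is to establish part (2) first and then derive part (1) as a formal consequence. Part (2) splits into the main case $\vec{r} \neq \vec{0}$, where the isomorphism falls out of the results already assembled in Section \ref{sec:psreps}, and the residual case $\vec{r} = \vec{0}$, which requires a change-of-weight argument and will be the main technical point.

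For part (2) in the case $\vec{r} \neq \vec{0}$, I would compose the isomorphisms
\[
\pi(\vec{r}, \lambda_{\mu, \psi}) \;\cong\; \Ind_{\Bbt}^{\Gt}\bigl(\ind_{\TKt}^{\Tt}(\sigrt)_{\Ubark}\bigr) \otimes_{\SHA,\, \mathcal{S}_{\vec{r}}}\theta_{\mu, \psi} \;\cong\; \Ind_{\Bbt}^{\Gt}\bigl(\ind_{\TKt}^{\Tt}(\sigrt)_{\Ubark}\bigr) \otimes_{\SHT}\theta_{\mu, \psi} \;\cong\; \Ind_{\Bbt}^{\Gt}(\mu \cdot \chi_{\psi}),
\]
using Corollary \ref{cor:shtcharwtinj} for the first isomorphism and Lemma \ref{lem:pswtauxlem} for the third. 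The middle step is justified by observing that $\lambda_{\mu, \psi} = \mu(\varpi)\gamma_F(\varpi, \psi)^{-1} \in E^\times$, so that $\theta_{\mu, \psi}$ sends the generator $\tau_{-1}^{\vec{r}}$ of $\SHT$ to a unit; consequently the tensor product of an $\SHT$-module with $\theta_{\mu, \psi}$ depends only on the action of the subring $\mathcal{S}_{\vec{r}}(\SHA) = \mathcal{H}^{\leq 0}(\Tt, \TKt, (\sigrt)_{\Ubark})$.

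The residual case $\vec{r} = \vec{0}$ arises only when $\mu\big\vert_{\OF^\times} = (-, \varpi)_F^m$; by Proposition \ref{prop:psweights} (2) the weight $\sigma_{\overrightarrow{p-1}}$ also appears in $\Ind_{\Bbt}^{\Gt}(\mu\cdot\chi_\psi)$, and the previous case already supplies $\pi(\overrightarrow{p-1}, \lambda_{\mu,\psi}) \cong \Ind_{\Bbt}^{\Gt}(\mu\cdot\chi_\psi)$. To transfer the isomorphism to $\vec{r} = \vec{0}$, I would use the intertwining operators $\mathcal{T}_1^{\vec{0}, \overrightarrow{p-1}}$ and $\mathcal{T}_1^{\overrightarrow{p-1}, \vec{0}}$ of Section \ref{sec:sphericalHAstructure}. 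From the product formula $\mathcal{T}_1^{\vec{s}, \vec{r}} \circ \mathcal{T}_1^{\vec{r}} = \mathcal{T}_1^{\vec{s}} \circ \mathcal{T}_1^{\vec{s}, \vec{r}}$ of Theorem \ref{thm:SHAsstructure} (3), each of these operators descends to a $\Gt$-linear map between the cokernel modules $\pi(\vec{0}, \lambda_{\mu,\psi})$ and $\pi(\overrightarrow{p-1}, \lambda_{\mu,\psi})$, and by Lemma \ref{lem:cowmap} (2) their compositions compute to $(\mathcal{T}_1^{\vec{0}})^2$ and $(\mathcal{T}_1^{\overrightarrow{p-1}})^2$, which act by the unit $\lambda_{\mu,\psi}^2 \in E^\times$ on the respective cokernel modules. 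Hence the two descended maps are mutually inverse up to a unit, yielding $\pi(\vec{0}, \lambda_{\mu,\psi}) \cong \pi(\overrightarrow{p-1}, \lambda_{\mu,\psi}) \cong \Ind_{\Bbt}^{\Gt}(\mu\cdot\chi_\psi)$. This is where the key structural input for $\widetilde{SL}_2$---that the change-of-weight map for the nonsupersingular case has no obstruction to being an isomorphism---is invoked, and I expect this to be the main technical obstacle.

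For part (1), suppose $0 \neq N \subsetneq \Ind_{\Bbt}^{\Gt}(\mu\cdot\chi_\psi)$. By Proposition \ref{prop:ktweights} (2), $N$ contains a weight $\tilde{\sigma}_{\vec{s}}$ of $\Kt$, and Proposition \ref{prop:psweights} identifies $\vec{s}$ as one of the at most two weights in $\Ind_{\Bbt}^{\Gt}(\mu\cdot\chi_\psi)$, each appearing with multiplicity $1$. By Proposition \ref{prop:SHAaction} the operator $\mathcal{T}_1^{\vec{s}}$ acts on the one-dimensional $\Hom_{\Kt}(\tilde{\sigma}_{\vec{s}}, N\big\vert_{\Kt})$ by $\lambda_{\mu,\psi}$, so Lemma \ref{lem:cokernelnonzero} yields a nonzero $\Gt$-linear map $\pi(\vec{s}, \lambda_{\mu,\psi}) \to N$; composing with $N \hookrightarrow \Ind_{\Bbt}^{\Gt}(\mu\cdot\chi_\psi)$ and using the isomorphism of part (2) for $\vec{s}$ identifies this composition, up to scalar, with the identity of $\Ind_{\Bbt}^{\Gt}(\mu\cdot\chi_\psi)$. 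Its image---the $\Gt$-submodule of $\Ind_{\Bbt}^{\Gt}(\mu\cdot\chi_\psi)$ generated by $\tilde{\sigma}_{\vec{s}}$---therefore equals the whole induced representation, forcing $N = \Ind_{\Bbt}^{\Gt}(\mu\cdot\chi_\psi)$ and contradicting properness.
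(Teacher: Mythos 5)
Your proposal is correct and takes essentially the same route as the paper: Corollary \ref{cor:shtcharwtinj} composed with Lemma \ref{lem:pswtauxlem} for the non-extremal weights, the change-of-weight operators $\mathcal{T}_1^{\vec{0}, \overrightarrow{p-1}}$ and $\mathcal{T}_1^{\overrightarrow{p-1}, \vec{0}}$ with compositions acting by $\lambda_{\mu,\psi}^2 \in E^\times$ (this is exactly the paper's Lemma \ref{lem:changeofweight}) for $\vec{r} \in \{\vec{0}, \overrightarrow{p-1}\}$, and irreducibility via the multiplicity-one weight analysis of Proposition \ref{prop:psweights} together with Proposition \ref{prop:SHAaction}; proving (2) in all cases before (1) is only a cosmetic reordering of the paper's interleaved argument. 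The one small slip is harmless: $\lambda_{\mu, \psi} = (-1, \varpi)_F\,\mu(\varpi)\,\gamma_F(\varpi, \psi)^{-1}$ rather than $\mu(\varpi)\gamma_F(\varpi, \psi)^{-1}$, but all your argument uses is that this value lies in $E^\times$.
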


\begin{proof}[Proof of Theorem \ref{thm:pswtisom}]
First suppose that $\Hom_{\Kt}(\sigrt, \Ind_{\Bbt}^{\Gt}(\mu \cdot \chi_{\psi})\big \vert_{\Kt}) \neq 0$ for some weight $\sigrt$ of $\Kt$ with $\vec{r} \notin \{\vec{0}, \overrightarrow{p-1}\}$. Then Proposition \ref{prop:psweights} (1) and (2) imply that $\sigrt$ is the unique weight of $\Kt$ contained in $\Ind_{\Bbt}^{\Gt}(\mu \cdot \chi_{\psi})$ and that $\text{dim}_E \, \Hom_{\Kt}(\sigrt, \Ind_{\Bbt}^{\Gt}(\mu \cdot \chi_{\psi})\big \vert_{\Kt}) = 1$. Composing the map of Corollary \ref{cor:shtcharwtinj} with the inverse of the map of Lemma \ref{lem:pswtauxlem}, we have a $\Gt$-linear isomorphism 
$$\pi(\vec{r}, \lambda_{\mu, \psi}) \rightarrow \Ind_{\Bbt}^{\Gt}\left(\mu \cdot \chi_{\psi} \right).$$ Hence $\pi(\vec{r}, \lambda_{\mu, \psi})$ likewise contains a unique weight of $\Kt$, namely $\sigrt$, and has a 1-dimensional $\sigrt$-weight space. Thus by Proposition \ref{prop:ktwtsignif}, the weight $\sigrt$ generates an irreducible $\Gt$-submodule of $\pi(\vec{r}, \lambda_{\mu, \psi})$. The image of $\sigrt$ under the composition $\sigrt \hookrightarrow \ind_{\Kt}^{\Gt}\sigrt \rightarrow \pi(\vec{r}, \lambda_{\mu, \psi})$ generates all of $\pi(\vec{r}, \lambda_{\mu, \psi})$ as a $\Gt$-module, so both $\pi(\vec{r}, \lambda_{\mu, \psi})$ and $\Ind_{\Bbt}^{\Gt}(\mu \cdot \chi_{\psi})$ are irreducible. We have proved both parts of Theorem \ref{thm:pswtisom} in the case where $\Ind_{\Bbt}^{\Gt}(\mu \cdot \chi_{\psi})$ contains a $\Kt$-weight different from $\tilde{\sigma}_{\vec{0}}$ and $\tilde{\sigma}_{\overrightarrow{p-1}}.$ 

Suppose that $\Ind_{\Bbt}^{\Gt}(\mu \cdot \chi_{\psi})$ does not contain any $\Kt$-weight $\sigrt$ such that $\vec{r} \notin \{\vec{0}, \overrightarrow{p-1}\}$. Then by Proposition \ref{prop:psweights} (1) and (2), the $\Kt$-weights contained in $\Ind_{\Bbt}^{\Gt}(\mu \cdot \chi_{\psi})$ are exactly $\tilde{\sigma}_{\vec{0}}$ and $\tilde{\sigma}_{\overrightarrow{p-1}}$, and both weight spaces are 1-dimensional. The map of Corollary \ref{cor:shtcharwtinj} composes with the inverse of the map of Lemma \ref{lem:pswtauxlem} to give an isomorphism $\pi_{0}(\overrightarrow{p-1}, \lambda_{\mu, \psi}) \rightarrow \Ind_{\Bbt}^{\Gt}(\mu \cdot \chi_{\psi})$. The following lemma gives the extra information needed to prove irreducibility of the principal series representations which contain these ``extremal'' $\Kt$-weights. 

\begin{lem} \label{lem:changeofweight}
If $\theta$ is a character of $\mathcal{H}(\Tt, \Tt \cap \Kt, (\tilde{\sigma}_{\overrightarrow{p-1}})_{\Ubark})$, and if $\theta'$ is the character of $\mathcal{H}(\Tt, \Tt \cap \Kt, (\tilde{\sigma}_{\vec{0}})_{\Ubark})$ defined by $\theta'(\tau_{-1}^{\vec{0}}) = \theta(\tau_{-1}^{\overrightarrow{p-1}})$, then there is a $\Gt$-linear isomorphism
$$\ind_{\Kt}^{\Gt} \tilde{\sigma}_{\overrightarrow{p-1}} \otimes_{\mathcal{H}(\Gt, \Kt, \tilde{\sigma}_{\overrightarrow{p-1}}), \, \mathcal{S}_{\overrightarrow{p-1}}}\theta \rightarrow \ind_{\Kt}^{\Gt} \tilde{\sigma}_{\vec{0}} \otimes_{\mathcal{H}(\Gt, \Kt, \tilde{\sigma}_{\vec{0}}),\mathcal{S}_{\vec{0}}} \theta'.$$
\end{lem}

\begin{proof}[Proof of Lemma \ref{lem:changeofweight}]
We have $\theta' \circ \mathcal{S}_{\vec{0}} = \theta \circ \mathcal{S}_{\vec{0}, \overrightarrow{p-1}} \circ \mathcal{S}_{\vec{0}}$ and $\theta \circ \mathcal{S}_{\overrightarrow{p-1}} = \theta' \circ \mathcal{S}_{\overrightarrow{p-1}, \vec{0}} \circ \mathcal{S}_{\overrightarrow{p-1}},$ so the Hecke operators $\mathcal{T}_1^{\overrightarrow{p-1}, \vec{0}}$ and $\mathcal{T}_1^{\vec{0}, \overrightarrow{p-1}}$ induce $\Gt$-linear homomorphisms
\begin{align*}
\ind_{\Kt}^{\Gt} \tilde{\sigma}_{\overrightarrow{p-1}} \otimes_{\mathcal{H}(\Gt, \Kt, \tilde{\sigma}_{\overrightarrow{p-1}}), \mathcal{S}_{\overrightarrow{p-1}}} \theta &\rightarrow \ind_{\Kt}^{\Gt} \tilde{\sigma}_{\vec{0}}\otimes_{\mathcal{H}(\Gt, \Kt, \tilde{\sigma}_{\vec{0}}), \mathcal{S}_{\vec{0}}} \theta',\\
\ind_{\Kt}^{\Gt}\tilde{\sigma}_{\vec{0}}\otimes_{\mathcal{H}(\Gt, \Kt, \tilde{\sigma}_{\vec{0}}), \mathcal{S}_{\vec{0}}} \theta' & \rightarrow \ind_{\Kt}^{\Gt}\tilde{\sigma}_{\overrightarrow{p-1}} \otimes_{\mathcal{H}(\Gt, \Kt, \tilde{\sigma}_{\overrightarrow{p-1}}), \mathcal{S}_{\overrightarrow{p-1}}}\theta
\end{align*}
respectively.
By Lemma \ref{lem:cowmap}, $\mathcal{T}_1^{\vec{0}, \overrightarrow{p-1}} \circ \mathcal{T}_1^{\overrightarrow{p-1}, \vec{0}} = (\mathcal{T}_1^{\overrightarrow{p-1}})^2,$ so the composition of induced maps
\begin{align*}
\ind_{\Kt}^{\Gt}\tilde{\sigma}_{\overrightarrow{p-1}} \otimes_{\mathcal{H}(\Gt, \Kt, \tilde{\sigma}_{\overrightarrow{p-1}}), \mathcal{S}_{\overrightarrow{p-1}}}\theta \rightarrow \ind_{\Kt}^{\Gt} \tilde{\sigma}_{\vec{0}}\otimes_{\mathcal{H}(\Gt, \Kt, \tilde{\sigma}_{\vec{0}}), \mathcal{S}_{\vec{0}}} \theta' \rightarrow \ind_{\Kt}^{\Gt} \tilde{\sigma}_{\overrightarrow{p-1}} \otimes_{\mathcal{H}(\Gt, \Kt, \tilde{\sigma}_{\overrightarrow{p-1}}), \mathcal{S}_{\overrightarrow{p-1}}} \theta
\end{align*} 
is given by $\theta \circ \mathcal{S}_{\overrightarrow{p-1}}(\mathcal{T}_1^{\overrightarrow{p-1}})^2$, i.e., by $ (\theta(\tau_{-1}^{\overrightarrow{p-1}}))^2.$ Likewise, the map induced by $\mathcal{T}_1^{\overrightarrow{p-1}, \vec{0}} \circ \mathcal{T}_1^{\vec{0}, \overrightarrow{p-1}} = (\mathcal{T}_1^{\vec{0}})^2$ is multiplication by $(\theta'(\tau_{-1}^{\vec{0}}))^2 = (\theta(\tau_{-1}^{\overrightarrow{p-1}}))^2$. Since $\theta$ is a character of $\mathcal{H}(\Tt, \Tt \cap \Kt, (\tilde{\sigma}_{\overrightarrow{p-1}})_{\Ubark}) \cong E[(\tau^{\overrightarrow{p-1}}_{-1})^{\pm 1}]$, the value $\theta(\tau^{\overrightarrow{p-1}}_{-1})$ lies in $E^\times$. Hence the maps induced by $\mathcal{T}_1^{\overrightarrow{p-1}, \vec{0}}$ and $(\theta(\tau_{-1}^{\overrightarrow{p-1}}))^{-2}\cdot \mathcal{T}_{1}^{\vec{0}, \overrightarrow{p-1}}$ are inverse to each other, and the map induced by $\mathcal{T}_1^{\overrightarrow{p-1}, \vec{0}}$ is the desired isomorphism. 
\end{proof}
Let $\rho$ be a nonzero $\Gt$-subrepresentation of $\Ind_{\Bbt}^{\Gt}(\mu \cdot \chi_{\psi})$. Then $\rho$ contains at least one $\Kt$-weight. Since $\rho \subset \Ind_{\Bbt}^{\Gt}(\mu \cdot \chi_{\psi})$, the $\Kt$-weights of $\rho$ lie in the set $\{ \tilde{\sigma}_{\vec{0}}, \tilde{\sigma}_{\overrightarrow{p-1}}\}$, and the corresponding weight spaces are at most 1-dimensional. Suppose that $\rho$ contains $\tilde{\sigma}_{\overrightarrow{p-1}}$. Then the image of the inclusion $\rho \hookrightarrow \Ind_{\Bbt}^{\Gt}(\mu \cdot \chi_{\psi})$ contains the image of $\tilde{\sigma}_{\overrightarrow{p-1}}$ in $\Ind_{\Bbt}^{\Gt}(\mu \cdot \chi_{\psi})$  under the composition $$\tilde{\sigma}_{\overrightarrow{p-1}} \hookrightarrow \ind_{\Kt}^{\Gt}\tilde{\sigma}_{\overrightarrow{p-1}} \twoheadrightarrow \pi(\overrightarrow{p-1}, \lambda_{\mu, \psi}) \rightarrow^{\cong} \Ind_{\Bbt}^{\Gt}(\mu \cdot \chi_{\psi}).$$ Since the latter image generates $\Ind_{\Bbt}^{\Gt}(\mu \cdot \chi_{\psi})$ as a $\Gt$-module, we have $\rho = \Ind_{\Bbt}^{\Gt}(\mu \cdot \chi_{\psi})$. 

Suppose towards a contradiction that $\rho$ does not contain $\tilde{\sigma}_{\overrightarrow{p-1}}$. Then $\rho$ contains $\tilde{\sigma}_{\vec{0}}$, so Frobenius reciprocity provides a nonzero $\Gt$-linear map $\ind_{\Kt}^{\Gt}\tilde{\sigma}_{\vec{0}} \rightarrow \rho$. Pulling back through the quotient $\ind_{\Kt}^{\Gt}\tilde{\sigma}_{\vec{0}} \twoheadrightarrow \pi(\vec{0}, \lambda_{\mu, \psi})$, we also get a nonzero $\Gt$-linear map 
\begin{equation} \label{eqn:cowprep}
\pi(\vec{0}, \lambda_{\mu, \psi}) \rightarrow \rho.
\end{equation} 
Since $\lambda_{\mu, \psi} \neq 0$, the character $\Theta'_{\mu, \psi}$ of $\mathcal{H}(\Gt, \Kt, \tilde{\sigma}_{\vec{0}})$ extends to a character $\theta'_{\mu, \psi}$ of $\mathcal{H}(\Tt, \TKt, \tilde{\sigma}_{\vec{0}})$ by setting $\mathcal{\theta}_{\lambda, \mu}(\tau_{-1}^{\vec{0}}) = \lambda_{\mu, \psi}^{-1}$, and $\pi(\vec{0}, \lambda_{\mu, \psi}) = \ind_{\Kt}^{\Gt}\tilde{\sigma}_{\vec{0}} \otimes_{\mathcal{H}(\Gt, \Kt, \tilde{\sigma}_{\vec{0}}), \mathcal{S}_{\vec{0}}} \theta_{\mu, \psi}.$ Likewise, $\pi(\overrightarrow{p-1}, \lambda_{\mu, \psi}) = \ind_{\Kt}^{\Gt}\tilde{\sigma}_{\overrightarrow{p-1}}\otimes_{\mathcal{H}(\Gt, \Kt, \tilde{\sigma}_{\overrightarrow{p-1}}), \, \mathcal{S}_{\overrightarrow{p-1}}}\theta_{\mu, \psi}$, where $\theta_{\mu, \psi}(\tau_{-1}^{\overrightarrow{p-1}}) = \lambda_{\mu, \psi}$. Then by Lemma \ref{lem:changeofweight} we have $\pi(\vec{0}, \lambda_{\mu, \psi}) \cong \pi(\overrightarrow{p-1}, \lambda_{\mu, \psi})$, and precomposing (\ref{eqn:cowprep}) with this isomorphism we have a nonzero $\Gt$-linear map $$\pi(\overrightarrow{p-1}, \lambda_{\mu, \psi}) \rightarrow \rho.$$ Precomposing further with the quotient $\ind_{\Kt}^{\Gt}\tilde{\sigma}_{\overrightarrow{p-1}} \twoheadrightarrow \pi(\overrightarrow{p-1}, \lambda_{\mu, \psi})$ produces a nonzero $\Gt$-linear map $\ind_{\Kt}^{\Gt}\tilde{\sigma}_{\overrightarrow{p-1}} \rightarrow \rho$. By Frobenius reciprocity, the latter map corresponds to an injection $\tilde{\sigma}_{\overrightarrow{p-1}} \hookrightarrow \rho$, so $\rho$ contains $\tilde{\sigma}_{\overrightarrow{p-1}}$ after all. Thus $\rho = \Ind_{\Bbt}^{\Gt}(\mu \cdot \chi_{\psi})$. This completes the proof of (1) for principal series representations containing $\tilde{\sigma}_{\vec{0}}$ and $\tilde{\sigma}_{\overrightarrow{p-1}}$. 

In the case that the weights of $\Ind_{\Bbt}^{\Gt}(\mu \cdot \chi_{\psi})$ are $\{ \tilde{\sigma}_{\vec{0}}, \tilde{\sigma}_{\overrightarrow{p-1}}\}$, we have already seen that $ \pi(\overrightarrow{p-1}, \lambda_{\mu, \psi})  \cong \Ind_{\Bbt}^{\Gt}(\mu \cdot \chi_{\psi})$. By Lemma \ref{lem:cowmap}, also $\pi(\vec{0}, \lambda_{\mu, \psi}) \cong \Ind_{\Bbt}^{\Gt}(\mu \cdot \chi_{\psi})$. This proves (2) for principal series representations which contain the extremal $\Kt$-weights. 
\end{proof}

\begin{rem}
\label{rem:thesispsirred}
Alternatively, one can prove irreducibility of genuine principal series representations without any comparison to compact inductions. This was done by the author in \cite{peskin:thesis} following the strategy of Abdellatif \cite{abdellatif:thesis} for $SL_2(F)$. The idea is to show that $\Ind_{\Bt}^{\Gt}(\mu \cdot \chi_{\psi})$ fits into an exact sequence of $\Bt$-modules
$$0 \rightarrow W_{\mu, \psi} \rightarrow \Ind_{\Bbt}^{\Gt}(\mu \cdot \chi_{\psi}) \rightarrow \mu \cdot \chi_{\psi} \rightarrow 0$$ 
in which $W_{\mu, \psi}$ is a certain irreducible infinite-dimensional $\Bt$-module, so that each $\Ind_{\Bt}^{\Gt}(\mu \cdot \chi_{\psi})$ is of length 2 as a $\Bt$-module and admits $\mu \cdot \chi_{\psi}$ as its unique 1-dimensional subquotient. If $\Ind_{\Bt}^{\Gt}(\mu \cdot \chi_{\psi})$ were reducible as a $\Gt$-module, then by smooth Frobenius reciprocity would admit $\mu \cdot \chi_{\psi}$ as a 1-dimensional subquotient. But this is impossible, since the abelianization of $\Gt$ is trivial (cf. Lemma \ref{lem:commutators}) and on the other hand $\mu \cdot \chi_{\psi}$ is genuine and therefore nontrivial. 
\end{rem}

Next we give a dictionary between parameters with respect to $\Kt$ and to $\Kt'$. 

\begin{thm} \label{thm:dictionary}
Given $\vec{r} \in \{0, \dots, p-1\}^f$, let $\vec{r'}$ denote any vector in $\{0, \dots, p-1\}^f$ such that 
 $$\sum_{i = 0}^{f-1}r_i'p^i \equiv \sum_{i = 0}^{f-1}\left(r_i + \frac{p-1}{2}\right)p^i \pmod{q-1}$$ (thus $\vec{r'}$ is uniquely determined by $\vec{r}$ if $\vec{r}\neq \overrightarrow{\frac{p-1}{2}}$, and $\vec{r'} \in \{ \vec{0}, \overrightarrow{p-1}\}$ if $\vec{r} = \overrightarrow{\frac{p-1}{2}}$).
\begin{enumerate}
\item For any $\vec{r} \in \{ 0, \dots, p-1\}^f$ and $\lambda \in E^\times$, $$\pi(\vec{r}, \lambda) \cong \pi'(\vec{r'}, \lambda);$$
in particular, $$\pi(\vec{0}, \lambda) \cong \pi(\overrightarrow{p-1}, \lambda) \cong \pi'(\overrightarrow{\frac{p-1}{2}}, \lambda)$$ and $$\pi'(\vec{0}, \lambda) \cong \pi'(\overrightarrow{p-1}, \lambda) \cong \pi(\overrightarrow{\frac{p-1}{2}}, \lambda);$$
\item the isomorphisms of (1) are the only equivalences between nonsupersingular cokernel modules. 
\end{enumerate}
\end{thm}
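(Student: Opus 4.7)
The plan is to reduce both parts to the identification, provided by Theorem \ref{thm:pswtisom}, of each nonsupersingular cokernel module with an irreducible principal series representation, and then read off the equivalences from Proposition \ref{prop:psweights}.

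For Part (1), fix $\vec{r}$ and $\lambda \in E^\times$. The first step is to realize $\pi(\vec{r}, \lambda)$ as an induced representation. Choose a nontrivial additive character $\psi: F \to \CC^\times$ of even conductor, so that by Theorem \ref{thm:raoweilindexof} the character $\chi_{\psi}$ is trivial on $\OF^\times$; then define $\mu: F^\times \to E^\times$ by $\mu\big\vert_{\OF^\times} = \delta_{\vec{r}}$ and $\mu(\varpi) = \lambda \cdot \chi_{\psi}(\tilde{h}(\varpi))^{-1}$. By Proposition \ref{prop:psweights}(1)--(2), $\sigrt$ is a $\Kt$-weight of $\Ind_{\Bbt}^{\Gt}(\mu \cdot \chi_{\psi})$, and by construction $\lambda_{\mu, \psi} = \mu(\varpi) \cdot \chi_{\psi}(\tilde{h}(\varpi)) = \lambda$, so Theorem \ref{thm:pswtisom}(2) gives $\pi(\vec{r}, \lambda) \cong \Ind_{\Bbt}^{\Gt}(\mu \cdot \chi_{\psi})$, which is irreducible. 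By Proposition \ref{prop:psweights}(3)--(4), this same induction contains a $\Kt'$-weight $\tilde{\sigma}_{\vec{r'}}^{\tilde{\alpha}}$, characterized by $(\tilde{\sigma}_{\vec{r'}}^{\tilde{\alpha}})_{\Ubark} \cong (\mu \cdot \chi_{\psi})\big\vert_{\TKt} \cong (\sigrt)_{\Ubark}$; via Lemma \ref{lem:kt'coinvs}(2) this translates exactly into the congruence $\sum r'_i p^i \equiv \sum (r_i + \tfrac{p-1}{2}) p^i \pmod{q-1}$ of the theorem statement, including the two-fold ambiguity when $\vec{r} = \overrightarrow{\tfrac{p-1}{2}}$. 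To finish, Proposition \ref{prop:SHAaction}(2) produces a nonzero, hence surjective, $\Gt$-linear map $\pi'(\vec{r'}, \lambda) \twoheadrightarrow \Ind_{\Bbt}^{\Gt}(\mu \cdot \chi_{\psi})$; applying the same construction to $\vec{r'}$ shows $\pi(\vec{r'}, \lambda)$ is irreducible, and Lemma \ref{lem:pipi'isom} then makes $\pi'(\vec{r'}, \lambda) \cong \pi(\vec{r'}, \lambda)^{\tilde{\alpha}}$ irreducible as well. Hence the surjection is an isomorphism, yielding $\pi(\vec{r}, \lambda) \cong \pi'(\vec{r'}, \lambda)$.

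For Part (2), every nonsupersingular cokernel module is now realized as an irreducible $\Ind_{\Bbt}^{\Gt}(\chi)$ for some genuine character $\chi$ of $\Tt$, and it suffices to show that $\chi$ is determined by the isomorphism class of the induced representation. The restriction $\chi\big\vert_{\TKt}$ is recovered from the $\TKt$-structure on the coinvariants of any weight (for either $\Kt$ or $\Kt'$) contained in the induction, while the value $\chi(\tilde{h}(\varpi)) = \lambda$ is recovered from the Hecke action via Proposition \ref{prop:SHAaction}; since $\Tt$ is generated by $\TKt$ together with $\tilde{h}(\varpi)$, these two data determine $\chi$. Two nonsupersingular cokernel modules are thus isomorphic iff they arise from a common $\chi$, and Proposition \ref{prop:psweights} enumerates all $(\vec{r}, \lambda)$ and $(\vec{r'}, \lambda)$ producing a fixed $\chi$: the fiber is a single pair, a pair $\{\vec{0}, \overrightarrow{p-1}\}$ on the $\Kt$-side, or a pair $\{\vec{0}, \overrightarrow{p-1}\}$ on the $\Kt'$-side. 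Comparing with Part (1) shows these are precisely the isomorphisms already listed there.

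The principal technical obstacle is proving irreducibility of $\pi'(\vec{r'}, \lambda)$ without redoing all of $\S$\ref{sec:psreps} on the $\Kt'$-side; this is achieved by applying Lemma \ref{lem:pipi'isom} to transport the already-established irreducibility of $\pi(\vec{r'}, \lambda)$ across conjugation by $\tilde{\alpha}$. A secondary, purely combinatorial, point is the bookkeeping at the extremal vectors $\vec{0}$, $\overrightarrow{p-1}$, and $\overrightarrow{\tfrac{p-1}{2}}$, where non-uniqueness of the weight on one side of the dictionary must be matched (and only matched) by uniqueness on the other side; this is a direct verification using the identity $\sum(\tfrac{p-1}{2} + \tfrac{p-1}{2})p^i = q - 1 \equiv 0 \pmod{q-1}$.
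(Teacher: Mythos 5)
Your proposal is correct, and it reaches the same principal-series realization as the paper (via Theorem \ref{thm:pswtisom} and Proposition \ref{prop:psweights}), but the mechanism linking the $\Kt'$-side cokernel modules to the $\Kt$-side is genuinely different. The paper proves (1) by introducing a second induction: it twists $\mu$ by $(-, \varpi)_F = \delta_{\overrightarrow{\frac{p-1}{2}}}$ to get $\mu'$, realizes $\pi(\vec{r'}, \lambda) \cong \Ind_{\Bbt}^{\Gt}(\mu' \cdot \chi_{\psi})$, applies Lemma \ref{lem:pipi'isom}, and then computes explicitly that $(\mu' \cdot \chi_{\psi})^{\tilde{\alpha}} \cong \mu \cdot \chi_{\psi}$, so that conjugation cancels the twist; part (2) is handled by the same conjugation-of-characters computation. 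You instead stay with the single induction $\Ind_{\Bbt}^{\Gt}(\mu \cdot \chi_{\psi})$, use its $\Kt'$-weight content (Proposition \ref{prop:psweights}(3)--(4) together with Lemma \ref{lem:kt'coinvs}(2), which is exactly where the congruence on $\vec{r'}$ enters) and the Hecke eigenvalue from Proposition \ref{prop:SHAaction}(2) to produce a surjection $\pi'(\vec{r'}, \lambda) \twoheadrightarrow \Ind_{\Bbt}^{\Gt}(\mu \cdot \chi_{\psi})$, and then upgrade it to an isomorphism by transporting the irreducibility of $\pi(\vec{r'}, \lambda)$ across $\tilde{\alpha}$ via Lemma \ref{lem:pipi'isom}; your part (2) is reorganized as ``recover the inducing character $\chi$ from the isomorphism class (restriction to $\TKt$ from the weight coinvariants, $\chi(\tilde{h}(\varpi))$ from the Hecke eigenvalue, and $\Tt = \langle \TKt, \tilde{h}(\varpi)\rangle$) and enumerate the fibers of $(\vec{r}, \lambda) \mapsto \chi$,'' which yields the same list of equivalences. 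Both arguments are sound; the paper's route makes the quadratic twist and the Weil-index constants explicit (which feeds directly into Corollaries \ref{cor:dictionary2} and \ref{cor:dictionary3} and in particular fixes the normalization $\lambda_{\mu,\psi}$ for arbitrary conductor), while yours avoids any computation with conjugated characters at the cost of the extra irreducibility-transport step and of restricting to $\psi$ of even conductor, which is harmless for the theorem itself but means the dependence on $\psi$ recorded later has to be re-derived separately. One small point to make explicit if you write this up: the nonzero map $\ind_{\Kt'}^{\Gt}\tilde{\sigma}_{\vec{r'}}^{\tilde{\alpha}} \rightarrow \Ind_{\Bbt}^{\Gt}(\mu \cdot \chi_{\psi})$ factors through $\pi'(\vec{r'}, \lambda)$ precisely because the Hecke operator acts on the corresponding weight space by the scalar $\lambda$, so the image of $\mathcal{T}^{\tilde{\alpha}, \vec{r'}}_1 - \lambda$ is killed; with that spelled out, your argument is complete.
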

\begin{proof}[Proof of Theorem \ref{thm:dictionary}]

\begin{enumerate}
\item
Let $\vec{r} \in \{0, \dots, p-1\}^f$ and $\lambda \in E^\times$, and fix an additive $\CC$-valued character $\psi$ of conductor $m$. Let $\psi$ be a nontrivial additive $\CC$-valued character of $F$ of conductor $m$. Denote the parity of $m$ by $s(m) \in \{0, 1\}$. There exists a character $\mu$ of $F^\times$ such that $\mu \big \vert_{\OF^\times} \cong \delta_{\vec{r}} \cdot (-, \varpi)_F^m$ and $\mu(\varpi) = (-1, \varpi)_F \cdot\lambda\cdot \gamma_{\mathfrak{k}}(\bar{\psi})^{1 - 2 s(m)}$. Then by Theorem \ref{thm:raoweilindexof} (2), the genuine character $\mu \cdot \chi_{\psi}$ of $\Tt$ satisfies 
\begin{equation}
\label{eqn:dict1}
\mu \cdot \chi_{\psi} \big \vert_{\Tt \cap \Kt} \cong \epsilon \cdot \delta_{\vec{r}} \cdot (-, \varpi)_F^{2m} = \tilde{\delta}_{\vec{r}} = (\sigrt)_{\Ubark}
\end{equation}
and
\begin{equation}
\label{eqn:dict2}
\mu \cdot \chi_{\psi} (\tilde{h}(\varpi)) = \lambda.
\end{equation}
The isomorphism (\ref{eqn:dict1}) implies, due to Proposition \ref{prop:psweights} (1) and (2), that the $\Kt$-weight $\sigrt$ is contained in $\Ind_{\Bbt}^{\Gt}(\mu \cdot \chi_{\psi})$. Hence (\ref{eqn:dict1}) and (\ref{eqn:dict2}) together with Theorem \ref{thm:pswtisom} (2) imply that $\pi(\vec{r}, \lambda) \cong \Ind_{\Bbt}^{\Gt}(\mu \cdot \chi_{\psi})$. 

Now let $\vec{r'} \in \{ 0, \dots, p-1\}^f$ satisfy the condition of the theorem with respect to $\vec{r}$. Then $\mu' := \delta_{\overrightarrow{\frac{p-1}{2}}}\cdot \mu = (-, \varpi)_F \cdot \mu$ is a character of $F^\times$ such that $\mu' \big \vert_{\OF^\times} \cong \delta_{\vec{r'}} \cdot (-, \varpi)_F^m$ and $\mu'(\varpi) = \mu(\varpi)$. By Theorem \ref{thm:pswtisom}, $\pi(\vec{r'}, \lambda) \cong \Ind_{\Bbt}^{\Gt}(\mu' \cdot \chi_{\psi})$. And by Lemma \ref{lem:pipi'isom} we have $\pi'(\vec{r'}, \lambda) \cong \left(\pi(\vec{r'}, \lambda) \right)^{\tilde{\alpha}},$ so
\begin{align*}
\pi'(\vec{r'}, \lambda) & \cong \left(\Ind_{\Bbt}^{\Gt}(\mu' \cdot \chi_{\psi}) \right)^{\tilde{\alpha}} \cong \Ind_{\Bbt}^{\Gt}\left( \mu' \cdot \chi_{\psi} \right)^{\tilde{\alpha}}.
\end{align*}
For $(t(x), \zeta) \in \Tt$,
\begin{align*}
(\mu' \cdot \chi_{\psi})^{\tilde{\alpha}}((t(x), \zeta)) & = (\mu' \cdot \chi_{\psi})\left((\tilde{\alpha})^{-1}(t(x), \zeta\right)\tilde{\alpha})\\
& = (\mu' \cdot \chi_{\psi})\left( (t(x), \zeta \cdot (x, \varpi)_F) \right)\\
& = \zeta\cdot (x, \varpi)_F \cdot \mu'(x) \cdot \gamma_F(x, \psi)^{-1}\\
& = \zeta \cdot (x, \varpi)_F \cdot \delta_{\overrightarrow{\frac{p-1}{2}}} \cdot \mu(x)\cdot \gamma_F(x, \psi)^{-1}\\
& = \zeta \cdot \mu(x) \cdot \gamma_F(x, \psi)^{-1}\\
& = (\mu \cdot \chi_{\psi})((t(x), \zeta)), 
\end{align*}
so $(\mu' \cdot \chi_{\psi})^{\tilde{\alpha}} \cong \mu \cdot \chi_{\psi}$. Thus $\pi'(\vec{r'}, \lambda) \cong \Ind_{\Bbt}^{\Gt}(\mu \cdot \chi_{\psi}) \cong \pi(\vec{r}, \lambda).$ 

(2) Let $\vec{r}$, $\vec{s} \in \{ 0, \dots, p-1\}^f$ and let $\lambda, \nu \in E^\times$. Suppose that $\pi(\vec{r}, \lambda) \cong \pi(\vec{s}, \nu)$. Then $\pi(\vec{r}, \lambda)$ is a nonsupercuspidal $\Gt$-representation containing the $\Kt$-weights $\tilde{\sigma}_{\vec{r}}$ and $\tilde{\sigma}_{\vec{s}}$, so by Proposition \ref{prop:psweights} (1) and (2) either $\vec{r} = \vec{s}$ or $\{ \vec{r}, \vec{s}\} = \{ \vec{0}, \overrightarrow{p-1}\}$. In either case, the eigenvalue of $\mathcal{T}^{\vec{r}}$ on $\Hom_{\Kt}(\sigrt, \pi(\vec{r}, \lambda)\big \vert_{\Kt})$ is equal to the eigenvalue of $\mathcal{T}^{\vec{r}}$ on $\Hom_{\Kt}(\sigst, \pi(\vec{s}, \nu))$, i.e., $\lambda = \nu$. Thus the only equivalences between cokernel modules with respect to $\Kt$ are the isomorphisms $\pi(\vec{0}, \lambda) \cong \pi(\overrightarrow{p-1}, \lambda)$. Conjugating by $\tilde{\alpha}$ and using Lemma \ref{lem:pipi'isom}, we see that the only equivalences between cokernel modules with respect to $\Kt'$ are the isomorphisms $\pi'(\vec{0}, \lambda) \cong \pi'(\overrightarrow{p-1}, \lambda).$ 

Finally, let $\vec{r}$, $\vec{s} \in \{ 0, \dots, p-1\}^f$ and $\lambda, \nu \in E^\times$, and suppose that $\pi(\vec{r}, \lambda) \cong \pi'(\vec{s}, \nu)$. By Theorem \ref{thm:pswtisom}, there is a genuine character $\mu \cdot \chi_{\psi}$ of $\Tt$ such that $\pi(\vec{r}, \lambda) \cong \Ind_{\Bbt}^{\Gt}(\mu \cdot \chi_{\psi})$. Then by Proposition \ref{prop:SHAaction}, we must have $\lambda = \nu = \mu \cdot \chi_{\psi}(\tilde{h}(\varpi)).$ And by Lemma \ref{lem:pipi'isom}, $\pi'(\vec{s}, \lambda) \cong \left(\pi(\vec{s}, \lambda) \right)^{\tilde{\alpha}}$, so 
$$\pi(\vec{s}, \lambda) \cong \left( \pi'(\vec{s}, \lambda)\right)^{\tilde{\alpha}^{-1}} \cong \left(\Ind_{\Bbt}^{\Gt} \mu \cdot \chi_{\psi} \right)^{\tilde{\alpha}^{-1}} \cong \Ind_{\Bbt}^{\Gt} \left(\mu \cdot \chi_{\psi} \right)^{\tilde{\alpha}^{-1}}.$$ By a similar calculation to the one in the proof of (1), $(\mu \cdot \chi_{\psi})^{\tilde{\alpha}^{-1}} \cong  \delta_{\overrightarrow{\frac{p-1}{2}}} \cdot \mu \cdot \chi_{\psi}$. Since $\sigst$ is a $\Kt$-weight of $\Ind_{\Bbt}^{\Gt}( \delta_{\overrightarrow{\frac{p-1}{2}}} \cdot \mu\cdot \chi_{\psi} )$, by Lemma \ref{lem:natlisom} we have an isomorphism of $\Tt \cap \Kt$-representations $(\sigst)_{\Ubark}  \cong  \delta_{\overrightarrow{\frac{p-1}{2}}} \cdot \mu \cdot \chi_{\psi} \big \vert_{\Tt \cap \Kt}$. And since $\sigrt$ is a $\Kt$-weight of $\Ind_{\Bbt}^{\Gt}(\mu \cdot \chi_{\psi})$, we have $(\sigrt)_{\Ubark} \cong \mu \cdot \chi_{\psi} \big \vert_{\Tt \cap \Kt}$. Hence $\tilde{\delta}_{\vec{s}} = \delta_{\overrightarrow{\frac{p-1}{2}}} \cdot \tilde{\delta}_{\vec{r}}$, which is only true if $\vec{s}$ satisfies the condition 
$$\sum_{i = 0}^{f-1}s_ip^i \equiv \sum_{i = 0}^{f-1} \left(r_i + \frac{p-1}{2}\right)p^i \pmod{q-1}.$$
\end{enumerate}
\end{proof}

\begin{cor} \label{cor:dictionary2}
Let $\psi: F \rightarrow \CC$ be a nontrivial additive character of conductor $m$, let $\mu: F^\times \rightarrow E^\times$ be a smooth multiplicative character, and let $\chi_{\psi}$ be the genuine $E$-valued character of $\Tt$ defined in (\ref{eqn:genchar}). 
\begin{enumerate}
\item Suppose either that $2 \big \vert m$ and $\mu \big \vert_{\OF^\times} = 1$, or that $2 \not \big \vert m$ and $\mu \big \vert_{\OF^\times} = (-, \varpi)_F$. Then the parameters of $\Ind_{\Bbt}^{\Gt}(\mu \cdot \chi_{\psi})$ with respect to $\Kt$ are $(\vec{0}, \lambda_{\mu, \psi})$ and $(\overrightarrow{p-1}, \lambda_{\mu, \psi})$, and $\Ind_{\Bbt}^{\Gt}(\mu \cdot \chi_{\psi})$ has the unique parameter $(\overrightarrow{\frac{p-1}{2}}, \lambda_{\mu, \psi})$ with respect to $\Kt'$. 

\item Suppose either that $2 \big \vert m$ and $\mu \big \vert_{\OF^\times} = (-, \varpi)_F$, or that $2 \not \big \vert m$ and $\mu \big \vert_{\OF^\times} = 1$. Then $\Ind_{\Bbt}^{\Gt}(\mu \cdot \chi_{\psi})$ has the unique parameter $(\overrightarrow{\frac{p-1}{2}}, \lambda_{\mu, \psi})$ with respect to $\Kt$, and the parameters of $\Ind_{\Bbt}^{\Gt}(\mu \cdot \chi_{\psi})$ with respect to $\Kt'$ are $(\vec{0}, \lambda_{\mu, \psi})$ and $(\overrightarrow{p-1}, \lambda_{\mu, \psi})$.  

\item Otherwise, $\Ind_{\Bbt}^{\Gt}(\mu \cdot \chi_{\psi})$ has a unique parameter with respect to $\Kt$, and this parameter is of the form $(\vec{r}, \lambda_{\mu, \psi})$ for some $\vec{r} \notin \{\vec{0}, \overrightarrow{\frac{p-1}{2}}, \overrightarrow{p-1}\}$. $\Ind_{\Bbt}^{\Gt}(\mu\cdot \chi_{\psi})$ also has a unique parameter with respect to $\Kt'$, equal to $(\vec{r'}, \lambda_{\mu, \psi})$ where $\vec{r'}$ is the unique vector in $\{0, \dots, p-1\}^f$ such that $\sum_{i = 0}^{f-1}r_i'p^i \equiv \sum_{i = 0}^{f-1}\left(r_i + \frac{p-1}{2}\right) p^i \pmod{q-1}$. 
\end{enumerate}
\end{cor}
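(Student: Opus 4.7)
The plan is to combine Proposition \ref{prop:psweights} (which identifies which $\widetilde{\mathcal{K}}$- and $\widetilde{\mathcal{K}}'$-weights sit in $\mathrm{Ind}_{\Bbt}^{\Gt}(\mu \cdot \chi_\psi)$) with Theorem \ref{thm:pswtisom} (which turns each such weight into a parameter with scalar $\lambda_{\mu,\psi}$), and then invoke Theorem \ref{thm:dictionary} to cross-reference the two extremal parameters when they occur. The only genuine computation is translating between the hypothesis ``$\mu|_{\OF^\times} = (-,\varpi)_F^m$'' of Proposition \ref{prop:psweights} and the case distinction on the parity of $m$ that appears in the corollary; this is immediate from the identity $(-,\varpi)_F^m = 1$ if $2 \mid m$ and $(-,\varpi)_F^m = (-,\varpi)_F$ otherwise.

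First I will handle case (1). In each of its two subcases one verifies that $\mu|_{\OF^\times} = (-,\varpi)_F^m$ and $\mu|_{\OF^\times} \neq (-,\varpi)_F^{m+1}$. By Proposition \ref{prop:psweights} (2), the $\Kt$-weights in $\mathrm{Ind}_{\Bbt}^{\Gt}(\mu\cdot\chi_\psi)$ are exactly $\tilde{\sigma}_{\vec{0}}$ and $\tilde{\sigma}_{\overrightarrow{p-1}}$, and by Proposition \ref{prop:psweights} (3) there is a unique $\Kt'$-weight $\sigrt^{\tilde{\alpha}}$ appearing, with $\vec{r} \notin \{\vec{0},\overrightarrow{p-1}\}$. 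Theorem \ref{thm:pswtisom} then yields the $\Kt$-parameters $(\vec{0},\lambda_{\mu,\psi})$ and $(\overrightarrow{p-1},\lambda_{\mu,\psi})$. To identify $\vec{r}$, I will use Lemma \ref{lem:kt'coinvs} (2) together with the requirement that $(\sigrt^{\tilde{\alpha}})_{\Ubark} \cong \mu\cdot\chi_\psi|_{\Tt\cap\Kt}$; an easy calculation using Theorem \ref{thm:raoweilindexof} (2) shows the latter restriction is trivial on $\mu_2(F) \cdot \OF^\times$, forcing $\vec{r} = \overrightarrow{\frac{p-1}{2}}$. (Alternatively, invoke Theorem \ref{thm:dictionary} (1): $\pi(\vec{0},\lambda_{\mu,\psi}) \cong \pi'(\overrightarrow{\frac{p-1}{2}},\lambda_{\mu,\psi})$.)

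Case (2) is strictly parallel: the two subcases jointly characterize when $\mu|_{\OF^\times} = (-,\varpi)_F^{m+1}$ and $\mu|_{\OF^\times} \neq (-,\varpi)_F^m$, so Proposition \ref{prop:psweights} (1) and (4) give a unique $\Kt$-weight and two extremal $\Kt'$-weights. The $\Kt$-weight is pinned down to be $\tilde{\sigma}_{\overrightarrow{\frac{p-1}{2}}}$ by the same $\Ubark$-coinvariants calculation (now on the $\Kt$ side via Lemma \ref{lem:ukinvariants}), using that the restriction $\mu\cdot\chi_\psi|_{\OF^\times}$ equals $(a,\varpi)_F = \delta_{\overrightarrow{\frac{p-1}{2}}}(a)$ in both subcases of (2). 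Theorem \ref{thm:pswtisom} and Theorem \ref{thm:dictionary} (1) then finish this case.

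Case (3) is the generic situation, in which $\mu|_{\OF^\times}$ equals neither $1$ nor $(-,\varpi)_F$, hence differs from both $(-,\varpi)_F^m$ and $(-,\varpi)_F^{m+1}$ regardless of the parity of $m$. Proposition \ref{prop:psweights} (1) and (3) supply unique weights $\sigrt$ and $\sigst^{\tilde{\alpha}}$ with $\vec{r}, \vec{s} \notin \{\vec{0}, \overrightarrow{p-1}\}$; moreover $\vec{r} \neq \overrightarrow{\frac{p-1}{2}}$ since otherwise the $\Tt\cap\Kt$-character $\mu\cdot\chi_\psi|_{\Tt\cap\Kt} \cong \tilde{\delta}_{\overrightarrow{\frac{p-1}{2}}}$ would force $\mu|_{\OF^\times} \in \{1, (-,\varpi)_F\}$. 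Applying Lemma \ref{lem:kt'coinvs} (2) to the relation $(\sigst^{\tilde{\alpha}})_{\Ubark} \cong (\sigrt)_{\Ubark}$ produces the stated congruence between $\vec{s} = \vec{r'}$ and $\vec{r}$. Theorem \ref{thm:pswtisom} again supplies the scalar $\lambda_{\mu,\psi}$.

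The hardest part is not a step but the bookkeeping: one must check carefully that in each subcase of (1) and (2) the condition of Proposition \ref{prop:psweights} is correctly triggered, and that the extremal weight $\overrightarrow{\frac{p-1}{2}}$ picked out on the opposite side is consistent with Theorem \ref{thm:dictionary}. The calculation $\gamma_F(a,\psi)^{-1} = (a,\varpi)_F^m$ on $\OF^\times$, from Theorem \ref{thm:raoweilindexof} (2), is used repeatedly to identify the $\Tt\cap\Kt$-restriction of $\mu\cdot\chi_\psi$, and this is the single computational input that drives every case.
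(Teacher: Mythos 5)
Your overall route is the paper's own: Proposition \ref{prop:psweights} to pin down the weights, Theorem \ref{thm:pswtisom} to realize the principal series as the corresponding cokernel modules with scalar $\lambda_{\mu,\psi}$, and Theorem \ref{thm:dictionary} (or, equivalently, the direct computation with Lemma \ref{lem:kt'coinvs} (2) and Theorem \ref{thm:raoweilindexof} (2)) to match the $\Kt$- and $\Kt'$-sides; your case bookkeeping, including the identification of $\overrightarrow{\tfrac{p-1}{2}}$ and the congruence defining $\vec{r'}$ in case (3), is correct. (One wording slip: in case (1) the restriction $\mu\cdot\chi_\psi\big\vert_{\Tt\cap\Kt}$ is not ``trivial on $\mu_2(F)\cdot\OF^\times$'' --- it is genuine --- what you mean is that it is isomorphic to $\tilde{\delta}_{\vec{0}}$, i.e.\ trivial on the $\OF^\times$-part.)

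There is, however, one genuine gap: the corollary asserts that the listed pairs are \emph{all} of the parameters (``the parameters are \dots'', ``has the unique parameter \dots''), and your argument only establishes that they \emph{are} parameters and that no other weight $\vec{r}$ can occur. You never rule out a parameter with a different second component --- in particular a supersingular parameter $(\vec{r},0)$. This is exactly the first step of the paper's proof: if $\Ind_{\Bbt}^{\Gt}(\mu\cdot\chi_\psi)$ were a quotient of $\pi(\vec{r},\nu)$ (resp.\ $\pi'(\vec{r},\nu)$), then by Lemma \ref{lem:cokernelnonzero} (2) (resp.\ Lemma \ref{lem:kt'cokernelnonzero} (2)) the operator $\mathcal{T}^{\vec{r}}_1$ (resp.\ $\mathcal{T}^{\tilde{\alpha},\vec{r}}_1$) would act by $\nu$ on the relevant weight space, while Proposition \ref{prop:SHAaction} says it acts by $\lambda_{\mu,\psi}=\mu\cdot\chi_\psi(\tilde{h}(\varpi))\in E^\times$; hence $\nu=\lambda_{\mu,\psi}\neq 0$, and supersingular (or any other) eigenvalues are excluded. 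Adding this one-line argument closes the gap; with it, your proof coincides in substance with the paper's, the only difference being that you re-derive the $\Kt$-versus-$\Kt'$ weight translation by hand where the paper simply cites Theorem \ref{thm:dictionary}.
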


\begin{proof}[Proof of Corollary \ref{cor:dictionary2}]
First we check that under no circumstance can a genuine principal series representation have a supersingular parameter. If $\pi = \Ind_{\Bbt}^{\Gt}(\mu \cdot \chi_{\psi})$ has a supersingular parameter with respect to $\Kt$, then by Lemma \ref{lem:cokernelnonzero} (2), there is a weight $\tilde{\sigma}_{\vec{r}}$ of $\Kt$ such that $\pi$ contains $\tilde{\sigma}_{\vec{r}}$ and such that $\mathcal{T}^{\vec{r}}_1$ acts by 0 on $\Hom_{\Kt}(\tilde{\sigma}_{\vec{r}}, \varpi\big \vert_{\Kt})$. But by Proposition \ref{prop:SHAaction} (1), this implies that $(\mu \cdot\chi_{\psi})(\tilde{h}(\varpi)) = 0$, which is impossible, so $\pi$ has no supersingular parameter with respect to $\Kt$. A similar argument using Lemma \ref{lem:kt'cokernelnonzero} (2) Proposition \ref{prop:SHAaction} (2) shows that $\pi$ has no supersingular parameter with respect to $\Kt'$.  

All points regarding the nonsupersingular parameters follow from Theorem \ref{thm:dictionary} together with Proposition \ref{prop:psweights}. 
\end{proof}

Finally, we hold the character $\mu: F^{\times} \rightarrow E$ fixed and put the statement of Corollary \ref{cor:dictionary2} in a form which highlights the dependence on $\psi$ of the parameters for principal series representations. For $a \in F^\times$, let $\psi_a$ denote the character $x \mapsto \psi(ax)$. A consequence of Theorem \ref{thm:raoweilindex} is the formula $\gamma_F(c, \psi_a) = (a, c)_F \cdot \gamma_F(c, \psi)$. Thus $\chi_{\psi}$ and $\chi_{\psi_a}$ define identical bijections
$$\{ \text{smooth characters } F^\times \rightarrow E^\times\} \leftrightarrow \{ \text{genuine characters } \Tt \rightarrow E^\times \}$$
if and only if $a \in (F^\times)^2$. Likewise, the principal series representations $\Ind_{\Bbt}^{\Gt}(\mu \cdot \chi_{\psi})$ and $\Ind_{\Bbt}^{\Gt}(\mu \cdot \chi_{\psi_a})$ have identical parameters with respect to $\Kt$ and $\Kt'$ if and only if $a \in (F^\times)^2$. The full dependence is as follows: 

\begin{cor}\label{cor:dictionary3}
Keep the notation of Corollary \ref{cor:dictionary2}, and in addition let $a = u \cdot \varpi^{v_F(a)} \in F^\times$ with $u \in \OF^\times$. Suppose that $(\vec{r}, \lambda)$ is a parameter of $\Ind_{\Bbt}^{\Gt}(\mu \cdot \chi_{\psi})$ with respect to $\Kt$, and that $(\vec{r'}, \lambda)$ is a parameter of $\Ind_{\Bbt}^{\Gt}(\mu \cdot \chi_{\psi})$ with respect to $\Kt'$. Then 
\begin{enumerate}
\item If $2 \big \vert v_F(a)$, then $(\vec{r}, (u, \varpi)_F \cdot \lambda)$ is a parameter of $\Ind_{\Bbt}^{\Gt}(\mu \cdot \chi_{\psi_a})$ with respect to $\Kt$ and $(\vec{r'}, (u, \varpi)_F \cdot \lambda)$ is a parameter of $\Ind_{\Bbt}^{\Gt}(\mu \cdot \chi_{\psi_a})$ with respect to $\Kt'$. 

\item If $2 \not \big \vert v_F(a)$, then $(\vec{r'}, (-u, \varpi)_F \cdot \lambda)$ is a parameter of $\Ind_{\Bbt}^{\Gt}(\mu \cdot \chi_{\psi_a})$ with respect to $\Kt$ and $(\vec{r}, (-u, \varpi)_F \cdot \lambda)$ is a parameter of $\Ind_{\Bbt}^{\Gt}(\mu \cdot \chi_{\psi_a})$ with respect to $\Kt'$. 
\end{enumerate}
\end{cor}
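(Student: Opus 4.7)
The plan is to reduce everything to the identity $\chi_{\psi_a} = (a,-)_F \cdot \chi_{\psi}$ between genuine characters of $\widetilde{T}$, and then read off the effect of replacing $\psi$ by $\psi_a$ via Corollary \ref{cor:dictionary2}. The identity follows directly from the formula $\gamma_F(c,\psi_a) = (a,c)_F\gamma_F(c,\psi)$ recalled just before the statement of the corollary, combined with the definition (\ref{eqn:genchar}) of $\chi_\psi$ and the fact that Hilbert symbols are $\pm 1$ so $(a,c)_F^{-1} = (a,c)_F$. Hence
\[
\mu \cdot \chi_{\psi_a} \;=\; \bigl(\mu \cdot (a,-)_F\bigr) \cdot \chi_\psi
\]
as genuine characters of $\widetilde{T}$.

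Next I would compute the two invariants that enter the parametrization of Corollary \ref{cor:dictionary2}: the eigenvalue $\lambda_{\mu,\psi_a}$ and the restriction $\mu|_{\mathcal{O}_F^\times}$ as modified by $(a,-)_F$. Using $\tilde{h}(\varpi) = (h(\varpi),(-1,\varpi)_F)$, the identity above immediately gives
\[
\lambda_{\mu,\psi_a} \;=\; (a,\varpi)_F \cdot \lambda_{\mu,\psi},
\]
and bilinearity of $(\cdot,\cdot)_F$ together with $(\varpi,\varpi)_F = (-1,\varpi)_F$ yields $(a,\varpi)_F = (u,\varpi)_F \cdot (-1,\varpi)_F^{v_F(a)}$, which matches the scalar factors $(u,\varpi)_F$ or $(-u,\varpi)_F$ appearing in (1) and (2). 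For the vector component, one uses the classification (Proposition \ref{prop:psweights}) together with the explicit form of $\chi_\psi|_{\widetilde{T}\cap\widetilde{K}}$ from Theorem \ref{thm:raoweilindexof}(2): the $\widetilde{K}$-weight $\tilde\sigma_{\vec{r}}$ appearing in $\Ind_{\widetilde{\overline{B}}}^{\widetilde{G}}(\mu\cdot\chi_\psi)$ is characterized by $\delta_{\vec{r}} = \mu|_{\mathcal{O}_F^\times} \cdot (-,\varpi)_F^{m}$, and similarly for $\psi_a$ one gets $\delta_{\vec{r}_a} = \mu|_{\mathcal{O}_F^\times}\cdot(-,\varpi)_F^{m - v_F(a)}$.

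If $v_F(a)$ is even, the exponent $m - v_F(a)$ has the same parity as $m$, so $\vec{r}_a = \vec{r}$ (and analogously $\vec{r}'$ is unchanged for $\widetilde{K}'$), proving (1). If $v_F(a)$ is odd, then $\delta_{\vec{r}_a} = \delta_{\vec{r}}\cdot(-,\varpi)_F$; since $(-,\varpi)_F = \delta_{\overrightarrow{(p-1)/2}}$ on $\mathcal{O}_F^\times$ (as observed in the proof of Lemma \ref{lem:kt'coinvs}(2)), the shift from $\vec{r}$ to $\vec{r}_a$ is precisely the shift $\vec{r}\mapsto \vec{r}'$ appearing in Theorem \ref{thm:dictionary}. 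By Theorem \ref{thm:dictionary}, this vector is exactly the vector labelling the $\widetilde{K}'$-parameter of $\Ind_{\widetilde{\overline{B}}}^{\widetilde{G}}(\mu\cdot\chi_\psi)$, so the roles of $\widetilde{K}$ and $\widetilde{K}'$ swap, yielding (2). The same argument applied to $\widetilde{K}'$ (using Proposition \ref{prop:psweights}(3)--(4) and the $\widetilde{K}'$-analogue of Lemma \ref{lem:natlisom}, namely Lemma \ref{lem:natlisomkt}) gives the assertions about parameters with respect to $\widetilde{K}'$.

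The only non-bookkeeping step is ensuring that the parity-parity table of Corollary \ref{cor:dictionary2} really does interchange cases (1) and (2) when the parity of the conductor flips, and that the unique vector $\vec{r}$ of case (3) transforms via the involution $\vec{r} \leftrightarrow \vec{r}'$; the first is immediate from the case statements, and the second is exactly the content of Theorem \ref{thm:dictionary}(1). Beyond this, the proof is purely computational, so I expect no real obstacle — just a careful verification that the Hilbert-symbol twist $(a,-)_F$ and the parity shift of the conductor of $\psi_a$ account for both scalar and vector components of the parameters.
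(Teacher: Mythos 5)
Your argument is correct and takes essentially the same route as the paper's own proof: the identity $\gamma_F(c,\psi_a)=(a,c)_F\,\gamma_F(c,\psi)$ yields the scalar factor $(a,\varpi)_F$, which equals $(u,\varpi)_F$ or $(-u,\varpi)_F$ according to the parity of $v_F(a)$, while the flip in conductor parity is fed into Corollary \ref{cor:dictionary2} (equivalently Proposition \ref{prop:psweights} and Theorem \ref{thm:dictionary}) to determine the vector components and the swap of the roles of $\Kt$ and $\Kt'$. The only slip is writing the conductor of $\psi_a$ as $m-v_F(a)$ rather than $m+v_F(a)$, which is harmless since $(-,\varpi)_F$ is quadratic and only the parity of the exponent matters.
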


\begin{proof}[Proof of Corollary \ref{cor:dictionary3}]
The conductors of $\psi$ and $\psi_a$ have equal parity if $2 \big \vert v_F(a)$ and opposite parity otherwise. The statements about the first  components of the given parameters then follow from \ref{cor:dictionary2}. The second component of each parameter of $\Ind_{\Bbt}^{\Gt}(\mu \cdot \chi_{\psi_a})$ is equal to $\mu \cdot \chi_{a}(\tilde{h}(\varpi)).$ Since $(\vec{r}, \lambda)$ and $(\vec{r'},\lambda)$ are parameters for $\Ind_{\Bbt}^{\Gt}(\mu \cdot \chi_{\psi})$, we have $\mu \cdot \chi_{\psi}(\tilde{h}(\varpi)) = \lambda$. We also have $\mu \cdot \chi_{\psi}(\tilde{h}(\varpi)) = (-1, \varpi)_F \cdot \mu(\varpi) \cdot \gamma_F(\varpi, \psi)^{-1}$, while $\mu \cdot \chi_{\psi_a}(\tilde{h}(\varpi)) = (-1, \varpi)_F \cdot \mu(\varpi) \cdot \gamma_F(\varpi, \psi_a)^{-1}$, so 
$$\mu \cdot \chi_{\psi_a}(\tilde{h}(\varpi)) = \frac{\gamma_F(\varpi, \psi_a)}{\gamma_F(\varpi, \psi)} \cdot \lambda = (a, \varpi)_F \cdot \lambda.$$
If $2 \big \vert v_F(a)$ then $(a, \varpi)_F = (u, \varpi)_F$, and if $2 \not \big \vert v_F(a)$ then $(a, \varpi)_F = (u\varpi, \varpi)_F = (-u, \varpi)_F$. 
\end{proof}

\section{Proof of the classification theorem}
\label{sec:classnproof}

\begin{proof}[Proof of Theorem \ref{thm:irredadmissclassn}]
Let $\pi$ be a smooth, genuine, irreducible, admissible representation of $\Gt$. Then by Proposition \ref{prop:possparamexists}, $\pi$ has a parameter with respect to each of $\Kt$ and $\Kt'$. Suppose that $\pi$ is nonsupercuspidal; then by Theorem \ref{thm:pswtisom}, $\pi$ is an irreducible genuine principal series representation. So by Corollary \ref{cor:dictionary2}, $\pi$ has a nonsupersingular parameter with respect to each of $\Kt$ and $\Kt'$, and has no supersingular parameters.

Conversely, suppose that $\pi$ is a smooth, genuine, irreducible $\Gt$-representation which has a nonsupersingular parameter. If $\pi$ has the nonsupersingular parameter $(\vec{r}, \lambda)$ with respect to $\Kt$, then by definition $\pi$ is a quotient of $\pi(\vec{r}, \lambda)$. Since $\lambda \in E^\times$, there exists a character $\mu: F^\times \rightarrow E^\times$ such that $\mu(\varpi) = \lambda$ and such that $\mu \big \vert_{\OF^\times} \cong \delta_{\vec{r}}$. If $\psi: F \rightarrow \CC$ is a nontrivial additive character with even conductor $m$, then by Proposition \ref{prop:SHAaction} (1) the genuine $E$-valued character $\mu \cdot \chi_{\psi}$ of $\Tt$ satisfies $\mu \cdot \chi_{\psi}(\tilde{h}(\varpi)) = \lambda$, so by Theorem \ref{thm:pswtisom} (2) we have $\pi(\vec{r}, \lambda) \cong \Ind_{\Bbt}^{\Gt}(\mu \cdot \chi_{\psi})$. Thus $\pi$ is a quotient of a principal series representation, and in fact $\pi \cong \Ind_{\Bbt}^{\Gt}(\mu \cdot \chi_{\psi})$ since by Theorem \ref{thm:pswtisom} (1) the latter is also irreducible. If $\pi$ has a nonsupersingular parameter $(\vec{r}, \lambda)$ with respect to $\Kt'$, then the same argument (only changed to take $\psi$ of odd conductor) shows again that $\pi$ is a genuine principal series representation. Corollary \ref{cor:dictionary2} shows again that $\pi$ has no supersingular parameters. 

Thus an irreducible admissible genuine representation $\pi$ of $\Gt$ has a nonsupersingular parameter if and only if all of its parameters are nonsupersingular, if and only if $\pi$ is a genuine principal series representation. 
\end{proof}


\bibliographystyle{abbrv}
\bibliography{sphericalrefs}

\vspace{.5cm}

\noindent \textit{Address:}\\Department of Mathematics, Weizmann Institute of Science\\
234 Herzl Street, Rehovot 7610001 Israel

\noindent \textit{Email:} \url{laura.peskin@weizmann.ac.il}

\end{document}